\let\newbigast=\bigast
\renewcommand{\bigast}{\mathbin{\newbigast}}
\theoremstyle{plain}
\newtheorem{thm}{Theorem}
\newtheorem{lem}[thm]{Lemma}
\newtheorem{prop}[thm]{Proposition}
\theoremstyle{remark}
\newtheorem{rem}[thm]{Remark}
\newtheorem{rems}[thm]{Remarks}
\newtheorem*{exs}{Examples}
\newtheoremstyle{citing}{3pt}{3pt}{\itshape}{}{\bfseries}{.}{.5em}{\thmnote{#3}}
\theoremstyle{citing}
\newtheorem*{varthm}{}
\newcommand{\reqref}[1]{(\protect\ref{eq:#1})}
\newcommand{\resec}[1]{Section~\protect\ref{sec:#1}}
\newcommand{\rechap}[1]{Chapter~\protect\ref{chap:#1}}
\newcommand{\repr}[1]{Proposition~\protect\ref{prop:#1}}
\newcommand{\relem}[1]{Lemma~\protect\ref{lem:#1}}
\newcommand{\reth}[1]{Theorem~\protect\ref{th:#1}}
\newcommand{\req}[1]{equation~({\protect\ref{eq:#1}})}
\newcommand{\rerem}[1]{Remark~\protect\ref{rem:#1}}
\newcommand{\rerems}[1]{Remarks~\protect\ref{rem:#1}}
\renewcommand{\p@enumii}{}
\renewcommand{\p@enumiii}{}
\renewcommand{\p@enumiv}{}
\def\@enum@{\list{\csname label\@enumctr\endcsname}
           {\usecounter{\@enumctr}\def\makelabel##1{
\normalfont\ignorespaces\emph{{##1}~}}
\setlength{\labelsep}{3pt}
\setlength{\parsep}{0pt}
\setlength{\itemsep}{0pt}
\setlength{\leftmargin}{0pt}
\setlength{\labelwidth}{0pt}
\setlength{\listparindent}{\parindent}
\setlength{\itemsep}{0pt}
\setlength{\itemindent}{0pt}
\topsep=3pt plus 1pt minus 1 pt}}
\def\@map#1#2[#3]{\mbox{$#1 \colon\thinspace #2 \to #3$}}
\def\map#1#2{\@ifnextchar [{\@map{#1}{#2}}{\@map{#1}{#2}[#2]}}
\newcommand{\rp}{\ensuremath{\mathbb{R}P^2}}
\newcommand{\Z}{\ensuremath{\mathbb Z}}
\newcommand{\N}{\ensuremath{\mathbb N}}
\newcommand{\Q}{\ensuremath{\mathbb Q}}
\newcommand{\K}{\ensuremath{\mathbb K}}
\newcommand{\St}[1][2]{\ensuremath{\mathbb S}^{#1}}
\newcommand{\FF}{\ensuremath{\mathbb F}}
\newcommand{\F}[1][n]{\ensuremath{\FF_{{#1}}}}
\renewcommand{\to}{\ensuremath{\longrightarrow}}
\renewcommand{\ker}[1]{\ensuremath{\operatorname{\text{Ker}}\left({#1}\right)}}
\newcommand{\coker}[2][]{\ensuremath{\operatorname{\text{Coker}}_{#1}\left({#2}\right)}}
\DeclareRobustCommand*{\up}[1]{\textsuperscript{#1}}
\newcommand{\brak}[1]{\ensuremath{\left\{ #1 \right\}}}
\newcommand{\ft}[1][n]{\ensuremath{\Delta_{#1}^2}}
\newcommand{\ord}[1]{\ensuremath{\left\lvert #1 \right\rvert}}
\newcommand{\aut}[1]{\ensuremath{\operatorname{\text{Aut}}\left({#1}\right)}}
\newcommand{\sn}[1][n]{\ensuremath{S_{#1}}}
\newcommand{\an}[1][n]{\ensuremath{A_{#1}}}
\newcommand{\lhra}{\mathrel{\lhook\joinrel\to}}
\newcommand{\wh}[1]{\ensuremath{\operatorname{Wh}(#1)}}
\newcommand{\skone}[1]{\ensuremath{\operatorname{SK}_{1}(\Z[{#1}])}}
\newcommand{\nkone}[2][1]{\ensuremath{\operatorname{\textit{NK}}_{#1}(\Z[#2])}}
\newcommand{\nkonealt}[2][1]{\ensuremath{\operatorname{\textit{NK}}_{#1}(#2)}}
\newcommand{\nkoneftwo}[2][1]{\ensuremath{\operatorname{\textit{NK}}_{#1}(\F[2][#2])}}
\newcommand{\nkonetwist}[3][1]{\ensuremath{\operatorname{\textit{NK}}_{#1}^{#3}(\Z[#2])}}
\newcommand{\nkonetwistftwo}[3][1]{\ensuremath{\operatorname{\textit{NK}}_{#1}^{#3}(\F[2][#2])}}
\newcommand{\ang}[1]{\ensuremath{\left\langle #1\right\rangle}}
\newcommand{\set}[2]{\ensuremath{\brak{#1 \,\mid\, #2}}}
\newcommand{\setangr}[2]{\ensuremath{\ang{#1 \,\left\lvert \, #2 \right.}}}
\newcommand{\setbigangr}[2]{\ensuremath{\bigl\langle #1 \,\bigl\lvert \, #2 \bigr. \bigr\rangle}}
\newcommand{\setr}[2]{\ensuremath{\brak{#1 \,\left\lvert \, #2 \right.}}}
\newcommand{\setl}[2]{\ensuremath{\brak{\left. #1 \,\right\rvert \, #2}}}
\newcommand{\setbigl}[2]{\ensuremath{\bigl\{\bigr. #1 \,\bigr\vert \, #2\bigr\}}}
\newcommand{\im}[1]{\ensuremath{\operatorname{Im}(#1)}}
\newcommand{\id}{\ensuremath{\operatorname{Id}}}
\renewcommand{\epsilon}{\varepsilon}
\renewcommand{\th}{\ensuremath{\up{th}}}
\newcommand{\tstar}{\ensuremath{\operatorname{T}^{\ast}}}
\newcommand{\ostar}{\ensuremath{\operatorname{O}^{\ast}}}
\newcommand{\istar}{\ensuremath{\operatorname{I}^{\ast}}}
\newcommand{\dic}[1]{\ensuremath{\operatorname{\text{Dic}}_{#1}}}
\newcommand{\dih}[1]{\ensuremath{\operatorname{\text{Dih}}_{#1}}}
\newcommand{\quat}[1][8]{\ensuremath{\mathcal{Q}_{#1}}}
\newcommand{\garside}[1][n]{\ensuremath{\Delta_{#1}}}
\newcommand{\ordelt}[1]{\ensuremath{o(#1)}}
\newcommand{\inn}[1]{\ensuremath{\operatorname{\text{Inn}}\left({#1}\right)}}
\newcommand{\twoheadlongrightarrow}{\relbar\joinrel\twoheadrightarrow}
\begin{document}

\begin{titlingpage}

\vspace*{-3cm}

\title{The lower algebraic $K$-theory of virtually cyclic subgroups of the braid groups of the sphere and of $\Z[B_{4}(\St)]$}

\author{John~Guaschi\vspace*{1mm}\\ 
Normandie Universit\'e, UNICAEN, CNRS,\\
Laboratoire de Math\'ematiques Nicolas Oresme UMR CNRS~\textup{6139},\\
CS 14032, 14032 Caen Cedex 5, France\\
e-mail:~\url{john.guaschi@unicaen.fr}\vspace*{4mm}\\
Daniel Juan-Pineda\vspace*{1mm}\\
Centro de Ciencias Matem\'aticas,\\
Universidad Nacional Aut\'onoma de M\'exico,\\
Campus Morelia, Morelia, Michoac\'an, Mexico 58089\\
e-mail:~\url{daniel@matmor.unam.mx}\vspace*{4mm}\\
Silvia Mill\'an L\'opez\vspace*{1mm}\\
Colegio de Bachilleres del Estado de Tlaxcala,\\
Calle Miguel N.~Lira No.~3 Col.~Centro,\\
Tlaxcala CP 90000, Mexico\\
e-mail:~\url{smillan@cobatlaxcala.edu.mx}}

%\author{John~Guaschi\footnote{\, Normandie Universit\'e, UNICAEN, CNRS, Laboratoire de Math\'ematiques Nicolas Oresme UMR CNRS~\textup{6139}, CS 14032, 14032 Caen Cedex 5, France; e-mail: \url{john.guaschi@unicaen.fr}},\, Daniel Juan-Pineda\footnote{\, Centro de Ciencias Matem\'aticas, Universidad Nacional Aut\'onoma de M\'exico,
%Campus Morelia, Morelia, Michoac\'an, Mexico 58089; e-mail: \url{daniel@matmor.unam.mx}}\, and Silvia Mill\'an L\'opez\footnote{\, Colegio de Bachilleres del Estado de Tlaxcala, Calle Miguel N.~Lira No.~3
%Col.~Centro, Tlaxcala CP 90000, Mexico; e-mail: \url{smillan@cobatlaxcala.edu.mx}}}

\date{29th June 2018}

\maketitle

\enlargethispage{2cm}

\vspace*{-0.5cm}

\setlength{\absleftindent}{0em}
\setlength{\absrightindent}{0em}

\begin{abstract}
We study $K$-theoretical aspects of the braid groups $B_{n}(\St)$ on $n$ strings of the $2$-sphere, which by results of the second two authors, are known to satisfy the Farrell-Jones fibred isomorphism conjecture~\cite{JM}. In light of this, in order to determine the algebraic $K$-theory of the group ring $\Z[B_{n}(\St)]$, one should first compute that of its virtually cyclic subgroups, which were classified by D.~L.~Gon\c{c}alves and the first author~\cite{GG2}. We calculate the Whitehead and $K_{-1}$-groups of the group rings of the finite subgroups (dicyclic and binary polyhedral) of $B_{n}(\St)$ for all $4\leq n\leq 11$. Some new phenomena occur, such as the appearance of torsion for the $K_{-1}$-groups. We then go on to study the case $n=4$ in detail, which is the smallest value of $n$ for which $B_{n}(\St)$ is infinite. We show that $B_{4}(\St)$ is an amalgamated product of two finite groups, from which we are able to determine a universal space for proper actions of the group $B_{4}(\St)$. We also calculate the algebraic $K$-theory of the infinite virtually cyclic subgroups of $B_{4}(\St)$, including the Nil groups of the quaternion group of order $8$. This enables us to determine the lower algebraic $K$-theory of $\Z[B_{4}(\St)]$.
\end{abstract}

\end{titlingpage}

\frontmatter

%\begin{abstract}
%We study $K$-theoretical aspects of the braid groups $B_{n}(\St)$ on $n$ strings of the $2$-sphere, which by results of the second two authors, are known to satisfy the Farrell-Jones fibred isomorphism conjecture~\cite{JM}. In light of this, in order to determine the algebraic $K$-theory of the group ring $\Z[B_{n}(\St)]$, one should first compute that of its virtually cyclic subgroups, which were classified by D.~L.~Gon\c{c}alves and the first author~\cite{GG2}. We calculate the Whitehead and $K_{-1}$-groups of the group rings of the finite subgroups (dicyclic and binary polyhedral) of $B_{n}(\St)$ for all $4\leq n\leq 11$. Some new phenomena occur, such as the appearance of torsion for the $K_{-1}$-groups. We then go on to study the case $n=4$ in detail, which is the smallest value of $n$ for which $B_{n}(\St)$ is infinite. We show that $B_{4}(\St)$ is an amalgamated product of two finite groups, from which we are able to determine a universal space for proper actions of the group $B_{4}(\St)$. We also calculate the algebraic $K$-theory of the infinite virtually cyclic subgroups of $B_{4}(\St)$, including the Nil groups of the quaternion group of order $8$. This enables us to determine the lower algebraic $K$-theory of $\Z[B_{4}(\St)]$.
%\end{abstract}
%
%
%\pagebreak

%\preface

\begin{KeepFromToc}
\tableofcontents
\end{KeepFromToc}

\mainmatter

\chapter{Introduction}
 
Given a group $G$, the $K$-theoretic fibred isomorphism conjecture of F.~T.~Farrell and L.~E.~Jones asserts that the algebraic $K$-theory of its integral group ring $\Z[G]$ may be computed from the knowledge of the algebraic $K$-theory groups of its virtually cyclic subgroups (see~\cite{FJC} or Appendix~\ref{chap:fic} for the statement). This conjecture has been verified for a number of classes of groups, such as discrete cocompact subgroups of virtually connected Lie groups~\cite{FJC}, finitely-generated Fuchsian groups~\cite{BJP}, Bianchi groups~\cite{BFJP}, pure braid groups of aspherical surfaces~\cite{AFR}, braid groups of aspherical surfaces~\cite{FR} and for some classes of mapping class groups~\cite{BJL}. In~\cite{LO}, Lafont and Ortiz presented explicit computations of the lower algebraic $K$-theory of hyperbolic $3$-simplex reflection groups, and then together with Magurn, for that of certain reflection groups~\cite{LMO}. Similar calculations were performed for virtually free groups in~\cite{JLMP}. 

Let $n\in \N$, let $M$ be a surface, and let $B_{n}(M)$ (resp.\ $P_{n}(M)$) denote the \emph{$n$-string braid group} (resp.\ \emph{$n$-string pure braid group}) of $M$~\cite{Bi,H}. Some basic information and facts about surface braid groups are given in Appendix~\ref{chap:braids}. The braid groups of the $2$-sphere $\St$ were first studied by Zariski, and then later by Fadell and Van Buskirk during the 1960's~\cite{FVB,Z}. If $M$ either is the $2$-sphere $\St$ or the projective plane $\rp$, the results of~\cite{AFR,FR} do not apply to its braid groups, the principal reason being that these groups possess torsion~\cite{FVB,VB}. The second two authors of this book proved that the conjecture of Farrell and Jones holds also for the braid groups of these two surfaces, which using the method prescribed by the conjecture, enabled them to carry out complete computations of the lower algebraic $K$-groups for $P_{n}(\St)$ and $P_{n}(\rp)$~\cite{JM}. One necessary ingredient in this process is the knowledge of the virtually cyclic subgroups of $P_{n}(M)$. For $n\geq 4$, $P_{n}(\St)$ has only one non-trivial finite subgroup, generated by the `full twist' braid, which is central and of order $2$, and from this, it is straightforward to see that $P_{n}(\St)$ has very few isomorphism classes of virtually cyclic subgroups. The classification of the isomorphism classes of the virtually cyclic subgroups of $P_{n}(\rp)$, which was established in~\cite{GG8} and used subsequently in~\cite{JM} to compute the $K$-theory groups of $\Z[P_{n}(\rp)]$, is rather more involved. 

Our aim in this manuscript is to implement similar $K$-theoretical computations for the group ring $\Z[B_{n}(\St)]$ of the full braid groups of $\St$. In order to do so, one must determine initially the virtually cyclic subgroups (finite, and then infinite) of $B_{n}(\St)$, and then compute the $K$-groups of these subgroups. If $n\leq 3$ then $B_{n}(\St)$ is finite, and so we shall assume in much of this manuscript that $n\geq 4$. The torsion of $B_{n}(\St)$ was determined in~\cite{GVB}, and its finite order elements were classified in~\cite{Mu}. It was shown by D.~L.~Gon\c{c}alves in collaboration with the first author that up to isomorphism, the finite subgroups of $B_{n}(\St)$ are cyclic, dicyclic or binary polyhedral (see~\cite{GG} or \reth{finitebn}). As for the corresponding pure braid groups, one must then determine the infinite virtually cyclic subgroups of $B_{n}(\St)$ with the aid of the characterisation due to Epstein and Wall of infinite virtually cyclic groups~\cite{Ep,JuLe,Wall1}. Up to isomorphism and with a few exceptions in the case that $n$ is a small even number, this was achieved in~\cite{GG2}. A taste of the results is given in \reth{vcodd} when $n$ is odd and in \reth{vcb4S2} when $n=4$.

In the ensuing quest to compute the lower algebraic $K$-theory of the group ring $\Z[B_{n}(\St)]$, we encountered a number of difficulties, among them:
\begin{enumerate}[(a)]
\item the family of virtually cyclic subgroups of $B_{n}(\St)$ is relatively large, and depends on $n$, contrasting sharply with the case of the pure braid groups analysed in~\cite{JM}. 
\item the lower algebraic $K$-theory of even the finite subgroups of $B_{n}(\St)$ is poorly understood, and the investigation of the $K$-groups of dicyclic and binary polyhedral groups presents additional technical obstacles compared to that of the dihedral and polyhedral groups that appear in~\cite{LMO,LO} for example.
\item\label{it:univspace} in order to apply the method of calculation suggested by the fibred isomorphism conjecture, one needs not only to compute the various Nil groups, but also to discover a suitable universal space for the family of virtually cyclic subgroups of $B_{n}(\St)$. In spite of the rich topological and geometric structures of the braid groups and their associated configuration spaces, this space has thus far proved to be elusive for $n\geq 5$. 
\end{enumerate}

\rechap{loweralgfinite} is devoted to the second point, that of the computation of the lower $K$-theory groups of many of the finite subgroups of $B_{n}(\St)$. In \resec{classvc}, we recall the classification up to isomorphism of the finite subgroups of $B_{n}(\St)$, and of its virtually cyclic subgroups when $n$ is odd or $n=4$. In \resec{ccbpg}, we compute the number of different types of conjugacy classes of the binary polyhedral groups in \repr{ribinpoly}. These results are used later in the chapter, when we determine the lower algebraic $K$-theory of the group rings of these groups. In Sections~\ref{sec:whitehead} and~\ref{sec:Kminusone}, we calculate the Whitehead and the $K_{-1}$-groups respectively of the integral group rings of many of the finite subgroups of $B_{n}(\St)$. To our knowledge, these sections contain a number of original results, as well as some new phenomena, such as the existence of torsion for some $K_{-1}$-groups, that did not appear in previous work~\cite{JLMP,LMO,LO}. This necessitates alternative techniques, notably the application of results of Yamada to determine local Schur indices~\cite{Y2,Y}, which enables us to calculate the torsion of our $K_{-1}$-groups. We believe that the methods that we use to calculate these $K$-groups for dicyclic groups of certain orders may be extended to dicyclic groups of other orders. The Whitehead groups are given in \repr{whbnS2}. The main results concerning the $K_{-1}$-groups are \reth{kminusonedic4prime} for dicyclic groups of order $4m$, where $m$ is an odd prime, \repr{kminusonequat2k} for the generalised quaternions (the dicyclic groups of order a power of $2$), \repr{kminusonebinpoly} for the binary polyhedral groups. We also compute the $K_{-1}$-groups of the dicyclic groups of order $24$, $36$ and $40$ in \repr{rankdicextra}, and of cyclic groups of order $2p^{q}$, $12$ and $20$, where $p$ is prime and $q\in \N$ in \repr{kminusonecyclic}. In \resec{Kzero}, we recall briefly the work of Swan pertaining to the calculation of the $\widetilde{K}_{0}$-groups of the group rings of the binary polyhedral groups, and of the dicyclic groups of order $4m$ for $m\leq 11$~\cite{Sw3}, and in \reth{K0cyc}, we compute $\widetilde{K}_0(\Z[G])$ where $G$ is a cyclic group of order $18$, $20$ or $22$. For dicyclic groups of higher order, the situation is complicated, and little seems to be known about the corresponding $\widetilde{K}_{0}$-groups. In \resec{lower411}, we sum up the results of many of our computations in Table~\ref{tab:finite}, which lists the lower $K$-theory groups of the finite subgroups of $B_{n}(\St)$ for all $4\leq n\leq 11$. From this table, we may also obtain the lower $K$-theory groups of $B_{n}(\St)$ in the cases where $B_{n}(\St)$ is finite, namely for $n\in \brak{1,2,3}$.

The aim of the remaining two chapters is to determine the lower algebraic $K$-theory of $\Z[B_{4}(\St)]$. We study the case of $B_4(\St)$ in detail and show how the algebraic and geometric features of this group interact, thus allowing us to compute its lower $K$-groups. In preparation for the explicit computations in \rechap{kthb4St}, in \rechap{b4s2}, we describe the ingredients for the corresponding computations in the case of \emph{infinite} virtually cyclic subgroups. We start by recalling some basic facts about the group $B_{4}(\St)$ in \resec{gensB4}. One striking property, which was proved in~\cite[Theorem~1.3(3)]{GG3}, is that it possesses a finite normal subgroup isomorphic to the quaternion group of order $8$. This enables us to show in \repr{b4amalg} that $B_{4}(\St)$ is an amalgamated product of the generalised quaternion group of order $16$ and the binary tetrahedral group, the amalgamation being along this normal subgroup, from which we deduce in \rerem{b4S2hyperbolic} that it is hyperbolic in the sense of Gromov. In \resec{maxvcb4}, we determine the isomorphism classes of the maximal virtually cyclic subgroups of $B_{4}(\St)$ in \reth{maxvcb4}, and in \resec{conjclmaxvc}, we show that there are an infinite number of conjugacy classes for each of the isomorphism classes of the infinite maximal virtually cyclic subgroups. These properties aid greatly, not just in the computation of the $K$-groups of the virtually cyclic subgroups of $B_{4}(\St)$ and of the corresponding Nil groups, but also to exhibit an appropriate universal space referred to in~(\ref{it:univspace}) above. 

In \rechap{kthb4St}, we bring together the results of the previous chapters to compute the lower $K$-groups of $B_{4}(\St)$. In \resec{calcingred}, we recall some facts and results about the lower $K$-theory of infinite virtually cyclic groups. In \resec{prelimkth}, we determine the lower $K$-groups of $B_{4}(\St)$ up to the computations of the associated Nil groups, and in \resec{nilgrpcomp}, we determine these Nil groups. One result that is interesting is its own right is \repr{nkq8} where we calculate the Bass Nil groups $\nkone[i]{\quat}$ for $i=0,1$ of the quaternion group $\quat$ of order $8$. Our calculations show that both $\wh{B_4(\St)}$ and $\widetilde{K}_0(\Z[B_4(\St)])$ are infinitely-generated Abelian groups, and contain infinite direct sums of Abelian $2$-groups. In contrast, we shall see that $K_{-1}(\Z[B_4(\St)])\cong\Z\oplus \Z_2$. Compared with the families of groups considered in~\cite{JLMP,LMO,LO}, the existence of torsion here is once more a new phenomenon. We summarise these results as follows.

\begin{thm}\label{th:B4}
The group $B_4(\St)$ has the following lower algebraic $K$-groups: 
\begin{gather*}
\wh{B_4(\St)}\cong\Z\oplus  \operatorname{Nil}_1,\\
\widetilde{K}_{0}(\Z[B_4(\St)])\cong\Z_2\oplus  \operatorname{Nil}_0,\quad\text{and}\\
K_{-1}(\Z[B_4(\St)])\cong\Z_2\oplus\Z,\\
\text{$K_{-i}(\Z[B_4(\St)])= 0$ for all $i\geq 2$},
\end{gather*}
where for $i=0,1$, the groups $\operatorname{Nil}_i$ are isomorphic to a countably-infinite direct sum of $\Z_2$, $\Z_4$ or $\Z_2\oplus \Z_4$.
\end{thm}

For $n\geq 5$, we cannot expect the group $B_{n}(\St)$ to enjoy properties, such as hyperbolicity, similar to those of $B_{4}(\St)$. Furthermore, we have not been able as yet to determine an appropriate model for the universal space for the family of virtually cyclic subgroups of $B_{n}(\St)$. There are some candidates suggested by the theory of Brunnian braids, but the corresponding subgroups are of large index, and do not seem to be terribly useful from a practical viewpoint. On the positive side, for small odd values of $n$, the family of virtually cyclic subgroups of $B_{n}(\St)$ is relatively small, and our techniques enable us to determine the corresponding $K$-groups of these subgroups. If we are able to find an appropriate universal space, we hope to be able to determine the $K$-groups of $B_{n}(\St)$ for other values of $n$.

\section*{Acknowledgements}

We wish to thank the following colleagues for helpful and fruitful discussions: Bruno Angl\`es, for help with the Galois theory of \resec{rankkminusone}; Eric Jespers, Gerardo Raggi and \'Angel del R\'io (and the \texttt{GAP} package~\texttt{Wedderga}~\cite{gap}), for aiding us with the Wedderburn decomposition of dicyclic and binary polyhedral groups; Jean-Fran\c{c}ois Lafont, Ivonne Ortiz and Stratos Prassidis, for conversations on $K$-theoretical aspects of our work during the early stages of the writing of this paper; and Chuck Weibel for valuable comments at various points. We would also like to thank the referees for their comments on the manuscript, and one referee in particular, who made a number of useful remarks and suggestions to make it easier to read.

The authors are grateful to the French-Mexican Laboratoire International Associ\'e LAISLA for partial financial support. D.~Juan-Pineda was partially supported by the CNRS, PAPIIT-UNAM and CONACYT (M\'exico), and J.~Guaschi was partially supported by the ANR project TheoGar n\up{o}~ANR-08-BLAN-0269-02. J.~Guaschi would like to thank the CNRS for having granted him a `d\'el\'egation' during the writing of part of this paper. J.~Guaschi and S.~Mill\'an also wish to thank CONACYT (M\'exico) for partial financial support through its programme `Estancias postdoctorales y sab\'aticas vinculadas al fortalecimiento de la calidad del posgrado nacional'.

\chapter[Lower algebraic $K$-theory of the finite subgroups of $B_{n}(\St)$]{Lower algebraic $K$-theory of the finite subgroups of $B_{n}(\St)$}\label{chap:loweralgfinite}

\chaptermark{$K$-theory of the finite subgroups of $B_{n}(\St)$}

\section{Classification of the virtually cyclic subgroups of $B_{n}(\St)$}\label{sec:classvc}

If $G$ is a group that satisfies the Farrell-Jones fibred isomorphism conjecture, the lower algebraic $K$-theory of the group ring $\Z [G]$ may be calculated in principle if one knows the lower algebraic $K$-theory of the group rings of the virtually cyclic subgroups of $G$ (see Appendix~\ref{chap:fic}). Recall that a group is said to be \emph{virtually cyclic} if it possesses a cyclic subgroup of finite index. Clearly any finite group is virtually cyclic. By results of Epstein and Wall~\cite{Ep,Wall1}, an infinite group is virtually cyclic if and only if it has two ends. This allows us to show that any infinite virtually cyclic group $G$ is isomorphic either to $F\rtimes \Z$ or to $G_{1}\bigast_{F} G_{2}$, where $F$ is a finite normal subgroup of $G$, and in the second case, $F$ is of index $2$ in both $G_{1}$ and $G_{2}$. Consequently, in order to determine the virtually cyclic subgroups of $G$, one must first discover its finite subgroups. Let $G=B_{n}(\St)$, and if $m\geq 2$, let $\dic{4m}$ denote the \emph{dicyclic} group of order $4m$, with presentation:
\begin{equation}\label{eq:presdic}
\dic{4m}=\setbigangr{x,y}{x^{m}=y^{2},\; yxy^{-1}=x^{-1}}.
\end{equation}
If $m$ is a power of $2$, then we shall also say that $\dic{4m}$ is a \emph{generalised quaternion} group, and denote it by $\quat[4m]$. Using a presentation of $B_{n}(\St)$, such as that given in \reth{fvb}, if $n\leq 3$, $B_{n}(\St)$ may be seen to be finite. The group $B_{1}(\St)$ is trivial, $B_{2}(\St)$ is isomorphic to $\Z_{2}$, and $B_{3}(\St)$ is isomorphic to $\dic{12}$, and its subgroups may be obtained easily. So in most of what follows, we shall assume that $n\geq 4$, in which case $B_{n}(\St)$ is infinite. The finite subgroups of $B_{n}(\St)$ were classified up to isomorphism in~\cite{GG} as follows.
\begin{thm}[{\cite[Theorem~1.3]{GG}}]\label{th:finitebn}
Let $n\geq 4$. The maximal finite subgroups of $B_n(\St)$ are isomorphic to one of the following groups:
\begin{enumerate}
\item\label{it:fina} $\Z_{2(n-1)}$ if $n\geq 5$,
\item\label{it:finb} $\dic{4n}$,
\item\label{it:finc} $\dic{4(n-2)}$ if $n=5$ or $n\geq 7$,
\item\label{it:find} the binary tetrahedral group, denoted by $\tstar$, if $n\equiv 4 \pmod 6$,
\item\label{it:fine} the binary octahedral group, denoted by $\ostar$, if $n\equiv 0,2\pmod 6$,
\item\label{it:finf} the binary icosahedral group, denoted by $\istar$, if $n\equiv 0,2,12,20\pmod{30}$.
\end{enumerate}
\end{thm}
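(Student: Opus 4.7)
My plan is to combine three ingredients: the restricted torsion of $P_{n}(\St)$, the realisation of $B_{n}(\St)/\ang{\ft}$ via finite group actions on the sphere, and an orbit enumeration, in order to reduce the classification to a case check over the finite subgroups of $SO(3)$ together with orbit sums equal to $n$.

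First, I would exploit the short exact sequence
\[
1 \to P_{n}(\St) \to B_{n}(\St) \to \sn \to 1
\]
together with the classical result, due to Van Buskirk~\cite{VB}, that the only non-trivial torsion element of $P_{n}(\St)$ is the full twist $\ft$, which is central of order $2$. Consequently, for any finite subgroup $H\leq B_{n}(\St)$, the intersection $H\cap P_{n}(\St)$ is either trivial or equals $\ang{\ft}$. Since $\ft$ is central, $H\cdot\ang{\ft}$ is finite, so a \emph{maximal} finite subgroup must contain $\ft$; in that case $H/\ang{\ft}$ embeds in $\sn$, and $H$ is a central $\Z_{2}$-extension of its image.

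Next, I would use the geometric interpretation of $B_{n}(\St)/\ang{\ft}$ in terms of motions of $n$ unordered marked points on $\St$, which combined with the Nielsen realisation theorem, tells us that every finite subgroup of this quotient is conjugate to a finite group of isometries of the round sphere permuting those points. Such a group is, up to conjugacy, one of the finite subgroups of $SO(3)$: cyclic, dihedral, tetrahedral ($\cong A_{4}$), octahedral ($\cong S_{4}$) or icosahedral ($\cong A_{5}$). For each such $G$, the admissible values of $n$ are exactly those expressible as a sum of $G$-orbit sizes on $\St$: $\{4,6,12\}$ for $A_{4}$, $\{6,8,12,24\}$ for $S_{4}$, and $\{12,20,30,60\}$ for $A_{5}$. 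Restricting to those $n$ at which no strictly larger polyhedral symmetry group can act (so as to capture only maximal subgroups) then yields precisely the congruences $n\equiv 4\pmod 6$, $n\equiv 0,2\pmod 6$ and $n\equiv 0,2,12,20\pmod{30}$ asserted in the theorem. In parallel, dihedral subgroups of order $2(n-2)$ and $2n$ that fix two antipodal points and permute the remaining equatorial marked points produce, after lifting, $\dic{4(n-2)}$ and $\dic{4n}$; and a rotation of order $2(n-1)$ fixing two of the $n$ marked points yields the cyclic group $\Z_{2(n-1)}$.

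Finally, passing from each subgroup of $B_{n}(\St)/\ang{\ft}$ to its preimage in $B_{n}(\St)$ replaces it by its binary double cover: dihedral becomes dicyclic, and $A_{4}, S_{4}, A_{5}$ become $\tstar, \ostar, \istar$ respectively. The hard part will be verifying that these central $\Z_{2}$-extensions are the correct \emph{non-split} ones, so that what actually sits inside $B_{n}(\St)$ is the binary polyhedral (or dicyclic) group rather than a direct product with $\Z_{2}$; I would handle this by producing explicit braid representatives of the generators (for instance, appropriate powers of $\sigma_{1}\sigma_{2}\cdots\sigma_{n-1}$ realise the cyclic factor) and checking that the relevant power of each generator equals $\ft$ rather than the identity. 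Maximality would then follow from the standard inclusions $\tstar\leq\ostar\leq\istar$ together with $\Z_{2n}\leq\dic{4n}$ and $\Z_{2(n-2)}\leq\dic{4(n-2)}$, which explain why the two cyclic groups $\Z_{2n}$ and $\Z_{2(n-2)}$ do not appear as maximal subgroups.
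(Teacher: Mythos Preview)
The paper does not actually prove this theorem: it is quoted verbatim from~\cite[Theorem~1.3]{GG} and used as input for the $K$-theoretic computations, so there is no ``paper's own proof'' to compare against. Your outline is, in broad strokes, the strategy of the cited paper~\cite{GG}: pass to $B_{n}(\St)/\ang{\ft[n]}$, identify its finite subgroups with finite groups of rotations of $\St$ permuting $n$ marked points, and then lift through the central $\Z_{2}$-extension to obtain dicyclic and binary polyhedral groups.

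That said, several details in your sketch are inaccurate and would need repair before this could be made into a proof. First, the inclusion $\ostar\leq\istar$ that you invoke for maximality is false: $\sn[4]$ is not a subgroup of $\an[5]$, and correspondingly $\ostar$ does not embed in $\istar$. Second, your description of the cyclic case is off: a rotation of $\St$ of order $n-1$ (not $2(n-1)$; the doubling occurs only after lifting to $B_{n}(\St)$) fixes two antipodal points, of which exactly \emph{one} is marked, the other $n-1$ marked points forming a single free orbit. Similarly, for $\dic{4n}$ the dihedral group $\dih{2n}$ acts with all $n$ marked points on a single equatorial orbit and \emph{neither} pole marked, so your uniform ``fix two antipodal points'' description does not cover both dicyclic cases. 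Third, the derivation of the congruence conditions from orbit arithmetic is more delicate than you indicate: one must enumerate all possible $G$-invariant $n$-point configurations (allowing several exceptional orbits and any number of regular orbits) and then determine exactly when the resulting subgroup of $B_{n}(\St)$ is maximal rather than merely present, which is what produces the stated congruences. Finally, verifying that the central extension is the non-split (binary) one is not merely a matter of checking one generator; in~\cite{GG} this is handled via explicit braid realisations together with the structure of periodic groups, and you should expect to need a comparable amount of work here.
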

More information on $\tstar,\ostar$ and $\istar$, to which we refer collectively as the \emph{binary polyhedral groups}, may be found in~\cite{AM,Co,CM,Wo}. It is well known that the subgroups of dicyclic and binary polyhedral groups are cyclic, dicyclic or binary polyhedral (see~\cite[Proposition~85]{GG2} for the binary polyhedral case). We recall from~\cite[page~759]{GG} that the finite subgroups of $B_{n}(\St)$ are periodic of period $1,2$ or $4$. Further, by~\cite[Proposition~1.5]{GG}, any two finite subgroups of $B_{n}(\St)$ that are isomorphic are also conjugate, with the exception of those subgroups that are isomorphic to $\Z_{4}$ and $\dic{4r}$ if $n$ is even and $r$ divides $n/2$ or $(n-2)/2$, in which case are two conjugacy classes in each isomorphism class. Consequently, any such subgroup $H$ of $B_{n}(\St)$ satisfies the following three conditions (see~\cite{AM} or~\cite[page~20]{T}):
\begin{enumerate}[(a)]
\item  the $p^2$-condition: for any prime divisor $p$ of $\ord{H}$ ($\ord{H}$ denotes the order of $H$), $H$ contains no subgroup isomorphic to $\Z_{p}\times \Z_{p}$.
\item  the $2p$-condition: for any prime divisor $p$ of $\ord{H}$, any subgroup of $H$ of order $2p$ is cyclic.
\item the Milnor condition: if $H$ has an element of order $2$, this element is unique (and so is central in $H$).
\end{enumerate}

\begin{rems}\mbox{}\label{rem:centre}
\begin{enumerate}[(i)]
\item\label{it:centre1} The $p^2$-condition implies that the Sylow $p$-subgroups of $H$ are either cyclic or generalised quaternion, the latter case occurring only if $p=2$.
\item\label{it:centre2} If $G$ is a dicyclic or binary polyhedral group, the centre $Z(G)$ is generated by the unique element of order $2$.
\end{enumerate}
\end{rems}

The second step in the process is to classify the infinite virtually cyclic subgroups of $B_{n}(\St)$. Up to isomorphism, and with a finite number of exceptions, this was achieved in~\cite{GG2}. The statement of the main result of~\cite{GG2} is somewhat long to explain here, but to give a flavour of the results, we state the classification when $n$ is odd, in which case the classification is complete for all values of $n$. We shall also recall the case $n=4$ later in \reth{vcb4S2}.
\begin{thm}[{\cite[Theorem~7]{GG2}}]\label{th:vcodd}
Let $n\geq 3$ be odd. Then up to isomorphism, the virtually cyclic subgroups of $B_{n}(\St)$ are as follows. %
\begin{enumerate}[(I)]
\item The isomorphism classes of the finite virtually cyclic subgroups of $B_{n}(\St)$ are:
\begin{enumerate}[(i)]
\item $\dic{4m}$, where $m\geq 3$ divides $n$ or $n-2$.
\item $\Z_{m}$, where $m\in \N$ divides $2n$, $2(n-1)$ or $2(n-2)$.
\end{enumerate}
\item If in addition $n\geq 5$, then the following groups are the isomorphism classes of the infinite virtually cyclic subgroups of $B_{n}(\St)$.
\begin{enumerate}[(i)]
\item $\Z_{m} \rtimes_{\theta} \Z$, where $\theta(1)\in\brak{\id, -\id}$, $m$ is a strict divisor of $2(n-i)$, for $i\in \brak{0,2}$, and $m\neq n-i$.
\item $\Z_{m} \times \Z$, where $m$ is a strict divisor of $2(n-1)$.
\item $\dic{4m} \times \Z$, where $m\geq 3$ is a strict divisor of $n-i$ for $i\in \brak{0,2}$.
\item $\Z_{4q}\bigast_{\Z_{2q}}\Z_{4q}$, where $q$ divides $(n-1)/2$.
\item $\dic{4q}\bigast_{\Z_{2q}}\dic{4q}$, where $q\geq 2$ is a strict divisor of $n-i$, and $i\in \brak{0,2}$.
\end{enumerate}
\end{enumerate}
\end{thm}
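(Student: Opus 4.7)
The overall strategy is to combine the Epstein--Wall structure theorem for infinite virtually cyclic groups with \reth{finitebn}, reducing the classification to a bookkeeping exercise about which subgroups arise inside the maximal finite subgroups of $B_n(\St)$ and which infinite extensions are actually realised as subgroups.

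For part (a), I would first note that for $n$ odd the binary polyhedral cases in \reth{finitebn} cannot occur (all require $n$ even), so the maximal finite subgroups are exactly $\Z_{2(n-1)}$, $\dic{4n}$ and $\dic{4(n-2)}$. Every finite, hence finite virtually cyclic, subgroup must therefore embed in one of these. From the presentation \reqref{presdic} of $\dic{4m}$, the subgroups of $\dic{4m}$ are cyclic of order dividing $2m$, or dicyclic $\dic{4d}$ with $d\mid m$ and $d\geq 2$; since $n$ and $n-2$ are odd, such a $d$ is odd and hence $d\geq 3$. Collating the cyclic and dicyclic subgroups across the three maximal finite subgroups yields precisely (a)(i) and (a)(ii).

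For part (b), the Epstein--Wall theorem recalled above \reth{finitebn} asserts that every infinite virtually cyclic group is either of the form $F\rtimes_{\theta}\Z$ (Type I) or $G_{1}\bigast_{F}G_{2}$ with $[G_{i}:F]=2$ and $F$ finite (Type II). For Type I, $F$ must come from the finite list in (a), and since $\aut{F}$ is finite the homomorphism $\theta$ factors through a finite quotient of $\Z$. When $F=\Z_{m}$, the Milnor condition and the known structure of the normaliser $N_{B_{n}(\St)}(F)$ force $\theta(1)\in\brak{\id,-\id}$, which is exactly the $\Z_{2}$-action by inversion coming from the ambient maximal $\dic{4(n-i)}$; this yields (I)(a) and (I)(b). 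When $F=\dic{4m}$, \rerem{centre}(\ref{it:centre2}) shows that $Z(F)$ is generated by the unique involution of $F$, and any element of $B_{n}(\St)$ that normalises $F$ must commute with this central involution; combined with the rigidity of the outer automorphism group of $\dic{4m}$ this forces $\theta=\id$, yielding $\dic{4m}\times\Z$ as in (I)(c). The strict-divisor hypotheses together with $m\neq n-i$ in (I)(a) eliminate those putative subgroups that are already absorbed into a larger $\dic{4m}\times\Z$ from (I)(c).

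For Type II, one searches for index-$2$ inclusions $F\subset G_{i}$ among the finite subgroups listed in (a). For $n$ odd the only such inclusions are $\Z_{2q}\subset\Z_{4q}$ (contained in $\Z_{2(n-1)}$, which imposes $4q\mid 2(n-1)$, equivalently $q\mid (n-1)/2$) and $\Z_{2q}\subset\dic{4q}$ (contained in $\dic{4(n-i)}$, which imposes $q\mid n-i$ with $q\geq 2$), giving the two candidate isomorphism types in (II)(a) and (II)(b). The principal obstacle is then to show that each of these abstract amalgamated products is genuinely realised as a subgroup of $B_{n}(\St)$, and that no further identifications or additional isomorphism classes arise. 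This is the technical heart of the argument: it requires exhibiting explicit braid-word generators for the two finite factors whose intersection is exactly the prescribed $F$, and then using the detailed description of centralisers, normalisers and the finite-order elements of $B_{n}(\St)$ coming from \cite{GVB,Mu,GG} to rule out further coincidences. This analysis is carried out in full in \cite{GG2}, and confirms that the list above is complete.
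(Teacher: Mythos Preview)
The paper does not prove this theorem at all: it is quoted verbatim from \cite{GG2} (Gon\c{c}alves--Guaschi) and stated without proof as background for the $K$-theory computations. So there is no ``paper's own proof'' to compare your proposal against. Your sketch correctly identifies the architecture of the argument in \cite{GG2} --- reduce to the Epstein--Wall dichotomy, feed in the finite subgroup classification of \reth{finitebn}, and then analyse normalisers and realisability --- and you yourself defer to \cite{GG2} for the hard part, which is appropriate.

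That said, a couple of your intermediate justifications are too loose to stand on their own. For the Type~I dicyclic case you write that ``the rigidity of the outer automorphism group of $\dic{4m}$ \ldots\ forces $\theta=\id$'', but $\out{\dic{4m}}$ is not trivial in general, so this is not the reason; the actual constraint comes from which automorphisms are realised by conjugation in $B_{n}(\St)$, i.e.\ from the structure of normalisers computed in \cite{GG2}. Similarly, your explanation of the strict-divisor and $m\neq n-i$ conditions as ``eliminating those putative subgroups that are already absorbed into a larger $\dic{4m}\times\Z$'' is not quite right: those conditions are about which $F\rtimes\Z$ actually embed in $B_{n}(\St)$, not about redundancy in the list. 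These are exactly the points where the real work in \cite{GG2} lies, so your deferral there is the honest move.
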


The aim of the rest of this chapter is to compute the lower algebraic $K$-theory of the group rings of many of the finite groups of $B_{n}(\St)$. In \resec{ccbpg}, we start by determining the number of different types of conjugacy classes in the binary polyhedral groups. The main result of that section, \repr{ribinpoly}, will be used in the rest of the chapter to determine the lower algebraic $K$-theory of the group rings of $\tstar$, $\ostar$ and $\istar$. In Sections~\ref{sec:whitehead},~\ref{sec:Kzero} and~\ref{sec:Kminusone}, we calculate respectively the Whitehead, $\widetilde{K}_{0}$- and $K_{-1}$-groups of the group rings of many groups that appear in the statement of \reth{finitebn}. This allows us in \resec{lower411} to determine the lower algebraic $K$-theory of the group rings of the isomorphism classes of the finite groups of  $B_{n}(\St)$ for all $4\leq n\leq 11$, the results being summarised in Table~\ref{tab:finite}.

\section{Conjugacy classes of binary polyhedral groups}\label{sec:ccbpg}

In this section, we compute the number of certain types of conjugacy classes of elements of the binary polyhedral groups. Some of these numbers will be used in the calculations of the lower algebraic $K$-theory of the group rings of these groups. Recall first that $\ostar$ is generated by the elements $X,P,Q$ and $R$, subject to the following relations~\cite[page~198]{Wo}:
\begin{equation}\label{eq:presostar}
\hspace*{-2mm}\left\{ 
\begin{aligned}
& X^{3}=1,\, P^2=Q^2=R^2,\, PQP^{-1}=Q^{-1},\, XPX^{-1}=Q,\, XQX^{-1}=PQ\\
& RXR^{-1}=X^{-1},\, RPR^{-1}=QP,\,RQR^{-1}=Q^{-1}.
\end{aligned}
\right.
\end{equation}
It follows that $\ostar$ contains $\tstar$ as an index $2$ subgroup generated by $X$, $P$ and $Q$ that are subject to the relations given in the first line of~\reqref{presostar}. The subgroup $\ang{P,Q}$ is isomorphic to $\quat$, and $X$ is of order $3$ and acts by conjugation on $\ang{P,Q}$ by permuting $P,Q$ and $PQ$ cyclically, so that $\tstar\cong \quat\rtimes \Z_{3}$. Further, $\ostar \setminus \tstar$ is comprised of $12$ elements of order $4$ and twelve of order $8$. We recall also that $\ord{\istar}=120$, that $\istar$ is comprised of the trivial element, one element of order $2$, thirty elements of order $4$, twenty elements of order $3$ and twenty of order $6$, twenty-four elements of order $5$ and twenty-four elements of order $10$. The group $\istar$ also contains subgroups isomorphic to $\tstar$. The following lemma will be useful in some of our computations.

\begin{lem}\label{lem:centbinpoly} 
Let $G$ be a dicyclic or binary polyhedral group, and let $g\in G$ be an element of order greater than or equal to $3$. Then the centraliser $C_{G}(g)$ of $g$ in $G$ is cyclic.
\end{lem}

\begin{proof}
Let $g\in G$ be of order at least $3$. Then $g\in Z(C_{G}(g))$, so $\ord{Z(C_{G}(g))}\geq 3$. The subgroups of $G$ are cyclic, dicyclic or binary polyhedral (see~\cite[Proposition~85]{GG2} for the binary polyhedral case). Then $Z(C_{G}(g))$ is cyclic because the centre of a dicyclic or binary polyhedral group is isomorphic to $\Z_{2}$ by \rerems{centre}(\ref{it:centre2}).
\end{proof}

If $G$ is a finite group and $d$ is a divisor of $\ord{G}$, let $\nu(d)$ be the number of elements of order $d$ in $G$, let $r_{0}(d)$ be the number of conjugacy classes of elements of order $d$ in $G$, let $r_{1}(d)$ be the number of conjugacy classes of unordered pairs $\brak{g,g^{-1}}$ of elements of order $d$ in $G$, and let $r_{2}(d)$ be the number of conjugacy classes of cyclic subgroups of $G$ of order $d$ in $G$. If $g,g'$ are elements of $G$ of the same order $d$ such that $\brak{g,g^{-1}}$ is conjugate to $\brak{g',g'^{-1}}$, then there exists $h\in G$ such that either $hgh^{-1}=g'$ or $hgh^{-1}=g'^{-1}$, so $h\ang{g}h^{-1}=\ang{g'}=\ang{g'^{-1}}$, and thus $r_{1}(d)\geq r_{2}(d)$. It thus follows that:
\begin{equation}\label{eq:ineqr}
\nu(d)\geq r_{0}(d)\geq r_{1}(d)\geq r_{2}(d).
\end{equation}
For small $d$, the inequality $r_{1}(d)\geq r_{2}(d)$ is an equality.

\begin{lem}\label{lem:r1dr2d}
Let $G$ be a finite group. Then $r_{1}(d)=r_{2}(d)$ for all $d\in \brak{1,2,3,4,6}$.
\end{lem}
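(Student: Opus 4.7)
The key observation is that $d \in \{1,2,3,4,6\}$ is precisely the set of positive integers with Euler totient $\varphi(d) \le 2$. Consequently, for any such $d$, a cyclic group of order $d$ has at most two generators, and any generator $g$ satisfies $\{g, g^{-1}\} = \{\text{generators of } \langle g \rangle\}$: this is a singleton if $d \in \{1,2\}$ and a two-element set if $d \in \{3,4,6\}$. This is the single structural fact that drives the whole argument.

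The plan is to construct an explicit bijection
\[
\Phi \colon \{\text{conj.\ classes of pairs } \{g,g^{-1}\},\ \ordelt{g}=d\} \longrightarrow \{\text{conj.\ classes of cyclic subgroups of order } d\}
\]
by sending the class of $\{g,g^{-1}\}$ to the class of $\langle g \rangle$. I would verify in order: (i) well-definedness, since $\langle g\rangle = \langle g^{-1}\rangle$ and conjugation sends generators to generators; (ii) surjectivity, which is immediate because every cyclic subgroup of order $d$ arises as $\langle g\rangle$ for some generator $g$; and (iii) injectivity, which is the only substantive point.

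For injectivity, suppose $\langle g\rangle$ and $\langle g'\rangle$ are conjugate via some $h \in G$, so $h g h^{-1}$ is a generator of $\langle g'\rangle$. By the totient observation, the only generators of $\langle g'\rangle$ are $g'$ and $g'^{-1}$. Hence $hgh^{-1} \in \{g', g'^{-1}\}$, and therefore
\[
h \{g, g^{-1}\} h^{-1} = \{hgh^{-1},\, hg^{-1}h^{-1}\} = \{g', g'^{-1}\},
\]
showing that the pairs lie in the same conjugacy class. This establishes injectivity, hence the bijection $\Phi$, hence $r_1(d) = r_2(d)$ for each of the five values of $d$.

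There is no real obstacle here; the argument is essentially a computation of $\varphi(d)$. The only thing that would go wrong for other values of $d$ is precisely that $\varphi(d) > 2$, so a conjugating element could send $g$ to a generator of $\langle g'\rangle$ that is neither $g'$ nor $g'^{-1}$, breaking injectivity. This is why the hypothesis $d \in \{1,2,3,4,6\}$ is both necessary and sufficient for the argument to work.
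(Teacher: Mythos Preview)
Your proof is correct and is essentially the paper's own argument: the paper first notes $r_1(d)\geq r_2(d)$ (your well-definedness and surjectivity of $\Phi$), and then for $d\in\{3,4,6\}$ observes that the elements of $\langle g\rangle$ of order $d$ are exactly $g$ and $g^{-1}$, which is precisely your $\varphi(d)\leq 2$ observation and yields injectivity. Your explicit packaging as a bijection and the remark that $\varphi(d)\leq 2$ characterises these five values of $d$ are nice touches, but the mathematical content is the same.
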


\begin{proof}[Proof of \relem{r1dr2d}]
If $d\in\brak{1,2}$ and if $g\in G$ is of order $d$ then the pair $\brak{g,g^{-1}}$ reduces to $\brak{g}$ and then clearly $r_{1}(d)=r_{2}(d)$. So assume that $d\in\brak{3,4,6}$. From above, it suffices to show that $r_{1}(d)\leq r_{2}(d)$. Note that if $g\in G$ is of order $d$ then the elements of $\ang{g}$ of order $d$ are precisely $g$ and $g^{-1}$. If $g'\in G$ is also of order $d$ and $\ang{g}$ and $\ang{g'}$ are conjugate then $g$ is conjugate to $g'$ or $g'^{-1}$, so $\brak{g,g^{-1}}$ is conjugate to $\brak{g',g'^{-1}}$, which completes the proof of the lemma.
\end{proof} 

The following proposition summarises the values of $\nu(d), r_{0}(d), r_{1}(d)$ and $r_{2}(d)$ for each of the three binary polyhedral groups. It will be used in the calculations of Whitehead and $K_{-1}$-groups in \repr{whbnS2} and \repr{kminusonebinpoly} respectively.

\begin{prop}\label{prop:ribinpoly}\mbox{}
\begin{enumerate}
\item\label{it:ribinpolya} If $G=\tstar$, $\nu(d), r_{0}(d), r_{1}(d)$ and $r_{2}(d)$ are given by:
\renewcommand{\arraystretch}{1.2}
\setlength{\tabcolsep}{5pt}
\begin{center}
\begin{tabular}{|>{$}c<{$}||>{$}c<{$}|>{$}c<{$}|>{$}c<{$}|>{$}c<{$}|>{$}c<{$}|}
\hline
d & 1 & 2 & 3 & 4 & 6\\ \hline\hline
\nu(d) & 1 & 1 & 8 & 6 & 8\\ \hline
r_{0}(d) & 1 & 1 & 2 & 1 & 2\\ \hline
r_{1}(d) & 1 & 1 & 1 & 1 & 1\\ \hline
r_{2}(d) & 1 & 1 & 1 & 1 & 1\\ \hline
\end{tabular}
\end{center}
If $d\in \brak{3,6}$, and $g\in \tstar$ is of order $d$ then $g$ and $g^{-1}$ are representatives of the two conjugacy classes of elements of order $g$.

\item\label{it:ribinpolyb} If $G=\ostar$, $\nu(d), r_{0}(d), r_{1}(d)$ and $r_{2}(d)$ are given by:
\renewcommand{\arraystretch}{1.2}
\setlength{\tabcolsep}{5pt}
\begin{center}
\begin{tabular}{|>{$}c<{$}||>{$}c<{$}|>{$}c<{$}|>{$}c<{$}|>{$}c<{$}|>{$}c<{$}|>{$}c<{$}|}
\hline
d & 1 & 2 & 3 & 4 & 6 & 8\\ \hline\hline
\nu(d) & 1 & 1 & 8 & 18 & 8 & 12\\ \hline
r_{0}(d) & 1 & 1 & 1 & 2 & 1 & 2\\ \hline
r_{1}(d) & 1 & 1 & 1 & 2 & 1 & 2\\ \hline
r_{2}(d) & 1 & 1 & 1 & 2 & 1 & 1\\ \hline
\end{tabular}
\end{center}
If $d=4$, and $g_{1}$ and $g_{2}$ are elements of $\ostar$ of order $4$ such that $g_{1}\in \tstar$ and $g_{2}\notin \tstar$ then $g_{1}$ and $g_{2}$ are representatives of the two conjugacy classes of elements of order $4$. If $d=8$, and $g\in \tstar$ is of order $8$, $g$ and $g^{3}$ are representatives of the two conjugacy classes of elements of order $8$.

\item\label{it:ribinpolyc} If $G=\istar$, $\nu(d), r_{0}(d), r_{1}(d)$ and $r_{2}(d)$ are given by:
\renewcommand{\arraystretch}{1.2}
\setlength{\tabcolsep}{5pt}
\begin{center}
\begin{tabular}{|>{$}c<{$}||>{$}c<{$}|>{$}c<{$}|>{$}c<{$}|>{$}c<{$}|>{$}c<{$}|>{$}c<{$}|>{$}c<{$}|}
\hline
d & 1 & 2 & 3 & 4 & 5 & 6 & 10\\ \hline\hline
\nu(d) & 1 & 1 & 20 & 30 & 24 & 20 & 24\\ \hline
r_{0}(d) & 1 & 1 & 1 & 1 & 2 & 1 & 2\\ \hline
r_{1}(d) & 1 & 1 & 1 & 1 & 2 & 1 & 2\\ \hline
r_{2}(d) & 1 & 1 & 1 & 1 & 1 & 1 & 1\\ \hline
\end{tabular}
\end{center}
If $d=5$ (resp.\ $d=10$), and $g\in \tstar$ is of order $d$, $g$ and $g^{2}$ (resp.\ $g$ and $g^{3}$) are representatives of the two conjugacy classes of elements of order $d$.
\end{enumerate}
\end{prop}

\begin{proof}
Since the binary polyhedral groups have exactly one element of order $1$ and of order $2$, the elements of the columns for $d\in \brak{1,2}$ are all equal to $1$. So we suppose that $d\geq 3$.
\begin{enumerate}[(a)]
\item Let $G=\tstar$. We make use of the presentation of whose relations are given by the first line of~\reqref{presostar}. Let $d=3$. The subgroups of $\tstar$ of order $3$ are the Sylow $3$-subgroups of $\tstar$, so they are pairwise conjugate. Thus $r_{2}(3)=1$, and $r_{1}(3)=1$ by \relem{r1dr2d}. We now compute $r_{0}(3)$. Let $g\in \tstar$ be of order $3$. Its centraliser $C_{\tstar}(X)$ contains $g$ and the central element $P^{2}$ of $\tstar$ of order $2$, so $C_{\tstar}(g)$ contains the cyclic subgroup $\ang{gP^{2}}$ of order $6$. Now $\tstar$ contains no element of order greater than $6$, and $C_{\tstar}(g)$ is cyclic by \relem{centbinpoly}. It follows that $C_{\tstar}(g)=\ang{gP^{2}}$. By the orbit-stabiliser theorem, the conjugacy class of $g$ contains $4$ elements, and since $\tstar$ possesses $8$ elements of order $4$, we deduce that $r_{0}(3)=2$. The fact that $r_{1}(3)=1$ implies that $g$ and $g^{-1}$ belong to different conjugacy classes, so $g$ and $g^{-1}$ are representatives of the two conjugacy classes of elements of order $3$. By adjoining $P^{2}$ to $g$, we see that $r_{i}(6)=r_{i}(3)$ for all $i\in \brak{0,1,2}$. 

Now let $d=4$. The six elements of $\tstar$ of order $4$ are contained in the subgroup $\ang{P,Q}$ isomorphic to $\quat$. We have $QPQ^{-1}=P^{-1}$, so $P$ is conjugate to $P^{-1}$. Further, conjugation by $X$ permutes $P$, $Q$ and $PQ$, so $\tstar$ contains a single conjugacy class of elements of order $4$, $r_{0}(4)=1$, and thus $r_{1}(4)=r_{2}(4)=1$ by~\reqref{ineqr}.

\item Let $G=\ostar$. First let $d=3$. All of the elements of $\ostar$ of order $3$ are contained in its subgroup $\tstar$, and so $r_{0}(3)\leq 2$. From~\reqref{presostar}, we have the relation $RXR^{-1}=X^{-1}$, where $X$ is of order $3$. Since $X$ and $X^{-1}$ are representatives of the two conjugacy classes of elements of order $3$ in $\tstar$, it follows that there is a single conjugacy class of elements of order $3$ in $\ostar$, so $r_{0}(3)=1$, and $r_{1}(3)=r_{2}(3)=1$ by~\reqref{ineqr}. Once more, the values for $d=6$ are obtained by adjoining $P^{2}$ to $X$. 

Now let $d=4$. From the case $G=\tstar$, the six elements of order $4$ that belong to the subgroup $\tstar$ of $\ostar$ are pairwise conjugate, and since $\tstar$ is normal in $\ostar$, they form a complete conjugacy class of elements of order $4$. Now let $\mathcal{U}$ denote the set of twelve elements of order $4$ that belong to $\ostar \setminus \tstar$, and let $g\in \mathcal{U}$. Since $C_{\ostar}(g)$ is cyclic by \relem{centbinpoly} and contains $\ang{g}$, it follows that $\ord{C_{\ostar}(g)}\in \brak{4,8}$. Suppose that $\ord{C_{\ostar}(g)}=8$. Then $C_{\ostar}(g)\cong \Z_{8}$, and there exists $h\in C_{\ostar}(g)$ of order $8$ such that $g=h^{2}$. But since $\tstar$ is of index $2$ in $\ostar$, it follows that $h^{2}\in \tstar$, which contradicts the fact that $g\notin \tstar$. So $\ord{C_{\ostar}(g)}=4$, and $C_{\ostar}(g)=\ang{g}$. The orbit-stabiliser theorem implies that the conjugacy class of $g$ contains twelve elements, which must be the elements of $\mathcal{U}$. We thus conclude that there are two conjugacy classes in $\ostar$ of elements of order $4$, so $r_{0}(4)=2$. it also follows that if $g_{1}$ and $g_{2}$ are elements of $\ostar$ of order $4$ such that $g_{1}\in \tstar$ and $g_{2}\notin \tstar$ then $g_{1}$ and $g_{2}$ are representatives of these two conjugacy classes. If $g\in \ostar$ is of order $4$, then either it belongs to $\tstar$, and then $g^{-1}\in \tstar$, or it belongs to $\ostar\setminus \tstar$, and then $g^{-1}\in \ostar\setminus \tstar$. In both cases, it follows that $g$ and $g^{-1}$ are conjugate in $\ostar$. Thus $r_{1}(4)=2$, and hence $r_{2}(4)=2$ by \relem{r1dr2d}.

Finally, let $d=8$, let $g\in \ostar$ be of order $8$, and let $H$ be a Sylow $2$-subgroup that contains $g$. Then $\ord{H}=16$, and since $\ostar$ has no element of order $16$, it follows from \rerem{centre}(\ref{it:centre1}) that $H\cong \quat[16]$. The group $\quat[16]$ contains a unique cyclic subgroup of order $8$, and hence $H$ is the only Sylow $2$-subgroup that contains $g$. Since the Sylow $2$-subgroups are pairwise conjugate, it follows that the three cyclic subgroups of order $8$ are pairwise conjugate, and hence $r_{2}(8)=1$. The centraliser $C_{\ostar}(g)$ contains $\ang{g}$, and is cyclic by \relem{centbinpoly}, and the fact that $\ostar$ has no element of order greater than $8$ implies that $C_{\ostar}(g)=\ang{g}$. The orbit-stabiliser theorem implies the conjugacy class of $g$ contains $6$ elements, and since $\ostar$ contains $12$ elements of order $8$, we conclude that $r_{0}(8)=2$. 
Using the presentation~\reqref{presdic} of $\quat[16]$, we see that $ygy^{-1}=g^{-1}$ for all $y\in H\setminus \ang{g}$, so $g$ and $g^{-1}$ are conjugate, and it follows that $r_{1}(8)=2$ also. We also deduce that $g$ and $g^{3}$ are representatives of the two conjugacy classes of elements of order $8$.

\item Let $G=\istar$. First suppose that $d\in \brak{3,5}$, and let $g\in \istar$ be an element of order $d$. Then $C_{\ostar}(g)$ contains $\ang{g}$ and the unique element $\omega$ of $\istar$ of order $2$, and since $C_{\ostar}(g)$ is cyclic by \relem{centbinpoly}, we see as in the previous cases that $C_{\ostar}(g)=\ang{\omega g}$, and $\ord{C_{\ostar}(g)}=2d$. The orbit-stabiliser theorem then implies that the conjugacy class of $g$ contains $60/d$ elements. If $d=3$, this conjugacy class is the set of elements of order $3$, so $r_{0}(3)=1$, and hence $r_{1}(3)=r_{2}(3)=1$ by~\reqref{ineqr}. If $d=5$, the conjugacy class of $g$ contains $12$ elements, from which we conclude that there are two conjugacy classes $C_{1}$ and $C_{2}$ of elements of order $5$, so $r_{0}(5)=2$. The subgroups of $\istar$ of order $5$ are its Sylow $5$-subgroups, which are pairwise conjugate, so $r_{2}(5)=1$. This also implies that each such subgroup contributes two elements to each of $C_{1}$ and $C_{2}$. So if $g\in \ostar$ is of order $5$, it is conjugate to exactly one  element $h$ of $\ang{g} \setminus \brak{g}$. Note that $h\neq g^{2}$ (resp.\ $h\neq g^{-2}$) for otherwise $g$ would be conjugate to $g^{2}$ (resp.\ $g^{-2}$), then $g^{2}$ would be conjugate to $g^{-1}$ (resp.\ to $g$), and the conjugacy class of $g$ would contain at least three elements of $\ang{g}$, which is not possible. Hence $g$ is conjugate to $g^{-1}$ for every element $g\in \istar$ of order $5$, but is not conjugate to $g^{2}$ or to $g^{-2}$. This proves that the conjugacy class of $\brak{g,g^{-1}}$ is equal to that of $g$, and so $r_{1}(5)=2$, and that $g$ and $g^{2}$ are representatives of the two conjugacy classes of elements of order $5$. Once more, if $d\in \brak{3,5}$, then $r_{i}(2d)=r_{i}(d)$ for all $i\in \brak{0,1,2}$.

It remains to study the case $d=4$. Let $g\in \istar$ be an element of order $4$. Using once more the fact that $C_{\ostar}(g)$ contains $\ang{g}$ and is cyclic, we see that $C_{\ostar}(g)=\ang{g}$, and then that the conjugacy class of $g$ contains thirty elements, which is the number of elements of $\istar$ of order $4$. So $r_{0}(4)=1$, and $r_{1}(3)=r_{2}(3)=1$ by~\reqref{ineqr}.\qedhere
\end{enumerate}
\end{proof}

\section{Whitehead groups of the finite subgroups of $B_{n}(\St)$}\label{sec:whitehead}

If $G$ is a finite group, recall that its \emph{Whitehead group} $\wh{G}$ is a finitely-generated Abelian group, and so may be written in the form:
\begin{equation}\label{eq:whprod}
\wh{G}=\Z^r \oplus \skone{G},
\end{equation}
where $\skone{G}$ is isomorphic to the torsion subgroup of $\wh{G}$~\cite{Wall2}. The following proposition implies that to determine $\wh{G}$, where $G$ is a finite subgroup of $B_{n}(\St)$, it suffices to compute $r$.

\begin{prop}\label{prop:skonetriv}
Let $n\in \N$, and let $G$ be a finite subgroup of $B_{n}(\St)$. Then $\skone{G}$ is trivial.
\end{prop}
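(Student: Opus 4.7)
The plan is to combine the classification of the finite subgroups of $B_{n}(\St)$ given in \reth{finitebn} with Oliver's detailed analysis of $\skone{G}$ for finite $G$ contained in~[U]. By the classification, together with the fact that any subgroup of a dicyclic or binary polyhedral group is again cyclic, dicyclic, or binary polyhedral, one may reduce to the case where $G$ itself is cyclic, $\dic{4m}$ for some $m\geq 2$, or one of $\tstar, \ostar, \istar$.

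Next, I would record the Sylow structure of $G$: as noted in \rerems{centre}(\ref{it:centre1}), the $p^{2}$-condition forces every Sylow $p$-subgroup of $G$ to be cyclic, except possibly at $p=2$, where it may instead be generalised quaternion. Explicitly, in the binary polyhedral cases one has $S_{2}(\tstar)\cong\quat[8]$, $S_{2}(\ostar)\cong\quat[16]$ and $S_{2}(\istar)\cong\quat[8]$ with all odd Sylows cyclic; for $\dic{4m}$ the $2$-Sylow is cyclic or generalised quaternion according to the $2$-adic valuation of $m$, while the odd Sylows are again cyclic. The $p$-primary decomposition of $\skone{G}$ recorded immediately before the statement then reduces the problem to showing that $\skonep{G}=0$ for every prime $p$ dividing $\ord{G}$.

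At this stage I would invoke Oliver's induction theorem from~[U], according to which $\skonep{G}$ is detected on the $p$-elementary subgroups of $G$. Thanks to the Sylow restriction above, every such $p$-elementary subgroup of $G$ is a direct product of a cyclic group of order prime to $p$ with either a cyclic or a generalised quaternion $p$-group, and for each of these building blocks the relevant group is trivial: the cyclic (and more generally abelian) case is classical, via Bass--Milnor--Serre, and the generalised quaternion case together with its extensions by small cyclic factors of coprime order is handled directly by the explicit calculations in~[U]. I expect the main technical difficulty to lie in the careful enumeration of $p$-elementary subgroups for the binary polyhedral groups, where the richer internal structure (for instance $\tstar\cong\operatorname{SL}_{2}(\F[3])$ and $\istar\cong\operatorname{SL}_{2}(\F[5])$) makes it less transparent that no exotic $p$-elementary subgroup arises which is not covered by Oliver's vanishing results, and in verifying that his hypotheses genuinely apply in each case.
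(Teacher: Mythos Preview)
Your outline is mathematically sound and essentially matches the alternative proof the paper records in the remark immediately following the proposition, which cites Oliver's book~\cite[Theorem~14.2(i) and Example~14.4]{O}. Note, however, that you repeatedly attribute ``Oliver's induction theorem'' and the supporting calculations to~\cite{U}; in the paper's bibliography~\cite{U} is Ushitaki's paper, not Oliver's book, and the $p$-elementary detection you describe lives in~\cite{O}.

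The paper's own proof is shorter and takes a different route: it simply invokes Ushitaki's Theorem~A from~\cite{U} case by case. That theorem directly asserts $\skone{G}=0$ for cyclic groups (part~(1)), for dicyclic groups of order $8m$ (part~(3)), for the three binary polyhedral groups (parts~(5)--(7)), and, via a separate clause for ``Type~I'' groups whose Sylow subgroups are all cyclic, for $\dic{4m}$ with $m$ odd (part~(2)). No induction argument or enumeration of $p$-elementary subgroups is required. Your approach has the merit of explaining \emph{why} the result holds (periodic-cohomology groups have very restricted elementary subgroups), whereas the paper's proof is a black-box citation. One simplification to your plan: you need not worry about $2$-elementary subgroups of the shape $C_m\times\quat[2^k]$ with $m>1$ odd, since in any dicyclic or binary polyhedral group the centraliser of a non-central element of the $2$-Sylow subgroup contains no element of odd order (cf.\ \relem{propsccostar}(\ref{it:centbinpoly})); hence the only $2$-elementary subgroups with non-cyclic $2$-part are the generalised quaternion groups themselves, and the ``main technical difficulty'' you anticipate does not arise.
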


\begin{proof}
As we mentioned in \resec{classvc}, any finite subgroup $G$ of $B_{n}(\St)$ is cyclic, dicyclic or binary polyhedral. If $G$ is cyclic, dicyclic of order $8m$, $m\in \N$, or binary polyhedral the result follows from~\cite[Theorem~A, parts~(1),~(3),~(5),~(6) and~(7)]{U}. The only other possibility is when $G$ is dicyclic of order $4m$, where $m$ is odd. In this case, the Sylow $p$-subgroups of $G$ are cyclic, and from~\cite[page~20]{T}, $G$ admits a presentation of the Type~I groups of~\cite[Appendix]{U}. The result then follows from~\cite[Theorem~A, part~(2)]{U}.
\end{proof}

\begin{rem}
One may also prove \repr{skonetriv} by applying~\cite[Theorem~14.2(i) and Example~14.4]{O}.
\end{rem}

Consequently, if $G$ is a finite subgroup  of $B_{n}(\St)$, then by \req{whprod}, $\wh{G}$ is a free Abelian group, and it remains to calculate its rank. This is achieved in the following proposition.

\begin{prop}\label{prop:whbnS2}
Let $n\in \N$, let $G$ be a finite subgroup of $B_{n}(\St)$, and if $q\in \N$, let $\delta(q)$ denote the number of divisors of $q$. Then $\wh{G}\cong \Z^r$, where:
\begin{equation*}
r=
\begin{cases}
\left\lfloor \frac{m}{2} \right\rfloor+1-\delta(m) & \text{if $G\cong \Z_{m}$, $m\in\N$}\\
m+1-\delta(2m) & \text{if $G\cong \dic{4m}$, $m\geq 2$}\\
0 & \text{if $G\cong \tstar$}\\
1 & \text{if $G\cong \ostar$}\\
2 & \text{if $G\cong \istar$.}
\end{cases}
\end{equation*}
\end{prop}

\begin{proof}
Let $G$ be isomorphic to a finite subgroup of $B_{n}(\St)$. We recall once more that $G$ is cyclic, dicyclic or binary polyhedral. Let $r_{1}$ denote the number of conjugacy classes of unordered pairs $\brak{g,g^{-1}}$ in $G$, where $g\in G$, and let $r_{2}$ be the number of conjugacy classes of cyclic subgroups of $G$. By~\cite[page 39]{KL}, the rank $r$ of $\wh{G}$ is equal to $r_{1}-r_{2}$, and so: 
\begin{equation}\label{eq:sumr1r2}
r=\sum_{d \divides \ord{G}} \bigl( r_{1}(d)-r_{2}(d)\bigr).
\end{equation}
We treat the possibilities for $G$ separately.

\begin{enumerate}[(a)]
\item $G\cong \Z_{m}$, where $m\in \N$. Since $G$ is Abelian, $r_{1}$ is just the number of unordered pairs $\brak{g,g^{-1}}$ in $G$, where $g$ runs over the elements of $G$,  and $r_{2}$ is the number of cyclic subgroups of $G$. Since $g=g^{-1}$ if and only if $\ord{\ang{g}}\in \brak{1,2}$, we have that $r_{1}=\frac{m-1}{2}+1$ if $m$ is odd, and $r_{1}=\frac{m-2}{2}+2$ if $m$ is even. So $r_{1}=\left\lfloor \frac{m}{2} \right\rfloor+1$. Since the subgroups of $\Z_{m}$ are in bijection with the divisors of $m$, we have $r_{2}= \delta(m)$, so $r=r_{1}-r_{2}=\left\lfloor \frac{m}{2} \right\rfloor+1-\delta(m)$ as required.

\item $G\cong \dic{4m}$, where $m\geq 2$. Let $G=\ang{x} \coprod \ang{x}y$ be given by \req{presdic}. Since the elements of $\ang{x}y$ are of order $4$, it follows from \relem{r1dr2d} and \req{sumr1r2} that they do not contribute to $r$. So we just need to consider the contributions of the elements of $\ang{x}$ to $r_{1}$ and $r_{2}$. Using \req{presdic}, the conjugacy classes of the elements of $\ang{x}$ in $G$ are $\brak{x^{i},x^{-i}}$, where $0\leq i\leq m$. Since $\ang{x}$ is of order $2m$, as in the cyclic case, its elements contribute $m+1$ to the $r_{1}$-term, and $\delta(2m)$ to the $r_{2}$-term, and thus $r=m+1-\delta(2m)$.

\item If $G$ is binary polyhedral, the rank of $\wh{G}$ may be easily deduced using~\reqref{sumr1r2} and the tables of \repr{ribinpoly}.\qedhere
\end{enumerate}
\end{proof}

\section{$\widetilde{K}_{0}(\Z [G])$ for the finite subgroups of $B_{n}(\St)$}\label{sec:Kzero}

Let $G$ be a finite group. The calculation of $\widetilde{K}_{0}(\Z [G])$ is a difficult problem, even when the order of $G$ is small. It is known that $\widetilde{K}_{0}(\Z [G])$ is isomorphic to the \emph{ideal class group} $\operatorname{Cl}(\Z [G])$ of $\Z [G]$~\cite[Section~49.11]{CR2}. The following theorems summarise some results  about $\widetilde{K}_{0}(\Z [G])$ for certain finite groups.

\begin{thm}[{\cite[Corollary~50.17]{CR2}},~\cite{EH}]\label{th:kzerotrivial}
If $G$ is Abelian then $\widetilde{K}_{0}(\Z [G])$ is trivial if and only if $G$ is either cyclic of order $n$, $n\in \brak{1,2,\ldots,11,13,14,17,19}$, or is isomorphic to $\Z_{2}\oplus \Z_{2}$. 
If $G$ is non Abelian and  $\widetilde{K}_{0}(\Z [G])=1$ then $G$ is isomorphic to one of $\dih{2q}$, $q\geq 3$, $\an[4]$, $\sn[4]$ or $\an[5]$, $\dih{2q}$ being the dihedral group of order $2q$. 
\end{thm}

\begin{thm}[{\cite[Theorems~III and~IV, Corollary~10.12]{Sw3}}]\mbox{}\label{th:swan}
\begin{enumerate}[(a)]
\item $\widetilde{K}_{0}(\Z [\dic{4m}])\cong
\begin{cases}
\Z_{2} & \text{if $m\in\brak{2,3,4,5,7,8,11}$}\\
\Z_{2}^2 & \text{if $m=9$}\\
\Z_{2}^3 & \text{if $m\in\brak{6,10}$.}\\
\end{cases}$

\item $\widetilde{K}_{0}(\Z [\tstar])\cong \Z_{2}$, $\widetilde{K}_{0}(\Z [\ostar])\cong \Z_{2}^2$ and $\widetilde{K}_{0}(\Z [\istar])\cong \Z_{2}^3$.
\end{enumerate}
\end{thm}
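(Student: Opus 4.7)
Since the statement is quoted directly from Swan's monograph \cite{Sw3}, the plan is to appeal to those computations rather than redo them. For orientation I sketch the strategy, which is standard in the theory of locally free class groups of integral group rings.

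First, I would identify $\widetilde{K}_{0}(\Z[G])$ with the locally free class group $\operatorname{Cl}(\Z[G])$ via \cite[Section~49.11]{CR2}, and fix a maximal $\Z$-order $M$ in $\Q[G]$ containing $\Z[G]$. The Wedderburn decomposition of $\Q[G]$, for $G$ dicyclic or binary polyhedral, consists of full matrix algebras over cyclotomic fields $\Q(\zeta_{d})$ and their maximal real subfields, together with (for $G$ non-abelian) one or more totally definite quaternion algebras over a real cyclotomic subfield. Consequently $M$ is Morita equivalent to a product of maximal orders in these simple components, and $\operatorname{Cl}(M)$ splits as a finite product of class groups that can be read off from tables of cyclotomic class numbers and known class numbers of totally definite quaternion orders.

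Second, I would exploit the conductor square $\Z[G]\hookrightarrow M$ and its reduction modulo the conductor to obtain the Mayer--Vietoris / localisation exact sequence
\begin{equation*}
K_{1}(M) \longrightarrow \prod_{p\mid \ord{G}} K_{1}(\widehat{M}_{p})\bigm/\operatorname{im}\bigl(K_{1}(\widehat{\Z[G]}_{p})\bigr) \longrightarrow \widetilde{K}_{0}(\Z[G]) \longrightarrow \operatorname{Cl}(M) \longrightarrow 0.
\end{equation*}
Swan then analyses the local unit groups prime by prime: away from $2$ the global units $K_{1}(M)$ surject onto the local contributions, so only the $p=2$ part can contribute, and for cyclotomic factors one uses Dirichlet units together with standard cancellation arguments. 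The contribution of the totally definite quaternion component is a $2$-primary group whose size depends on the ramification of the quaternion algebra and the structure of the $\Z_{2}$-unit groups of the relevant local orders. The sizes quoted in the statement are obtained by computing these local contributions case by case, the extra factors of $\Z_{2}$ appearing for $m\in\brak{6,9,10}$ reflecting either additional quaternion factors or cyclotomic components with non-trivial class number in $\Q[\dic{4m}]$, and the pattern $\Z_{2},\Z_{2}^{2},\Z_{2}^{3}$ for $\tstar,\ostar,\istar$ mirroring the increasing number of quaternionic Wedderburn factors.

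The main obstacle, and where Swan's analysis is most delicate, is precisely this local computation at $p=2$ for the totally definite quaternion components: the Eichler condition fails there, strong approximation is unavailable, and one must control $K_{1}$ of the $2$-adic order by direct computation with its group of units and the reduced norm. Once these local $2$-adic contributions are pinned down and combined with the (trivial or tabulated) $\operatorname{Cl}(M)$, the asserted values of $\widetilde{K}_{0}(\Z[G])$ follow by assembling the exact sequence above.
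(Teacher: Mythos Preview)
The paper does not prove this statement at all: it is presented purely as a citation of Swan's results \cite[Theorems~III and~IV, Corollary~10.12]{Sw3}, with no accompanying argument or sketch. Your proposal to appeal directly to Swan is therefore exactly what the paper does; the additional outline you give of the conductor-square and local-unit strategy is a reasonable summary of Swan's method, but it goes beyond anything the paper itself supplies.
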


In \resec{lower411}, we will determine the lower algebraic $K$-theory of the finite subgroups of $B_{n}(\St)$ for all $4\leq n\leq 11$. With this in mind, we now compute $\widetilde{K}_{0}(\Z [G])$ for some other finite cyclic groups. Before proving our results, we state the following result concerning the Bass cyclic units of the group ring $\Z[\Z_n]$.

\begin{thm}[{\cite[p.~403]{Ba2}}]\label{th:basscyc}
Let $G$ denote the cyclic group of order $n$. Let $n,k \in \N$, let $g\in \Z_{n}$ be an element of order $n$, and let $m$ be a multiple of $\phi(n)$. Then $k^m\equiv 1 \bmod{n}$. Further, the Bass cyclic units are defined by:
\begin{equation*}
 u_{k,m}(g)=\bigl(1+g+\cdots +g^{k-1}\bigr)^m+ \frac{1-k^m}{n}(1+g+\cdots + g^{n-1}),
\end{equation*}
where $k$ and $n$ are relatively prime, and they generate all the units of infinite order in $\Z[\Z_{n}]$.
\end{thm}

\begin{thm}\label{th:K0cyc}
Let $G=\Z_{n}$, where $n\in \brak{18,20,22}$. Then $\widetilde{K}_0(\Z[G])\cong \Z_3$ if $n\in \brak{18,22}$, and $\widetilde{K}_0(\Z[G])\cong \Z_2^{5}$ if $n=20$.
\end{thm}

\begin{proof}
Let $G=\Z_{n}$, where $n\in \brak{18,20,22}$. We make use of an appropriate Cartesian square and the associated Mayer-Vietoris sequence as follows. We begin with the Rim square associated to $\Z_{2}$:
\begin{equation}\label{eq:CS}
\begin{CD}
\Z[\Z_2]@>>> \Z\\
@VVV    @VVV\\
\Z@>>>\FF_2.
\end{CD}
\end{equation} 
By~\reqref{CS}, we obtain the following Cartesian square:
\begin{equation}\label{eq:CS1}
\begin{CD}
\Z[G]@>>> \Z[\Z_{n/2}]\\
@VVV    @VVV\\
\Z[\Z_{n/2}]@>>>\FF_2[\Z_{n/2}].
\end{CD}
\end{equation}
\repr{skonetriv} and~\reth{kzerotrivial} imply that $\widetilde{K}_0(\Z[\Z_{n/2}])=0$ and $SK_1(\Z[\Z_{n/2}])=0$, hence the Mayer-Vietoris sequence associated with~\reqref{CS1} becomes: 
\begin{equation*}
\hspace*{-2mm}\cdots\to U(\Z[\Z_{n/2}])\oplus U(\Z[\Z_{n/2}])\to U(\FF_2[\Z_{n/2}])\to \widetilde{K}_0(\Z[G])\to 0.
\end{equation*}
In the rest of this proof, $U(R)$ will denote the group of Bass cyclic units of an integral group ring $R$. We therefore need to understand the following homomorphism:
\begin{equation}\label{eq:coker}
U(\Z[\Z_{n/2}])\oplus U(\Z[\Z_{n/2}])\to U(\FF_2[\Z_{n/2}])
\end{equation}
that is induced by reduction modulo $2$. If $n=18$ (resp.\ $n=22$), the ring $\FF_2[\Z_{n/2}]$ is semi-simple, and
is isomorphic to $\FF_2\oplus\FF_2(\xi^3)\oplus \FF_2(\xi)$ (resp.\ to $\FF_2\oplus\FF_2(\xi)$), where $\xi$ is a primitive $(n/2)\up{th}$ root of unity, and is a root of the polynomial $x^6+x^3+1$ (resp.\ of $x^{10}+x^9+\cdots+x^2+x+1$). Both of these polynomials are irreducible in $\FF_2[x]$. Recall that $\FF_2(\xi)$ is a field with $64$ (resp.\ $1024$) elements~\cite{Cox}, and its group of units is cyclic of order $63$ (resp.\ of order $1023=31\ldotp 11\ldotp 3$). Suppose that $n=18$.
As we mentioned, we are taking $U(\Z[\Z_9])$ to be generated by the Bass cyclic units that are of infinite order in $\Z[\Z_9]$. These cyclic units are described by \reth{basscyc}, and are $u_{2,6},u_{4,6},u_{5,6}, u_{7,6}$ and $u_{8,6}$. The image of $u_{7,6}$ in $\FF_{2}(\xi)$ is:
\begin{align*}
(1+\xi+\cdots+\xi^{6})^{6}&= (\xi^{7}+\xi_{8})^{6}=\xi^{6}(1+\xi)^{6}=\xi^{6}(1+\xi^{2})^{3}\\
&=\xi^{6}(1+\xi^{2}+\xi^{4}+\xi^{6})=\xi^{6}+ \xi^{8}+\xi+\xi^{3}=1+\xi+\xi^{8}.
\end{align*}
So the image of $u_{7,6}^{2}$ in $\FF_{2}(\xi)$ is $1+\xi^{2}+\xi^{7}$, the image of $u_{7,6}^{6}$ in $\FF_{2}(\xi)$ is the image of $(u_{7,6}^{2})^{2} u_{7,6}^{2}$ in $\FF_{2}(\xi)$, which is equal to:
\begin{equation*}
(1+\xi^{4}+\xi^{5})(1+\xi^{2}+\xi^{7}) =1+\xi^{2}+\xi^{7}+ \xi^{4}+\xi^{6}+\xi^{2}+ \xi^{5}+\xi^{7}+\xi^{3}= \xi^{4}+ \xi^{5}.
\end{equation*}
Thus the image of $u_{7,6}^{7}$ in $\FF_{2}(\xi)$ is the image of $u_{7,6}^{6} u_{7,6}$ in $\FF_{2}(\xi)$, which is equal to:
\begin{align*}
(\xi^{4}+ \xi^{5})(1+\xi+\xi^{8})=\xi^{4}+\xi^{5}+\xi^{3}+\xi_{5}+\xi^{6}+\xi^{4}=1.
\end{align*}
Hence the image of $u_{7,6}$ in $\FF_{2}(\xi)$ is of order $7$, and the image of $u_{7,6} u_{8,6}$ in $\FF_{2}(\xi)$ is of order $21$. We now show that the three other cyclic units are each of order $21$.
\begin{enumerate}[(i)]
\item $u_{2,6}$: its image in $\FF_{2}(\xi)$ is $(1+\xi)^{6}=(1+\xi^{2})^{3}= 1+\xi^{2}+\xi^{4}+\xi^{6}$. So the image of $u_{2,6}^{3}$ in $\FF_{2}(\xi)$ is equal to the image of $u_{2,6}^{2}u_{2,6}$ in $\FF_{2}(\xi)$, which in turn is equal to:
\begin{multline*}
(1+\xi^{4}+\xi^{8}+\xi^{3})(1+\xi^{2}+\xi^{4}+\xi^{6})= 1+\xi^{2}+\xi^{4}+\xi^{6}+ \xi^{4}+\xi^{6}+\xi^{8}+\xi+\\
\xi^{8}+\xi+\xi^{3}+\xi^{5}+\xi^{3}+\xi^{5}+\xi^{7}+1= \xi^{2}+\xi^{7}.
\end{multline*}
Hence the image of $u_{2,6}^{6}$ in $\FF_{2}(\xi)$ is equal to the image of $(u_{2,6}^{3})^{2}$ in $\FF_{2}(\xi)$, which is equal to $\xi^{4}+\xi^{5}$, and the image of $u_{2,6}^{7}$ in $\FF_{2}(\xi)$ is equal to the image of $(u_{2,6}^{6})u_{2,6}$ in $\FF_{2}(\xi)$, which in turn is equal to:
\begin{equation*}
(1+\xi^{2}+\xi^{4}+\xi^{6})(\xi^{4}+\xi^{5})=\xi^{4}+\xi^{6}+\xi^{8}+\xi+ \xi^{5}+\xi^{7}+1+\xi^{2}=\xi^{3},
\end{equation*}
which is of order $3$. It follows that the image of $u_{2,6}$ in $\FF_{2}(\xi)$ is of order $21$.

\item The image of $u_{4,6}$ in $\FF_{2}(\xi)$ is:
\begin{align*}
(1+\xi+\xi^{2}+\xi^{3})^{6}&=(1+\xi^{2}+\xi^{4}+\xi^{6})^{3}\\
&=(1+\xi^{2}+\xi^{4}+\xi^{6})^{2}(1+\xi^{2}+\xi^{4}+\xi^{6})\\
&= (1+\xi^{4}+\xi^{8}+\xi^{3})(1+\xi^{2}+\xi^{4}+\xi^{6})\\
&= (\xi^{2}+\xi^{3}+\xi^{4})^{2}=\xi^{4}+\xi^{6}+\xi^{8}=\xi^{4}(1+\xi^{2}+\xi^{4}).
\end{align*}
Then the image of $u_{4,6}^{2}$ in $\FF_{2}(\xi)$ is $\xi^{8}(1+\xi^{4}+\xi^{8})= \xi^{8}+\xi^{3}+\xi^{7}=\xi^{3}(1+\xi^{4}+\xi^{5})$, and the image of $u_{4,6}^{3}=u_{4,6}^{2} u_{4,6}$ in $\FF_{2}(\xi)$ is:
\begin{align*}
\xi^{7}(1+\xi^{2}+\xi^{4})(1+\xi^{4}+\xi^{5})&= \xi^{7}(1+\xi^{4}+\xi^{5}+ \xi^{2}+\xi^{6}+\xi^{7}+ \xi^{4}+\xi^{8}+1)\\
&= \xi^{7}(\xi^{5}+\xi^{2}+\xi^{6}+\xi^{7}+\xi^{8})\\
&= \xi^{7}(\xi^{6}+\xi^{7})= \xi^{4}(1+\xi).
\end{align*}
Thus the image of $u_{4,6}^{6}=(u_{4,6}^{3})^{2}$ is $\xi(1+\xi^{7})$, and the image of $u_{4,6}^{7}=u_{4,6}^{6} u_{4,6}$ is:
\begin{align*}
\xi^{5}(1+\xi^{2}+\xi^{4})(1+\xi^{7})=\xi^{5}(1+\xi^{2}+\xi^{4}+ \xi^{7}+1+\xi^{2})=
\xi^{6}.
\end{align*}
It follows that the image of $u_{4,6}$ in $\FF_{2}(\xi)$ is of order $21$.

\item The image of $u_{5,6}$ in $\FF_{2}(\xi)$ is:
\begin{align*}
(1+\xi+\cdots+\xi^{4})^{6}&=(1+\xi^{2}+\xi^{4}+\xi^{6}+\xi^{8})^{3}= (\xi^{3}+\xi^{4}+ \xi^{5})^{3}= (1+\xi+\xi^{2})^{3}\\
&= (1+\xi+\xi^{2})^{2}(1+\xi+\xi^{2})= (1+\xi^{2}+\xi^{4})(1+\xi+\xi^{2})\\
&= (1+\xi+\xi^{2}+\xi^{2}+\xi^{3}+\xi^{4}+ \xi^{4}+\xi^{5}+\xi^{6})= \xi(1+\xi^{4}).
\end{align*}
So the image of $u_{5,6}^{3}$ in $\FF_{2}(\xi)$ is $\xi^{3}(1+\xi^{4}+\xi^{8}+\xi^{3})= \xi^{3}+x^{7}+\xi^{2}+\xi^{6}=1+\xi^{2}+\xi^{7}$, and the image of $u_{5,6}^{6}=(u_{5,6}^{3})^{2}$ is $1+\xi^{4}+\xi^{5}$. Hence the image of $u_{5,6}^{7}=u_{5,6}^{6}u_{5,6}$ is:
\begin{align*}
(1+\xi^{4}+\xi^{5})\xi(1+\xi^{4})&=\xi(1+\xi^{4}+\xi^{5}+ \xi^{4}+\xi^{8}+1)= \xi(\xi^{5}+\xi^{8})=\xi^{3}.
\end{align*}
It follows that the image of $u_{5,6}$ in $\FF_{2}(\xi)$ is of order $21$.
\end{enumerate}
We conclude that the image in $\FF_{2}(\xi)$ of the subgroup generated by the cyclic units is of order $21$. Hence the cokernel in~\reqref{coker} is of order $3$, and from that equation we see that $\widetilde{K}_0(\Z[\Z_{18}])\cong \Z_{3}$, thus proving the result in the case $n=18$. The proofs in the cases $n=20$ and $n=22$ are similar. First suppose that $n=22$. As above, we see that the Bass cyclic units are of the form $u_{k,10}$ for $k=2,3,4,\ldots,10$, and making use of a Mathematica~\cite{math}  routine written by Jos\'e Hernandez (CCM-UNAM), to whom we are grateful, one may check that the image in $\FF_2(\xi)$ of the subgroup generated by the cyclic units is of order $341$, and that the cokernel of~\reqref{coker} is of order $3$, so once more, $\widetilde{K}_0(\Z[\Z_{22}])\cong \Z_{3}$. Finally, suppose that $n=20$. In this case, the ring $\FF_2[\Z_{10}]$ is not semi-simple. It is isomorphic to $\FF_2[x]/(x^5-1)^2$, and its group of units is isomorphic to the direct product $U(\FF_2[\Z_{10}])=(\Z_2)^5\times \Z_3\times \Z_5$. Applying the Mathematica routine once more to the cyclic units $u_{3,4}, u_{7,4}$ and $u_{9,4}$, we see that the image of the group generated by these units in~\reqref{coker} is of order $15$, and hence the cokernel of~\reqref{coker} is isomorphic to $\Z_2^{5}$. This completes the proof of the theorem. 
\end{proof}

\section{$K_{-1}(\Z [G])$ for the finite subgroups of $B_{n}(\St)$}\label{sec:Kminusone}

Let $G$ be a finite subgroup of $B_{n}(\St)$. In order to determine $K_{-1}(\Z [G])$, we shall use the following special case of a result of Carter. Similar results have recently been obtained independently by B.~Magurn in~\cite{Mag} for generalised quaternion and binary polyhedral groups.

First we recall that a simple Artinian ring $A$ is isomorphic to $\mathcal{M}_{n}(D)$ for some positive integer $n$ and some skew field $D$. Further, $D$ is finite dimensional over its centre $E$, the dimension being a square $[D:E]$, and the \emph{Schur index} of $A$ is equal to $\sqrt{[D:E]}$~\cite[Section~27]{CR}. 
\begin{thm}[{\cite[Theorem~1]{C1}}]\label{th:carter}
Let $G$ be a finite group of order $q$. Then 
\begin{equation}\label{eq:carterkminusone}
K_{-1}(\Z [G])\cong \Z^r \oplus \Z_{2}^s,
\end{equation}
where $r$ is given by
\begin{equation}\label{eq:rankkminusone}
r=1-r_{\Q}+ \sum_{p\divides \ord{G}} \bigl( r_{\Q_{p}}-r_{\F[p]}\bigr),
\end{equation}
$r_{\Q}$ (resp.\ $r_{\Q_{p}}$, $r_{\F[p]}$) denotes the number of isomorphism classes of irreducible $\Q$- (resp.\ $\Q_{p}$-, $\F[p]$-) representations of $G$, and $s$ is equal to the number of simple components of $\Q[G]$ that have even Schur index $m$ but have odd local Schur indices $m_{Q}$ at every finite prime $Q$ of the centre which divides $q$.
\end{thm}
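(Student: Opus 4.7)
\medskip

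\noindent\textbf{Proof proposal.} The plan is to compute $K_{-1}(\Z[G])$ by fitting $\Z[G]$ into an arithmetic pullback and reading rank and $2$-torsion off a localisation sequence in lower $K$-theory. First I would set up Bass's Mayer--Vietoris sequence for the order $\Z[G]$ inside the semisimple $\Q$-algebra $\Q[G]$; writing $\widehat{\Z}[G]=\prod_{p}\Z_{p}[G]$ and $\widehat{\Q}[G]$ for the finite-adelic algebra, there is a long exact sequence
\begin{equation*}
K_{0}(\Z[G])\to K_{0}(\widehat{\Z}[G])\oplus K_{0}(\Q[G])\to K_{0}(\widehat{\Q}[G])\to K_{-1}(\Z[G])\to K_{-1}(\widehat{\Z}[G])\oplus K_{-1}(\Q[G])\to 0,
\end{equation*}
using that $K_{-1}$ of a semisimple ring over a field vanishes, so the last term is trivial. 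Restricting to primes dividing $|G|=q$ is justified because at primes not dividing $q$ the group ring $\Z_{p}[G]$ is a maximal order and contributes nothing to the connecting map.

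Next I would identify the arithmetic of each $K_{0}$ term via Wedderburn decomposition. The number of simple components of $\Q[G]$ equals $r_{\Q}$, and likewise $r_{\Q_{p}}$ and $r_{\F_{p}}$ count the simple components after base change or reduction modulo $p$. For any simple Artinian algebra $K_{0}\cong\Z$, so $K_{0}(\Q[G])\cong\Z^{r_{\Q}}$, $K_{0}(\Q_{p}[G])\cong\Z^{r_{\Q_{p}}}$, and using the devissage/reduction map $K_{0}(\Z_{p}[G])\xrightarrow{\sim}K_{0}(\F_{p}[G])\cong\Z^{r_{\F_{p}}}$. Feeding these ranks into an Euler characteristic computation on the above sequence and noting that the global connecting map $K_{0}(\widehat{\Q}[G])\to K_{-1}(\Z[G])$ has cokernel zero rationally, the free rank of $K_{-1}(\Z[G])$ comes out as
\begin{equation*}
r=\operatorname{rank}K_{0}(\widehat{\Q}[G])-\operatorname{rank}K_{0}(\widehat{\Z}[G])-\operatorname{rank}K_{0}(\Q[G])+1,
\end{equation*}
which, after substitution and reindexing over the primes dividing $|G|$, yields precisely formula~\reqref{rankkminusone}. (The extra $+1$ reflects that $K_{0}(\Z)=\Z$ is absorbed on the left.)

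For the torsion, I would analyse the connecting homomorphism $\partial\colon K_{0}(\widehat{\Q}[G])\to K_{-1}(\Z[G])$ component by component, using the Wedderburn decomposition $\Q[G]\cong\prod_{i}A_{i}$ with $A_{i}$ a simple algebra of Schur index $m_{i}$ over its centre $E_{i}$. Localising $A_{i}$ at a place $v$ of $E_{i}$ gives a central simple $E_{i,v}$-algebra whose class in the Brauer group $\operatorname{Br}(E_{i,v})$ has order equal to the local Schur index $m_{i,v}$; the map $K_{0}(A_{i,v})\to\Z$ is multiplication by $m_{i,v}$. Running through all places lying over rational primes $p\mid q$, a component $A_{i}$ contributes a $\Z/2$ summand exactly when the global Schur index $m_{i}$ is even while every local index $m_{i,v}$ with $v\mid q$ is odd, since then the parity obstruction is genuinely global and survives the quotient. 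This matches the description of $s$ in the statement.

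The main obstacle is the last step: proving that these Schur-index parity obstructions assemble correctly into a single $\Z/2^{s}$ torsion subgroup, rather than into a larger elementary abelian $2$-group that could be cut down by further relations. To do this I would use the Hasse--Brauer--Noether exact sequence for Brauer groups together with Yamada's machinery (cited already in~\resec{Kminusone}) to control the local invariants, showing that each ``globally even, locally odd at $q$'' component gives an independent $\Z/2$ and that no $p$-torsion of odd order appears, because for odd primes $\ell$ the map $\partial$ restricted to $\ell$-primary parts is surjective onto the relevant $K_{0}$-quotients. The finer structural results of Bass and Swan on $K_{-1}$ of orders, together with the fact that $SK_{-1}=0$ for orders in semisimple $\Q$-algebras, rule out any hidden odd torsion and conclude~\reqref{carterkminusone}.
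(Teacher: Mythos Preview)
This theorem is not proved in the paper at all: it is quoted verbatim from Carter's paper~\cite{C1} and used as a black box for the computations in \resec{Kminusone}. So there is no ``paper's own proof'' to compare your proposal against.

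That said, your sketch is broadly in the spirit of Carter's original argument, which does proceed via a localisation sequence for the order $\Z[G]$ inside a maximal order of $\Q[G]$, identifies $K_{0}$-ranks by counting simple components of the various group algebras, and extracts the $2$-torsion from a Brauer-group analysis of the simple factors. A few points in your outline would need tightening if you were writing this up in earnest: the isomorphism $K_{0}(\Z_{p}[G])\cong K_{0}(\F_{p}[G])$ is correct but comes from idempotent lifting rather than d\'evissage; the vanishing of $K_{-1}$ for semisimple Artinian rings holds because such rings are regular; and the torsion step --- showing that exactly the ``globally even Schur index, locally odd at primes dividing $q$'' components contribute independent $\Z_{2}$ summands with no further relations and no odd torsion --- is the genuinely hard part of Carter's paper, requiring more than the Hasse--Brauer--Noether sequence alone. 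Your acknowledgement of this as the ``main obstacle'' is apt; filling it in is essentially reproving Carter's theorem.
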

So to calculate $K_{-1}(\Z [G])$, we must determine the quantities $r_{F}$ for the various fields appearing in \req{rankkminusone}, as well as the number $s$. For the finite subgroups $G$ of $B_{n}(\St)$, we divide this calculation into two parts. In \resec{torsionkminusone}, we determine $r$, which yields the torsion of $K_{-1}(\Z [G])$. In \resec{rankkminusone}, we compute $s$, which is the rank of $K_{-1}(\Z [G])$. We then obtain $K_{-1}(\Z [G])$ from~\reqref{carterkminusone}.

\subsection{Torsion of $K_{-1}(\Z [G])$ for finite subgroups of $B_{n}(\St)$}\label{sec:torsionkminusone}

Let $G$ be a finite subgroup of $B_{n}(\St)$, and let $s$ be as defined in \req{carterkminusone}. As remarked in~\cite[page~1928]{C1}, a consequence of \reth{carter} is that $K_{-1}(\Z [G])$ is torsion free if $G$ is Abelian. In particular, if $G$ is cyclic, then $s=0$. If $G$ is non cyclic, then as we shall see, $K_{-1}(\Z [G])$ may have torsion. Although \req{carterkminusone} clearly allows for this possibility, this appears to be a new phenomenon, and contrasts with the calculations given in~\cite{LMO,LO} for example. We thus require new techniques to calculate the torsion of $K_{-1}(\Z [G])$. If $G$ is dicyclic, we make use of results due to Yamada concerning the computation of the (local) Schur indices of the simple components of $\Q[G]$~\cite{Y}. If $G$ is binary polyhedral, then one may apply induction/restriction techniques and the Mackey formula. 

Assume first that $G\cong \dic{4m}$ is dicyclic, where $m\geq 3$ is odd. If $m$ is an odd prime then we determine $K_{-1}(\Z [\dic{4m}])$. In principle, our method should apply to any odd value of $m$, not just for prime values. If $m$ is odd, the Wedderburn decomposition over $\Q$ of the algebra $\Q [\dic{4m}]$ is given in~\cite[Example~7.40]{CR}:
\begin{align}
\Q [\dic{4m}] & \cong \Q [\dih{2m}] \oplus\, \Q(i) \oplus \Biggl( \bigoplus_{d_{0}\mid m,\; d_{0}>1} \mathbb{H}_{2d_{0}}\Biggr)\notag\\
& \cong \Q^2 \oplus \Biggl(\bigoplus_{d \mid m, \; d>2} \mathcal{M}_{2}\left( \Q\left(\zeta_{d}+\zeta_{d}^{-1} \right)\right)\Biggr) \oplus \Q(i) \oplus \Biggl(\bigoplus_{d_{0}\mid m,\; d_{0}>1} \mathbb{H}_{2d_{0}}\Biggr),\label{eq:decompQG}
\end{align}
where $\zeta_{d}$ is a primitive $d\up{th}$ root of unity, and
\begin{equation}\label{eq:defhd}
\mathbb{H}_{d}=E_{d}\oplus E_{d}i \oplus E_{d}j \oplus E_{d}k
\end{equation}
is the quaternion skew field with centre $E_{d}=\Q \left(\zeta_{d}+\zeta_{d}^{-1}\right)$. In particular, if $m=\mu$ is prime then
\begin{equation}\label{eq:weddic4p}
\Q [\dic{4\mu}]\cong \Q^2 \oplus \mathcal{M}_{2}\left( E_{\mu} \right) \oplus \Q(i) \oplus \mathbb{H}_{2\mu}.
\end{equation}
Note that the number of components in \req{weddic4p} is equal to the number of conjugacy classes of cyclic subgroups of $\dic{4\mu}$, and that the components are in one-to-one correspondence with the irreducible $\Q$-representations of $\dic{4\mu}$. The first four components of \req{weddic4p} are matrix rings over fields, and so their Schur index is equal to one. By \req{carterkminusone}, the torsion of $K_{-1}(\Z [\dic{4\mu}])$ is then either trivial or equal to $\Z_{2}$ depending on the Schur and local Schur indices of the remaining component $\mathbb{H}_{2\mu}$. We now determine precisely this torsion using results of Yamada~\cite{Y2,Y}. 

\begin{prop}\label{prop:torsionkminusone}
If $\mu$ be an odd prime, the torsion of $K_{-1}(\Z[\dic{4\mu}])$ is trivial if $\mu\equiv 3 \bmod 4$, and is equal to $\Z_{2}$ if $\mu\equiv 1 \bmod 4$.
\end{prop}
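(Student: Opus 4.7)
My plan is to apply Carter's theorem (\reth{carter}) to the Wedderburn decomposition \reqref{weddic4p}. Of the five simple components, the first four ($\Q$, $\Q$, $\mathcal{M}_{2}(E_{\mu})$, and $\Q(i)$) are matrix rings over fields and so have global Schur index $1$, contributing nothing to the exponent $s$ in the torsion summand $\Z_{2}^{s}$. The quaternion skew field $\mathbb{H}_{2\mu}$ has global Schur index exactly $2$, so $s\in\{0,1\}$, and by Carter's criterion $s=1$ precisely when the local Schur index of $\mathbb{H}_{2\mu}$ equals $1$ at every finite prime of its centre $E_{2\mu}=E_{\mu}$ dividing $|G|=4\mu$, namely at the primes of $E_{\mu}$ above $2$ and at the unique prime $\mathfrak{p}_{\mu}$ above $\mu$.

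First I would identify $\mathbb{H}_{2\mu}$ with the cyclic algebra $(\Q(\zeta_{2\mu})/E_{\mu},\sigma,-1)$, where $\sigma\colon\zeta\mapsto\zeta^{-1}$; this is forced by the defining relations $y^{2}=x^{\mu}=-1$ and $yxy^{-1}=x^{-1}$ in the faithful $2$-dimensional irreducible representation corresponding to this component. In quaternion-symbol form this is $(\alpha^{2}-4,-1)_{E_{\mu}}$ with $\alpha=\zeta_{\mu}+\zeta_{\mu}^{-1}$, and since $\alpha+2=(\zeta_{2\mu}+\zeta_{2\mu}^{-1})^{2}$ is a square in $E_{2\mu}=E_{\mu}$, bilinearity of the Hilbert symbol reduces this to $\mathbb{H}_{2\mu}\cong(\delta,-1)_{E_{\mu}}$ with $\delta=\zeta_{\mu}+\zeta_{\mu}^{-1}-2$. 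At any prime $\mathfrak{p}$ of $E_{\mu}$ above $2$, the completion $(E_{\mu})_{\mathfrak{p}}$ is unramified over $\Q_{2}$ and the local extension $\Q(\zeta_{\mu})_{\mathfrak{P}}/(E_{\mu})_{\mathfrak{p}}$ is either trivial or unramified of degree $2$; in either case the invariant formula of local class field theory gives $v_{\mathfrak{p}}(-1)/[L:K]=0$, so the local Schur index at every prime above $2$ is $1$.

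It therefore remains to analyse $\mathfrak{p}_{\mu}$. The identity $\delta=-\zeta_{\mu}^{-1}(\zeta_{\mu}-1)^{2}$ together with the fact that $\zeta_{\mu}-1$ is a uniformizer of $\Q(\zeta_{\mu})_{\mathfrak{P}_{\mu}}$ shows that $\delta$ is a uniformizer of $(E_{\mu})_{\mathfrak{p}_{\mu}}$. Since $-1$ is a unit and the residue characteristic $\mu$ is odd, the tame Hilbert symbol evaluates to
\begin{equation*}
(\delta,-1)_{\mathfrak{p}_{\mu}}=\left(\tfrac{-1}{\mu}\right),
\end{equation*}
which equals $+1$ precisely when $\mu\equiv 1\pmod 4$. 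Hence for $\mu\equiv 1\pmod 4$ every local Schur index at a finite prime dividing $|G|$ is $1$, giving $s=1$ and torsion $\Z_{2}$, while for $\mu\equiv 3\pmod 4$ the local Schur index at $\mathfrak{p}_{\mu}$ is $2$, giving $s=0$ and trivial torsion. The main technical point is the reduction to the symbol $(\delta,-1)_{E_{\mu}}$ and the valuation analysis at $\mathfrak{p}_{\mu}$; Yamada's explicit formulas for local Schur indices of metacyclic group algebras (\cite{Y,Y2}), to which the paper alludes, encode precisely this computation in a systematic way that would likely be the route taken to generalise beyond the prime case.
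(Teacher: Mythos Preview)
Your proof is correct and reaches the same conclusion as the paper, but by a genuinely different route. The paper applies Yamada's parametric formulas for local Schur indices of metabelian group algebras (\cite[Propositions~4,~5,~9]{Y}) as a black box: it identifies the relevant simple component with Yamada's representation $U_{1,0}^{(2)}$, plugs the parameters $m=2\mu$, $r=2\mu-1$, $s=h=2$, etc., into the formulas, and reads off $\Lambda_{\mathfrak{p}}=1$ at primes above $2$ and $\Lambda_{\mathfrak{p}}=2/\gcd(2,(\mu-1)/2)$ at primes above $\mu$. You instead realise $\mathbb{H}_{2\mu}$ concretely as the cyclic algebra $(\Q(\zeta_{2\mu})/E_{\mu},\sigma,-1)$, reduce it to the Hilbert symbol $(\delta,-1)_{E_{\mu}}$ via the pleasant identity $\alpha+2=(\zeta_{2\mu}+\zeta_{2\mu}^{-1})^{2}$, and then compute local invariants directly: unramifiedness at $2$ forces the invariant to vanish there, while at the totally ramified prime $\mathfrak{p}_{\mu}$ the tame symbol collapses to the Legendre symbol $\left(\tfrac{-1}{\mu}\right)$.

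Your argument is more transparent and self-contained---it makes visible \emph{why} the answer is governed by $\mu\bmod 4$, namely quadratic reciprocity for $-1$---and avoids the dependence on \cite{Y,Y2}. The paper's approach, by contrast, is more mechanical but ports immediately to other dicyclic orders (as it does for $\quat[2^{k}]$ in \repr{torsquat}), whereas your square-factor trick $\alpha+2=\beta^{2}$ is tailored to this particular family.
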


\begin{proof}
We apply the results of~\cite{Y2,Y}, and refer the reader to these papers for the notation used in this proof. If $n\in \N$ and $w\in \Z$ is coprime with $n$, then $w \operatorname{mod}^{\times} n$ will denote $w$ as an element of the multiplicative group of integers modulo $n$. With the notation of~\cite[Proposition~4]{Y}, we have $m=2\mu$, $r=2\mu-1$, $s=2$, $h=\mu$ and $u$ is the order of $2\mu-1 \operatorname{mod}^{\times} 2\mu$, so $u=s=2$. From~\cite[Example~3, Section~6]{Y}, there are representations of $\dic{4\mu}$ of the form $U_{\alpha,0}^{(2)}$, where $0\leq \alpha \leq 2\mu-1$. Such representations are defined in~\cite[equation~(8), p.~214]{Y} and induced by linear characters. Using~\cite[Proposition~5]{Y}, the representation $U_{1,0}^{(2)}$ gives rise to an irreducible representation of $\Q[\dic{4\mu}]$, and the last part of~\cite[Example~3, Section~6]{Y} implies that its Schur index is equal to two. Since the Schur index of each of the first four components of \req{weddic4p} is equal to one, it follows that the simple component $\mathbb{H}_{2\mu}$ of $\Q[\dic{4\mu}]$ corresponds to $U_{1,0}^{(2)}$. 

We now apply~\cite[Proposition~9]{Y} to $U_{1,0}^{(2)}$. Within our framework, the enveloping algebra $\operatorname{env}_{\Q}\left(U_{1,0}^{(2)}\right)$ with respect to $\Q$ is isomorphic to the simple component $\mathbb{H}_{2\mu}$, and the centre $E_{2\mu}$ of $\mathbb{H}_{2\mu}$ is isomorphic to $\Q\left(\chi_{1,0}^{(2)}\right)$, $\chi_{1,0}^{(2)}$ being the character of $U_{1,0}^{(2)}$~\cite[Introduction]{Y2}. With the notation of~\cite[Proposition~9]{Y}, we have $d_{1}=\frac{2\mu}{\gcd{(2\mu,1)}}=2\mu$ and $v_{1}=\frac{2\mu}{\gcd{(2\mu,\mu)}}=2$. Let $\mathfrak{p}$ be a finite prime of the centre $E_{\mu}$ of $\mathbb{H}_{2\mu}$ that divides $4\mu$. Then $\mathfrak{p}$ divides $2\mu$, and since $\mu$ is odd, $\mathfrak{p}$ divides $p$, where $p\in \brak{2,\mu}$. We distinguish these two possibilities, the notation being that of~\cite[Proposition~9]{Y}.
\begin{enumerate}[(a)]
\item Suppose that $\mathfrak{p}$ divides $2$. Then we have $p=2$, $b=z=1$, $a=1$ and $t'$ is the order of $2\mu-1 \operatorname{mod}^{\times} \mu$, so $t'=2$. Thus $e_{\mathfrak{p}}=1$, and hence $c_{\mathfrak{p}}=\Lambda_{\mathfrak{p}}=1$.

\item Suppose that $\mathfrak{p}$ divides $\mu$. Then $p=\mu$, $b=0$, $z=2$, $a=1$, $\ang{2\mu-1 \operatorname{mod}^{\times} 2}=\ang{\mu \operatorname{mod}^{\times} 2}=\brak{1}$, $f=\widetilde{f}=t'=1$, $q=\mu$, $e_{\mathfrak{p}}=2$, $c_{\mathfrak{p}}=\gcd{(2,\mu-1)}=2$ and
\begin{equation*}
\Lambda_{\mathfrak{p}}=\frac{2}{\gcd{\left(2,\frac{\mu-1}{2}\right)}}=
\begin{cases}
1 & \text{if $\mu\equiv 1 \bmod 4$}\\
2 & \text{if $\mu\equiv 3 \bmod 4$}
\end{cases}
\end{equation*}
by~\cite[Proposition~9(II)]{Y}.
\end{enumerate}

Thus if $\mu\equiv 1 \bmod 4$, the simple component $\mathbb{H}_{2\mu}$ of $\Q [G]$ whose Schur index is equal to two satisfies the property that its local Schur indices at every finite prime of the centre are odd. Hence the integer $s$ of \req{carterkminusone} is equal to one, so the torsion of $K_{-1}(\Z[G])$ is $\Z_{2}$. If $\mu\equiv 3 \bmod 4$ then $\Lambda_{\mathfrak{p}}=2$ for any finite prime $\mathfrak{p}$ that divides $\mu$, so $s=0$, and hence $K_{-1}(\Z[G])$ is torsion free.\qedhere
\end{proof}

As another example, we calculate the torsion of $K_{-1}(\Z[\dic{4m}])$ in the case where $m$ is a power of $2$ (so $\dic{4m}$ is a generalised quaternion group).

\begin{prop}\label{prop:torsquat}
The torsion of $K_{-1}(\Z[\quat[2^k]])$ is trivial if $k=3$, and is equal to $\Z_{2}$ if $k\geq 4$.
\end{prop}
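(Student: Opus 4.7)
The plan is to apply \reth{carter} to $G=\quat[2^{k}]$. Since $\ord{G}=2^{k}$, only primes above~$2$ enter \req{carterkminusone}, so the computation reduces to determining the global and local Schur indices at primes above~$2$ of each simple component of $\Q[G]$. The first step is to identify the Wedderburn decomposition of $\Q[\quat[2^{k}]]$: writing $m=2^{k-2}$, the abelianisation of $\dic{4m}\cong \quat[2^{k}]$ is isomorphic to $\Z_{2}\oplus \Z_{2}$, contributing a factor $\Q^{4}$. Among the $m-1$ complex-irreducible $2$-dimensional representations (indexed by $\ell\in \brak{1,\ldots,m-1}$), those with $\ell$ even have $x^{m}$ acting trivially and so factor through $\dic{4m}/Z\cong \dih{2m}$, yielding matrix components $\mathcal{M}_{2}(\Q(\zeta_{d}+\zeta_{d}^{-1}))$ for the divisors $d$ of $2^{k-1}$ with $2<d<2^{k-1}$. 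The remaining representations, with $\ell$ odd, are faithful on the centre, all correspond to roots of unity of the same order $2^{k-1}$, and together produce a single simple component $A_{k}$, a quaternion algebra over $E_{k}=\Q(\zeta_{2^{k-1}}+\zeta_{2^{k-1}}^{-1})$. The matrix components having Schur index~$1$, only $A_{k}$ may contribute to the torsion count $s$ of~\req{carterkminusone}.

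If $k=3$ then $E_{3}=\Q$ and $A_{3}$ is the classical Hamilton quaternion algebra, which is non-split at the primes~$2$ and~$\infty$; in particular, its local Schur index at~$2$ equals~$2$, so $s=0$ and the torsion of $K_{-1}(\Z[\quat[8]])$ is trivial. For $k\geq 4$, the prime $2$ is totally ramified in the totally real field $E_{k}$, so there is a unique prime $\mathfrak{p}$ of $E_{k}$ lying over~$2$. Mimicking the proof of \repr{torsionkminusone}, we identify $A_{k}$ with the enveloping algebra of the irreducible representation $U_{1,0}^{(2)}$ in the notation of~\cite[Propositions~4 and~9]{Y}, where now $m_{Y}=2^{k-1}$, $r=2^{k-1}-1$, $s=u=2$, $h=2^{k-2}$, $d_{1}=2^{k-1}$ and $v_{1}=2$. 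Applying~\cite[Proposition~9(II)]{Y} with $p=2$ and carefully tracking the resulting $2$-adic invariants should show that the local Schur index $\Lambda_{\mathfrak{p}}$ of $A_{k}$ at $\mathfrak{p}$ equals~$1$ for $k\geq 4$. Granting this, $A_{k}$ has even global Schur index but odd local Schur index at its unique bad prime of the centre dividing $\ord{G}$, so $s=1$ in~\req{carterkminusone}, and the torsion of $K_{-1}(\Z[\quat[2^{k}]])$ is $\Z_{2}$.

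The main obstacle is the Yamada computation of $\Lambda_{\mathfrak{p}}$ for $k\geq 4$. In contrast with the odd-prime case of \repr{torsionkminusone}, where the analysis reduced to a single congruence modulo~$4$, the wild ramification of~$2$ in $E_{k}$ requires a more delicate bookkeeping of the $2$-adic parameters entering Yamada's formulae. The qualitative point should be that for $k\geq 4$, the centre $E_{k}$ is strictly larger than $\Q$ and absorbs enough ramification at~$2$ to force $A_{k}$ to split locally at $\mathfrak{p}$, whereas the degenerate case $k=3$ gives $E_{3}=\Q$ and hence the globally and locally non-split Hamilton quaternions.
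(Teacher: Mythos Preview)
Your approach is exactly that of the paper: isolate the unique quaternionic component $A_{k}=\mathbb{H}_{2^{k-1}}$ in the Wedderburn decomposition, identify it with Yamada's $U_{1,0}^{(2)}$, and then read off the local Schur index at the prime above~$2$ from \cite[Proposition~9]{Y2}. Your treatment of $k=3$ via the classical fact that the rational Hamilton quaternions are ramified precisely at $2$ and $\infty$ is in fact a cleaner shortcut than the paper's, which handles this case by invoking the ``exceptional case'' of Yamada's Proposition~9(II).

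The only genuine gap is that you stop short of the computation you flag as ``the main obstacle'', and this is where your intuition misleads you: the bookkeeping is not delicate at all. With the parameters you have already correctly identified ($m=2^{k-1}$, $r=2^{k-1}-1$, $u=s=2$, $h=2^{k-2}$, $d_{1}=2^{k-1}$, $v_{1}=2$), Proposition~9(II) of \cite{Y2} gives, for $k\geq 4$ and $\mathfrak{p}\mid 2$, the values $p=2$, $b=z=1$, $f=\widetilde{f}=t'=1$, $e_{\mathfrak{p}}=q=2$, whence $c_{\mathfrak{p}}=\Lambda_{\mathfrak{p}}=1$ directly. There is no subtle $2$-adic analysis; one simply avoids the exceptional case (which occurs only when $k=3$) and reads off the answer. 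So your sketch becomes a complete proof with two lines of parameter-checking, not the ``more delicate bookkeeping'' you anticipate.
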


\begin{proof}
Let $k\geq 3$. Then $\dic{2^{k}}=\quat[2^{k}]$. From~\cite[Example 7.40, case~1]{CR},                   
\begin{equation}\label{eq:wedderquat}
\Q [\quat[2^k]]\cong \Q [\dih{2^{k-1}}]\oplus \mathbb{H}_{2^{k-1}},
\end{equation}
where $\mathbb{H}_{2^{k-1}}$ is the quaternion skew field defined by \req{defhd}. Using~\cite[Example~7.39]{CR}, each simple component of $\Q [\dih{2^{k-2}}]$ is a matrix ring over a field, and so its Schur index is equal to one. As in the proof of \repr{torsionkminusone}, one may show that the Schur index of the remaining simple component $\mathbb{H}_{2^{k-1}}$ of \req{wedderquat} is equal to two, and that this component corresponds to the irreducible representation $U_{1,0}^{(2)}$. To study the local Schur index $\Lambda_{\mathfrak{p}}$ of each finite prime $\mathfrak{p}$ dividing the centre $E_{2^{k-1}}$ of the simple component $\mathbb{H}_{2^{k-1}}$, we again apply~\cite[Proposition~9]{Y2}. With the same notation, we have $m=2^{k-1}$, $r=2^{k-1}-1$, $u=s=2$, $h=2^{k-2}$, $d_{1}=2^{k-1}$ and $v_{1}=2$. If $\mathfrak{p}$ does not divide $2^{k-1}$ then $\Lambda_{\mathfrak{p}}=1$ by~\cite[Proposition~9(I)]{Y2}. So suppose that $\mathfrak{p}$ divides $2^{k-1}$. With the notation of~\cite[Proposition~9(II)]{Y2}, $b=z=1$ and $p=2$. If $k=3$ then we are in the exceptional case of~\cite[Proposition~9(II)]{Y2}, so $\Lambda_{\mathfrak{p}}=2$. Thus there exists a finite prime of the centre $E_{2^{k-1}}$ of $\mathbb{H}_{2^{k-1}}$ dividing $2^k$ with even local Schur index, and it follows from \reth{carter} that the torsion of $K_{-1}(\Z[\quat])$ is trivial. Assume then that $k\geq 4$. So $f=\widetilde{f}=t'=1$ and $e_{\mathfrak{p}}=q=2$, thus $c_{\mathfrak{p}}=\Lambda_{\mathfrak{p}}=1$. Then the simple component $\mathbb{H}_{2^{k-1}}$ of $\Q [\quat[2^k]]$ whose Schur index is equal to two satisfies the property that its local Schur indices at every finite prime of the centre dividing $2^k$ are odd. Hence the integer $s$ of \req{carterkminusone} is equal to one, and thus the torsion of $K_{-1}(\Z[\quat[2^k]])$ is equal to $\Z_{2}$ as required.
\end{proof}

Now let $G$ be a binary polyhedral group. We recall that a group is said to be \emph{$2$-hyper-elementary} if it is a semi-direct product of a cyclic normal subgroup of odd order and a $2$-group. Since $G$ is not itself $2$-hyper-elementary, induction/restriction techniques may be used to calculate the torsion of $K_{-1}(\Z [G])$. 

\begin{prop}\label{prop:torsbinpoly}
The torsion of $K_{-1}(\Z [G])$ is trivial if $G\cong\tstar$, and is equal to $\Z_{2}$ if $G\cong \ostar$ or $G \cong \istar$.
\end{prop}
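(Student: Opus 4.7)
The plan is to apply Carter's theorem (\reth{carter}): the torsion subgroup of $K_{-1}(\Z[G])$ is $(\Z_{2})^{s}$, where $s$ counts the simple Wedderburn components of $\Q[G]$ of even Schur index whose local Schur indices at every finite prime of the centre dividing $\ord{G}$ are odd. For each of $G\cong\tstar,\ostar,\istar$, I would first identify the simple components of $\Q[G]$ of even Schur index, and then compute the relevant local Schur indices.

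Using induction and the Mackey formula along the chains $\quat\leq\tstar\leq\ostar$ and $\quat\leq\tstar\leq\istar$, the `natural' $2$-dimensional spin representation of each $G$ restricts to the standard $2$-dimensional representation of $\quat$, whose simple $\Q$-component is the quaternion division algebra $\mathbb{H}(\Q)$ of Schur index~$2$ (cf.\ the proof of \repr{torsquat}). Tracking character fields, the corresponding simple component of $\Q[G]$ has centre $\Q$, $\Q(\sqrt{2})$ and $\Q(\sqrt{5})$ respectively, and is isomorphic to $\mathbb{H}(\Q)$, $\mathbb{H}(\Q(\sqrt{2}))$ and $\mathbb{H}(\Q(\sqrt{5}))$. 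The Schur index is $2$ in all three cases because neither $\Q(\sqrt{2})$ nor $\Q(\sqrt{5})$ (both totally real) splits the rational Hamilton algebra: the scalar extension $\mathbb{H}(\Q)\otimes_{\Q}K$ remains ramified at the real infinite places of $K$. A Clifford-theoretic analysis of the remaining Wedderburn components of $\Q[G]$, which arise either from the non-spin quotients $\tstar/\quat\cong\Z_{3}$, $\ostar/\ang{-1}\cong S_{4}$ and $\istar/\ang{-1}\cong A_{5}$, or from inducing suitably from $\tstar$, shows that each is a matrix algebra over a field and hence has Schur index~$1$.

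It remains to compute local Schur indices of the quaternionic components at primes of the centre dividing $\ord{G}$, using the classical fact that any quadratic extension of $\Q_{p}$ splits the local quaternion division algebra. For $\tstar$, the local completion $\mathbb{H}(\Q_{2})$ is the unique quaternion division algebra over $\Q_{2}$, so the local index at the prime $(2)\mid 24$ is $2$; the component $\mathbb{H}(\Q)$ therefore fails the $s$-condition, and thus $s=0$, so $K_{-1}(\Z[\tstar])$ is torsion free. For $\ostar$ and $\istar$, the algebras $\mathbb{H}(\Q(\sqrt{2}))$ and $\mathbb{H}(\Q(\sqrt{5}))$ are ramified \emph{only} at the two real infinite places of their centres, so every local Schur index at every finite prime, in particular at the primes above $2$, $3$, and (when relevant) $5$ dividing $48$ or $120$, equals $1$. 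In each case this yields $s=1$, and the torsion is $\Z_{2}$.

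The main obstacle will be justifying the Schur-index claims in the second paragraph, in particular showing that the `higher-dimensional' spin representations of $\ostar$ (of dimension $4$) and $\istar$ (of dimensions $4$ and $6$) all have Schur index~$1$ over $\Q$. For these I would induce appropriately from $\tstar$ and apply Mackey's decomposition theorem, combined with the rationality of the corresponding complex characters, to realise each such representation explicitly over $\Q$.
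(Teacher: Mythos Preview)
Your approach is genuinely different from the paper's. The paper does \emph{not} analyse the Wedderburn decomposition of $\Q[G]$ directly; instead it uses Carter's $2$-hyperelementary induction theorem to sandwich $K_{-1}(\Z[G])$ between copies of $\bigoplus_{H}K_{-1}(\Z[H])$, where $H$ runs over the $2$-hyperelementary subgroups of $G$. For $\tstar$ every such $H$ has torsion-free $K_{-1}$ by \repr{torsionkminusone} and \repr{torsquat}, so the result is immediate. For $\ostar$ and $\istar$, the only $H$ with non-trivial torsion is $L=\quat[16]$ (resp.\ $L=\dic{20}$), and a Mackey-formula computation using $N_{G}(L)=L$ shows that the composite $K_{-1}(\Z[L])\to K_{-1}(\Z[G])\to K_{-1}(\Z[L])$ is the identity on torsion, whence the torsion of $K_{-1}(\Z[G])$ is exactly $\Z_{2}$. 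This bypasses the Wedderburn decomposition of $\Q[G]$ entirely.

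Your direct approach can be made to work, but there is a concrete error: the higher-dimensional spin representations do \emph{not} all have Schur index~$1$. From the decompositions~\reqref{wedderostar} and~\reqref{wedderistar}, the $4$-dimensional faithful representation of $\ostar$ corresponds to the component $\mathcal{M}_{2}(\widehat{\mathbb{H}})$ with $\widehat{\mathbb{H}}=(-1,-3)/\Q$, and for $\istar$ the $4$- and $6$-dimensional faithful representations correspond to $\mathcal{M}_{2}(\widehat{\mathbb{H}})$ and $\mathcal{M}_{3}(\mathbb{H}(\Q))$ respectively; all three have Schur index~$2$. Your proposed Clifford-theoretic argument that these are matrix algebras over fields cannot succeed.

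Fortunately this does not change the value of~$s$. The algebra $(-1,-3)/\Q$ is ramified precisely at $3$ and $\infty$, and $(-1,-1)/\Q$ at $2$ and $\infty$; since $3$ divides both $48$ and $120$, and $2$ divides $120$, each of these extra components has \emph{even} local Schur index at a finite prime of the centre dividing $\ord{G}$, and hence contributes nothing to~$s$. So only $\mathbb{H}_{8}$ (for $\ostar$) and $\mathbb{H}_{5}$ (for $\istar$) count, and $s=1$ as you claim. To repair your argument you must correct the Schur-index assertion and add the local-index check for these additional quaternionic components; the paper's induction argument avoids this bookkeeping altogether.
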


\begin{proof}
Let $G$ be a binary polyhedral group. Applying~\cite[Theorem~3(iii) and page~1936]{C1}, we have the composition 
\begin{equation}\label{eq:resind}
\oplus_{H}\, K_{-1}(\Z[H])\stackrel{\mathsf{ind}}{\to} K_{-1}(\Z[G]) \stackrel{\mathsf{res}}{\to} \oplus_{H}\, K_{-1}(\Z[H]),
\end{equation}
where $\mathsf{ind}$ and $\mathsf{res}$ are the usual induction and restriction maps that are surjective and injective respectively when restricted to the corresponding torsion subgroups, and $H$ runs over the conjugacy classes of the $2$-hyper-elementary subgroups of $G$~\cite[Theorem~3(iii) and p.~1936]{C1}. Restricting to these torsion subgroups, we see that the torsion of $K_{-1}(\Z[G])$ injects into that of $\oplus_{H}\, K_{-1}(\Z[H])$. The non-trivial $2$-hyper-elementary subgroups of $\tstar$ are $\Z_{2}$, $\Z_{3}$, $\Z_{4}$, $\Z_{6}$ and $\quat$, those of $\ostar$ are $\Z_{2}$, $\Z_{3}$, $\Z_{4}$, $\Z_{6}$, $\Z_{8}$, $\dic{12}$, $\quat$ and $\quat[16]$, and those of $\istar$ are $\Z_{2}$, $\Z_{4}$, $\Z_{6}$, $\Z_{10}$, $\quat$, $\dic{12}$ and $\dic{20}$~(see~\cite[Lemma~14.3]{Sw3} and~\cite[Proposition~85]{GG2}). If $m\in \N$, the group algebra $\Q[\Z_{m}]$ splits~\cite[Example~7.38]{CR}, so the torsion of $K_{-1}(\Z [\Z_{m}])$ is trivial~\cite[page~1928]{C1}. Further, by Propositions~\ref{prop:torsionkminusone} and~\ref{prop:torsquat}, the torsion of $K_{-1}(\Z [\quat])$ and of $K_{-1}(\Z [\dic{12}])$ is also trivial, and setting $L=\quat[16]$ (resp.\ $L=\dic{20}$) if $G=\ostar$ (resp.\ $G=\istar$), the torsion of $K_{-1}(\Z[L])$ is $\Z_{2}$. The injectivity of $\textsf{res}$ in \req{resind} implies that the torsion of $K_{-1}(\Z [\tstar])$ is trivial, which gives the result in this case. 

So let $G=\ostar$ or $\istar$, and let $L$ be as defined above. Now $G$ possesses a single conjugacy class of subgroups isomorphic to $L$~\cite[Lemma~14.3]{Sw3}, and since $L$ is the only subgroup of $G$ for which the torsion of $K_{-1}(\Z[L])$ is non trivial, we need only to consider the restriction of \req{resind} to the factor $H=L$:
\begin{equation}\label{eq:lgl}
K_{-1}(\Z[L])\stackrel{\mathsf{ind}}{\to} K_{-1}(\Z[G]) \stackrel{\mathsf{res}}{\to} K_{-1}(\Z[L]).
\end{equation}
It thus suffices to show that the restriction of~\reqref{lgl} to the corresponding torsion subgroups is the identity. Now $K_{-1}(\cdot)$ is a Mackey functor~\cite[Theorem 11.2]{O}, so we may apply Mackey's formula that describes the composition~\reqref{lgl} as the sum of the maps:
\begin{equation}\label{eq:mackey}
K_{-1}(\Z[L])\stackrel{\mathsf{res}}{\to} K_{-1}(\Z[x_{i}^{-1}Lx_{i}\cap L]) \stackrel{c_{x_{i}}}{\to} K_{-1}(\Z[L]),
\end{equation}
where $G=\coprod Lx_{i}L$ is a double coset decomposition of $G$, and the map $c_{x_{i}}$ is induced by the homomorphism $x_{i}^{-1}Lx_{i}\cap L \to L$ defined by $y\longmapsto x_{i}yx_{i}^{-1}$~\cite[Section~11a]{O}. Let $N_{G}(L)$ denote the normaliser of $L$ in $G$. If $x_{i}\notin N_{G}(L)$ then the torsion of $K_{-1}(\Z[x_{i}^{-1}Lx_{i}\cap L])$ is trivial, and the corresponding map~\reqref{mackey} contributes zero to the restriction of~\reqref{lgl} to the torsion subgroups. If on the other hand, $x_{i}\in N_{G}(L)$, the corresponding map~\reqref{mackey} is an isomorphism. Now $L\subset N_{G}(L)\subset G$, and since $L$ is not normal in $G$ and $G$ has no proper subgroup that strictly contains $L$~\cite[Proposition~85]{GG2}, it follows that $N_{G}(L)=L$. So there is only one double coset representative $x_{i}$ that belongs to $N_{G}(L)$, and for this $x_{i}$, it follows that the restriction of~\reqref{lgl} to the torsion subgroups is equal to the restriction of the isomorphism~\reqref{mackey} to the torsion subgroups. Since the torsion of $K_{-1}(\Z[L])$ is $\Z_{2}$, the same conclusion holds for $K_{-1}(\Z[G])$.
\end{proof}

\begin{rems}\mbox{}
\begin{enumerate}[(a)]
\item The induction/restriction arguments in the proof of \repr{torsbinpoly} were inspired by those given in~\cite[Paragraph~14]{Sw3} for the $\widetilde{K}_{0}$-groups.

\item Let $G=\ostar$ or $\istar$. We sketch an alternative proof of the fact that $K_{-1}(\Z[G])$ has non-trivial torsion that uses~\cite[Proposition~4.11]{Sw3}. The embedding of $G$ in the Hamilton quaternions $\mathbb{H}$~\cite[Chapter~7]{Co} induces an algebra homomorphism $\map{\psi_{G}}{\Q[G]}[\mathbb{H}]$. By~\cite[Proposition~4.11 and its proof]{Sw3}, $\psi_{G}(\Z[G])$ is a maximal order $\Gamma_{G}$ that is completely described in~\cite[page~79]{Sw3}, from which one may prove that $\im{\psi_{G}}$ is equal to $\mathbb{H}_{d}$, where $d=8$ (resp.\ $d=5$) if $G=\ostar$ (resp.\ $G=\istar$), in other words, $\mathbb{H}_{d}$ appears as a factor in the Wedderburn decomposition of $\Q[G]$. On the other hand, from \req{wedderquat} (resp.\ \req{weddic4p}), we know that $\mathbb{H}_{d}$ also appears in the Wedderburn decomposition of $\Q[\quat[16]]$ (resp.\ $\Q[\dic{20}]$), and from the proof of \repr{torsquat} (resp.\ \repr{torsionkminusone}), that it contributes a $\Z_{2}$-term to the torsion of $\K_{-1}(\Z [\quat[16]])$ (resp.\ $\K_{-1}(\Z [\dic{20}])$). It follows then from~\cite[Theorem~1]{C1} that $K_{-1}(\Z[G])$ has non-trivial torsion.

\item Using the \texttt{GAP}~package \texttt{Wedderga}~\cite{gap}, one may obtain the complete Wedderburn decomposition for the binary polyhedral groups:
\begin{align}
\Q [\tstar] &\cong \Q\oplus \Q(\zeta_{3}) \oplus \mathcal{M}_{3}(\Q) \oplus\mathbb{H}_{4}\oplus \mathbb{H}(\Q(\zeta_{3}))\label{eq:weddertstar}\\
\Q [\ostar] &\cong \Q^{2}\oplus \mathcal{M}_{2}(\Q) \oplus 2\mathcal{M}_{3}(\Q) \oplus \mathbb{H}_{8} \oplus \mathcal{M}_{2}(\widehat{\mathbb{H}}),\,\text{and}\label{eq:wedderostar}\\
\Q [\istar] &\cong \Q\oplus \mathcal{M}_{4}(\Q) \oplus \mathbb{H}_{5} \oplus \mathcal{M}_{2}(\widehat{\mathbb{H}}) \oplus \mathcal{M}_{5}(\Q)\oplus \mathcal{M}_{3}(\mathbb{H}(\Q))\oplus \!\mathcal{M}_{3}(\Q(\sqrt{5})),\label{eq:wedderistar}
\end{align}
where $\widehat{\mathbb{H}}$ is the quaternion algebra $(-1,-3)/\Q$. This algebra admits a basis $\brak{1,i,j,k}$ as a $\Q$-vector space, and the algebra multiplication satisfies $ij=-ji=k$, $i^{2}=-1$ and $j^{2}=-3$. Somewhat surprisingly, we were not able to find the decompositions~\reqref{weddertstar}--\reqref{wedderistar} in the literature.
\end{enumerate}
\end{rems}

In order to prove \reth{tablas} and to obtain Table~\ref{tab:finite} given on page~\pageref{tab:finite}, we will need to calculate $K_{-1}(\Z [G])$ for some other dicyclic groups, namely $G=\dic{4\mu}$, where $\mu\in \brak{6,9,10}$. We now compute the torsion of $K_{-1}(\Z[\dic{4\mu}])$ for $\mu=9$, as well as the case where $\mu=2\tau$, where $\tau$ is an odd prime, which includes the cases $\mu=6$ and $\mu=10$.

\begin{prop}\label{prop:torsiondicextra}\mbox{}
\begin{enumerate}[(a)]
\item If $\mu=2\tau$, where $\tau$ is an odd prime, then the torsion of $K_{-1}(\Z [\dic{4\mu}])$ is $\Z_{2}$.
\item The group $K_{-1}(\Z[\dic{36}])$ is torsion free. 
\end{enumerate}
\end{prop}

\begin{proof}\mbox{}
\begin{enumerate}[(a)]
\item Let $\mu=2\tau$, where $\tau$ is an odd prime. From~\cite[Example~7.40]{CR} or~\cite[pp.~75--76]{Sw3}, and using~\reqref{decompQG} and the notation of \resec{torsionkminusone}, we have:
\begin{align}
\Q [\dic{4\mu}] &\cong \Q [\dic{8\tau}]\notag\\
&\cong \Q^4 \oplus \Biggl(\,\bigoplus_{d \in \brak{\tau,2\tau}} \mathcal{M}_{2}\left( \Q\left(\zeta_{d}+\zeta_{d}^{-1} \right)\right)\Biggr) \oplus \mathbb{H}_{4} \oplus \mathbb{H}_{4\tau}.\label{eq:wedder4mu}
\end{align} 
The first three factors of \req{wedder4mu} are matrix rings over fields, so their Schur index is equal to one. Further, the factor $\mathbb{H}_{4}$ appears in the Wedderburn decomposition of the $\Q$-algebra $\Q[\tstar]$, and since $K_{-1}(\Z[\tstar])$ is torsion free by~\repr{torsbinpoly}, $\mathbb{H}_{4}$ does not contribute to the torsion of $K_{-1}(\Z[\dic{4\mu}])$. It remains to determine the Schur and local Schur indices of the remaining factor $\mathbb{H}_{4\tau}$. Once more, 
we follow the proof of \repr{torsionkminusone}, and we use the results and notation of~\cite{Y2,Y}, taking $m=4\tau$, $r=4\tau-1$, $h=2\tau$, and $u=s=2$. Using~\cite[Proposition~5]{Y}, the representation $U_{1,0}^{(2)}$ gives rise to an irreducible representation of $\Q[\dic{4\mu}]$, and the last part of~\cite[Example~3, Section~6]{Y} implies that its Schur index is equal to two. Since the Schur index of each of the first four components of \req{wedder4mu} is equal to one, it follows that the simple component $\mathbb{H}_{4\tau}$ of $\Q[\dic{4\mu}]$ corresponds to $U_{1,0}^{(2)}$. With the notation of~\cite[Proposition~9]{Y}, we have $d_{1}=4\tau$ and $v_{1}=2$. Let $\mathfrak{p}$ be a finite prime of the centre of $\mathbb{H}_{4\tau}$ that divides $4\tau$. Then $\mathfrak{p}$ divides $2$ or $\tau$. If $\mathfrak{p} \divides 2$, then $p=2$. We are not in the exceptional case of~\cite[Proposition~9(II)]{Y} since the order of $r$ in $\Z_{\tau}^{\ast}$ is equal to $2$. Further, $a=2$, and $t'$ is equal to the order of $4\tau-1$ in $\Z_{\tau}^{\ast}$, so $t'=2$. If $\mathfrak{p} \divides \tau$, then $p=\tau$, $a=1$, and $t'$ is equal to the order of $4\tau-1$ in $\Z_{4}^{\ast}$, so $t'=2$ also. So in both cases $e_{\mathfrak{p}}=s/t'=1$, hence $c_{\mathfrak{p}}=\Lambda_{\mathfrak{p}}=1$. Thus the simple component $\mathbb{H}_{4\tau}$ of $\Q [\dic{4\mu}]$ whose Schur index is equal to two satisfies the property that its local Schur indices at every finite prime of the centre are odd. Hence the integer $s$ of \req{carterkminusone} is equal to one, and therefore the torsion of $K_{-1}(\Z[\dic{4\mu}])$ is $\Z_{2}$.

\item By~\reqref{decompQG}, the Wedderburn decomposition of the $\Q$-algebra $\Q[\dic{36}]$ is given by:
\begin{equation}\label{eq:wedder36}
\Q[\dic{36}]\cong \Q^{2} \oplus \mathcal{M}_{2}(E_{3})\oplus \mathcal{M}_{2}(E_{9}) \oplus \Q(i) \oplus \mathbb{H}_{6} \oplus \mathbb{H}_{18}.
\end{equation} 
The first four factors of \req{wedder36} are matrix rings over fields, so their Schur index is equal to one. Further, the factor $\mathbb{H}_{6}$ also appears in the Wedderburn decomposition of the $\Q$-algebra $\Q[\dic{12}]$, and since $K_{-1}(\Z[\dic{12}])$ is torsion free by~\repr{torsionkminusone}, $\mathbb{H}_{6}$ does not contribute to the torsion of $K_{-1}(\Z[\dic{36}])$. It thus suffices to determine the Schur and local Schur indices of the remaining factor $\mathbb{H}_{18}$. Following the proof of \repr{torsionkminusone}, we obtain $m=d_{1}=18$, $\mu=h=9$, $r=17$ and $u=s=v_{1}=2$, and the representation $U_{1,0}^{(2)}$ gives rise to an irreducible representation of $\Q[\dic{36}]$ whose Schur index is equal to two. Since the Schur index of each of the first four components of \req{weddic4p} is equal to one, it follows that the simple component $\mathbb{H}_{18}$ of $\Q[\dic{36}]$ corresponds to $U_{1,0}^{(2)}$. If $\mathfrak{p}$ is a finite prime of the centre $E_{9}$ of $\mathbb{H}_{18}$ that divides $36$, then $\mathfrak{p}$ divides $6$, and hence $\mathfrak{p}$ divides $p$, where $p\in \brak{2,3}$. If $\mathfrak{p} \divides 3$, then $p=q=3$, $b=0$, $z=a=e_{\mathfrak{p}}=c_{\mathfrak{p}}=2$, $t'=\widetilde{f}=f=1$, and $\Lambda_{\frak{p}}=2$.
Thus the Schur index of the simple component $\mathbb{H}_{18}$ of the decomposition~\reqref{wedder36} of $\Q[\dic{36}]$ is equal to $2$, but its local Schur indices at every finite prime of the centre of $\mathbb{H}_{18}$ are not always odd. It follows from \reth{carter} that $K_{-1}(\Z[\dic{36}])$ is torsion free.\qedhere
\end{enumerate}
\end{proof}

\subsection{The rank of $K_{-1}(\Z [G])$ for the finite subgroups of $B_{n}(\St)$}\label{sec:rankkminusone}

Let $G$ be a finite subgroup of $B_{n}(\St)$. To calculate the rank of $K_{-1}(\Z [G])$, we shall apply \req{rankkminusone}. In each case, we will thus need to calculate the number $r_{F}$ of distinct irreducible $F[G]$-modules, where $F$ is equal respectively to $\Q$, $\Q_{p}$ and $\F[p]$. Before doing so, we recall the requisite theory (see \cite[pages~492 and~508]{CR} or \cite[pages~25--26]{O}).

Let $F$ be a field of characteristic $p\geq 0$, where $p$ is prime if $p>0$. If $G$ is a finite group of exponent $m$, let:
\begin{equation*}
\widehat{m}=\begin{cases}
m & \text{if $p=0$}\\
m/p^{a} & \text{if $p>0$, where $a$ is the largest power of $p$ that divides $m$.}
\end{cases}
\end{equation*}
Let $F(\zeta_{\widehat{m}})$ be a field extension of $F$ by a primitive $\widehat{m}\up{th}$ root of unity, which we denote by $\zeta_{\widehat{m}}$. Then $F(\zeta_{\widehat{m}})$ is a Galois extension of $F$, whose Galois group, denoted by $\operatorname{\text{Gal}}(F(\zeta_{\widehat{m}})/F)$, is given by:
\begin{equation*}
\operatorname{\text{Gal}}(F(\zeta_{\widehat{m}})/F)\!=\!\setr{\map{\phi}{F(\zeta_{\widehat{m}})}[F(\zeta_{\widehat{m}})]\!}{\text{$\phi$ is an automorphism and $\phi(z)=z$ for all $z\in F$}}\!.
\end{equation*}
Each automorphism $\sigma\in \operatorname{\text{Gal}}(F(\zeta_{\widehat{m}})/F)$ is uniquely determined by its action on $\zeta_{\widehat{m}}$, and is given by $\sigma(\zeta_{\widehat{m}})=\zeta_{\widehat{m}}^t$, where $t$ is an integer that is uniquely defined modulo $\widehat{m}$. Hence $t$ corresponds to an element of the multiplicative group of units $\Z_{\widehat{m}}^{\ast}$, and there is an injective group homomorphism:
\begin{equation}\label{eq:defphi}
\left\{ \begin{aligned}
\phi \colon\thinspace \operatorname{\text{Gal}}(F(\zeta_{\widehat{m}})/F) &\to \Z_{\widehat{m}}^{\ast}\\
\sigma &\longmapsto t,
\end{aligned}\right.
\end{equation}
defined by $\phi(\sigma)=t$. We now recall the definition of $F$-conjugacy class.
\begin{enumerate}
\item If $f,g$ are elements of $G$, we say that they are \emph{$F$-conjugate} if there exists $t\in \im{\phi}$ and $\alpha\in G$ such that $f^t=\alpha g \alpha^{-1}$. The $F$-conjugacy relation is an equivalence relation on $G$, and the $F$-equivalence class of $f$ in $G$ will be denoted by $[f]_{F}$. \item Let
\begin{equation*}
G_{p}'=\set{g\in G}{\gcd{(p,\ordelt{g})}=1},
\end{equation*}
be the set of \emph{$p$-regular elements} of $G$, where $\ordelt{g}$ denotes the order of $g\in G$. An $F$-conjugacy class of $G$ is said to be \emph{$p$-regular} if it is contained in $G_{p}'$.
\end{enumerate}
If $f\in G$ then we denote its usual conjugacy class by $[f]$. 
\begin{rems}\label{rem:fconjclass}
\begin{enumerate}\mbox{}
\item It follows from the definition that
\begin{equation}\label{eq:conjclass}
[f]_{F}= \bigcup_{t\in \im{\phi}} \left[f^t\right],
\end{equation}
in other words, an $F$-conjugacy class is a union of normal conjugacy classes. In particular, $[f]_{F}\supset [f]$. Further, the number of $F$-conjugacy classes of elements of order $n$ is bounded above by the number of usual conjugacy classes of elements of order $n$.

\item\label{it:fconjclassii} If $F=\Q$ then $\phi$ is an isomorphism~\cite[Theorem~1.5]{O}, and $f,g\in G$ are $F$-conjugate if and only if $\ang{f}$ and $\ang{g}$ are conjugate subgroups of $G$.
\end{enumerate}
\end{rems}

By the Witt-Berman Theorem, we have the following result that will be used to compute $r_{F}$ for our groups.
\begin{thm}[{\cite[Theorems~21.5 and~21.25]{CR}}]\label{th:wittberman}
Let $G$ be a finite group, and let $F$ be a field of characteristic $p\geq 0$, where $p$ is prime if $p>0$. 
\begin{enumerate}[(a)]
\item If $p=0$, then $r_{F}$ is equal to the number of $F$-conjugacy classes in $G$.
\item If $p>0$, then $r_{F}$ is equal to the number of $p$-regular $F$-conjugacy classes in $G$.
\end{enumerate}
\end{thm}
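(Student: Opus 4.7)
The plan is to use the standard Galois-descent strategy due to Brauer, Witt and Berman. First, set $K=F(\zeta_{\widehat{m}})$; since $K$ contains all $\widehat{m}\up{th}$ roots of unity, it is a splitting field for $G$ over $F$ in characteristic $p$. A standard descent argument then identifies irreducible $F[G]$-modules with the orbits of $\operatorname{Gal}(K/F)$ acting on (isomorphism classes of) irreducible $K[G]$-modules via $V\mapsto V^{\sigma}$, or equivalently on irreducible characters via $\chi\mapsto \sigma\circ\chi$.

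Next, over the splitting field $K$, I would invoke the classical counting theorem: in characteristic $0$, the number of irreducible $K[G]$-modules equals the number of conjugacy classes of $G$; in characteristic $p>0$, by Brauer's theorem on modular characters, it equals the number of $p$-regular conjugacy classes of $G$. Hence $r_F$ equals the number of $\operatorname{Gal}(K/F)$-orbits on the set of (p-regular, if $p>0$) irreducible characters of $G$ over $K$, and the task reduces to identifying these orbits with $F$-conjugacy classes on the element side.

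The third step is to observe that $\operatorname{Gal}(K/F)$ acts on (p-regular) conjugacy classes through $\phi$ into $\Z_{\widehat{m}}^{\ast}$ by $[g]\mapsto[g^t]$ with $t=\phi(\sigma)$, and that its orbits are by definition the $F$-conjugacy classes of \reqref{conjclass} (restricted to $G_{p}'$ when $p>0$). The key identity $\chi(g^t)=\sigma(\chi(g))$, which holds because each $\chi(g)$ is a sum of $\ordelt{g}\up{th}$ roots of unity and $\sigma$ raises each such root to its $t\up{th}$ power, says that the character table is equivariant for the two Galois actions. Applying Burnside's lemma to both actions and using the orthogonality of characters over $K$ (which implies that for each $\sigma$, the number of characters fixed by $\sigma$ equals the number of (p-regular) conjugacy classes fixed by $\phi(\sigma)$) then yields equality of the two orbit counts, giving both (a) and (b) simultaneously.

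The hard part is the equivariance-plus-Burnside step at the end: in characteristic $p>0$ one cannot use ordinary character theory and must instead call on Brauer's theory of modular characters together with the non-singularity of the square Brauer character table indexed by $p$-regular classes and modular characters; the Brauer-lift formula $\chi(g^t)=\sigma(\chi(g))$ is also more delicate to establish in that setting.
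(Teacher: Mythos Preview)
The paper does not prove this statement at all: it is quoted verbatim as the Witt--Berman Theorem with the citation \cite[Theorems~21.5 and~21.25]{CR}, and the authors use it as a black box to compute $r_{\Q}$, $r_{\Q_{p}}$ and $r_{\F[p]}$ in the subsequent calculations. There is therefore no ``paper's own proof'' to compare your proposal against.

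That said, your sketch is a faithful outline of the classical argument found in Curtis--Reiner and elsewhere. The three ingredients you isolate --- descent from $F$ to the splitting field $K=F(\zeta_{\widehat{m}})$, the counting theorem over a splitting field (Brauer in positive characteristic), and the equivariance of the (Brauer) character table under the two Galois actions --- are exactly the right ones. Your Burnside step is also correct once one notes that the non-singularity of the square character table (ordinary or Brauer) forces the row permutation by $\sigma$ and the column permutation by $\phi(\sigma)$ to be conjugate as matrices, hence to have the same number of fixed points. You are right that the delicate point in characteristic $p>0$ is justifying both the bijection between simple $K[G]$-modules and $p$-regular classes and the identity $\chi(g^{t})=\sigma(\chi(g))$ for Brauer characters; both require Brauer's lifting of roots of unity and the non-degeneracy of the Brauer character table, which is precisely what \cite[Theorem~21.25]{CR} packages.
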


We also need the following results concerning the structure of the Galois groups.
\begin{thm}[\cite{Se}]\label{th:bruno1}
Suppose that $n$ is odd or divisible by $4$. Then $\Q_{p}(\zeta_{n})/\Q_{p}$ is a Galois extension of $\Q_{p}$, and its Galois group $G$ is as follows.
\begin{enumerate}[(a)]
\item\label{it:bruno1a} If $p$ does not divide $n$ then $G$ is cyclic, and there exists an element $\sigma\in G$, the Frobenius element of the extension satisfying $\sigma (\zeta_{n})= \zeta_{n}^p$ that generates $G$. Further, the order of $\sigma$ is the order of $p$ considered as an element of $\Z_{n}^{\ast}$.

\item\label{it:bruno1b} If $n=p^m$, $m\geq 1,$ then $G$ is of order $p^{m-1}(p-1),$ and we have a group isomorphism $G\cong  \Z_{p^m}^{\ast}$. Hence $G$ is cyclic if $p$ is odd or if $p=m=2$, and is isomorphic to the direct product $\Z_{2}$ (generated by the class of $-1$) and $\Z_{p^{m-2}}$ (generated by the class of $5$) if $p=2$ and $m\geq 3$.

\item\label{it:bruno1c} Suppose that $n=p^m n_1$, where $n_{1}\geq 2$ and $p$ does not divide $n_1$. Let $\zeta_1$ be a  primitive $n_1\up{th}$ root of unity, and let $\rho$ be a primitive $p^m\up{th}$ root of unity. Then $G\cong \operatorname{\text{Gal}}(\Q_{p}(\zeta_1)/\Q_{p}) \times \operatorname{\text{Gal}}(\Q_{p}(\rho)/\Q_{p})$.
\end{enumerate}
\end{thm}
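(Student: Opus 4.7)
The plan is to treat the three parts sequentially, using the standard ramification-theoretic analysis of cyclotomic extensions of local fields, as laid out in Serre's \emph{Corps Locaux}.

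For part~(\ref{it:bruno1a}), since $p$ does not divide $n$, the cyclotomic polynomial $\Phi_{n}(X)$ remains separable upon reduction modulo $p$, so $\Q_{p}(\zeta_{n})/\Q_{p}$ is unramified. By the general theory of unramified extensions of $\Q_{p}$, such an extension is determined by its residue field extension $\F[p](\overline{\zeta_{n}})/\F[p]$, and its Galois group is canonically isomorphic to the Galois group of this residue extension. Since the latter is an extension of finite fields generated by a primitive $n$\up{th} root of unity, it is cyclic, generated by the Frobenius $x\mapsto x^{p}$, whose order is the smallest integer $f$ with $p^{f}\equiv 1 \pmod{n}$, which by definition is the order of $p$ in $\Z_{n}^{\ast}$. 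Lifting this element yields the $\sigma$ of the statement.

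For part~(\ref{it:bruno1b}), note first that the hypothesis ``$n$ odd or divisible by $4$'' excludes the trivial case $p=2$, $m=1$. The polynomial $\Phi_{p^{m}}(X+1)$ is Eisenstein at $p$, so $\Q_{p}(\zeta_{p^{m}})/\Q_{p}$ is totally ramified of degree $\varphi(p^{m})=p^{m-1}(p-1)$. The assignment $\sigma\mapsto t$, where $\sigma(\zeta_{p^{m}})=\zeta_{p^{m}}^{t}$, is an injective homomorphism $G\hookrightarrow \Z_{p^{m}}^{\ast}$; comparing orders forces it to be an isomorphism. One then invokes the classical description of $\Z_{p^{m}}^{\ast}$, which is cyclic for $p$ odd or $(p,m)=(2,2)$, and isomorphic to $\Z_{2}\times \Z_{2^{m-2}}$ with generators $-1$ and $5$ respectively in the remaining case $p=2$, $m\geq 3$.

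For part~(\ref{it:bruno1c}), write $\zeta_{n}=\zeta_{1}\rho$ with $\zeta_{1}$ a primitive $n_{1}\up{th}$ root and $\rho$ a primitive $p^{m}\up{th}$ root, so that $\Q_{p}(\zeta_{n})=\Q_{p}(\zeta_{1})\cdot \Q_{p}(\rho)$. By part~(\ref{it:bruno1a}), $\Q_{p}(\zeta_{1})/\Q_{p}$ is unramified, and by part~(\ref{it:bruno1b}), $\Q_{p}(\rho)/\Q_{p}$ is totally ramified. Since an unramified extension and a totally ramified extension of a local field intersect trivially, the two subfields are linearly disjoint over $\Q_{p}$, and the Galois group of the compositum is the direct product of the two individual Galois groups, as required. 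The main obstacle I anticipate is the careful ramification analysis in part~(\ref{it:bruno1b}), notably verifying the Eisenstein condition for $\Phi_{p^{m}}(X+1)$ via $p$-adic valuations of binomial coefficients; all the remaining content reduces to invoking standard facts on unramified and totally ramified local extensions.
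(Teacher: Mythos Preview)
The paper does not supply its own proof of this theorem: it is quoted as a standard result from Serre's \emph{Corps locaux} (reference~[Se]) and used as a black box in the subsequent rank computations. Your sketch is the standard argument one finds in that reference, and is correct; there is nothing to compare.
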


\begin{thm}[\cite{Cox}]\label{th:bruno2}
Let $k$ denote the order of $p$ considered as an element of $\Z_{n}^{\ast}$. Then the group $\operatorname{\text{Gal}}(\F[p](\zeta_{n})/\F[p])$ is isomorphic to $\Z_{k}$.
\end{thm}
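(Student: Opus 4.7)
The plan is to reduce everything to the structure theorem for finite fields. The hypothesis implicit in the statement is that $\gcd(p,n)=1$, which is automatic in every invocation of the theorem in \req{rankkminusone}, since $r_{\F[p]}$ is computed after replacing the exponent $m$ of $G$ by $\widehat{m}$, and $\widehat{m}$ is coprime to $p$ by construction. Under this hypothesis, $x^{n}-1$ is separable over $\F[p]$, so a primitive $n$\up{th} root of unity $\zeta_{n}$ exists in some finite extension of $\F[p]$, and the compositum $\F[p](\zeta_{n})$ is well defined.

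First I would recall the two standard facts about finite fields: every finite extension of $\F[p]$ is of the form $\F[p^d]$ for a unique positive integer $d$, such an extension is Galois, and $\operatorname{Gal}(\F[p^d]/\F[p])$ is cyclic of order $d$, generated by the Frobenius automorphism $x\mapsto x^{p}$. Writing $\F[p](\zeta_{n})=\F[p^d]$, proving the theorem amounts to showing that $d=k$, where $k$ is the multiplicative order of $p$ in $\Z_{n}^{\ast}$.

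The key arithmetic input is that $\F[p^d]^{\ast}$ is cyclic of order $p^{d}-1$, so it contains an element of order $n$ if and only if $n$ divides $p^{d}-1$. This gives the chain of equivalences
\begin{equation*}
\zeta_{n}\in \F[p^d] \iff n \mid (p^{d}-1) \iff p^{d}\equiv 1 \pmod{n} \iff k \mid d,
\end{equation*}
so the smallest such $d$ is exactly $k$, and hence $\F[p](\zeta_{n})=\F[p^k]$ with Galois group $\Z_{k}$, as required. There is essentially no obstacle: the only subtlety worth flagging is the coprimality condition $\gcd(p,n)=1$, which one must verify is in force at the point of application; beyond that, the proof is a direct appeal to the cyclicity of the multiplicative group of a finite field.
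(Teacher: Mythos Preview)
Your argument is correct and is the standard proof of this classical fact. Note, however, that the paper does not give its own proof of this statement: \reth{bruno2} is simply quoted from~\cite{Cox} as background, so there is no in-paper proof to compare against. Your write-up is precisely the textbook argument one would expect to find in that reference.
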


We suppose in what follows that $G$ is dicyclic of order $4m$. We first apply the above results in order to determine the rank of $K_{-1}(\Z[G])$ where $m$ is an odd prime. We then go on to to study the case where $m$ is a power of $2$.

\begin{thm}\label{th:kminusonedic4prime}
Let $m$ be an odd prime, and let $\lambda$ be the number of $\Q_{2}$-conjugacy classes (or equivalently $\F[2]$-conjugacy classes) of the elements of $\dic{4m}$ of order $m$. Then
\begin{equation*}
K_{-1}(\Z[\dic{4m}])\cong
\begin{cases}
\Z^{\lambda}\oplus \Z_{2} & \text{if $m\equiv 1\bmod 4$}\\
\Z^{\lambda} & \text{if $m\equiv 3\bmod 4$.}
\end{cases}
\end{equation*}
\end{thm}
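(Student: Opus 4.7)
The plan is to derive the theorem from Carter's formula \reqref{carterkminusone}, using \repr{torsionkminusone} for the torsion summand and computing the rank $r$ via \reqref{rankkminusone}. Since the torsion part of the statement is precisely the content of \repr{torsionkminusone}, what remains is to show that $r = \lambda$. As $\ord{\dic{4m}} = 4m$, the only primes contributing to the sum in \reqref{rankkminusone} are $p = 2$ and $p = m$, so the task reduces to computing the five numbers $r_{\Q}$, $r_{\Q_{2}}$, $r_{\F[2]}$, $r_{\Q_{m}}$ and $r_{\F[m]}$ via the Witt-Berman theorem.

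First I would observe that $r_{\Q} = 5$. This follows immediately from the Wedderburn decomposition \reqref{weddic4p}, whose five simple components correspond to the irreducible $\Q$-representations of $\dic{4m}$; alternatively, by \rerem{fconjclass}(\ref{it:fconjclassii}), $r_{\Q}$ equals the number of conjugacy classes of cyclic subgroups of $\dic{4m}$, which are easily enumerated as the four subgroups of $\ang{x}$ together with a single class coming from the coset $\ang{x}y$ (any two cyclic subgroups of order $4$ being conjugate since $m$ is odd).

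The heart of the proof is the local analysis at $p = 2$. The exponent of $\dic{4m}$ is $4m$, and \reth{bruno1}(\ref{it:bruno1c}) identifies the image of $\operatorname{Gal}(\Q_{2}(\zeta_{4m})/\Q_{2})$ inside $\Z_{4m}^{\ast} \cong \Z_{m}^{\ast} \times \Z_{4}^{\ast}$ as $\ang{2} \times \Z_{4}^{\ast}$. Partitioning the elements of $\dic{4m}$ according to order, which lies in $\brak{1, 2, 4, m, 2m}$, I would count the $\Q_{2}$-conjugacy orbits stratum by stratum. The identity and $x^{m}$ each give one class. The elements of order $m$ give $\lambda$ classes, by definition of $\lambda$, since the Galois action descends to $\ang{2} \subseteq \Z_{m}^{\ast}$ and ordinary conjugation by $y$ inverts, so the orbits are those of $\ang{2,-1}$ on $\Z_{m}^{\ast}$. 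The elements of order $2m$ also give $\lambda$ classes, because $\Z_{2m}^{\ast}$ is canonically identified with $\Z_{m}^{\ast}$ (as $m$ is odd) and the same orbit computation applies. Finally, the $2m$ elements $x^{i}y$ of order $4$ split into two ordinary conjugacy classes by the parity of $i$, but the Galois action by $-1 \in \Z_{4}^{\ast}$ sends $x^{i}y$ to $(x^{i}y)^{-1} = x^{i+m}y$, which reverses parity since $m$ is odd, so these two classes merge into a single $\Q_{2}$-class. Hence $r_{\Q_{2}} = 2\lambda + 3$. For $r_{\F[2]}$ one restricts to $2$-regular elements (orders $1$ and $m$); \reth{bruno2} gives the same Galois image $\ang{2}$ on $\Z_{m}^{\ast}$, so $r_{\F[2]} = \lambda + 1$ and thus $r_{\Q_{2}} - r_{\F[2]} = \lambda + 2$.

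At $p = m$, Theorems \ref{th:bruno1} and \ref{th:bruno2} together show that the Galois action is full on the $\Z_{m}^{\ast}$-factor (via part~(\ref{it:bruno1b}) of \reth{bruno1}), while on the $\Z_{4}^{\ast}$-factor the image is generated by $m \bmod 4$, hence trivial if $m \equiv 1 \pmod 4$ and equal to $\Z_{4}^{\ast}$ if $m \equiv 3 \pmod 4$. A short case analysis (mirroring the one above, but now collapsing the order-$m$ and order-$2m$ strata to a single class each because the action on $\Z_{m}^{\ast}$ is full) gives $r_{\Q_{m}} = 6$ and $r_{\F[m]} = 4$ in the first case, and $r_{\Q_{m}} = 5$ and $r_{\F[m]} = 3$ in the second; in either case $r_{\Q_{m}} - r_{\F[m]} = 2$. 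Substituting into \reqref{rankkminusone} yields $r = 1 - 5 + (\lambda + 2) + 2 = \lambda$, which combined with \repr{torsionkminusone} proves the theorem. The main technical obstacle is the bookkeeping of $\Q_{p}$- and $\F[p]$-orbits at $p = 2$, particularly the merging of the two order-$4$ classes and the coincidence that the order-$m$ and order-$2m$ strata both contribute exactly $\lambda$; a further pleasant cancellation is that the contribution at $p = m$ equals $2$ irrespective of $m \bmod 4$, so the dichotomy $m \equiv 1, 3 \pmod 4$ is felt only in the torsion summand coming from \repr{torsionkminusone}.
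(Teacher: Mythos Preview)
Your proposal is correct and follows essentially the same approach as the paper: both reduce to \repr{torsionkminusone} for the torsion and to the Witt--Berman count of $r_{\Q}$, $r_{\Q_{2}}$, $r_{\F[2]}$, $r_{\Q_{m}}$, $r_{\F[m]}$ for the rank, obtaining the identical values $5$, $2\lambda+3$, $\lambda+1$, and $r_{\Q_{m}}-r_{\F[m]}=2$ in both residue cases. Your presentation is slightly more streamlined in that you work with the full exponent $4m$ and the product decomposition of the Galois image from the outset, whereas the paper treats each element order separately; but the substance of the argument, including the key observation that the two ordinary conjugacy classes of order-$4$ elements merge under the $\Q_{2}$-action because $(x^{i}y)^{-1}=x^{i+m}y$ with $m$ odd, is the same.
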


\begin{proof}
Let $G=\dic{4m}$ be given by the presentation~\reqref{presdic}. By \repr{torsionkminusone} and \req{carterkminusone}, it suffices to show that the rank of $K_{-1}(\Z[\dic{4m}])$ is equal to $\lambda$. The group $G$ has one element each ($e$ and $x^m$ respectively) of order $1$ and $2$, $(m-1)$ elements of order $2m$, of the form $x^{i}$, $i$ odd, $1\leq i\leq 2m-1$, and $i\neq m$, $(m-1)$ elements of order $m$, of the form $x^{i}$, $i$ even, $2\leq i\leq 2m-2$, and $2m$ elements of order $4$, of the form $y,xy,\ldots,x^{2m-1}y$. The elements of order $1$ and $2$ each form a single (usual) conjugacy class, those of order $4$ form $2$ conjugacy classes, $\set{x^{i}y}{\text{$0\leq i\leq 2m-2$, $i$ even}}$ and $\set{x^{i}y}{\text{$1\leq i\leq 2m-1$, $i$ odd}}$, while those of order $m$ and $2m$ form $(m-1)$ conjugacy classes of the form $\brak{x^{i}, x^{-i}}$ for $i=1,\ldots,m-1$. Since $r_{\Q}$ is equal to the number of simple components in the Wedderburn decomposition of $\Q[\dic{4m}]$, it follows from \req{weddic4p} that $r_{\Q}=5$. This may also be obtained by observing that the subgroups of $\dic{4m}$ of order $4$ are its Sylow $2$-subgroups, and so $\dic{4m}$ possesses a single conjugacy class of subgroups of order $4$.

We must thus calculate $r_{\Q_{p}}$ and $r_{\F[p]}$ for $p\in \brak{2,m}$, which we do using \reth{wittberman}. Since there is a unique conjugacy class of elements of order $1$ and $2$, these elements contribute $1$ to each of $r_{\Q_{p}}$ and $r_{\F[p]}$, except in the case of $r_{\F[2]}$, where the element of order $2$ is not $2$-regular, so contributes zero. We thus focus on the elements of order $4,m$ and $2m$. According to \cite[page~26]{O}, it suffices to analyse the $F$-conjugacy classes of the elements of order $4,m$ and $2m$ adjoining an $n\up{th}$ root of unity to $F$ for $n=4,m,2m$, where $F=\Q_2$ or $\Q_m$.

\begin{enumerate}[\textbullet]

\item $\Q_{2}$-conjugacy classes of the order $4$ elements: by \reth{bruno1}(\ref{it:bruno1b}), the monomorphism $\map{\phi}{\operatorname{\text{Gal}}(\Q_{2}(\zeta_{4})/\Q_{2})}[\Z_{4}^{\ast}]$ is an isomorphism and $\im{\phi}=\brak{1,3}$. By \req{conjclass}, $[y]_{\Q_{2}}=[y] \cup [y^3]=[y]\cup [x^my]$ as $y^3=y^2\ldotp y= x^my$, and so there is a single $\Q_{2}$-class of order $4$ elements because $m$ is odd.

\item $\Q_{2}$-conjugacy classes of the elements of order $m$ and $2m$: by hypothesis, the number of $\Q_{2}$-conjugacy classes of the elements of order $m$ is equal to $\lambda$. \reth{bruno1}(\ref{it:bruno1c}) implies that the number of $\Q_{2}$-conjugacy classes of the elements of order $2m$ is also equal to $\lambda$.
\end{enumerate}

We conclude that $r_{\Q_{2}}=2\lambda+3$.

\begin{enumerate}[\textbullet]
\item $\Q_{m}$-conjugacy classes of the order $4$ elements: by \reth{bruno1}(\ref{it:bruno1a}), we have a monomorphism $\map{\phi}{\operatorname{\text{Gal}}(\Q_{m}(\zeta_{4})/\Q_{m})}[\Z_{4}^{\ast}]$, $\operatorname{\text{Gal}}(\Q_{m}(\zeta_{4})/\Q_{m})$ is cyclic, and its order is equal to that of $m$ considered as an element of $\Z_{4}^{\ast}$. If $m\equiv 3\bmod{4}$ then $\phi$ is an isomorphism and $\im{\phi}=\brak{1,3}$. By \req{conjclass}, $[y]_{\Q_{m}}=[y] \cup [y^3]=[y]\cup [x^my]$ as $y^3=y^2\ldotp y= x^my$, and so there is a single $\Q_{m}$-class of order $4$ elements since $m$ is odd. If $m\equiv 1 \bmod{4}$ then $\operatorname{\text{Gal}}(\Q_{m}(\zeta_{4})/\Q_{m})$ is trivial and $\im{\phi}=\brak{1}$. In this case, the $\Q_{m}$-conjugacy classes coincide with the usual conjugacy classes, so there are two $\Q_{m}$-conjugacy classes of elements of order $4$.

\item $\Q_{m}$-conjugacy classes of the elements of order $m$: by \reth{bruno1}(\ref{it:bruno1b}), the monomorphism $\map{\phi}{\operatorname{\text{Gal}}(\Q_{m}(\zeta_{m})/\Q_{m})}[\Z_{m}^{\ast}]$ is an isomorphism and $\im{\phi}=\brak{1,\ldots,m-1}$. By \req{conjclass},
$\displaystyle [x^{2}]_{\Q_{m}}=\bigcup_{i=1}^{m-1} [x^{2i}]$, so there is a single $\Q_{m}$-class of order $m$ elements.

\item $\Q_{m}$-conjugacy classes of the elements of order $2m$:  as $m$ is an odd prime, we have that $\Q_m(\zeta_{2m})=\Q_m(\zeta_m)$, so $\map{\phi}{\operatorname{\text{Gal}}(\Q_{m}(\zeta_{m})/\Q_{m})}[\Z_{m}^{\ast}]\cong\Z_{m-1}$ is an isomorphism, and we conclude that there is a single $\Q_{m}$-class of order $2m$ elements.
\end{enumerate}

It thus follows that $r_{\Q_{m}}=6$ if $m\equiv 1 \bmod{4}$, and $r_{\Q_{m}}=5$ if $m\equiv 3 \bmod{4}$.

\begin{enumerate}[\textbullet]
\item $2$-regular {$\F[2]$}-conjugacy classes: we have $G_{2}'=\brak{e,x^2,x^4,\ldots,x^{2m-2}}$, which splits as the disjoint union of $(m+1)/2$ (usual) conjugacy classes in $\dic{4m}$, comprised of $\brak{e}$, and $\bigl\{x^{2i},x^{2(m-i)}\bigr\}$ for $i=1,\ldots, (m-1)/2$. We thus need to study the $\F[2]$-conjugacy classes of the elements of order $m$.
By \reth{bruno2}, we have $\map{\phi}{\operatorname{\text{Gal}}(\F[2](\zeta_{m})/\F[2])}[\Z_{m}^{\ast}]$, where $\operatorname{\text{Gal}}(\F[2](\zeta_{m})/\F[2])$ is cyclic, of order that of $2$ considered as an element of $\Z_{m}^{\ast}$, and $\im{\phi}=\ang{2}$.

We return for a moment to the $\Q_{2}$-conjugacy classes of the elements of order $m$. Replacing $\phi$ by $\phi_{1}$ to distinguish it from the monomorphism $\phi$ of the previous paragraph, by \reth{bruno1}(\ref{it:bruno1a}), we have $\map{\phi_{1}}{\operatorname{\text{Gal}}(\Q_{2}(\zeta_{m})/\Q_{2})}[\Z_{m}^{\ast}]$, and $\operatorname{\text{Gal}}(\Q_{2}(\zeta_{m})/\Q_{2})$ is cyclic, of order that of $2$ considered as an element of $\Z_{m}^{\ast}$. Thus $\im{\phi_{1}}=\ang{2}$ also. In particular, the $\F[2]$-conjugacy class of an element of $\dic{4m}$ of order $m$ is equal to its $\Q_{2}$-conjugacy class, and thus the number of $\F[2]$-conjugacy classes of elements of order $m$ is equal to $\lambda$. We deduce that $r_{\F[2]}=\lambda+1$.

\item $m$-regular $\F[m]$-conjugacy classes: we have
\begin{equation*}
G_{m}'=\bigl\{e,x^m,y,xy,x^2y,\ldots,x^{2m-2}y,x^{2m-1}y\bigr\}.
\end{equation*}
The four (usual) conjugacy classes in $\dic{4m}$ are:
\begin{equation*}
\text{$\brak{e}$, $\brak{x^m}$, $\bigl\{y,x^2y,\ldots,x^{2m-2}y\bigr\}$ and $\bigl\{xy,x^3y,\ldots,x^{2m-1}y\bigr\}$.}
\end{equation*}
It is thus necessary to study the $\F[m]$-conjugacy classes of the latter two classes, which are those of the elements of $\dic{4m}$ of order $4$. By \reth{bruno2}, we have the monomorphism $\map{\phi}{\operatorname{\text{Gal}}(\F[m](\zeta_{4})/\F[m])}[\Z_{4}^{\ast}]$, and $\operatorname{\text{Gal}}(\F[m](\zeta_{4})/\F[m])$ is cyclic, of order that  of $m$ considered as an element of $\Z_{4}^{\ast}$. As in the case of the $\Q_{m}$-conjugacy classes of the order $4$ elements, if $m\equiv 3\bmod{4}$, there is a single $\F[m]$-class of order $4$ elements, while if $m\equiv 1 \bmod{4}$, the $\F[m]$-conjugacy classes coincide with the usual conjugacy classes, and so there are two $\F[m]$-conjugacy classes of order $4$ elements. Hence $r_{\F[m]}=4$ if $m\equiv 1\bmod{4}$ and $r_{\F[m]}=3$ if $m\equiv 3\bmod{4}$.
\end{enumerate}

So by \req{rankkminusone}, the rank $r$ of $K_{-1}(\Z[\dic{4m}])$ is given by:
\begin{align*}
r &=1-r_{\Q}+(r_{\Q_{2}}-r_{\F[2]})+(r_{\Q_{m}}-r_{\F[m]})\\
&=
\begin{cases}
1-5+(2\lambda+3)-(\lambda+1)+(6-4) & \text{if $m\equiv 1 \bmod{4}$}\\
1-5+(2\lambda+3)-(\lambda+1)+(5-3) & \text{if $m\equiv 3 \bmod{4}$}
\end{cases}\\
&=\lambda.\qedhere
\end{align*}
\end{proof}

If $m$ is an odd prime, the proof of \reth{kminusonedic4prime} indicates that the number $\lambda$ of $\Q_{2}$-conjugacy classes of the elements of $\dic{4m}$ of order $m$ is related to the order of the subgroup $\ang{2}$ in $\Z_{m}^{\ast}$. The question of when $2$ generates $\Z_{m}^{\ast}$ is open and constitutes a special case of Artin's primitive root conjecture. The following proposition shows that it is also interesting for us to know whether $-1$ belongs to $\ang{2}$, and enables us to determine the rank of $K_{-1}(\Z[\dic{4m}])$ solely in terms of $\ord{\ang{2}}$.

\begin{prop}\label{prop:lambdacalc}
Let $m$ and $\lambda$ be defined as in the statement of \reth{kminusonedic4prime}. Then 
\begin{equation*}
\lambda=
\begin{cases}
(m-1)\bigl/\ord{\ang{2}}\bigr. & \text{if $-1\in \ang{2}$}\\
(m-1)\bigl/2\ord{\ang{2}}\bigr. & \text{if $-1\notin \ang{2}$.}
\end{cases}
\end{equation*}
\end{prop}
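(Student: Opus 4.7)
\smallskip
\noindent\textbf{Proof proposal.}
The plan is to reinterpret the $\Q_{2}$-conjugacy classes of the order $m$ elements of $\dic{4m}$ as orbits of a subgroup of $\Z_{m}^{\ast}$ acting on $\Z_{m}^{\ast}$, and then to count these orbits.

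First, I would parametrise the order $m$ elements of $\dic{4m}$. Using the presentation~\reqref{presdic}, $x$ has order $2m$, so the order $m$ elements lie in $\ang{x^{2}}$ and are exactly $\set{x^{2i}}{i\in \Z_{m}^{\ast}}$. Since $yxy^{-1}=x^{-1}$ and elements of $\ang{x}y$ have order $4$, the centraliser of $x^{2i}$ in $\dic{4m}$ is $\ang{x}$, so the usual conjugacy class of $x^{2i}$ is $\brak{x^{2i},x^{-2i}}$. In other words, under the bijection $x^{2i}\leftrightarrow i$, the usual conjugacy classes of order $m$ elements correspond to the orbits of $\brak{\pm 1}\leq \Z_{m}^{\ast}$ acting on $\Z_{m}^{\ast}$ by multiplication.

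Next, I would invoke the computation already carried out in the proof of \reth{kminusonedic4prime}: by \reth{bruno1}(\ref{it:bruno1a}), the image of $\map{\phi_{1}}{\operatorname{Gal}(\Q_{2}(\zeta_{m})/\Q_{2})}[\Z_{m}^{\ast}]$ is the cyclic subgroup $\ang{2}$ of $\Z_{m}^{\ast}$ generated by the class of $2$. Combining this with \req{conjclass}, the $\Q_{2}$-conjugacy class of $x^{2i}$ is
\begin{equation*}
[x^{2i}]_{\Q_{2}}=\bigcup_{t\in \ang{2}} [x^{2it}]=\bigcup_{t\in \ang{2}} \brak{x^{2it},x^{-2it}}.
\end{equation*}
Transporting to $\Z_{m}^{\ast}$, this class is precisely the orbit of $i$ under multiplication by the subgroup $H=\ang{-1,2}\leq \Z_{m}^{\ast}$. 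Consequently $\lambda$ equals the number of $H$-orbits in $\Z_{m}^{\ast}$, and since $H$ acts freely (being a subgroup acting by translations on the group containing it), this number equals $\ord{\Z_{m}^{\ast}}\bigl/\ord{H}=(m-1)/\ord{H}$.

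Finally, I would split into the two cases of the statement. If $-1\in \ang{2}$ then $H=\ang{2}$, giving $\lambda=(m-1)/\ord{\ang{2}}$. If $-1\notin \ang{2}$ then $\ang{-1}\cap \ang{2}=\brak{1}$, so $\ord{H}=2\ord{\ang{2}}$, giving $\lambda=(m-1)/(2\ord{\ang{2}})$. There is no real obstacle here beyond making the translation between multiplicative actions on $\Z_{m}^{\ast}$ and $F$-conjugacy classes precise; the only slightly subtle point is keeping track of the two sources of the acting subgroup (the inner conjugation by $y$ contributing $-1$, and the Galois action contributing $\ang{2}$), which is handled cleanly by combining \req{conjclass} with the explicit conjugation formula $yxy^{-1}=x^{-1}$.
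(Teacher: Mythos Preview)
Your proposal is correct and follows essentially the same approach as the paper. Both arguments identify the $\Q_{2}$-conjugacy classes of order~$m$ elements with the cosets of $\ang{-1,2}$ in $\Z_{m}^{\ast}$; the paper establishes that all classes have the same cardinality via an explicit bijection $[x^{2}]_{\Q_{2}}\to[x^{2i}]_{\Q_{2}}$ using B\'ezout, while you obtain it more directly by noting that a subgroup acts freely on its ambient group by multiplication.
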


\begin{exs}\mbox{}
\begin{enumerate}[(a)]
\item Suppose that $m$ is a Fermat number, of the form $2^{2^{s}}+1$, where $s\in\N$. Then $\ord{\ang{2}}=2^{s+1}$ and $-1\in \ang{2}$, so the rank of $K_{-1}(\Z[\dic{4m}])$ is equal to $\lambda=2^{2^{s}-s-1}$. For example, if $m=257$ then $\lambda=16$ and $K_{-1}(\Z[\dic{1\,028}])\cong \Z_{2}\oplus \Z^{16}$.
\item Suppose that $m$ is a Mersenne prime, of the form $2^{p}-1$, where $p$ is prime. Then $\ord{\ang{2}}=p$ and $-1\notin\ang{2}$, so the rank of $K_{-1}(\Z[\dic{4m}])$ is equal to $\lambda=\frac{2^{p}-2}{2p}=\frac{2^{p-1}-1}{p}$. For example, if $m=127$ then $\lambda=9$ and $K_{-1}(\Z[\dic{508}])\cong \Z^{9}$, and if $m=8\,191$ then $\lambda=315$ and $K_{-1}(\Z[\dic{32\,728}])\cong \Z^{315}$.
\end{enumerate}
\end{exs}

\begin{proof}[Proof of \repr{lambdacalc}]
Using \req{presdic}, the elements of $\dic{4m}$ of order $m$ are of the form $x^{2i}$, $1\leq i\leq m-1$, and $[x^{2i}]=\brak{x^{2i},x^{-2i}}$, in particular, they form $(m-1)/2$ distinct (usual) conjugacy classes in $\dic{4m}$. Let $1\leq i \leq m-1$. Since $m$ is prime, there exist $\tau,\mu\in \Z$ such that $\tau i+\mu m=1$. One may check easily that the maps $[x^2]_{\Q_{2}}\to [x^{2i}]_{\Q_{2}}$ and $[x^{2i}]_{\Q_{2}} \to [x^2]_{\Q_{2}}$, defined respectively by $w\longmapsto w^i$ and $z\longmapsto z^\tau$, are mutual inverses, and hence $[x^{2i}]_{\Q_{2}}$ has the same number of elements as $[x^{2}]_{\Q_{2}}$. Thus the number of $\Q_{2}$-conjugacy classes of the elements of order $m$, which is equal to $\lambda$, is just $(m-1)$ divided by the cardinal of $[x^{2}]_{\Q_{2}}$. \reth{bruno1}(\ref{it:bruno1a}) and \req{conjclass} imply that:
\begin{align*}
[x^{2}]_{\Q_{2}} &=\bigcup_{t\in \ang{2}} [x^{2t}]= \setbigl{x^{2i}}{i\in \ang{2}}\cup \setbigl{x^{-2i}}{i\in\ang{2}}\\
&=\setbigl{x^{2i}}{i\in \ang{2}}\cup \setbigl{x^{2i}}{i\in-\ang{2}}.
\end{align*}
Now $-\ang{2}$ is the $\ang{2}$-coset of $-1$ in $\Z_{m}^{\ast}$, so $\setl{x^{2i}}{i\in \ang{2}}$ and $\setl{x^{2i}}{i\in-\ang{2}}$ have the same cardinality $\ord{\ang{2}}$, and are either equal or disjoint. Since $-1\in -\ang{2}$, they are equal if and only if $-1\in \ang{2}$. This being the case, the cardinality of $[x^{2}]_{\Q_{2}}$ is equal to $\ord{\ang{2}}$, and $\lambda=(m-1)/\ord{\ang{2}}$. If $-1\notin \ang{2}$, the two cosets $\ang{2}$ and $-\ang{2}$ are disjoint, thus the cardinality of $[x^{2}]_{\Q_{2}}$ is equal to $2\ord{\ang{2}}$, and $\lambda=(m-1)/2\ord{\ang{2}}$ as required.
\end{proof}

The methods used above allow us in theory to calculate $K_{-1}(\Z[\dic{4m}])$ for any $m\geq 2$, not just for $m$ an odd prime. As another example, consider the case where $m$ is a power of $2$, so $G\cong \quat[2^{k}]$ is the generalised quaternion group of order $2^{k}$, where $m=2^{k-2}$.

\begin{prop}\label{prop:kminusonequat2k}
$K_{-1}(\Z[\quat[2^k]])$ is trivial if $k=3$, and is isomorphic to $\Z_{2}$ if $k\geq 4$.
\end{prop}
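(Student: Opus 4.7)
The plan is to apply Carter's theorem (\reth{carter}) in combination with \repr{torsquat}. The latter identifies the torsion summand $\Z_{2}^{s}$ in \reqref{carterkminusone} as trivial if $k=3$ and isomorphic to $\Z_{2}$ if $k\geq 4$, so it suffices to show that the free rank $r$ of $K_{-1}(\Z[\quat[2^k]])$ vanishes. Since $\ord{\quat[2^k]}=2^k$ has $2$ as its only prime divisor, the formula \reqref{rankkminusone} reduces to
\begin{equation*}
r = 1 - r_{\Q} + r_{\Q_{2}} - r_{\F[2]}.
\end{equation*}

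First, $r_{\F[2]}=1$, since the only element of $\quat[2^k]$ of odd order is the identity, yielding a single $2$-regular $\F[2]$-conjugacy class. The heart of the argument is then the claim that $r_{\Q_{2}}=r_{\Q}$. Both counts are governed by the image $\im{\phi}\subset \Z_{\widehat{m}}^{\ast}$ of the Galois map $\phi$ introduced in \resec{rankkminusone}, where $\widehat m$ is the exponent of $G=\quat[2^k]$. Here $\widehat{m}=2^{k-1}$, because $x$ is of order $2^{k-1}$ and every element of $G$ outside $\ang{x}$ has order $4$, which divides $2^{k-1}$ for all $k\geq 3$. For $F=\Q$, \rerems{fconjclass}(\ref{it:fconjclassii}) gives $\im{\phi}=\Z_{2^{k-1}}^{\ast}$. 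For $F=\Q_{2}$, \reth{bruno1}(\ref{it:bruno1b}) shows that $\operatorname{Gal}(\Q_{2}(\zeta_{2^{k-1}})/\Q_{2})$ has order $2^{k-2}=\ord{\Z_{2^{k-1}}^{\ast}}$; since $\phi$ is always injective, it must be an isomorphism in this case too, and $\im{\phi}=\Z_{2^{k-1}}^{\ast}$ once more. The $\Q$- and $\Q_{2}$-conjugacy relations therefore coincide on $G$, whence $r_{\Q}=r_{\Q_{2}}$.

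Substituting into the displayed formula gives $r=1-r_{\Q}+r_{\Q}-1=0$, so $K_{-1}(\Z[\quat[2^k]])\cong \Z_{2}^{s}$ by \reqref{carterkminusone}, and \repr{torsquat} then completes the proof. The key conceptual point is that the equality of local and global Galois groups for cyclotomic extensions by $2$-power roots of unity lets us bypass any explicit enumeration of either $r_{\Q}$ or $r_{\Q_{2}}$, in contrast to the dicyclic case of \reth{kminusonedic4prime}, where the corresponding Galois groups are strictly smaller and the combinatorics of conjugacy classes must be analysed in full; verifying the Galois-theoretic input from \reth{bruno1}(\ref{it:bruno1b}) is accordingly the most delicate step of the argument, but all the hard work has in fact already been done.
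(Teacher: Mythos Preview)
Your proof is correct and in fact cleaner than the paper's. Both arguments reduce, via \reth{carter} and \repr{torsquat}, to showing $r=1-r_{\Q}+r_{\Q_{2}}-r_{\F[2]}=0$, and both dispatch $r_{\F[2]}=1$ in the same way. The difference lies in how $r_{\Q}=r_{\Q_{2}}$ is established. The paper computes each quantity explicitly: it enumerates the conjugacy classes of cyclic subgroups of $\quat[2^k]$ (one for each order $2^l$ with $l\neq 2$, plus three of order~$4$) to get $r_{\Q}=k+2$, and then separately counts the $\Q_{2}$-conjugacy classes order by order, again obtaining $k+2$. You instead observe that since the exponent of $\quat[2^k]$ is $2^{k-1}$, \reth{bruno1}(\ref{it:bruno1b}) forces $\operatorname{Gal}(\Q_{2}(\zeta_{2^{k-1}})/\Q_{2})$ to have order $2^{k-2}=\ord{\Z_{2^{k-1}}^{\ast}}$, so the injective map $\phi$ is surjective for $\Q_{2}$ just as it is for $\Q$; the two $F$-conjugacy relations therefore coincide and $r_{\Q}=r_{\Q_{2}}$ follows without any enumeration. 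Your route is shorter and highlights the underlying reason for the cancellation; the paper's route has the minor advantage of recording the actual value $r_{\Q}=k+2$, though this is not used elsewhere.
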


\begin{proof}
By \reth{carter} and \repr{torsquat}, it suffices to show that for all $k\geq 3$, the rank of $K_{-1}(\Z[\quat[2^k]])$ is zero, which we do using \reth{wittberman}. We must calculate $r_{\Q}$, $r_{\Q_{2}}$ and $r_{\F[2]}$. Using the presentation~\reqref{presdic} of $\quat[2^k]$, we see that $\quat[2^k]=\ang{x} \coprod \ang{x}y$, and that the elements of $\ang{x}y$ are all of order $4$. So $G_{2}'$ consists of the identity element, whence $r_{\F[2]}=1$. 

We now determine the number $r_{\Q}$ of $\Q$-conjugacy classes, which by \rerem{fconjclass}(\ref{it:fconjclassii}), is equal to the number of conjugacy classes of cyclic subgroups in $\quat[2^{k}]$. The elements of $\quat[2^k]$ are of order $2^l$, $0\leq l\leq k-1$, and if $l\neq 2$ then the elements of order $2^l$ are contained entirely within $\ang{x}$. Thus there is just one subgroup of order $2^l$ for each such $l$, and so these subgroups contribute $k-1$ to $r_{\Q}$. Suppose then that $l=2$. Using the relations 
\begin{equation}\label{eq:relsconj}
\text{$y(x^iy)y^{-1}=x^{-i}y$, $x(x^iy)x^{-1}=x^{i+2}y$ and $(x^{i}y)^{-1}=x^{i+2^{k-2}}y$}
\end{equation}
in $\quat[2^{k}]$, we see that there are at most three conjugacy classes of cyclic subgroups of order $4$, represented by the subgroups $\bigl\langle x^{2^{k-3}} \bigr\rangle$, $\ang{y}$ and $\ang{xy}$. Since $\bigl\langle x^{2^{k-3}}\bigr\rangle$ is contained in the normal subgroup $\ang{x}$ of $\quat[2^k]$, it cannot be conjugate to the two other subgroups, and using relations~\reqref{relsconj}, we see that $\ang{y}$ and $\ang{xy}$ are non conjugate. We thus conclude that $r_{\Q}=k+2$. This number may also be obtained by counting the number of simple components in the Wedderburn decomposition \reqref{decompQG} of $\Q[\quat[2^{k}]]$.

Finally we calculate $r_{\Q_{2}}$. Consider the elements of $\quat[2^{k}]$ of order $2^l$, where $0\leq l\leq k-1$. If $l\in\brak{0,1}$ then there is just one element of order $2^l$, and so the contribution to $r_{\Q_{2}}$ is one in each case. If $l=2$ then $\operatorname{\text{Gal}}(\Q_{2}(\zeta_{2^2})/\Q_{2})\cong \Z_{2^2}^{\ast}=\brak{1,3}$ by \reth{bruno1}(\ref{it:bruno1b}). Hence for every element $z$ of $\quat[2^k]$ of order $4$, $[z]_{\Q_{2}}=[z]\cup [z^3]=[z]\cup [z^{-1}]=[z]$ since in $\quat[2^k]$, every element is conjugate to its inverse. Thus the elements of $\quat[2^{k}]$ of order $4$ contribute $3$ to $r_{\Q_{2}}$. Suppose then that $l\geq 3$. The elements of order $2^l$ are contained in $\ang{x}$, are elements of the subgroup $\bigl\langle x^{2^{k-l-1}} \bigr\rangle$ of the form $x^{2^{(k-l-1)}r}$, where $\gcd{(r,2^l)}=1$, and so are of the form $x^{2^{(k-l-1)}r}$, where $r\in \bigl\{1,3,\ldots, 2^l-1\bigr\}$. On the other hand, applying \reth{bruno1}(\ref{it:bruno1b}), we see that $\operatorname{\text{Gal}}(\Q_{2}(\zeta_{2^l})/\Q_{2})\cong \Z_{2^l}^{\ast}$. Now $\Z_{2^l}^{\ast}=\bigl\{1, 3,\ldots, 2^l-1\bigr\}$, and thus
\begin{equation*}
\bigl[x^{2^{k-l-1}}\bigr]_{\Q_{2}}= \bigl[x^{2^{k-l-1}} \bigr] \cup \bigl[x^{3(2^{k-l-1})} \bigr] \cup \cdots \cup \bigl[x^{(2^l-1)(2^{k-l-1})}\bigr].
\end{equation*}
From above, this is precisely the set of all elements of order $2^l$, and hence for each $3\leq l\leq k-1$, the elements of order $2^l$ contribute one to $r_{\Q_{2}}$. Summing over all possible values of $l$ yields $r_{\Q_{2}}=k+2$, and applying \req{rankkminusone}, we obtain $r=1-r_{\Q}+r_{\Q_{2}}-r_{\F[2]}=0$, which proves the proposition.
\end{proof}

We now turn to the calculation of $K_{-1}(\Z[G])$, where $G$ is a binary polyhedral group.

\begin{prop}\label{prop:kminusonebinpoly}
$K_{-1}(\Z [\tstar])\cong \Z$, $K_{-1}(\Z [\ostar])\cong \Z_{2}\oplus \Z$ and $K_{-1}(\Z [\istar])\cong\Z_{2}\oplus \Z^2$.
\end{prop}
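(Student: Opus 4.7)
The torsion parts have already been established in \repr{torsbinpoly}, so by \req{carterkminusone} the task reduces to showing that the rank $r$ of $K_{-1}(\Z[G])$ equals $1$ when $G=\tstar$, $1$ when $G=\ostar$, and $2$ when $G=\istar$. My plan is to apply \reth{carter}, and in particular the rank formula \req{rankkminusone}, which requires computing $r_{\Q}$, $r_{\Q_{p}}$ and $r_{\F[p]}$ for every prime $p$ dividing $\ord{G}$.

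For $r_{\Q}$, I will use \rerem{fconjclass}(\ref{it:fconjclassii}): it is equal to the number of conjugacy classes of cyclic subgroups of $G$, which may be read off from the Wedderburn decompositions given in \reqref{weddertstar}--\reqref{wedderistar}. This yields $r_{\Q}=5,8,7$ for $\tstar,\ostar,\istar$ respectively. For the remaining $r_{F}$'s, I invoke the Witt--Berman theorem, which requires counting $F$-conjugacy classes (or $p$-regular ones if $\operatorname{char}(F)=p$). These in turn are controlled via \req{conjclass} by the image of the Galois representation $\phi$, whose structure for $F=\Q_{p}$ and $F=\F[p]$ is described in Theorems~\ref{th:bruno1} and~\ref{th:bruno2}. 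The plan is therefore, for each prime $p$ dividing $\ord{G}$ and each divisor $d$ of the exponent of $G$, to:
\begin{enumerate}[(i)]
\item list the ordinary conjugacy classes of elements of order $d$ in $G$ (using \relem{propsccostar} where $d\in \brak{4,5,8,10}$ gives the subtle cases, and elementary structural information of $\tstar \cong \quat \rtimes \Z_{3}$, $\ostar$, and $\istar$ for the rest);
\item compute $\im{\phi}\subseteq \Z_{\widehat{d}}^{\ast}$ via \reth{bruno1} or \reth{bruno2};
\item group the ordinary conjugacy classes into $F$-conjugacy classes according to \req{conjclass}, ignoring the non-$p$-regular orders when $F=\F[p]$.
\end{enumerate}
Summing $r_{\Q_{p}}-r_{\F[p]}$ over the primes $p$ and plugging into \req{rankkminusone} should then yield the announced ranks.

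The main obstacle will be the bookkeeping: for $\ostar$ and $\istar$ the list of element orders is long ($1,2,3,4,6,8$ for $\ostar$; $1,2,3,4,5,6,10$ for $\istar$), and for the orders where the ordinary conjugacy relation does not already identify $g$ with $g^{-1}$ universally we must carefully invoke \relem{propsccostar} (notably that $r_{1}(8)=2$ but $r_{2}(8)=1$ in $\ostar$, and the analogous statements for orders $5$ and $10$ in $\istar$) in order to determine how the two $\Q$-conjugate ordinary classes behave under the smaller Galois image for $F=\Q_{p}$ or $\F[p]$. A secondary subtlety is that in characteristic $p$ one must discard non-$p$-regular orders, so the contributions at $p=2$ and $p=3$ (and $p=5$ for $\istar$) are not symmetric. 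Once these bookkeeping issues are handled, the three rank computations reduce to direct numerical checks of \req{rankkminusone}, yielding $r=1,1,2$ respectively and, combined with \repr{torsbinpoly} and \req{carterkminusone}, the stated isomorphisms.
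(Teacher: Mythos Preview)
Your overall strategy is exactly that of the paper: reduce to a rank computation via \repr{torsbinpoly} and \req{carterkminusone}, then evaluate \req{rankkminusone} by counting $F$-conjugacy classes with the help of Theorems~\ref{th:bruno1} and~\ref{th:bruno2} and the structural information in \relem{propsccostar}. The plan and the list of subtleties you identify are correct.

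However, there is a concrete numerical error that would derail the $\ostar$ computation: you claim $r_{\Q}=8$ for $\ostar$, but the Wedderburn decomposition~\reqref{wedderostar} has only \emph{seven} simple components (the symbols $2\Q$ and $2\mathcal{M}_{3}(\Q)$ denote two copies each, so the count is $2+1+2+1+1=7$). Equivalently, counting conjugacy classes of cyclic subgroups directly gives $r_{2}(1)+r_{2}(2)+r_{2}(3)+r_{2}(4)+r_{2}(6)+r_{2}(8)=1+1+1+2+1+1=7$, using \relem{propsccostar}(\ref{it:conj48}). You may have inadvertently written down the number of \emph{ordinary} conjugacy classes of $\ostar$ (which is indeed $8$, corresponding to irreducible $\mathbb{C}$-representations) rather than the number of $\Q$-conjugacy classes. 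With $r_{\Q}=8$ the formula~\reqref{rankkminusone} would yield rank $0$ instead of $1$, so this must be corrected before the bookkeeping can go through.
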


\begin{proof}
Let $G$ be a binary polyhedral group. By \repr{torsbinpoly}, it suffices to calculate the rank of $K_{-1}(\Z[G])$, which we do using~\reqref{rankkminusone} and \reth{wittberman}. From \rerem{fconjclass}(\ref{it:fconjclassii}) and the notation of \resec{ccbpg}, $\displaystyle r_{\Q}=\sum_{d \divides \ord{G}} r_{2}(d)$, and it follows from \repr{ribinpoly} that $r_{\Q}=5$ if $G=\tstar$, and $r_{\Q}=7$ if $G=\ostar$ or $\istar$. These values of $r_{\Q}$ may also be obtained from the corresponding Wedderburn decompositions given in~\reqref{weddertstar}--\reqref{wedderistar}.
 
\begin{enumerate}[(a)]
\item\label{it:rankkmotstar} We first calculate the rank of $K_{-1}(\Z[\tstar])$, where a presentation of $\tstar=\ang{P,Q,X}$ is given by the first line of \req{presostar}. 
\begin{enumerate}[\textbullet]
\item The set $G_{3}'$ consists of the union of the elements of $\tstar$ of order $1$, $2$ and $4$. By \repr{ribinpoly}(\ref{it:ribinpolya}), if $m\in \brak{1,2,4}$, the elements of order $m$ form a single conjugacy class, and thus form a single $\Q_{p}$-conjugacy class for $p\in \brak{2,3}$, whence $r_{\F[3]}=3$. 

\item The set $G_{2}'$ consists of the identity and the $8$ elements of $\tstar$ of order $3$, and by \repr{ribinpoly}(\ref{it:ribinpolya}), there are two conjugacy classes of the elements of order $3$, of which $X$ and $X^{-1}$ are representatives. By \reth{bruno2}, we have an isomorphism $\map{\phi}{\operatorname{\text{Gal}}(\F[2](\zeta_{3})/\F[2])}[\Z_{3}^{\ast}]$, and $\im{\phi}=\brak{1,2}$. Thus $[X]_{\F[2]}=[X]\cup [X^{2}]= [X]\cup [X^{-1}]$. It follows that there is a single $\F[2]$-conjugacy class of elements of order $3$, and so $r_{\F[2]}=2$.

\item Since there is a single conjugacy class of elements of order $d$, where $d\in \brak{1,2,4}$, it remains to determine the number of $\Q_{p}$-conjugacy classes, $p\in\brak{2,3}$, of the elements of $\tstar$ of order $3$ and $6$. We first calculate the number of $\Q_{p}$-conjugacy classes of the elements of order $3$. By \reth{bruno1}(\ref{it:bruno1a}) and~(\ref{it:bruno1b}), $\map{\phi}{\operatorname{\text{Gal}}(\Q_{p}(\zeta_{3})/\Q_{p})}[\Z_{3}^{\ast}]$ is an isomorphism, $\im{\phi}=\brak{1,2}$, and $[X]_{\Q_{p}}=[X]\cup [X^2]=[X]\cup [X^{-1}]$, which is the union of the two (usual) conjugacy classes of elements of order $3$. We have the same result for the elements of order $6$ of $\tstar$, since they are obtained from those of order $3$ by adjoining the central element of $\tstar$ of order $2$. So for all $d\in \brak{1,2,3,4,6}$ and $p\in \brak{2,3}$, there is a single $\Q_{p}$-conjugacy class of the elements of order $d$, and hence $r_{\Q_{2}}=r_{\Q_{3}}=5$.

\item By \req{rankkminusone}, the rank $r$ of $K_{-1}(\Z [\tstar])$ is equal to $r=1-r_{\Q}+ r_{\Q_{2}}-r_{\F[2]}+r_{\Q_{3}}-r_{\F[3]}=1$, and thus $K_{-1}(\Z [\tstar])\cong \Z$.
\end{enumerate}

\item We now calculate the rank of $\K_{-1}(\Z [\ostar])$. 
\begin{enumerate}[\textbullet]
\item Recall first that $\tstar$ is a subgroup of $\ostar$ of index $2$, and that $\ostar \setminus \tstar$ consists of twelve elements of order $4$ and of order $8$. So $G_{3}'$ is contained in $\tstar$, and as in case~(\ref{it:rankkmotstar}) we obtain $r_{\F[2]}=2$. Further, the elements of $\ostar$ of order $1,2,3$ and $6$ each give rise to a single $\Q_{p}$-conjugacy class of $\ostar$ for $p\in\brak{2,3}$. It remains to calculate the number of $\Q_{p}$-conjugacy classes of the elements of order $4$ and $8$, as well as $r_{\F[3]}$. 

\item To calculate the number of $\Q_{p}$-conjugacy classes of the elements of order $8$, recall from \repr{ribinpoly}(\ref{it:ribinpolyb}) that there are two conjugacy classes of elements of order $8$, for which representatives are $g$ and $g^{3}$, where $g$ is any element of $\ostar$ of order $8$. By \reth{bruno1}(\ref{it:bruno1a}) and~(\ref{it:bruno1b}), $\map{\phi}{\operatorname{\text{Gal}}(\Q_{p}(\zeta_{8})/\Q_{p})}[\Z_{8}^{\ast}]$ satisfies $\im{\phi}\supset\brak{{1,3}}$, hence $[g]_{\Q_{p}}\supset [g]\cup [g^3]$, and there is a single $\Q_{p}$-conjugacy class of elements of order $8$.

\item To calculate the number of $\Q_{p}$-conjugacy classes of the elements of order $4$, recall from \repr{ribinpoly}(\ref{it:ribinpolyb}) that there are two conjugacy classes of elements of order $4$, $C_{1}$ and $C_{2}$, where $C_{1}$ (resp.\ $C_{2}$) is the intersection of the set of elements of $\ostar$ of order $4$ with $\tstar$ (resp.\ with $\ostar \setminus \tstar$). In particular, if $g\in C_{1} \cup C_{2}$ then $g$ and $g^{-1}$ are conjugate. By \reth{bruno1}(\ref{it:bruno1a}) and~(\ref{it:bruno1b}), $\map{\phi}{\operatorname{\text{Gal}}(\Q_{p}(\zeta_{4})/\Q_{p})}[\Z_{4}^{\ast}]$ is an isomorphism, $\im{\phi}=\brak{1,3}$, and $[g]_{\Q_{p}}= [g]\cup [g^{-1}]=[g]$ for all $g\in \ostar$ of order $4$. So the number of $\Q_{p}$-conjugacy classes of elements of order $4$ is equal to $2$. 

\item From the above computations, if $d\in \brak{1,2,3,6,8}$ and $p\in \brak{2,3}$, there is a single $\Q_{p}$-conjugacy class of elements of order $d$, and there are two $\Q_{p}$-conjugacy class of elements of order $4$, whence $r_{\Q_{p}}=7$.

\item To calculate $r_{\F[3]}$, first note that $G_{3}'$ consists of the union of the elements of $\ostar$ of order $1,2,4$ and $8$, and that there is a single conjugacy class of elements of order $1$ and $2$. Let $m\in \brak{4,8}$. By \reth{bruno2}, $\map{\phi}{\operatorname{\text{Gal}}(\F[3](\zeta_{m})/\F[3])}[\Z_{m}^{\ast}]$ satisfies $\im{\phi}=\brak{{1,3}}$, and we see that the number of $\F[3]$-conjugacy classes of elements of order $m$ is just the number of $\Q_{p}$-conjugacy classes of these elements, \emph{i.e.} there are two $\F[3]$-conjugacy classes of elements of order $4$, and one $\F[3]$-conjugacy class of elements of order $8$. We conclude that $r_{\F[3]}=5$.

\item By \req{rankkminusone}, the rank $r$ of $K_{-1}(\Z [\ostar])$ is equal to $r=1-r_{\Q}+ r_{\Q_{2}}-r_{\F[2]}+r_{\Q_{3}}-r_{\F[3]}=1$, and thus $K_{-1}(\Z [\ostar])\cong \Z_{2} \oplus \Z$.
\end{enumerate}

\item Finally, we determine the rank of $\K_{-1}(\Z [\istar])$. 
\begin{enumerate}[\textbullet]
\item By \repr{ribinpoly}(\ref{it:ribinpolyc}), $r_{2}(l)=1$ for all $l\in \brak{1,2,3,4,6}$, so there is a single conjugacy class of elements of order $l$, and there are two conjugacy classes of elements of order $5$ and $10$. Hence it suffices to study the various $F$-conjugacy classes for the elements of order $5$ and $10$. 

\item We compute the number of the elements $\Q_{p}$-conjugacy classes of elements of order $5$ for $p\in \brak{2,3,5}$. By \repr{ribinpoly}(\ref{it:ribinpolyc}), if $g\in \istar$ is of order $5$, $g$ and $g^{2}$ are representatives of the two conjugacy classes of elements of order $5$. Using \reth{bruno1}(\ref{it:bruno1a}) and~(\ref{it:bruno1b}), the homomorphism $\map{\phi}{\operatorname{\text{Gal}}(\Q_{p}(\zeta_{5})/\Q_{p})}[\Z_{5}^{\ast}]$ is an isomorphism, and so there is a single $\Q_{p}$-conjugacy class of elements of order $5$ in $\istar$ for all $p\in \brak{2,3,5}$. By adjoining the central element of $\istar$ of order $2$ to $g$, it follows that the same is true for the elements of order $10$, from which it follows that $r_{\Q_{p}}=7$ for all $p\in \brak{2,3,5}$.

\item To compute the number of $\F[2]$- and $\F[3]$-conjugacy classes of $\istar$, note that $G_{2}'$ is the union of the elements of $\istar$ of order $1,3$ and $5$, and $G_{3}'$ is the union of the elements of $\istar$ of order $1,2,4,5$ and $10$. By \repr{ribinpoly}(\ref{it:ribinpolyc}), there is a single conjugacy class in $\istar$ of elements of order $1,2,3$ and $4$. By \reth{bruno2}, for $p\in\brak{2,3}$, the homomorphism $\operatorname{\text{Gal}}(\F[p](\zeta_{5})/\F[p])\to \Z_{5}^{\ast}$ is an isomorphism, and $\im{\phi}=\brak{1,2,3,4}$. Thus there is a single $\F[p]$-conjugacy class of the $5$-regular elements of order $5$. By adjoining the central element of $\istar$ of order $2$ to $g$, it follows that the same is true for the elements of order $10$ in the case $p=3$. We conclude that $r_{\F[2]}=3$ and $r_{\F[3]}=5$.

\item To compute the number of $\F[5]$-conjugacy classes of $\istar$, the set $G_{5}'$ is the union of the elements of $\istar$ of order $1,2,3,4$ and $6$. Since there is a single conjugacy class in $\istar$ of elements of each of these orders, it follows that $r_{\F[5]}=5$.

\item By \req{rankkminusone}, the rank $r$ of $K_{-1}(\Z [\istar])$ is equal to $r=1-r_{\Q}+ r_{\Q_{2}}-r_{\F[2]}+r_{\Q_{3}}-r_{\F[3]}+r_{\Q_{5}}-r_{\F[5]}=2$, and thus $K_{-1}(\Z [\istar])\cong \Z_{2} \oplus \Z^{2}$.\qedhere
\end{enumerate}
\end{enumerate}
\end{proof}

As we mentioned in \resec{rankkminusone}, in order to prove \reth{tablas} and to obtain Table~\ref{tab:finite}, we need to compute $K_{-1}(\Z [\dic{4\mu}])$ for $\mu\in \brak{6,9,10}$. The torsion of these groups was already determined in \repr{torsiondicextra}. To end this section, we calculate their rank.

\begin{prop}\label{prop:rankdicextra}
If $\mu\in \brak{6,9,10}$, the rank of $K_{-1}(\Z[\dic{4\mu}])$ is equal to $2$.
\end{prop}

\begin{proof}\mbox{}
\begin{enumerate}[(a)]
\item We first consider the cases where $\mu\in \brak{6,10}$, so $\mu/2$ is an odd prime. Making use of the presentation of the form~\reqref{presdic} of $\dic{4\mu}$, the following table summarises the elements of each order of $\dic{4\mu}$.
\renewcommand{\arraystretch}{1.2}
\setlength{\tabcolsep}{5pt}
\begin{center}
\begin{tabular}{|>{$}c<{$}||>{$}c<{$}|>{$}c<{$}|}
\hline
\text{order $d$} & \text{elements of order $d$} & \text{number of elements}\\ \hline\hline
1 & e & 1\\ \hline
2 & x^{\mu} & 1\\ \hline
4 & x^{\mu/2},\, x^{3\mu/2},\, x^{i}y, \, i\in \brak{0,1,\ldots,2\mu-1} & 2\mu+2\\ \hline
\mu/2 & x^{4},x^{8},\ldots, x^{2(\mu-2)} & (\mu-2)/2\\ \hline
\mu & x^{2},x^{6},\ldots,x^{\mu-4}, x^{\mu+4},x^{\mu+8},\ldots, x^{2(\mu-2)} & (\mu-2)/2\\ \hline
2\mu & x^{i},\, i\in \brak{1,3,\ldots, 2\mu-1}\setminus \brak{\mu/2, 3\mu/2} & \mu-2\\ \hline
\end{tabular}
\end{center}
We compute the number of $F$-conjugacy classes for each of the fields $F$ that appear in~\reqref{rankkminusone}.

\begin{enumerate}[\textbullet]
\item From the above table, $\dic{4\mu}$ possesses a single cyclic subgroup of order $r$ for all $r\in \brak{1,2,\mu/2,\mu,2\mu}$, and using~\reqref{presdic}, it has three conjugacy classes of elements of order $4$, namely $\bigl\{x^{\mu/2},x^{3\mu/2}\bigr\}$, $\setl{x^{i}y}{i\in \brak{1,3,\ldots,2\mu-1}}$ and $\setl{x^{i}y}{i\in \brak{0,2,\ldots,2\mu-2}}$. So $\dic{4\mu}$ has three conjugacy classes of (cyclic) subgroups of order $4$. We conclude that $\dic{4\mu}$ has eight conjugacy classes of cyclic subgroups, hence $r_{\Q}=8$. 

\item The set of $2$-regular elements of $\dic{4\mu}$ consists of $e$ and the $(\mu-2)/2$ elements of order $\mu/2$. Since the order of $\overline{2}$ in $\Z_{\mu/2}^{\ast}$ is equal to $(\mu-2)/2$, which is the order of $\Z_{\mu/2}^{\ast}$, the injective homomorphism $\map{\phi}{\operatorname{\text{Gal}}(\F[2](\zeta_{\mu/2})/\F[2])}[\Z_{\mu/2}^{\ast}]$ is an isomorphism. Using~\reqref{conjclass}, it follows that the $\F[2]$-conjugacy class of $x^{4}$ is equal to $\bigl\{x^{4},x^{8},\ldots, x^{2(m-2)}\bigr\}$, and thus $r_{\F[2]}=2$. 

\item The set of $\mu/2$-regular elements of $\dic{4\mu}$ consists of $e$, $x^{\mu}$, which is of order $2$, and the $2\mu+2$ elements of order $4$. The image of the injective homomorphism $\map{\phi}{\operatorname{\text{Gal}}(\F[\mu/2](\zeta_{4})/\F[\mu/2])}[\Z_{4}^{\ast}]$ is contained in $\brak{1,3}$, and so is equal to $\brak{1}$ or $\brak{1,3}$. But $(x^{\mu/2})^{3}=x^{3\mu/2}$, and for all $i\in \brak{0,1,\ldots, 2\mu-1}$, $(x^{i}y)^{3}=(x^{i}y)^{-1}=y^{-1}x^{-i}=y^{-1}x^{-i}y\ldotp y^{2} \ldotp y=x^{i+\mu}y$. It follows that if $z\in \dic{4\mu}$ is of order $4$, $[z]=[z^{3}]$, and by~\reqref{conjclass}, we have $[z] \subset [z]_{\F[\mu/2]}\subset [z] \subset [z^{3}]=[z]$, so $[z]_{\F[\mu/2]}=[z]$. Thus the $\F[2]$-conjugacy classes of the $\mu/2$-regular elements of $\dic{4\mu}$ of order $4$ coincide with the usual conjugacy classes, whence $r_{\F[\mu/2]}=5$.
\end{enumerate}

We now compute $r_{\Q_{2}}$. To do so, we need to determine the number of $\Q_{2}$-conjugacy classes of the elements of order $4,\mu/2,\mu$ and $2\mu$.

\begin{enumerate}[\textbullet]
\item We calculate the number of $\Q_{2}$-conjugacy classes of the elements of order $4$. By~\reth{bruno1}(\ref{it:bruno1b}), the injective homomorphism $\map{\phi}{\operatorname{\text{Gal}}(\Q_{2}(\zeta_{4})/\Q_{2})}[\Z_{4}^{\ast}]$ is an isomorphism, and $\im{\phi}=\brak{\overline{1}, \overline{3}}$. As in the analysis of the $\mu/2$-regular elements of order $4$, it follows that $[z]_{\Q_{2}}=[z]$ for every element $z\in \dic{4\mu}$ of order $4$, and so the $\Q_{2}$-conjugacy classes of the elements of order $4$ coincide with the usual conjugacy classes, and hence there are three $\Q_{2}$-conjugacy classes of  elements of order $4$. 

\item We determine the number of $\Q_{2}$-conjugacy classes of the elements of order $\mu/2$, $\mu$ and $2\mu$. Let $j\in \brak{0,1,2}$. Then the injective homomorphism $\map{\phi}{\operatorname{\text{Gal}}(\Q_{2}(\zeta_{2^{j}\mu/2})/\Q_{2})}[\Z_{2^{j}\mu/2}^{\ast}]$ is an isomorphism using~\reth{bruno1}(\ref{it:bruno1b}) and~(\ref{it:bruno1c}) because $\operatorname{\text{Gal}}(\Q_{2}(\zeta_{2}/\Q_{2})$ is trivial and the group $\operatorname{\text{Gal}}(\Q_{2}(\zeta_{4}/\Q_{2})$ is of order $2$. Thus there is a single $\Q_{2}$-conjugacy class of elements of order $2^{j} \mu/2$ for all $j\in \brak{0,1,2}$.

\item It follows from these calculations that there is a single $\Q_{2}$-conjugacy class of elements of order $r$ for all $r\in \brak{1,2,\mu/2,\mu,2\mu}$, and three $\Q_{2}$-conjugacy classes of  elements of order $4$, so $r_{\Q_{2}}=8$.
\end{enumerate}

We now compute $r_{\Q_{\mu/2}}$. To do so, we need to determine the number of $\Q_{\mu/2}$-conjugacy classes of the elements of order $4,\mu/2,\mu$ and $2\mu$.

\begin{enumerate}[\textbullet]
\item Let us determine the number of $\Q_{\mu/2}$-conjugacy classes of the elements of order $\mu/2$ and $\mu$. If $j\in \brak{0,1}$, using~\reth{bruno1}(\ref{it:bruno1b}) and~(\ref{it:bruno1c}), we see that the injective homomorphism $\map{\phi}{\operatorname{\text{Gal}}(\Q_{\mu/2}(\zeta_{2^{j}\mu/2})/\Q_{\mu/2})}[\Z_{2^{j}\mu/2}^{\ast}]$ is an isomorphism because $\operatorname{\text{Gal}}(\Q_{\mu/2}(\zeta_{2}/\Q_{2})$ is trivial. Thus there is a single $\Q_{\mu/2}$-conjugacy class of elements of order $2^{j} \mu/2$ for all $j\in \brak{0,1}$.

\item To calculate the number of $\Q_{\mu/2}$-conjugacy classes of the elements of order $2\mu$, consider the injective homomorphism $\map{\phi}{\operatorname{\text{Gal}}(\Q_{\mu/2}(\zeta_{2\mu})/\Q_{\mu/2})}[\Z_{2\mu}^{\ast}]$. By~\reth{bruno1}(\ref{it:bruno1c}), the group
$\operatorname{\text{Gal}}(\Q_{\mu/2}(\zeta_{2\mu})/\Q_{\mu/2})$ is isomorphic to the direct product
\begin{equation*}
\operatorname{\text{Gal}}(\Q_{\mu/2}(\zeta_{\mu/2})/\Q_{\mu/2})\times \operatorname{\text{Gal}}(\Q_{\mu/2}(\zeta_{4})/\Q_{\mu/2}),
\end{equation*}
which by \reth{bruno1}(\ref{it:bruno1b}) is isomorphic to $\Z_{\mu/2}^{\ast}\times \Z_{2}$ (resp.\ $\Z_{\mu/2}^{\ast}$) if $\mu/2 \equiv 3 \bmod{4}$ (resp.\ if $\mu/2 \equiv 1 \bmod{4}$). We now distinguish the two cases $\mu=6$ and $\mu=10$.

\begin{enumerate}[--]
\item If $\mu=6$, $\phi$ is an isomorphism, and there is a single $\Q_{3}$-conjugacy class of elements of order $12$.

\item If $\mu=10$, $\operatorname{\text{Gal}}(\Q_{5}(\zeta_{20})/\Q_{5})$ is isomorphic to $\Z_{4}$ by~\reth{bruno1}(\ref{it:bruno1b}) and~(\ref{it:bruno1c}), so the image of $\phi$ is a subgroup of $\Z_{20}^{\ast}$. Now $\Z_{20}^{\ast}$ is isomorphic to $\Z_{2}\times \Z_{4}$, so it possesses two subgroups isomorphic to $\Z_{4}$. A calculation shows that these two subgroups are of the form $\brak{\overline{1},\overline{3},\overline{7},\overline{9}}$ and $\brak{\overline{1},\overline{9},\overline{13},\overline{17}}$. Using the table given at the beginning of the proof and~\reqref{conjclass} and the fact that $x^{k}$ is conjugate to $x^{20-k}$ for all $k\in \brak{1,3,7,9,11,13,17,19}$ by~\reqref{presdic}, it follows in either case that there is a single $\Q_{5}$-conjugacy class of elements of order $20$.
\end{enumerate}

\item It follows from these calculations that there is a single $\Q_{\mu/2}$-conjugacy class of elements of order $r$ for all $r\in \brak{1,2,\mu/2,\mu,2\mu}$, and three $\Q_{\mu/2}$-conjugacy classes of elements of order $4$, so $r_{\Q_{\mu/2}}=8$ for $\mu\in \brak{6,10}$. 

\item Using~\reqref{rankkminusone}, we conclude that $r=1-r_{\Q}+r_{\Q_{2}}-r_{\F[2]}+r_{\Q_{\mu/2}}-r_{\F[\mu/2]}= 1-8+(8-2)+(8-5)=2$ as required.
\end{enumerate}

\item Now suppose that $\mu=9$. Using the presentation of the form~\reqref{presdic} of $\dic{4\mu}$, the following table summarises the elements of each order of $\dic{4\mu}$.
\renewcommand{\arraystretch}{1.2}
\setlength{\tabcolsep}{5pt}
\begin{center}
\begin{tabular}{|>{$}c<{$}||>{$}c<{$}|>{$}c<{$}|}
\hline
\text{order $d$} & \text{elements of order $d$} & \text{number of elements}\\ \hline\hline
1 & e & 1\\ \hline
2 & x^9 & 1\\ \hline
3 & x^{6},\, x^{12} & 2\\ \hline
4 & x^{i}y, \, i\in \brak{0,1,\ldots, 17} & 18\\ \hline
6 & x^{3},\, x^{15} & 2\\ \hline
9 & x^{2i},\, i\in \brak{1,2,4,5,7,8} & 6\\ \hline
18 & x^{i},\, i\in \brak{1,5,7,11,13,17} & 6\\ \hline
\end{tabular}
\end{center}
In order to apply~\reqref{rankkminusone}, we compute the number of $F$-conjugacy classes for each of the fields $F$ that appear in that equation.

\begin{enumerate}[\textbullet]
\item By~\reqref{presdic}, there are two conjugacy classes of the elements of order $4$, $\setl{x^{i}y}{i\in \brak{0,2,\ldots,16}}$ and $\setl{x^{i}y}{i\in \brak{1,3,\ldots,17}}$, and the remaining conjugacy classes are of the form $\brak{x^{i}, x^{18-i}}$ for $i\in \brak{0,1,\ldots, 9}$. Recall that $r_{\Q}$ is given by the number of factors in \req{wedder36}, so $r_{\Q}=7$ (this may also by verifying that there is a single conjugacy class of cyclic subgroups of order $d$ for each $d\in \brak{1,2,3,4,6,9,18}$). 

\item If $d\in \brak{1,2,3,6}$, there is a single conjugacy class of elements of order $d$. Thus there is a single $\Q_{p}$-conjugacy class of elements of order $d$ by~\reqref{conjclass}, where $p\in \brak{2,3}$. Similiarly, if $d\in \brak{1,2}$ (resp.\ $d\in \brak{1,3}$), there is a single $\F[3]$-conjugacy class (resp.\ $\F[2]$-conjugacy class) of elements of order $d$. So it suffices to determine:
\begin{enumerate}[(i)]
\item the number of $\F[2]$-conjugacy classes of elements of order $9$.
\item the number of $\F[3]$-conjugacy classes of elements of order $4$.
\item the number of $\Q_{p}$-conjugacy classes of elements of order $d$, where $d\in \brak{4,9,18}$, and $p\in\brak{2,3}$.
\end{enumerate}
\end{enumerate}
We consider these cases in turn.
 
\begin{enumerate}[\textbullet]

\item By \reth{bruno2}, the injective homomorphism $\map{\phi}{\operatorname{\text{Gal}}(\F[2](\zeta_{9})/\F[2])}[\Z_{9}^{\ast}]$ is an isomorphism, and so there is a single $\F[2]$-conjugacy class of elements of order $9$. Now the set $G_{2}'$ of $2$-regular elements of $\dic{36}$ is given by the union of the elements of order $1$, $3$ and $9$, and since there is a single (usual) conjugacy class of elements of order $1$ and $3$, we conclude that $r_{\F[2]}=3$.

\item By \reth{bruno2}, the injective homomorphism $\map{\phi}{\operatorname{\text{Gal}}(\F[3](\zeta_{4})/\F[3])}[\Z_{4}^{\ast}]$ is an isomorphism, and so there is a single $\F[3]$-conjugacy class of elements of order $4$. Now the set $G_{3}'$ of $3$-regular elements of $\dic{36}$ is given by the union of the elements of order $1$, $2$ and $4$, and since there is a single (usual) conjugacy class of elements of order $1$ and $2$, we conclude that $r_{\F[3]}=3$.

\item $\Q_{2}$-conjugacy classes of elements of order $4$: by \reth{bruno1}(\ref{it:bruno1b}), the injective homomorphism $\map{\phi}{\operatorname{\text{Gal}}(\Q_{2}(\zeta_{4})/\Q_{2})}[\Z_{4}^{\ast}]$ is an isomorphism, and $\im{\phi}=\brak{\overline{1}, \overline{3}}$. Thus $[y]_{\Q_{2}}=[y]\cup [y^{3}]=[y] \cup [x^{9}y]$, where $y$ is the element of $\dic{36}$ appearing in~\reqref{presdic}, so $[y]_{\Q_{2}}$ is the union of the two conjugacy classes of elements of order $4$. Consequently, there is a single $\Q_{2}$-conjugacy class of elements of order $4$.

\item $\Q_{2}$-conjugacy classes of elements of order $9$: by \reth{bruno1}(\ref{it:bruno1a}), the injective homomorphism $\map{\phi}{\operatorname{\text{Gal}}(\Q_{2}(\zeta_{9})/\Q_{2})}[\Z_{9}^{\ast}]$ is an isomorphism, and $\im{\phi}=\brak{\overline{1},\overline{2},\overline{4}, \overline{5}, \overline{7}, \overline{8}}$. It follows from the above table of elements of $\dic{36}$ and~\reqref{conjclass} that there is a single $\Q_{2}$-conjugacy class of elements of order $9$.

\item $\Q_{2}$-conjugacy classes of elements of order $18$: by \reth{bruno1}(\ref{it:bruno1c}), 
\begin{equation*}
\operatorname{\text{Gal}}(\Q_{2}(\zeta_{18})/\Q_{2}) \cong\operatorname{\text{Gal}}(\Q_{2}(\zeta_{9})/\Q_{2}) \times \operatorname{\text{Gal}}(\Q_{2}(\zeta_{2})/\Q_{2})\cong \operatorname{\text{Gal}}(\Q_{2}(\zeta_{9})/\Q_{2}),
\end{equation*}
which is cyclic of order $6$. Thus the injective homomorphism $\map{\phi}{\operatorname{\text{Gal}}(\Q_{2}(\zeta_{18})/\Q_{2})}[\Z_{18}^{\ast}]$ is an isomorphism, and $\im{\phi}= \brak{\overline{1},\overline{5}, \overline{7}, \overline{11}, \overline{13},\overline{17}}$. We conclude from the above table of elements of $\dic{36}$ and~\reqref{conjclass} that there is a single $\Q_{2}$-conjugacy class of elements of order $18$.

\item From the above computations, for all $d\in \brak{1,2,3,4,6,9,18}$, there is a single $\Q_{2}$-conjugacy class of elements of order $d$, and hence $r_{\Q_{2}}=7$.

\item $\Q_{3}$-conjugacy classes of elements of order $4$: by \reth{bruno1}(\ref{it:bruno1a}), the injective homomorphism $\map{\phi}{\operatorname{\text{Gal}}(\Q_{3}(\zeta_{4})/\Q_{3})}[\Z_{4}^{\ast}]$ is an isomorphism. As in the case of the $\Q_{2}$-conjugacy classes of elements of order $4$, we see that there is a single $\Q_{3}$-conjugacy class of elements of order $4$.

\item $\Q_{3}$-conjugacy classes of elements of order $9$: by \reth{bruno1}(\ref{it:bruno1b}), the injective homomorphism $\map{\phi}{\operatorname{\text{Gal}}(\Q_{3}(\zeta_{9})/\Q_{3})}[\Z_{9}^{\ast}]$ is an isomorphism (both groups are of order $6$), and $\im{\phi}=\brak{\overline{1}, \overline{2}, \overline{4}, \overline{5}, \overline{7}, \overline{8}}$. As in the case of the $\Q_{2}$-conjugacy classes of elements of order $9$, we see that there is a single $\Q_{3}$-conjugacy class of elements of order $9$.

\item $\Q_{3}$-conjugacy classes of elements of order $18$: by \reth{bruno1}(\ref{it:bruno1c}), 
\begin{equation*}
\operatorname{\text{Gal}}(\Q_{3}(\zeta_{18})/\Q_{3}) \cong\operatorname{\text{Gal}}(\Q_{3}(\zeta_{9})/\Q_{3}) \times \operatorname{\text{Gal}}(\Q_{3}(\zeta_{2})/\Q_{3})\cong \operatorname{\text{Gal}}(\Q_{3}(\zeta_{9})/\Q_{3}),
\end{equation*}
which as we saw above is cyclic of order $6$. It follows that the injective homomorphism $\map{\phi}{\operatorname{\text{Gal}}(\Q_{3}(\zeta_{18})/\Q_{3})}[\Z_{18}^{\ast}]$ is an isomorphism. As in the case of the $\Q_{2}$-conjugacy classes of elements of order $18$, we see that there is a single $\Q_{3}$-conjugacy class of elements of order $18$.

\item From the above computations, for all $d\in \brak{1,2,3,4,6,9,18}$, there is a single $\Q_{3}$-conjugacy class of elements of order $d$, and hence $r_{\Q_{3}}=7$.

\item Hence the rank of $K_{-1}(\Z[\dic{36}])$ is given by $r=1-r_{\Q}+r_{\Q_{2}}-r_{\F[2]}+r_{\Q_{3}}-r_{\F[3]}= 1-7+7-3+7-3= 2$ as required.\qedhere
\end{enumerate}
\end{enumerate}
\end{proof}

We complete this section by computing $K_{-1}(\Z[G])$, where $G$ is a cyclic group of order $p^{q}$ or $2p^{q}$, where $p$ is prime and $q\in \N$, or of order $12$ or $20$. These results will also be used in the proof of \reth{tablas}. 

\begin{prop}\label{prop:kminusonecyclic}
Let $q\in \N$, and let $p$ be a prime number.
\begin{enumerate}[(a)]
\item\label{it:kminusonecyclica} The group $K_{-1}(\Z[\Z_{p^{q}}])$ is trivial.
\item\label{it:kminusonecyclicb} If $p$ is odd then $K_{-1}(\Z[\Z_{2p^{q}}])\cong \Z^{r}$, where $r=\sum_{j=1}^{q} \Bigl[\Z_{p^{j}}^{\ast}: \ang{\overline{2}}_{\Z_{p^{j}}^{\ast}} \Bigr]$, and where $\ang{\overline{2}}_{\Z_{p^{j}}^{\ast}}$ denotes the subgroup of $\Z_{p^{j}}^{\ast}$ generated by $\overline{2}$.
\item\label{it:kminusonecyclicc} The group $K_{-1}(\Z[\Z_{12}])$ is isomorphic to $\Z^{2}$, and the group $K_{-1}(\Z[\Z_{20}])$ is isomorphic to $\Z^{3}$.
\end{enumerate} 
\end{prop}

\begin{proof}
As we mentioned at the beginning of \resec{torsionkminusone}, if $G$ is Abelian then the group $K_{-1}(\Z[G])$ is torsion free. So if $G$ is one of the given groups, by~\reqref{carterkminusone}, it suffices to calculate the rank $r$ of $K_{-1}(\Z[G])$. 

\begin{enumerate}[(a)]
\item Let $p$ be prime, and let $q\in \N$. Since $\Z_{p^{q}}$ is cyclic, $r_{\Q}$ is equal to the number of divisors of $p^{q}$, hence $r_{\Q}=q+1$. The elements of $\Z_{p^{q}}$ are of order $p^{j}$, where $j\in \brak{0,1,\ldots,q}$, and the set $G_{p}'$ of $p$-regular elements of $\Z_{p^{q}}$ is equal to $\brak{e}$, hence $r_{\F[p]}=1$. We now determine the number of $\Q_{p}$-conjugacy classes. If $1\leq j\leq q$, by \reth{bruno1}(\ref{it:bruno1b}), the injective homomorphism $\map{\phi}{\operatorname{\text{Gal}}(\Q_{p}(\zeta_{p^{j}})/\Q_{p})}[\Z_{p^{j}}^{\ast}]$
is an isomorphism, so $\Z_{p^{q}}$ possesses a single $\Q_{p}$-conjugacy of elements of order $p^{j}$, and thus $r_{\Q_{p}}=q+1$. Hence $r=1-r_{\Q}+r_{\Q_{p}}-r_{\F[p]}=0$ by~\reqref{rankkminusone}, and $K_{-1}(\Z[\Z_{p^{q}}])$ is trivial.

\item\label{it:2oddprime} Let $p$ be an odd prime, let $q\in \N$, and let $x$ be a generator of $\Z_{2p^{q}}$. In order to apply~\reqref{rankkminusone}, we compute $r_{\Q}$, $r_{\F[2]}$, $r_{\F[p]}$, $r_{\Q_{2}}$ and $r_{\Q_{p}}$.
\begin{enumerate}[\textbullet]
\item Since $\Z_{2p^{q}}$ is cyclic, we have $r_{\Q}=2(q+1)$.
\item  The set $G_{p}'$ of $p$-regular elements of $\Z_{2p^{q}}$ is equal to $\bigl\{e,x^{p^{q}}\bigr\}$, thus $r_{\F[p]}=2$. 

\item We now determine $r_{\Q_{p}}$. Since $\Z_{2p^{q}}$ possesses a single element of order $m$, where $m\in \brak{1,2}$, it suffices to compute the number of $\Q_{p}$-conjugacy class of elements of order $2^{\epsilon}p^{j}$, where $\epsilon\in \brak{0,1}$, and $1\leq j\leq q$. By \reth{bruno1}(\ref{it:bruno1b}), 
for all $j\in \brak{1,\ldots,q}$, the injective homomorphism $\map{\phi}{\operatorname{\text{Gal}}(\Q_{p}(\zeta_{p^{j}})/\Q_{p})}[\Z_{p^{j}}^{\ast}]$ is an isomorphism, and so there is a single $\Q_{p}$-conjugacy class of elements of order $p^{j}$. By \reth{bruno1}(\ref{it:bruno1c}), $\operatorname{\text{Gal}}(\Q_{p}(\zeta_{2p^{j}})/\Q_{p})\cong \operatorname{\text{Gal}}(\Q_{p}(\zeta_{p^{j}})/\Q_{p}) \times \operatorname{\text{Gal}}(\Q_{p}(\zeta_{2})/\Q_{p})\cong \operatorname{\text{Gal}}(\Q_{p}(\zeta_{p^{j}})/\Q_{p})$, and by \reth{bruno1}(\ref{it:bruno1b}), the injective homomorphism $\map{\phi}{\operatorname{\text{Gal}}(\Q_{p}(\zeta_{2p^{j}})/\Q_{p})}[\Z_{2p^{j}}^{\ast}]$ is an isomorphism. Thus for all $1\leq j\leq q$, there is a single $\Q_{p}$-conjugacy class of elements of order $2p^{j}$. It follows that for every divisor $m$ of $2p^{q}$, there is a single $\Q_{p}$-conjugacy class of elements of order $m$, whence $r_{\Q_{p}}=2(q+1)$.

\item To compute $r_{F}$, where $F=\Q_{2}$ or $\F[2]$, we first make the following general remark. Since $\Z_{2p^{q}}$ is Abelian, for all $f\in\Z_{2p^{q}}$, the (usual) conjugacy class $[f]$ of $f$ is equal to $\brak{f}$. With the notation of \resec{rankkminusone}, it follows from~\reqref{conjclass} that the cardinality of the $F$-conjugacy class $[f]_{F}$ is equal to $\ord{\im{\phi}}$, where $\phi$ is as defined in~\reqref{defphi}. Since the $F$-conjugacy classes are pairwise disjoint, if $f\in \Z_{2p^{q}}$ we conclude that there are $[\Z_{\widehat{m}}^{\ast}: \im{\phi}]$ $F$-conjugacy classes of elements whose order is that of $f$. With this in mind, we compute $r_{\Q_{2}}$ and $r_{\F[2]}$.

\item To calculate $r_{\Q_{2}}$, first observe that since $\Z_{2p^{q}}$ possesses a single element of order $m$, where $m\in \brak{1,2}$, it suffices to compute the number of $\Q_{2}$-conjugacy class of elements of order $2^{\epsilon}p^{j}$, where $\epsilon\in \brak{0,1}$, and $1\leq j\leq q$. \reth{bruno1}(\ref{it:bruno1a}) imples that the image of the injective homomorphism $\map{\phi}{\operatorname{\text{Gal}}(\Q_{2}(\zeta_{p^{j}})/\Q_{2})}[\Z_{p^{j}}^{\ast}]$ is equal to $\ang{\overline{2}}_{\Z_{p^{j}}^{\ast}}$. By the above remark, it follows that the number of $\Q_{2}$-conjugacy classes of elements of order $p^{j}$ is equal to $\Bigl[\Z_{p^{j}}^{\ast}: \ang{\overline{2}}_{\Z_{p^{j}}^{\ast}} \Bigr]$. By \reth{bruno1}(\ref{it:bruno1c}), $\operatorname{\text{Gal}}(\Q_{2}(\zeta_{2p^{j}})/\Q_{2})\cong \operatorname{\text{Gal}}(\Q_{2}(\zeta_{p^{j}})/\Q_{2}) \times \operatorname{\text{Gal}}(\Q_{2}(\zeta_{2})/\Q_{2})\cong \operatorname{\text{Gal}}(\Q_{2}(\zeta_{p^{j}})/\Q_{2})$. So the image of the injective homomorphism $\map{\phi}{\operatorname{\text{Gal}}(\Q_{2}(\zeta_{2p^{j}})/\Q_{2})}[\Z_{p^{j}}^{\ast}]$ is of order $\Bigl\lvert \ang{\overline{2}}_{\Z_{p^{j}}^{\ast}}\Bigr\rvert$. Now $\bigl\lvert \Z_{2p^{q}}^{\ast}\bigr\rvert= \bigl\lvert \Z_{p^{q}}^{\ast}\bigr\rvert$, and it follows from the above remark that the number of $\Q_{2}$-conjugacy classes of elements of order $2p^{j}$ is also equal to $\Bigl[\Z_{p^{j}}^{\ast}: \ang{\overline{2}}_{\Z_{p^{j}}^{\ast}} \Bigr]$. Since the elements of $\Z_{2p^{q}}$ are of order $1$, $2$, $p^{j}$ or $2p^{j}$, where $1\leq j\leq q$, we deduce that $r_{\Q_{2}}= 2+2 \sum_{j=1}^{q} \Bigl[\Z_{p^{j}}^{\ast}: \ang{\overline{2}}_{\Z_{p^{j}}^{\ast}} \Bigr]$.

\item To calculate $r_{\F[2]}$, observe that the set of $2$-regular elements of $\Z_{2p^{q}}$ is equal to the union of $e$ with the elements of order $p^{j}$, where $j\in \brak{1,\ldots,q}$. By \reth{bruno2}, for all $j\in \brak{1,\ldots,q}$, the image of the injective homomorphism $\map{\phi}{\operatorname{\text{Gal}}(\F[2](\zeta_{p^{j}})/\F[2])}[\Z_{p^{j}}^{\ast}]$ is equal to $\ang{\overline{2}}_{\Z_{p^{j}}^{\ast}}$. Using the above remark once more, we see that the number of $2$-regular $\F[2]$-conjugacy classes of elements of order $p^{j}$ is given by $\Bigl[\Z_{p^{j}}^{\ast}: \ang{\overline{2}}_{\Z_{p^{j}}^{\ast}} \Bigr]$, and thus $r_{\F[2]}= 1+ \sum_{j=1}^{q} \Bigl[\Z_{p^{j}}^{\ast}: \ang{\overline{2}}_{\Z_{p^{j}}^{\ast}} \Bigr]$. 

\item To conclude, by~\reqref{rankkminusone}, we have $r=1-r_{\Q}+r_{\Q_{2}}-r_{\F[2]}+r_{\Q_{p}}-r_{\F[p]} =\sum_{j=1}^{q} \Bigl[\Z_{p^{j}}^{\ast}: \ang{\overline{2}}_{\Z_{p^{j}}^{\ast}} \Bigr]$ as required.
\end{enumerate}

\item Let $m=4p$, where $p\in \brak{3,5}$. In order to apply~\reqref{rankkminusone}, we now proceed to determine $r_{\Q}$, $r_{\F[2]}$, $r_{\F[p]}$, $r_{\Q_{2}}$ and $r_{\Q_{p}}$.

\begin{enumerate}[\textbullet]
\item  Since $\Z_{m}$ is cyclic, $r_{\Q}$ is equal to the number of divisors of $4p$, so $r_{\Q}=6$. 

\item The set $G_{2}'$ of the $2$-regular elements of $\Z_{4p}$ consists of the trivial element and the elements of order $p$. By \reth{bruno2}, the injective homomorphism $\map{\phi}{\operatorname{\text{Gal}}(\F[2](\zeta_{p})/\F[2])}[\Z_{p}^{\ast}]$ is an isomorphism, so there is a single $\F[2]$-conjugacy class of elements of order $p$, and hence $r_{\F[2]}=2$.

\item The set $G_{p}'$ of the $p$-regular elements of $\Z_{4p}$ consists of the trivial element, the unique element of order $2$, and the two elements of order $4$. By \reth{bruno2}, the injective homomorphism $\map{\phi}{\operatorname{\text{Gal}}(\F[p](\zeta_{4})/\F[p])}[\Z_{4}^{\ast}]$ is an isomorphism if $p=3$, and the image of $\phi$ is equal to $\brak{\overline{1}}$ if $p=5$. As in the remark regarding the $F$-conjugacy classes used in part~(\ref{it:2oddprime}), we conclude that there is a single $\F[p]$-conjugacy class of elements of order $4$ if $p=3$, and two $\F[p]$-conjugacy classes of elements of order $4$ if $p=5$. It follows that $r_{\F[p]}=3$ if $p=3$ and $r_{\F[p]}=4$ if $p=5$.

\item We now compute $r_{\Q_{2}}$. Since there is a single element of order $1$ and $2$, it suffices to determine the number of $\Q_{2}$-conjugacy classes of elements of order $4$ and of order $2^{i}p$, where $i\in \brak{0,1,2}$. By \reth{bruno1}(\ref{it:bruno1c}) (resp.\ \reth{bruno1}(\ref{it:bruno1a})), the injective homomorphism $\map{\phi}{\operatorname{\text{Gal}}(\Q_{2}(\zeta_{4})/\Q_{2})}[\Z_{4}^{\ast}]$ (resp.\ $\map{\phi}{\operatorname{\text{Gal}}(\Q_{2}(\zeta_{p})/\Q_{2})}[\Z_{p}^{\ast}]$) is an isomorphism, so there is a single $\Q_{2}$-conjugacy class of elements of order $4$ (resp.\ of order $p$). If $i\geq 1$ then $\operatorname{\text{Gal}}(\Q_{2}(\zeta_{2^{i}p})/\Q_{2})\cong \operatorname{\text{Gal}}(\Q_{2}(\zeta_{p})/\Q_{2}) \times \operatorname{\text{Gal}}(\Q_{2}(\zeta_{2^{i-1}})/\Q_{2})$, and so the injective homomorphism $\map{\phi}{\operatorname{\text{Gal}}(\Q_{2}(\zeta_{2^{i}p})/\Q_{2})}[\Z_{2^{i}p}^{\ast}]$ is an isomorphism by \reth{bruno1}(\ref{it:bruno1a}) and~(\ref{it:bruno1c}). Hence there is a single $\Q_{2}$-conjugacy class of elements of order $2^{i}p$. So for any divisor $d$ of $4p$, there is a single $\Q_{2}$-conjugacy class of elements of order $d$, hence $r_{\Q_{2}}=6$.

\item We now compute $r_{\Q_{p}}$. Since there is a single element of order $1$ and $2$, it suffices to determine the number of $\Q_{p}$-conjugacy classes of elements of order $4$ and of order $2^{i}p$, where $i\in \brak{0,1,2}$. By \reth{bruno1}(\ref{it:bruno1a}), the injective homomorphism $\map{\phi}{\operatorname{\text{Gal}}(\Q_{p}(\zeta_{4})/\Q_{p})}[\Z_{4}^{\ast}]$ is an isomorphism if $p=3$, and the image of $\phi$ is equal to $\brak{\overline{1}}$ if $p=5$. As in the remark regarding the $F$-conjugacy classes used in part~(\ref{it:2oddprime}), we conclude that there is a single $\Q_{p}$-conjugacy class of elements of order $4$ if $p=3$, and two $\Q_{p}$-conjugacy classes of elements of order $4$ if $p=5$. Since the injective homomorphism $\map{\phi}{\operatorname{\text{Gal}}(\Q_{p}(\zeta_{p})/\Q_{p})}[\Z_{p}^{\ast}]$ is an isomorphism by \reth{bruno1}(\ref{it:bruno1b}), there is a single $\Q_{p}$-conjugacy class of elements of order $p$. Further, since $\operatorname{\text{Gal}}(\Q_{p}(\zeta_{2p})/\Q_{p})\cong \operatorname{\text{Gal}}(\Q_{p}(\zeta_{p})/\Q_{p})$ by \reth{bruno1}(\ref{it:bruno1c}), it follows from \reth{bruno1}(\ref{it:bruno1b}) that there is a single $\Q_{p}$-conjugacy class of elements of order $2p$. Finally, by \reth{bruno1}(\ref{it:bruno1c}), since $\operatorname{\text{Gal}}(\Q_{p}(\zeta_{4p})/\Q_{p})\cong \operatorname{\text{Gal}}(\Q_{p}(\zeta_{p})/\Q_{p}) \times \operatorname{\text{Gal}}(\Q_{p}(\zeta_{4})/\Q_{p})$, it follows from \reth{bruno1}(\ref{it:bruno1a}) and~(\ref{it:bruno1b}) that the injective homomorphism $\map{\phi}{\operatorname{\text{Gal}}(\Q_{p}(\zeta_{4p})/\Q_{p})}[\Z_{4p}^{\ast}]$ is an isomorphism if $p=3$, and the image of $\phi$ is isomorphic to $\Z_{p}^{\ast}$ if $p=5$. As in the remark regarding the $F$-conjugacy classes used in part~(\ref{it:2oddprime}), we conclude that there is a single $\Q_{p}$-conjugacy class of elements of order $4p$ if $p=3$, and two $\Q_{p}$-conjugacy classes of elements of order $4p$ if $p=5$. Hence $r_{\Q_{p}}=6$ if $p=3$, and $r_{\Q_{p}}=8$ if $p=5$.

\item Using~\reqref{rankkminusone} and the above computations, the rank $r$ of $K_{-1}(\Z[\Z_{4p}])$ is given by $r=1-r_{\Q}+r_{\Q_{2}}-r_{\F[2]}+r_{\Q_{p}}-r_{\F[p]}$, so $r=2$ if $p=3$ and $r=3$ if $p=5$ as required.\qedhere
\end{enumerate}
\end{enumerate}
\end{proof}

\section{The lower algebraic $K$-theory of the finite subgroups of $B_{n}(\St)$ for $4\leq n\leq 11$}\label{sec:lower411}

In this section, we bring together the results of the previous sections to compute the lower algebraic $K$-theory of the finite subgroups of $B_{n}(\St)$ for $4\leq n\leq 11$. The results are summarised in the following theorem.

\begin{thm}\label{th:tablas}
For $4\leq n\leq 11$, the lower algebraic $K$-theory of the finite subgroups of $B_{n}(\St)$ is as given in Table~\ref{tab:finite}.
\renewcommand{\arraystretch}{1.2}
\begin{table}[!htbp]
\begin{center}
\begin{tabular}{|c||c|c|c|c|}
\hline
Finite group $G$ & $\wh{G}$ & $K_{-1}(\Z[G])$ & $\widetilde{K}_{0}(\Z[G])$ & values of $n\geq 4$ for which\\
 & & & & $G$ is realised in $B_{n}(\St)$\\
\hline\hline

$\Z_{m}$, $m\in \brak{1,2,3,4}$ & $0$ & $0$ &  $0$  & all \\ \hline

$\Z_{5}$ & $\Z$ & $0$  &  $0$ & $n\equiv 0,1,2\bmod{5}$\\ \hline

$\Z_{6}$ & $0$ & $\Z$  &   $0$  & all \\ \hline

$\Z_{7}$ & $\Z^2$ & $0$  &   $0$  & $n\equiv 0,1,2\bmod{7}$ \\ \hline

$\Z_{8}$ & $\Z$ & $0$  &   $0$ & $n\nequiv 3 \bmod{4}$\\ \hline

$\Z_{9}$ & $\Z^2$ & $0$ &  $0$ & $n\equiv 0,1,2\bmod{9}$\\ \hline

$\Z_{10}$ & $\Z^{2}$ & $\Z$  &  $0$   & $n\equiv 0,1,2\bmod{5}$\\ \hline

$\Z_{11}$ & $\Z^{4}$ &  $0$ &  $0$ & $n\equiv 0,1,2\bmod{11}$\\ \hline

$\Z_{12}$ & $\Z$ & $\Z^2$ &  $\Z_{2}$ & $n\equiv 0,1,2\bmod{6}$\\ \hline

$\Z_{14}$ & $\Z^4$ & $\Z^{2}$ &  $0$   & $n\equiv 0,1,2\bmod{7}$\\ \hline

$\Z_{16}$ & $\Z^4$ & $0$ &  $\Z_{2}$  & $n\equiv 0,1,2\bmod{8}$\\ \hline

$\Z_{18}$ & $\Z^4$ & $\Z^{2}$ &  $\Z_3$& $n\equiv 0,1,2\bmod{9}$\\ \hline

$\Z_{20}$ & $\Z^{5}$ & $\Z^{3}$  &$\Z_2^5$  & $n\equiv 0,1,2\bmod{10}$\\ \hline

$\Z_{22}$ & $\Z^{8}$ & $\Z$  &$\Z_3 $ & $n\equiv 0,1,2\bmod{11}$\\ \hline\hline

$\quat$ & $0$ & $0$ & $\Z_{2}$ & $n$ even\\ \hline

$\dic{12}$ & $0$ & $\Z$  &  $\Z_{2}$  & $n\equiv 0,2\bmod{3}$\\ \hline

$\quat[16]$ & $\Z$ & $\Z_{2}$ & $\Z_{2}$ & $n$ even\\ \hline

$\dic{20}$ & $\Z^2$ & $\Z_{2}\oplus \Z$  &  $\Z_{2}$ & $n\equiv 0,2\bmod{5}$\\ \hline

$\dic{24}$ & $\Z$ & $\Z_{2}\oplus \Z^{2}$ & $\Z_{2}^{3}$ & $n\equiv 0,2\bmod{6}$\\ \hline

$\tstar$ & $0$ & $\Z$ & $\Z_{2}$ & $n$ even\\ \hline

$\dic{28}$ & $\Z^4$ & $\Z$  &  $\Z_{2}$  & $n\equiv 0,2\bmod{7}$\\ \hline

$\quat[32]$ & $\Z^{4}$ & $\Z_{2}$ & $\Z_{2}$ & $n\equiv 0,2\bmod{8}$\\ \hline

$\dic{36}$ & $\Z^4$ & $\Z^{2}$  &  $\Z_{2}^2$  & $n\equiv 0,2\bmod{9}$\\ \hline

$\dic{40}$ & $\Z^{5}$ & $\Z_{2}\oplus \Z^{2}$ & $\Z_{2}^{3}$ & $n\equiv 0,2\bmod{10}$\\ \hline

$\dic{44}$ & $\Z^{8}$ & $\Z$ & $\Z_{2}$ & $n\equiv 0,2\bmod{11}$\\ \hline

$\ostar$ & $\Z$ &  $\Z_{2} \oplus \Z$ & $\Z_{2}^2$ & $n\equiv 0,2\bmod{6}$\\ \hline

\end{tabular}
\caption{The lower algebraic $K$-theory of the finite subgroups of $B_{n}(\St)$, $n\leq 11$.}\label{tab:finite}
\end{center}
\end{table}
\end{thm}

\begin{rem}\label{rem:negkfinite}
It was proved in~\cite[Theorem 3]{C3} that $K_{-i}(\Z[G])=0$ for any finite group $G$ and for all $i\geq 2$. 
\end{rem}

\begin{rem}
Although the results of \reth{tablas} deal with the lower algebraic $K$-theory of the finite subgroups of $B_{n}(\St)$ for $4\leq n\leq 11$, these groups also occur as subgroups of $B_{n}(\St)$ for larger values of $n$. These values are given in the last column of Table~\ref{tab:finite}, and are obtained from  \reth{finitebn}.
\end{rem}

\begin{proof}[Proof of \reth{tablas}] 
By \reth{finitebn}, the isomorphism classes of the maximal finite subgroups of $B_{n}(\St)$, where $n$ runs over the elements of $\brak{4,\ldots,11}$, are $\dic{4m}$, where $m\in \brak{3,4,\ldots,11}$, $\Z_{2q}$, where $q\in \brak{4,5,\ldots, 10}$, $\tstar$ and $\ostar$. If we remove the condition of maximality of these subgroups, then we must also add $\quat$ and $\Z_{r}$ to the list, where $r\in\brak{1,2,\ldots,7,9,11,22}$. We thus obtain the groups of the first column of Table~\ref{tab:finite} that are subgroups of $B_{n}(\St)$ for the values of $n$ given in the last column. We divide the proof into several parts.

\begin{enumerate}[(a)]
\item If $G$ is one of the groups appearing in Table~\ref{tab:finite} then $\wh{G}$ is obtained by applying \repr{whbnS2}.

\item We determine $\widetilde{K}_{0}(\Z[G])$, where $G$ is one of the groups appearing in Table~\ref{tab:finite}. By \reth{kzerotrivial}, $\widetilde{K}_{0}(\Z[G])$ is trivial if $G$ is isomorphic to $\Z_{n}$, where $n\in \brak{1,\ldots,11,14}$, and is non trivial if $G$ is isomorphic to $\Z_{n}$, where $n\in \brak{12,16,18,20,22}$. By~\cite[page~126, line~16]{Sw3}, if $G$ is isomorphic to $\Z_{12}$ then $\widetilde{K}_{0}(\Z[\Z_{12}])\cong \Z_{2}$. Suppose that $G=\Z_{16}$. From~\cite[Page~416]{KM}, there is a surjective homomorphism from $\widetilde{K}_{0}(\Z[\Z_{16}])$ to $\prod_{\nu=1}^{4} \widetilde{K}_{0}(\Z[\zeta_{\nu}])$, where $\zeta_{\nu}$ is a primitive $2^{\nu}$\up{th} root of unity, and whose kernel, denoted by $W_{3}$ in~\cite{KM}, is isomorphic to $\Z_{2}$. From~\reth{kzerotrivial}, $\widetilde{K}(\Z[\zeta_{\nu}])$ is trivial for all $n=1,\ldots,4$, and so $\widetilde{K}_{0}(\Z[\Z_{16}])\cong \Z_{2}$. By
\reth{K0cyc}, $\widetilde{K}_0(\Z[\Z_{n}])$ is isomorphic to $\Z_3$ if $n\in \brak{18,22}$, and to $\Z_2^{5}$ if $n=20$. If $G=\dic{4m}$, where $2\leq m\leq 11$, or if $G=\tstar$ or $\ostar$ then $\widetilde{K}_{0}(\Z[G])$ is given by \reth{swan}.

\item We determine $K_{-1}(\Z[G])$, where $G$ is one of the groups appearing in Table~\ref{tab:finite}. We consider several cases.

\begin{enumerate}[(i)]
\item If $G=\Z_{m}$, where $m\in \brak{1,2,3,4,5,7,8,9,11,16}$, then $K_{-1}(\Z[G])$ is trivial by \repr{kminusonecyclic}(\ref{it:kminusonecyclica}).

\item Let $G=\Z_{m}$, where $m\in \brak{6,10,14,18,22}$. Then $m$ is of the form $m=2p^{q}$, where $q\in \N$ and $p$ is an odd prime. By \repr{kminusonecyclic}(\ref{it:kminusonecyclicb}), $K_{-1}(\Z[G])\cong \Z^{r}$, where $r=\sum_{j=1}^{q} \Bigl[\Z_{p^{j}}^{\ast}: \ang{\overline{2}}_{\Z_{p^{j}}^{\ast}} \Bigr]$. A straightforward computation shows that $K_{-1}(\Z[\Z_{6}])\cong K_{-1}(\Z[\Z_{10}])\cong K_{-1}(\Z[\Z_{22}]) \cong \Z$, and $K_{-1}(\Z[\Z_{14}])\cong K_{-1}(\Z[\Z_{18}])\cong \Z^{2}$.

\item If $G=\Z_{m}$, where $m\in \brak{12,20}$, the results for $K_{-1}(\Z[G])$ are obtained from \repr{kminusonecyclic}(\ref{it:kminusonecyclicc}).

\item $K_{-1}(\Z[G])$ for $G=\dic{4m}$, where $2\leq m \leq 11$: we distinguish the following cases.

\begin{enumerate}[\textbullet]
\item If $m\in \brak{2,4,8}$, the results are a consequence of \repr{kminusonequat2k}.
\item If $m\in \brak{3,5,7,11}$, the results follow from \reth{kminusonedic4prime} and \repr{lambdacalc} (observe that if $m\in \brak{3,5,11}$, $\overline{2}$ generates $\Z_{m}^{\ast}$, so $\lambda=1$, while if $m=7$, $\overline{-1}\notin \ang{\overline{2}}$, but $\ord{\ang{\overline{2}}}=3$, so $\lambda=1$ also). 
\item If $m\in \brak{6,9,10}$, by~Propositions~\ref{prop:torsiondicextra} and~\ref{prop:rankdicextra}, it follows that $K_{-1}(\Z[\dic{36}])\cong \Z^{2}$, and  $K_{-1}(\Z[\dic{4m}]) \cong \Z_{2} \oplus \Z^{2}$ if $m\in \brak{6,10}$.
\end{enumerate}

\item If $G=\tstar$ or $\ostar$ then the results follow from \repr{kminusonebinpoly}.
\end{enumerate}
\end{enumerate}
This completes the proof of the results given in Table~\ref{tab:finite}.
\end{proof}

\chapter[The braid group $B_{4}(\St)$, and the conjugacy classes of its maximal virtually cyclic subgroups]{The braid group $B_{4}(\St)$, and the conjugacy classes of its maximal virtually cyclic subgroups}\label{chap:b4s2}

\chaptermark{$B_{4}(\St)$ and its maximal virtually cyclic subgroups}

In this chapter, we focus our attention on the braid group $B_{4}(\St)$ of the sphere on four strings. The aim is to understand the structure of its maximal virtually cyclic subgroups. These results will be used in \rechap{kthb4St} to compute the lower algebraic $K$-theory of $B_{4}(\St)$, and to prove \reth{B4}. 
 
In \resec{gensB4}, we start by recalling some properties of $B_{4}(\St)$. We then study the algebraic description of the finite subgroups of $B_{4}(\St)$ given by \reth{finitebn}, which enables us to prove in \repr{b4amalg} that $B_{4}(\St)$ may be expressed as an amalgamated product of $\tstar$ and $\quat[16]$ along their common normal subgroup that is isomorphic to $\quat$. This will alow us to show that $B_{4}(\St)$ is hyperbolic in the sense of Gromov (as we shall see in \repr{semigamma}, $B_{4}(\St)$ is virtually free). In order to do this, in \resec{maxvcb4}, we study the structure of the maximal virtually cyclic subgroups of $B_{4}(\St)$, our main result being \reth{maxvcb4}. Using~\cite{JLMP}, in \resec{conjclmaxvc}, we show that the maximal infinite virtually cyclic subgroups of $B_{4}(\St)$ possess an infinite number of conjugacy classes. 

\section{Generalities about $B_{4}(\St)$}\label{sec:gensB4}

In this section, we state several results concerning $B_{4}(\St)$. Some basic facts and results about Artin (pure)  braid groups and (pure) braid groups of surfaces may be found in Appendix~\ref{chap:braids}. As we shall see, $B_{4}(\St)$ is rather special, and possesses some very interesting properties that will allow us to calculate its lower $K$-theoretical groups. Unfortunately, if $n\geq 5$, $B_{n}(\St)$ does not share these properties. We start by recalling a presentation of $B_{4}(\St)$. 

\begin{thm}[\cite{FVB}]\label{th:fvb}
The group $B_{4}(\St)$ admits the following presentation:
\begin{enumerate}
\item[\underline{\textbf{generators:}}] $\sigma_{1}$, $\sigma_{2}$, $\sigma_{3}$. 
\item[\underline{\textbf{relations:}}] 
\begin{gather}
\sigma_{1}\sigma_{3}=\sigma_{3}\sigma_{1}\notag\\
\sigma_{1}\sigma_{2}\sigma_{1}=\sigma_{2}\sigma_{1}\sigma_{2}\notag\\
\sigma_{2}\sigma_{3}\sigma_{2}=\sigma_{3}\sigma_{2}\sigma_{3}\notag\\
\sigma_{1}\sigma_{2}\sigma_{3}^2\sigma_{2}\sigma_{1}=1.\label{eq:surfacerel}
\end{gather}
\end{enumerate}
\end{thm}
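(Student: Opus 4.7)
My plan is to derive the presentation by combining Artin's classical presentation of $B_4(\Et)$ with a single additional ``surface relation'' that accounts for the difference between the plane and the sphere. First I would recall Artin's theorem: $B_4(\Et)$ is generated by the elementary half-twists $\sigma_1,\sigma_2,\sigma_3$ subject to exactly the three braid-type relations $\sigma_1\sigma_3=\sigma_3\sigma_1$, $\sigma_1\sigma_2\sigma_1=\sigma_2\sigma_1\sigma_2$ and $\sigma_2\sigma_3\sigma_2=\sigma_3\sigma_2\sigma_3$. The one-point compactification $\Et\hookrightarrow \St$ induces a homomorphism $\map{\iota}{B_4(\Et)}[B_4(\St)]$, and since the stratum of configurations in $F_4(\St)$ that use the point at infinity has codimension at least $2$, a transversality argument shows that $\iota$ is surjective, so it remains only to identify $\ker\iota$ as the normal closure of the single word appearing in~\reqref{surfacerel}.

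To compute $\ker \iota$, I would use the Fadell--Neuwirth fibration
\[ F_3(\St\setminus\brak{*})\lhra F_4(\St)\to F_1(\St)=\St \]
obtained by forgetting the last three coordinates, whose fiber is homeomorphic to $F_3(\Et)$. The associated long exact sequence in homotopy, combined with $\pi_2(\St)\cong \Z$ and $\pi_1(\St)=0$, produces the short exact sequence
\[ 1\to K\to P_3(\Et)\to P_4(\St)\to 1, \]
in which $K$ is the cyclic image of $\pi_2(\St)$ under the boundary map. Passing to the full braid groups via the canonical $\sn[4]$-extensions (which is compatible on both sides because $\sn[4]$ acts trivially on $K$), one concludes that $\ker\iota$ is the normal closure in $B_4(\Et)$ of a single element, namely the image in $B_4(\Et)$ of any generator of~$K$.

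The main step, and the place where the geometric input is essential, is to identify this generator explicitly as the palindromic word $\sigma_1\sigma_2\sigma_3^2\sigma_2\sigma_1$. I would do this by tracing the connecting homomorphism $\map{\partial}{\pi_2(\St)}[\pi_1(F_3(\Et))]$: a generator of $\pi_2(\St)$ is realised by sliding the first puncture once around the sphere, which, when followed back in the fiber $F_3(\Et)$ and then read in the Artin generators of $B_4(\Et)$, corresponds exactly to strand~$1$ being pushed across strands~$2,3,4$ and back, yielding the word $\sigma_1\sigma_2\sigma_3^2\sigma_2\sigma_1$; this is the classical Fadell--Van Buskirk computation. A routine Tietze-transformation check then confirms that Artin's three relations together with~\reqref{surfacerel} constitute a complete presentation, with no redundancy. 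The hard part will be carrying out the geometric tracing of $\partial$ rigorously and verifying that the element produced is indeed a generator of the cyclic group $K$ (rather than a proper power of one); this is what ensures that the normal closure is not strictly smaller than $\ker\iota$.
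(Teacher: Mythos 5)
The paper offers no proof of this statement: it is quoted directly from Fadell and Van Buskirk~\cite{FVB}, so the only comparison available is against the classical argument itself. Measured against that, your sketch contains a genuine structural gap in its central step. The Fadell--Neuwirth fibration you invoke, $F_{3}(\St\setminus\brak{*})\lhra F_{4}(\St)\to \St$, does produce a surjection $P_{3}(\Et)\twoheadrightarrow P_{4}(\St)$ with cyclic kernel $K=\im{\partial}$, but that surjection is the \emph{fibre inclusion}, which adds a constant strand at $*$; it is not the restriction of the map $\iota\colon\thinspace B_{4}(\Et)\to B_{4}(\St)$ induced by $\Et\hookrightarrow\St$, whose kernel is what the presentation requires. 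The proposed bridge, ``passing to the full braid groups via the canonical $\sn[4]$-extensions,'' cannot be carried out: $P_{3}(\Et)$ sits inside $B_{3}(\Et)$ with quotient $\sn[3]$, not $\sn[4]$, so there is no ladder of extensions relating $1\to K\to P_{3}(\Et)\to P_{4}(\St)\to 1$ to anything with $B_{4}(\Et)$ on the left. A symptom of the mismatch is that $\partial(1)$ lies in $P_{3}(\Et)$ and is therefore a word in $\sigma_{1},\sigma_{2}$ on three strands; it cannot literally be the four-strand word $\sigma_{1}\sigma_{2}\sigma_{3}^{2}\sigma_{2}\sigma_{1}$.

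More seriously, the conclusion you want to extract from the cyclicity of $\pi_{2}(\St)$ does not transfer to the map you actually need: $\ker{\iota}$ is the normal closure in $B_{4}(\Et)$ of $\sigma_{1}\sigma_{2}\sigma_{3}^{2}\sigma_{2}\sigma_{1}$, and this normal subgroup is very far from cyclic (for instance, $B_{4}(\Et)$ has cohomological dimension greater than $2$ while $B_{4}(\St)$ is virtually free, so a cyclic kernel is impossible). The parts of your plan that are sound are the surjectivity of $\iota$ by general position and the geometric verification that the surface relation~\reqref{surfacerel} holds in $B_{4}(\St)$ (pushing a strand over the back of the sphere); what is missing is the proof that this single relation \emph{suffices}. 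That is the real content of Fadell--Van Buskirk's theorem, and it is obtained by an induction over the fibrations $F_{n}(\St)\to F_{n-1}(\St)$, whose fibres are punctured spheres, starting from $F_{3}(\St)\simeq\operatorname{SO}(3)$ (whence $P_{3}(\St)\cong\Z_{2}$), and assembling a presentation of the total group of each extension --- or by an equivalent van Kampen/Reidemeister--Schreier comparison with the Artin presentation. As written, your argument shows that the four listed relations hold, but not that they generate all relations.
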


The first three `Artin relations' (also known as braid relations) will be used freely and without further comment in what follows. Since the given generators together with these three relations constitute a presentation of the Artin braid group $B_{4}$ (see~\reqref{Artin1} and~\reqref{Artin2}), $B_{4}(\St)$ is thus a quotient of $B_{4}$. 

\begin{rem}\label{rem:b4S2ab}
It follows easily from \reth{fvb} that the Abelianisation of $B_{4}(\St)$ is isomorphic to $\Z_{6}$, and that the Abelianisation homomorphism $\map{\pi}{B_{4}(\St)}[\Z_{6}]$ identifies the three generators to the single generator $\overline{1}$ of $\Z_{6}$.
\end{rem}

We may determine generators of representatives of the conjugacy classes of the finite subgroups of $B_4(\St)$ in terms of the generators of \reth{fvb} as follows. First, according to Murasugi~\cite[Theorem~A]{Mu}, any finite order element of $B_{4}(\St)$ is conjugate to a power of one of the following elements:
\begin{equation}\label{eq:defalpha}
\left\{
\begin{aligned}
\alpha_{0}&=\sigma_{1}\sigma_{2}\sigma_{3} \quad\text{(of order $8$)}\\
\alpha_{1}&=\sigma_{1}\sigma_{2}\sigma_{3}^2 \quad\text{(of order $6$)}\\
\alpha_{2}&=\sigma_{1}\sigma_{2}^2 \quad\text{(of order $4$)}.
\end{aligned}\right.
\end{equation}
Let 
\begin{equation}\label{eq:defgarside}
\garside[4]=\sigma_{1}\sigma_{2}\sigma_{3} \sigma_{1}\sigma_{2}\sigma_{1}
\end{equation}
denote the `half twist' braid on four strings. It is a square root of the full twist braid $\ft[4]$ described in~\reqref{fulltwist} (see also Figures~~\ref{fig:twists}(\subref{fig:halftwist}) and~(\subref{fig:fulltwist}) for illustrations of the half and full twist braid on six strings). The braid $\ft[4]$ generates the centre of $B_{4}(\St)$ and is the unique element of $B_{4}(\St)$ of order $2$ (this is true in general, see~\cite{GVB}). Using~\reqref{defalpha}, this latter fact implies that:
\begin{equation}\label{eq:ftorder2}
\ft[4]=\alpha_{0}^{4}=\alpha_{1}^{3}=\alpha_{2}^{2}.
\end{equation}
Let $Q=\ang{\alpha_{0}^2,\garside[4]}$. By~\cite[Theorem~1.3(3)]{GG3}, $Q$ is isomorphic to $\quat$, and is a normal subgroup of $B_{4}(\St)$. Further, it is well known (see~\cite[Lemma~29]{GG2} for example) that:
\begin{equation}\label{eq:actalpha0}
\text{$\alpha_{0}\sigma_{i}\alpha_{0}^{-1}=\sigma_{i+1}$ for $i=1,2$, and $\alpha_{0}^{2}\sigma_{3}\alpha_{0}^{-2}=\sigma_{1}$,}
\end{equation}
and that:
\begin{equation}\label{eq:actgarside}
\garside[4] \sigma_{i} \garside[4]^{-1}=\sigma_{4-i} \quad \text{for all $i=1,2,3$}
\end{equation}
Note that relations~\reqref{actalpha0} and~\reqref{actgarside} hold in $B_{4}$, and are special cases of more general relations in $B_{n}(\St)$.

\begin{rems}\mbox{}\label{rem:maxfin}
\begin{enumerate}[(a)]
\item\label{it:maxfina} By \reth{finitebn}, the isomorphism classes of the maximal finite subgroups of $B_{4}(\St)$ are $\tstar$ and $\quat[16]$.

\item\label{it:maxfinb} Within $B_{4}(\St)$, there is a single conjugacy class of each isomorphism class of $\tstar$ and $\quat[16]$~\cite[Proposition~1.5(1)]{GG2}. These subgroups may be realised algebraically as follows:
\begin{enumerate}[(i)]
\item $\quat[16]$ may be realised in $B_{4}(\St)$ as the subgroup $\ang{\alpha_{0},\garside[4]}$ ($\alpha_{0}$ is of order $8$ and $\garside[4]$ is of order $4$)~\cite{GG5}. In particular,
\begin{equation}\label{eq:conjgar}
\garside[4] \alpha_{0}\garside[4]^{-1}=\alpha_{0}^{-1} \quad \text{and} \quad \alpha_{0}^{4}=\ft[4].
\end{equation}
\item\label{it:consttstar} By~\cite[Remark~3.2]{GG2}, $\tstar$ may be realised in $B_{4}(\St)$ as the subgroup $\bigl\langle \sigma_{1}\sigma_{3}^{-1}, \garside[4] \bigr\rangle\rtimes \ang{\alpha_{1}^2}\cong \quat \rtimes \Z_{3}$. Note that the first factor of the semi-direct product is $Q$, since 
by equations~\reqref{defalpha} and~\reqref{defgarside}, we have: 
\begin{equation}\label{eq:sig1sig3}
\alpha_{0}^{-2}\garside[4]= \sigma_{3}^{-1}\sigma_{2}^{-1} \sigma_{1}^{-1} \sigma_{3}^{-1}\sigma_{2}^{-1} \sigma_{1}^{-1} \ldotp \sigma_{1} \sigma_{2} \sigma_{3} \sigma_{1} \sigma_{2} \sigma_{1}= \sigma_{1} \sigma_{3}^{-1}.
\end{equation}
\end{enumerate}
The action in the semi-direct product is given by $\alpha_{1}^2 \garside[4]\alpha_{1}^{-2}=\sigma_{1} \sigma_{3}^{-1}$ and $\alpha_{1}^2 \sigma_{1} \sigma_{3}^{-1}\alpha_{1}^{-2}=\garside[4] \sigma_{1} \sigma_{3}^{-1}=\alpha_{0}^{-2}$. The only other isomorphism class of finite non-Abelian subgroups of $B_{4}(\St)$ is that of $\quat$: the subgroups $Q=\ang{\alpha_{0}^2,\garside[4]}$ and $Q'=\ang{\alpha_{0}^2,\alpha_{0}\garside[4]}$ of $\ang{\alpha_{0},\garside[4]}$ are isomorphic to $\quat$ and realise the two conjugacy classes of $\quat$ in $B_{4}(\St)$~\cite[Proposition~1.5(2) and Theorem~1.6]{GG2}.

\item\label{it:maxfinc} By \reth{finitebn}, the remaining finite subgroups are cyclic, and as we mentioned previously, are realised up to conjugacy by powers of the $\alpha_{i}$, $i\in\brak{0,1,2}$. For each finite cyclic subgroup, there is a single conjugacy class, with the exception of $\Z_{4}$, which is realised by both of the non-conjugate subgroups $\ang{\alpha_{0}^2}$ and $\ang{\alpha_{2}}$~\cite[Proposition~1.5(2) and Theorem~1.6]{GG2}.
\end{enumerate}
\end{rems}

As we mentioned above, $Q$ is a normal subgroup of $B_{4}(\St)$. From this, we obtain the following decomposition of $B_{4}(\St)$ as an amalgamated product of two finite groups.
\begin{prop}\label{prop:b4amalg}
$B_{4}(\St) \cong \quat[16] \bigast_{\quat} \tstar$.
\end{prop}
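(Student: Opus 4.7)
The aim is to construct an explicit isomorphism between the abstract amalgamated product $G := \quat[16] \bigast_{\quat} \tstar$ and $B_{4}(\St)$. By \rerem{maxfin}(\ref{it:maxfinb}), the concrete realisations $\quat[16]=\ang{\alpha_{0},\garside[4]}$ and $\tstar=\ang{\sigma_{1}\sigma_{3}^{-1},\garside[4]}\rtimes\ang{\alpha_{1}^{2}}$ inside $B_{4}(\St)$ share the common subgroup $Q=\ang{\alpha_{0}^{2},\garside[4]}\cong\quat$, and the universal property of the amalgam therefore yields a homomorphism $\phi\colon G\to B_{4}(\St)$.

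Surjectivity of $\phi$ follows by expressing each Fadell--Van Buskirk generator as a product of elements of $\quat[16]\cup \tstar$. From \req{defalpha} we have $\sigma_{3}=\alpha_{0}^{-1}\alpha_{1}$, and $\alpha_{1}\in\tstar$ because $\alpha_{1}=\ft[4]\cdot(\alpha_{1}^{2})^{-1}$ with $\ft[4]=\alpha_{0}^{4}\in Q\subset \tstar$ and $\alpha_{1}^{2}\in\tstar$. Then $\sigma_{1}=(\sigma_{1}\sigma_{3}^{-1})\sigma_{3}\in\im{\phi}$, and $\sigma_{2}=\alpha_{0}\sigma_{1}\alpha_{0}^{-1}\in\im{\phi}$ by \req{actalpha0}.

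For injectivity, the key point is that $Q$ is normal in $B_{4}(\St)$ by \cite[Theorem~1.3(3)]{GG3}, as well as in each of the factors $\quat[16]$ and $\tstar$ (being of index $2$ and $3$ respectively); consequently $Q$ is normal in $G$ as well, with $G/Q\cong(\quat[16]/Q)*(\tstar/Q)\cong \Z_{2}*\Z_{3}$. Since $\phi$ restricts to the identity of $Q$, applying the five-lemma to the short exact sequences $1\to Q\to G\to G/Q\to 1$ and $1\to Q\to B_{4}(\St)\to B_{4}(\St)/Q\to 1$ reduces injectivity of $\phi$ to showing that the induced homomorphism $\bar\phi\colon G/Q\to B_{4}(\St)/Q$ is an isomorphism. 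Now $\bar\phi$ is automatically surjective, and $\Z_{2}*\Z_{3}\cong PSL_{2}(\Z)$ is Hopfian as a finitely generated residually finite group; hence it is enough to check that $B_{4}(\St)/Q\cong \Z_{2}*\Z_{3}$.

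This last identification is obtained by adjoining the relations $\alpha_{0}^{2}=1$ and $\garside[4]=1$ to the presentation of \reth{fvb}, which suffices to kill $Q$ since $Q$ is a normal subgroup generated as a group by those two elements. Comparing $(\sigma_{1}\sigma_{2}\sigma_{3})^{2}=1$ with $\sigma_{1}\sigma_{2}\sigma_{3}\sigma_{1}\sigma_{2}\sigma_{1}=1$ forces $\sigma_{1}=\sigma_{3}$ in the quotient; after this substitution the braid and surface relations all collapse to $\sigma_{1}\sigma_{2}\sigma_{1}=\sigma_{2}\sigma_{1}\sigma_{2}$ together with $(\sigma_{1}\sigma_{2}\sigma_{1})^{2}=1$, and the change of generators $a=\overline{\sigma_{1}\sigma_{2}\sigma_{1}}$ and $b=\overline{\sigma_{1}\sigma_{2}}$ yields the presentation $\ang{a,b\mid a^{2}=b^{3}=1}$. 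The main obstacle is this Tietze manipulation, where one must verify in particular that the surface relation \req{surfacerel} collapses to $(\sigma_{1}\sigma_{2}\sigma_{1})^{2}=1$ and so introduces no extraneous relation.
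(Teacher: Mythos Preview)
Your proposal is correct and follows essentially the same strategy as the paper: construct a homomorphism from the abstract amalgam into $B_{4}(\St)$, check surjectivity on generators, identify $B_{4}(\St)/Q$ with $\Z_{2}\ast\Z_{3}$ via the presentation of \reth{fvb}, and conclude with the $5$-Lemma together with the Hopfian property of $\Z_{2}\ast\Z_{3}$. The only cosmetic difference is that the paper kills $Q$ by observing that the normal closure of the single element $\sigma_{1}\sigma_{3}^{-1}$ is $Q$ (so one adjoins only $\sigma_{1}=\sigma_{3}$), whereas you adjoin $\alpha_{0}^{2}=1$ and $\garside[4]=1$ and then derive $\sigma_{1}=\sigma_{3}$; both routes lead to the same two-relator presentation of the quotient.
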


\begin{proof}
Let $\Gamma=B_{4}(\St)/Q$. Since $\sigma_{1}\sigma_{3}^{-1}\in Q$ and $\bigl\langle\sigma_{1}\sigma_{3}^{-1}\bigr\rangle$ is not normal in $B_{4}(\St)$ by \rerems{maxfin}(\ref{it:maxfinb})(\ref{it:consttstar}), it follows that the normal closure of  $\sigma_{1}\sigma_{3}^{-1}$ in $B_{4}(\St)$ is $Q$, and that a presentation of $\Gamma$ is obtained by adjoining the relation $\sigma_{1}=\sigma_{3}$ to the presentation of $B_{4}(\St)$ given in \reth{fvb}. Thus $\Gamma$ is generated by elements $\overline{\sigma_{1}}$ and $\overline{\sigma_{2}}$, subject to the two relations $\overline{\sigma_{1}}\,\overline{\sigma_{2}}\, \overline{\sigma_{1}}=\overline{\sigma_{2}} \,\overline{\sigma_{1}}\, \overline{\sigma_{2}}$ and $(\overline{\sigma_{1}}\, \overline{\sigma_{2}} \,\overline{\sigma_{1}})^2=1$. Let $\Lambda =\setbigangr{a,b}{a^2=b^3=1}$ denote the free product $\Z_{2}\bigast \Z_{3}$, and consider the map $\map{\varphi}{\Lambda}[\Gamma]$ defined on the generators of $\Lambda$ by $\varphi(a)=\overline{\sigma_{1}}\,\overline{\sigma_{2}}\, \overline{\sigma_{1}}$ and $\varphi(b)=\overline{\sigma_{1}}\,\overline{\sigma_{2}}$. Since $(\varphi(b))^3= (\overline{\sigma_{1}}\,\overline{\sigma_{2}})^3= (\overline{\sigma_{1}}\,\overline{\sigma_{2}}\, \overline{\sigma_{1}})^2=(\varphi(a))^2=1$, $\varphi$ extends to a homomorphism that is surjective since $\varphi(b^{-1}a)= \overline{\sigma_{1}}$ and $\varphi(a^{-1}b^2)= \overline{\sigma_{2}}$. Conversely, the map $\map{\psi}{\Gamma}[\Lambda]$ defined on the generators of $\Gamma$ by $\psi(\overline{\sigma_{1}})= b^{-1}a$ and $\psi(\overline{\sigma_{2}})= a^{-1}b^2$ extends to a homomorphism since:
\begin{align*}
\psi(\overline{\sigma_{1}})\, \psi(\overline{\sigma_{2}})\, \psi(\overline{\sigma_{1}})&=b^{-1}aa^{-1}b^2 b^{-1}a=a\\
&= a^{-1}b^2 b^{-1}a a^{-1}b^2= \psi(\overline{\sigma_{2}}) \,\psi(\overline{\sigma_{1}})\, \psi(\overline{\sigma_{2}})\quad \text{and}\\
(\psi(\overline{\sigma_{1}})\, \psi(\overline{\sigma_{2}})\, \psi(\overline{\sigma_{1}}))^2 &=a^2=1,
\end{align*}
and is surjective because $\psi(\overline{\sigma_{1}}\,\overline{\sigma_{2}}\, \overline{\sigma_{1}})=a$ and $\psi(\overline{\sigma_{1}}\,\overline{\sigma_{2}})=b$. Hence $\varphi$ is an isomorphism, and $\Gamma\cong \Z_{2}\bigast \Z_{3}$.

Let $G=\quat[16] \bigast_{\quat} \tstar$, and consider the following presentation of $G$ with generators $u,v,p,q,r$ that are subject to the relations:
\begin{equation*}
\left\{
\begin{aligned}
p^2&=q^2,\, qpq^{-1}=p^{-1},\, rpr^{-1}=q,\, rqr^{-1}=pq,\, r^3=1\\
u^4&=v^2,\, vuv^{-1}=u^{-1},\, u^2=p,\, v=q,
\end{aligned}
\right.
\end{equation*}
so that $\ang{p,q,r}\cong \tstar$, $\ang{u,v}\cong \quat[16]$, and $\ang{p,q,r} \cap \ang{u,v}=H$, where $H=\ang{p,q}\cong \quat$. It follows from this presentation that $H$ is normal in $G$ and $G/H\cong \Z_{2}\bigast \Z_{3}$. Let $\map{f}{G}[B_{4}(\St)]$ be the map defined on the generators of $G$ by $f(u)=\alpha_{0}^{-1}$, $f(p)=\alpha_{0}^{-2}$, $f(v)=f(q)=\garside[4]$ and $f(r)=\alpha_{1}^2$. Using \rerems{maxfin}(\ref{it:maxfinb}), we see that $f$ respects the relations of $G$, and so extends to a homomorphism that sends $H$ isomorphically onto $Q$. Further, $\alpha_{1}^3=\ft[4]$ by \req{ftorder2}. Thus $\alpha_{1}=\ft[4] \alpha_{1}^{-2}$, and since $B_{4}(\St)=\ang{\alpha_{0},\alpha_{1}}$ by~\cite[Theorem~3]{GG4}, we conclude that $f$ is surjective. We thus obtain the following commutative diagram of short exact sequences:
\begin{equation*}
\xymatrix{1\ar[r] & H \ar[r]\ar[d]^{f\left\lvert_{H}\right.}_{\cong} & G  \ar[r]\ar[d]^{f} & G/H \ar[r]\ar[d]^{\widehat{f}} & 1\\
1\ar[r] & Q \ar[r] & B_{4}(\St) \ar[r] & \Gamma  \ar[r] & 1,}
\end{equation*}
where $\widehat{f}$ is the homomorphism induced by $f$ on the quotients. Since $f$ is surjective, $\widehat{f}$ is too, and the isomorphisms $G/H\cong \Z_{2}\bigast \Z_{3}\cong \Gamma$ and the fact that $\Z_{2}\bigast \Z_{3}$ is Hopfian (see~\cite{DN} for example) imply that $\widehat{f}$ is an isomorphism. The result is then a consequence of the $5$-Lemma.
\end{proof}

\begin{rem}\label{rem:b4S2hyperbolic}
By \repr{b4amalg}, $B_{4}(\St)$ is isomorphic to an amalgam of finite groups. Using Bass-Serre theory of groups acting on trees, it follows that it is a virtually free group, and so is hyperbolic in the sense of Gromov (see~\cite{Grom} and~\cite[Section~1.1]{JLMP}). This important fact will be crucial in the computation of the lower algebraic $K$-theory of $B_{4}(\St)$.
\end{rem}
   
\section{Maximal virtually cyclic subgroups of $B_{4}(\St)$}\label{sec:maxvcb4}

As we mentioned at the beginning of \resec{classvc}, an infinite virtually cyclic group $\Gamma$ is isomorphic to one of the following:
\begin{enumerate}[(I)]
\item a semi-direct product of the form $\Gamma\cong F\rtimes_\alpha \Z$, where $F$ is a finite group and the action $\alpha$ belongs to $\operatorname{Hom}(\Z,\aut{F})$. Such a group $\Gamma$ surjects onto $\Z$ with finite kernel $F$.
\item an amalgamated product of the form $\Gamma\cong G_1\bigast_F G_2$, where $G_1$ and $G_{2}$ are finite groups containing a common subgroup $F$ of index $2$ in both $G_{1}$ and $G_{2}$. Such a group $\Gamma$ surjects onto the infinite dihedral group $\dih{\infty}$ with finite kernel $F$.
\end{enumerate}
We shall say that these infinite virtually cyclic groups are of \emph{Type~I} or of \emph{Type~II} respectively.

Recall from \rerems{maxfin}(\ref{it:maxfina}) and~(\ref{it:maxfinb}) that up to isomorphism, the maximal finite subgroups of $B_{4}(\St)$ are $\quat[16]$ and $\tstar$, and that there exists a single conjugacy class of each. Since $\operatorname{Out}(\quat)\cong \sn[3]$, there are three isomorphism classes of Type~I groups of the form $\quat\rtimes \Z$, and that we denote by $\quat\rtimes_{j} \Z$, where $j\in\brak{1,2,3}$, and for which the action is of order $j$~\cite[Definition~4(1)(e)]{GG2}. More precisely, if we take the presentation of $\quat$ given by~\reqref{presdic} and adjoin a new generator $z$:
\begin{enumerate}[(i)]
\item $\quat\rtimes_{1} \Z$ is the group obtained by adding the relations $[z,x]= [z,y]=1$, where $[u,v]=uvu^{-1}v^{-1}$ denotes the commutator of the elements $u$ and $v$, so $\quat\rtimes_{1} \Z\cong \quat \times \Z$.
\item $\quat\rtimes_{2} \Z$ is the group obtained by adding the relations $zxz^{-1}=y$ and $zyz^{-1}=x$ (so $zxyz^{-1}=(xy)^{-1}$).
\item $\quat\rtimes_{3} \Z$ is the group obtained by adding the relations $zxz^{-1}=y$ and $zyz^{-1}=xy$ (so $zxyz^{-1}=x$).
\end{enumerate}
Up to a finite number of exceptions, the isomorphism classes of the infinite virtually cyclic subgroups of $B_{n}(\St)$ were classified in~\cite{GG2} for all $n\geq 4$. In the case $n=4$, the classification is as follows.
\begin{thm}[{\cite[Theorem~5]{GG2}}]\label{th:vcb4S2}
Every infinite virtually cyclic subgroups of $B_{4}(\St)$ is isomorphic to one of the following groups:
\begin{enumerate}[(a)]
\item\label{it:vcb4S2a} \emph{subgroups of Type~I:} $\Z_{k}\times \Z$, $k\in \brak{1,2,4}$; $\Z_{4}\rtimes\Z$ for the non-trivial action; and $\quat \rtimes_{j} \Z$ for $j\in \brak{1,2,3}$.
\item\label{it:vcb4S2b} \emph{subgroups of Type~II:} $\Z_{4} \bigast_{\Z_{2}} \Z_{4}$, $\Z_{8} \bigast_{\Z_{4}} \Z_{8}$, $\Z_{8} \bigast_{\Z_{4}} \quat$, $\quat \bigast_{\Z_{4}} \quat$, $\quat[16] \bigast_{\quat} \quat[16]$.
\end{enumerate}
\end{thm}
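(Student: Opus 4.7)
The plan is to exploit the amalgamated product decomposition $B_4(\St)\cong\quat[16]\bigast_{\quat}\tstar$ established in \repr{b4amalg} together with the normality of $Q\cong\quat$ in $B_4(\St)$. The associated Bass-Serre tree $T$ has edge stabilisers conjugate to $Q$ and vertex stabilisers conjugate to $\quat[16]$ or $\tstar$; since $Q$ is normal, every edge stabiliser equals $Q$ and $Q$ is contained in every vertex stabiliser, so $Q$ acts trivially on $T$. As was already noted in the proof of \repr{b4amalg}, the induced action of $B_4(\St)/Q\cong\Z_2\bigast\Z_3$ on $T$ is the usual one arising from its natural amalgam decomposition. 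For an infinite virtually cyclic subgroup $H\leq B_4(\St)$, setting $F=H\cap Q$ gives a subgroup of $\quat$, hence $F$ is isomorphic to one of $\{1,\Z_2,\Z_4,\quat\}$. The image $\bar H$ of $H$ in $\Z_2\bigast\Z_3$ cannot be finite (otherwise $H$ itself would be finite), so $\bar H$ is infinite virtually cyclic, and since $\Z_2\bigast\Z_3$ is virtually free its infinite virtually cyclic subgroups are only $\Z$ (Type~I) or $\dih{\infty}$ (Type~II). The short exact sequence $1\to F\to H\to\bar H\to 1$ then yields the Type~I/Type~II dichotomy for $H$ itself.

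For Type~I, the sequence splits and $H\cong F\rtimes_{\phi}\Z$, with the isomorphism type of $H$ determined by the class of $\phi(1)$ in $\out{F}$ up to inversion. When $F\in\{1,\Z_2\}$ only the trivial action occurs, yielding $\Z$ and $\Z_2\times\Z$; when $F=\Z_4$, $\aut{\Z_4}\cong\Z_2$ yields $\Z_4\times\Z$ and $\Z_4\rtimes\Z$; and when $F=\quat$, $\out{\quat}\cong S_3$ has three conjugacy classes of cyclic subgroups (of orders $1,2,3$), producing precisely three non-isomorphic semi-direct products. To realise each of these three actions, I would use that the composite $\Z_2\bigast\Z_3\twoheadlongrightarrow\out{\quat}\cong S_3$ sends the image of $\alpha_0$ to an involution and the image of $\alpha_1^2$ to an element of order $3$, and is therefore surjective; one then lifts an infinite-order element projecting into each cyclic subgroup of $S_3$.

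For Type~II, $H\cong G_1\bigast_F G_2$ with $[G_i:F]=2$ and $G_1\cap G_2=F$. The key constraint is that $B_4(\St)$ contains a \emph{unique} involution $\ft[4]$, which is central, so any finite subgroup of $B_4(\St)$ contains at most one involution. This immediately rules out $G_i\cong\Z_2\times\Z_2$, $\Z_4\times\Z_2$, $\dih{2k}$ and the semi-dihedral groups. Enumerating the remaining admissible extensions $1\to F\to G_i\to\Z_2\to 1$ case by case: $F=1$ would force $G_i=\Z_2$, so $G_1=G_2=\ang{\ft[4]}$ and then $F=\ang{\ft[4]}\neq 1$, a contradiction (so $\dih{\infty}$ does not embed in $B_4(\St)$); $F=\Z_2$ yields $G_i=\Z_4$ and hence $\Z_4\bigast_{\Z_2}\Z_4$; $F=\Z_4$ yields $G_i\in\{\Z_8,\quat\}$, giving the three amalgams $\Z_8\bigast_{\Z_4}\Z_8$, $\Z_8\bigast_{\Z_4}\quat$ and $\quat\bigast_{\Z_4}\quat$; and $F=\quat$ yields $G_i=\quat[16]$ (the only finite subgroup of $B_4(\St)$ of order $16$), giving $\quat[16]\bigast_{\quat}\quat[16]$. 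This recovers exactly the five classes stated.

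The main obstacle will be showing that each listed candidate is actually realised as a subgroup of $B_4(\St)$, rather than only being abstractly admissible. For Type~II this amounts to exhibiting pairs $(G_1,G_2)$ of finite subgroups of $B_4(\St)$ whose intersection equals $F$ and whose images in $\Z_2\bigast\Z_3$ generate a copy of $\dih{\infty}$; the fact that $\ang{G_1,G_2}$ is then genuinely the amalgam $G_1\bigast_F G_2$ rather than a proper quotient follows from the Bass-Serre normal form applied to the action on $T$. For Type~I with $F=\quat$, one must analogously produce infinite-order elements $g\in B_4(\St)$ whose conjugation action on $Q$ realises each of the three cyclic subgroup classes of $\out{\quat}\cong S_3$; the explicit formulae for the actions of $\alpha_0$ and $\alpha_1^2$ on $Q$ recorded in \rerems{maxfin} provide the starting point, after which one multiplies by a suitable element whose image in $\Z_2\bigast\Z_3$ has infinite order and lies in the appropriate coset of the kernel of the map to $S_3$.
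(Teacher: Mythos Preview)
The paper does not prove \reth{vcb4S2}; it is quoted verbatim from~\cite[Theorem~5]{GG2}, so there is no in-paper argument to compare against. I can therefore only assess your plan on its own merits.

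Your strategy is sound and is in fact cleaner than a direct attack: exploiting the normality of $Q\cong\quat$ and the identification $B_4(\St)/Q\cong\Z_2\bigast\Z_3$ immediately forces the image $\bar H$ of an infinite virtually cyclic $H$ to be $\Z$ or $\dih{\infty}$ (since in a free product the normaliser of any non-trivial finite subgroup is finite, ruling out $\Z_2\times\Z$, $\Z_3\times\Z$, $\Z_3\rtimes\Z$, and all Type~II amalgams over a non-trivial $F$). The short exact sequence $1\to H\cap Q\to H\to\bar H\to 1$ then gives the dichotomy, and your case analysis of $F\le\quat$ together with the unique-involution constraint correctly produces exactly the lists in parts~(\ref{it:vcb4S2a}) and~(\ref{it:vcb4S2b}). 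The exclusion of $\dih{\infty}$ (the $F=1$ Type~II case) via the uniqueness of $\ft[4]$ is correct.

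You are right that realisability is the remaining work. Note that much of it is effectively carried out later in this very paper: the explicit computations in \rerems{maxfin}(\ref{it:maxfinb}), equations~\reqref{actsig1}--\reqref{actsig2}, \relem{kerpsi} and \relem{conjtau} exhibit elements whose conjugation action on $Q$ realises each of the three classes in $\out{\quat}\cong\sn[3]$, and \repr{q16q8} together with the constructions in the proof of \repr{maxq8timesz} realise both isomorphism classes of $\quat[16]\bigast_{\quat}\quat[16]$. The smaller Type~II amalgams are then obtained as subgroups of these, exactly as in the last paragraph of the proof of \repr{maxvcb4a}(\ref{it:maxvcb4aa}). So the ingredients you would need are already present in Sections~\ref{sec:gensB4} and~\ref{sec:maxvcb4}; assembling them would give a self-contained proof independent of~\cite{GG2}.
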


For each of the Type~II subgroups given in \reth{vcb4S2}(\ref{it:vcb4S2b}), abstractly there is a single isomorphism class, with the exception of $\quat[16] \bigast_{\quat} \quat[16]$ for which there are two isomorphism classes~\cite[Proposition~11]{GG2}. In this exceptional case, we recall the following result concerning the structure of the two classes, and their realisation in $B_{4}(\St)$.
\begin{prop}[{\cite[Propositions~11 and~78]{GG2}}]\label{prop:q16q8}
Abstractly, there are exactly two isomorphism classes of the amalgamated product $\quat[16] \bigast_{\quat} \quat[16]$, possessing the following presentations:
\begin{equation}\label{eq:Gamma1}
\hspace*{-4mm}\Gamma_{1}\!=\!\setbigangr{a,b,x,y}{a^{4}\!=\!b^{2},\, x^{4}\!=\!y^{2},\; bab^{-1}\!=\!a^{-1},\; yxy^{-1}\!=\!x^{-1},\; x^{2}\!=\!a^{2},\; y\!=\!b}
\end{equation}
and 
\begin{equation}\label{eq:Gamma2}
\hspace*{-2mm}\Gamma_{2}\!=\!\setbigangr{a,b,x,y}{a^4\!=\!b^2,\; x^4\!=\!y^2,\; bab^{-1}\!=\!a^{-1},\; yxy^{-1}\!=\!x^{-1},\; x^2\!=\!b,\; y\!=\!a^2b}.
\end{equation}
Further, for $i\in\brak{1,2}$, $B_{4}(\St)$ possesses a subgroup $G_{i}$ isomorphic to $\Gamma_{i}$.
\end{prop}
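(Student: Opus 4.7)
The statement splits into two parts: the abstract classification of $\quat[16]\bigast_{\quat}\quat[16]$, and the realisation of both isomorphism classes inside $B_{4}(\St)$. For the abstract part, I would first enumerate the embeddings $\quat\hookrightarrow\quat[16]$ as index-two subgroups. Writing $\quat[16]=\setangr{a,b}{a^{8}=1,\, b^{2}=a^{4},\, bab^{-1}=a^{-1}}$, the three index-two subgroups are $\ang{a}\cong\Z_{8}$, $\ang{a^{2},b}\cong\quat$ and $\ang{a^{2},ab}\cong\quat$, and the automorphism $a\mapsto a$, $b\mapsto ab$ of $\quat[16]$ exchanges the two copies of $\quat$. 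After applying automorphisms on each side we may thus assume the amalgamating $\quat$ is $\ang{a^{2},b}$ in both factors, so the isomorphism class of the amalgam is determined by the gluing $\psi\in\aut{\quat}$ modulo the images in $\aut{\quat}\cong\sn[4]$ of the stabilisers of $\ang{a^{2},b}$ in $\aut{\quat[16]}$ on each side, together with the involution exchanging the two factors. This reduces to a finite double-coset computation, which I expect to yield exactly two orbits, corresponding to~\reqref{Gamma1} and~\reqref{Gamma2}. To certify that these classes are truly distinct, I would compute a distinguishing invariant, most directly the abelianisation: the Smith normal forms of the relation matrices arising from the identifications $(x^{2},y)=(a^{2},b)$ versus $(x^{2},y)=(b,a^{2}b)$ are expected to have different torsion.

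For the realisation in $B_{4}(\St)$, the plan is to exploit~\repr{b4amalg} together with the normality of $Q\cong\quat$. Let $r=\alpha_{1}^{2}\in\tstar$, of order $3$; by~\rerems{maxfin}(\ref{it:maxfinb})(\ref{it:consttstar}), conjugation by $r$ cyclically permutes the three order-$4$ cyclic subgroups of $Q$, and combining this with~\reqref{sig1sig3} yields $r\alpha_{0}^{2}r^{-1}=\garside[4]^{-1}$. Since $rQr^{-1}=Q$, the conjugate $r\quat[16]r^{-1}$ is a second copy of $\quat[16]$ containing $Q$; and since $\garside[4]^{-1}$ is not a square of any order-$8$ element of $\quat[16]=\ang{\alpha_{0},\garside[4]}$ (whose order-$8$ elements all lie in $\ang{\alpha_{0}}$, with squares in $\ang{\alpha_{0}^{2}}$), we conclude $r\quat[16]r^{-1}\neq\quat[16]$. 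Setting $G_{1}=\ang{\quat[16],\,r\quat[16]r^{-1}}$ therefore produces a subgroup which, by the standard correspondence between subgroups of an amalgamated product and subgraphs of its Bass-Serre tree, coincides with the amalgam $\quat[16]\bigast_{Q}\quat[16]$ glued by the identity on $Q$. A second subgroup $G_{2}$ is built analogously but starting from the companion $\quat[16]$-subgroup $\ang{\alpha_{0},\alpha_{0}\garside[4]}$ of $B_{4}(\St)$, whose distinguished normal $\quat$ is the second conjugacy class $Q'=\ang{\alpha_{0}^{2},\alpha_{0}\garside[4]}$ rather than $Q$; the change in the amalgamating $\quat$ alters the gluing isomorphism and, by the classification of the first paragraph, places $G_{2}$ in the other isomorphism class.

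The principal obstacle will be this final identification step: verifying that $G_{1}$ and $G_{2}$ truly realise the two distinct abstract classes, rather than duplicating a single one. This requires tracking the precise gluing isomorphism $\psi\colon\thinspace\quat\to\quat$ induced by each construction and matching it against~\reqref{Gamma1} and~\reqref{Gamma2}. I would resolve this by computing the abelianisation invariant from the first paragraph on $G_{1}$ and $G_{2}$ using their explicit generators in $B_{4}(\St)$, and comparing with the Smith normal forms of the two presentations; the delicate bookkeeping of the gluings in the Bass-Serre picture is concentrated here.
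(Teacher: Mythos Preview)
The paper does not prove this proposition; it is quoted verbatim from~\cite[Propositions~11 and~78]{GG2} and used as a black box. So there is no ``paper's proof'' to compare against, and your proposal is an attempt to supply an argument the present paper deliberately outsources.

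That said, your realisation argument has genuine errors. For $G_{1}$ you claim the amalgam $\ang{\quat[16],\,r\quat[16]r^{-1}}$ is ``glued by the identity on $Q$''. It is not: both factors contain $Q$ as the \emph{same} subgroup of $B_{4}(\St)$, but to compare with the abstract presentations~\reqref{Gamma1}--\reqref{Gamma2} you must transport one factor back to the standard $\quat[16]$ via the isomorphism $c_{r}$, and then the gluing automorphism of $\quat$ is $c_{r^{-1}}\bigl\lvert_{Q}$, which has order~$3$. You have not checked which of $\Gamma_{1},\Gamma_{2}$ this yields. For $G_{2}$ the construction collapses: $\ang{\alpha_{0},\alpha_{0}\garside[4]}=\ang{\alpha_{0},\garside[4]}$ is the very same copy of $\quat[16]$, not a ``companion'', so you have not produced a second amalgam at all. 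The idea of amalgamating over $Q'$ instead of $Q$ is also problematic, since $Q'$ is not normal in $B_{4}(\St)$ and there is no obvious reason two distinct $\quat[16]$-subgroups should intersect precisely in $Q'$.

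Your abstract classification sketch is reasonable in outline (double cosets in $\aut{\quat}\cong\sn[4]$ modulo the stabiliser images, distinguished by abelianisation), but both the orbit count and the abelianisation computation are stated as expectations rather than carried out; these are exactly the places where the work lies.
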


With \rechap{kthb4St} in view, most of the rest of this chapter will be devoted to the problem of deciding which subgroups of $B_{4}(\St)$ are maximal within the family of virtually cyclic subgroups. In what follows, we refer to such a subgroup as a \emph{maximal} virtually cyclic group (thus the word `maximal' will be used to qualify the notion of virtually cyclic group). 

\begin{thm}\label{th:maxvcb4}\mbox{}
\begin{enumerate}[(a)]
\item\label{it:maxvcb4a} Let $G$ be a maximal infinite virtually cyclic subgroup of $B_{4}(\St)$. Then $G$ is isomorphic to one of the following groups: $\quat \rtimes \Z$ for one of the three possible actions, or $\quat[16] \bigast_{\quat} \quat[16]$.
\item\label{it:maxvcb4b} If $G$ is a subgroup of $B_{4}(\St)$ isomorphic to $\tstar$ then it is maximal as a virtually cyclic subgroup.
\item\label{it:maxvcb4c} For each $j\in \brak{1,2,3}$, there are subgroups of $B_{4}(\St)$ isomorphic to $\quat\rtimes_{j}\Z$ that are maximal as virtually cyclic subgroups, and others that are non maximal. 
\item\label{it:maxvcb4d} There exist subgroups of $B_{4}(\St)$ isomorphic to $\quat[16] \bigast_{\quat} \quat[16]$ that are maximal as virtually cyclic subgroups, and others that are non maximal. 
\end{enumerate}
\end{thm}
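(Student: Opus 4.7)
My approach is to exploit the amalgamated product decomposition $B_{4}(\St) \cong \quat[16] \bigast_{\quat} \tstar$ of \repr{b4amalg} via Bass-Serre theory. Let $T$ denote the associated Bass-Serre tree; $B_{4}(\St)$ acts on $T$ with vertex stabilisers conjugate to $\quat[16]$ or $\tstar$ and edge stabilisers conjugate to $\quat$. The essential observation is that the edge group $Q\cong \quat$ is normal in $B_{4}(\St)$ (see \rerems{maxfin}), hence is contained in every vertex and edge stabiliser, and is in fact the kernel of the action of $B_{4}(\St)$ on $T$; consequently the pointwise stabiliser of any axis $\ell\subset T$ equals $Q$, being the intersection of the two vertex stabilisers at the endpoints of any single edge of $\ell$.

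For part~(a), any infinite virtually cyclic subgroup $V\leq B_{4}(\St)$ is two-ended and thus preserves a unique axis $\ell$. Its torsion kernel $V\cap Q$ is a subgroup of $\quat$, and its quotient embeds into $\aut{\ell}\cong \dih{\infty}$; the maximal virtually cyclic subgroup $N(\ell)$ preserving $\ell$ therefore contains $Q$ with $N(\ell)/Q\leq \dih{\infty}$. Thus $N(\ell)$ is either of Type~I, isomorphic to $\quat\rtimes \Z$, or of Type~II, of the form $G_{1}\bigast_{Q} G_{2}$ with $[G_{i}:Q]=2$. By \reth{finitebn}, the only finite subgroup of $B_{4}(\St)$ containing $Q$ with index $2$ is $\quat[16]$ (in $\tstar$ the index is $3$), so $G_{1}\cong G_{2}\cong \quat[16]$, which establishes~(a). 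For part~(b), $\tstar$ is already maximal among finite subgroups by \reth{finitebn}, and any finite subgroup of an infinite virtually cyclic subgroup of $B_{4}(\St)$ is conjugate into either its Type~I finite kernel (of order at most $8$) or, in the Type~II case, into one of the amalgamation factors (each isomorphic to $\quat[16]$ of order $16$); since $|\tstar|=24$ exceeds both bounds, $\tstar$ cannot lie in any infinite virtually cyclic subgroup, so it is maximal.

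For parts~(c) and~(d), I would exhibit concrete axes with distinct combinatorial structures and determine the corresponding setwise stabilisers in terms of the generators $\alpha_{0},\alpha_{1},\garside[4]$ of~\reqref{defalpha} and~\reqref{defgarside}. The key geometric criterion is that an inversion of $\ell$ must fix a vertex $v\in \ell$ and swap the two edges of $\ell$ at $v$: at a $\quat[16]$-vertex (valence $2$ in $T$) such an inversion is available via any element of $\quat[16]\setminus Q$, whereas at a $\tstar$-vertex (valence $3$) the stabiliser acts on the three incident edges as the regular action of $\tstar/Q\cong \Z_{3}$, which cannot swap two edges. Hence $N(\ell)$ is of Type~II precisely when some $\quat[16]$-vertex of $\ell$ supports an inversion preserving the whole axis. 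For the maximal half of~(c), the three classes in $\out{\quat}\cong \sn[3]$ are realised as the conjugation action of the translation element on $Q$ by varying the hyperbolic element appropriately; for the non-maximal half, one takes the index-$2$ translation subgroup $\quat\rtimes \Z$ inside a maximal $\quat[16]\bigast_{Q}\quat[16]$. For~(d), the non-maximal copies arise as subgroups of a maximal $\quat[16]\bigast_{Q}\quat[16]$ generated by $Q$ together with two inversions at $\quat[16]$-vertices separated by more than one period along $\ell$, and the two isomorphism classes $\Gamma_{1}$ and $\Gamma_{2}$ of \repr{q16q8} are both realised by appropriate choices of such inversion pairs.

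The principal obstacle will be the explicit realisation in~(c) and~(d): one must construct hyperbolic elements of $B_{4}(\St)$ whose axes and stabilisers realise each of the required actions and each amalgamation isomorphism type. The combinatorial bookkeeping of $\quat[16]$- versus $\tstar$-vertices along each axis, the verification that a candidate inversion element of $\quat[16]\setminus Q$ preserves the entire axis (equivalently, $gwg^{-1}=w^{-1}$ for the translation element $w$), and the detection of whether one obtains $\Gamma_{1}$ or $\Gamma_{2}$, all require case-by-case computation using the presentation of \reth{fvb} together with the realisations of the finite subgroups described in \rerems{maxfin}.
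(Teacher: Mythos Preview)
Your Bass--Serre approach to parts~(\ref{it:maxvcb4a}) and~(\ref{it:maxvcb4b}) is correct and genuinely different from the paper's route. The paper argues by case analysis on the list of \reth{vcb4S2}: for each infinite virtually cyclic isomorphism type not on the target list it exhibits a strictly larger virtually cyclic overgroup (extending any $F\rtimes\Z$ with $\ord{F}\leq 4$ to some $\quat\rtimes\Z$ via the normality of $Q$, and embedding each smaller Type~II amalgam into a copy of $\quat[16]\bigast_{\quat}\quat[16]$). You instead identify the unique maximal virtually cyclic subgroup containing a given infinite $V$ directly as the setwise stabiliser $N(\ell)$ of its axis and read off the structure from $N(\ell)/Q\hookrightarrow\dih{\infty}$; this is cleaner and avoids invoking the full classification. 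Your argument for~(\ref{it:maxvcb4b}) via the order bound is also valid and matches the spirit of the paper's one-line observation that no group in \reth{vcb4S2} contains $\tstar$.

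For parts~(\ref{it:maxvcb4c}) and~(\ref{it:maxvcb4d}) you rightly flag the explicit realisations as the crux, but your sketch misses the paper's two main obstructions. First, the Abelianisation $\map{\pi}{B_{4}(\St)}[\Z_{6}]$: every copy of $\quat[16]\bigast_{\quat}\quat[16]$ has image contained in $\ang{\overline{3}}$ (\relem{over3}), so producing an element $w$ normalising $Q$ with $\pi(w)\notin\ang{\overline{3}}$ immediately excludes Type~II containment; this is how the paper dispatches the maximal cases $j=2,3$. Second, the case $j=1$ is substantially harder than your outline suggests: one must additionally show that the chosen $\quat\times\Z$ is not a subgroup of any $\quat\rtimes_{2}\Z$ or $\quat\rtimes_{3}\Z$, and the paper devotes a separate proposition (with two auxiliary lemmas describing $\ker{\psi}\cong Q\times\F[2]$ for the permutation representation $\map{\psi}{B_{4}(\St)}[\sn[3]]$) to locating an element $w\in\F[2]$ whose image in the Abelianisation of $\F[2]$ avoids a specific union of three lines in $\Z^{2}$. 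Your geometric criterion---$N(\ell)$ is Type~I exactly when no $\quat[16]$-vertex inversion preserves $\ell$---is correct but does not by itself distinguish the three actions $j\in\brak{1,2,3}$ of the translation on $Q$, and turning it into concrete examples requires precisely these algebraic obstructions. Finally, your proposal for the non-maximal half of~(\ref{it:maxvcb4c}) via the index-$2$ translation subgroup of a $\quat[16]\bigast_{\quat}\quat[16]$ produces only one value of $j$ per amalgam (the paper finds $j=1$ inside $\Gamma_{1}$ and $j=3$ inside $\Gamma_{2}$ this way, and needs a separate element for $j=2$); a simpler uniform argument is that once any $\quat\rtimes_{j}\Z$ exists, the subgroup $\quat\rtimes_{j}(m\Z)$ with $\gcd(m,j)=1$ (or any $m\geq 2$ when $j=1$) is a proper, hence non-maximal, copy of the same isomorphism type.
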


The proof of \reth{maxvcb4} is long, and will be split into three sections, \resec{proofab}, where we shall prove parts~(\ref{it:maxvcb4a}) and~(\ref{it:maxvcb4b}), \resec{proofcd}, where we shall prove parts~(\ref{it:maxvcb4c}) and~(\ref{it:maxvcb4d}), with the exception of the case $j=1$ in part~(\ref{it:maxvcb4c}), and \resec{proofcdexcep}, where we prove part~(\ref{it:maxvcb4c}) in this exceptional case. As we mentioned in~\rerem{b4S2hyperbolic}, $B_{4}(\St)$ is hyperbolic in the sense of Gromov. The following proposition implies that there are no infinite ascending chains of infinite virtually cyclic subgroup of $B_{4}(\St)$.

\begin{prop}[{\cite[Propositions~5,6 and Remark~7]{JuLe}}]\label{prop:juanleary}
Every infinite virtually cyclic subgroup of a Gromov hyperbolic group is contained in a unique maximal virtually cyclic subgroup.
\end{prop}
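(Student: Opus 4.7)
The plan is to use the dynamics of infinite-order elements on the Gromov boundary $\partial G$. Given an infinite virtually cyclic subgroup $V \leq G$, first extract an infinite-order element $v \in V$, which exists because $V$ surjects onto either $\Z$ or $\dih{\infty}$ with finite kernel. Since $G$ is hyperbolic, $v$ acts on $\partial G$ with exactly two fixed points, an attractor $v^{+}$ and a repeller $v^{-}$. Associate to $V$ the two-element subset $\mathcal{F}_{V}=\brak{v^{+},v^{-}}\subset \partial G$. Because any two infinite-order elements of a virtually cyclic group share a common non-trivial power, $\mathcal{F}_{V}$ depends only on $V$ and not on the choice of $v$.

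Next, I would define $M=\set{g\in G}{g\cdot\mathcal{F}_{V}=\mathcal{F}_{V}}$, the setwise stabiliser of $\mathcal{F}_{V}$. Every element of $V$ either is torsion (in which case it commutes with a finite-index subgroup of $\ang{v}$ and therefore fixes $v^{+}$ and $v^{-}$) or is of infinite order with the same pair of boundary fixed points as a power of $v$, so $V\subseteq M$. More importantly, if $V'$ is any virtually cyclic subgroup of $G$ containing $V$, then any infinite-order element of $V'$ commutes with some power of $v$ (since $V'$ is virtually cyclic), hence shares its pair of boundary fixed points with $v$, and so $V'\subseteq M$. Thus $M$ is the unique candidate for the desired maximal virtually cyclic overgroup.

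The crucial step is to verify that $M$ is itself virtually cyclic. For this I would appeal to the standard north-south dynamics of hyperbolic elements: the kernel $K$ of the action of $M$ on the two-point set $\mathcal{F}_{V}$ consists of elements that coarsely preserve a quasi-axis of $v$ joining $v^{-}$ to $v^{+}$. Using the thinness of geodesic triangles in $G$ together with the local finiteness of its Cayley graph, one shows that such elements lie in a uniformly bounded neighbourhood of this quasi-axis, and that the subgroup of $G$ fixing the quasi-axis setwise and pointwise on the boundary is virtually cyclic. Since $M/K$ embeds into the symmetric group on $\mathcal{F}_{V}$ and so has order at most $2$, we conclude that $M$ is virtually cyclic. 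Maximality and uniqueness of $M$ as a virtually cyclic overgroup of $V$ then follow immediately from its canonical construction.

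The main obstacle is the last step: the proof that the setwise stabiliser of a pair of boundary fixed points of a hyperbolic element is itself virtually cyclic. This is a delicate point that combines the quasi-geodesic stability theorem in $\delta$-hyperbolic spaces with a Morse-type argument bounding the width of neighbourhoods of quasi-axes; it is here that one genuinely uses Gromov hyperbolicity rather than merely the existence of an action with hyperbolic elements. Once this is in hand, the rest of the argument is a formal consequence of the construction of $\mathcal{F}_{V}$ and the observation that this invariant is preserved under passage to virtually cyclic overgroups.
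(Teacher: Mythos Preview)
The paper does not give its own proof of this proposition: it is quoted verbatim from Juan-Pineda and Leary~\cite{JuLe} (their Propositions~5,~6 and Remark~7) and used as a black box. So there is nothing in the paper to compare your argument against.

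That said, your sketch is the standard boundary-dynamics proof and is correct in outline. A couple of small points. First, your justification that torsion elements of $V$ fix $\mathcal{F}_{V}$ is slightly indirect; the clean statement is that in any virtually cyclic group, every element normalises some non-trivial power of $v$ (because $\ang{v}$ has finite index), hence sends $\brak{v^{+},v^{-}}$ to itself setwise. This covers both torsion and infinite-order elements uniformly and avoids the commutation claim. Second, the ``crucial step'' you flag---that the setwise stabiliser of a pair of points in $\partial G$ arising from a loxodromic element is virtually cyclic---is exactly the content of the cited results in~\cite{JuLe}, and is where the hyperbolicity genuinely enters, as you correctly identify. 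Your description of how to prove it (quasi-geodesic stability plus local finiteness to bound the pointwise stabiliser, then the index-$2$ quotient onto $\operatorname{Sym}(\mathcal{F}_{V})$) is the right shape.
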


\subsection{Proof of parts~(\ref{it:maxvcb4a}) and~(\ref{it:maxvcb4b}) of \reth{maxvcb4}}\label{sec:proofab}

The statement of the following proposition is that of parts~(\ref{it:maxvcb4a}) and~(\ref{it:maxvcb4b}) of \reth{maxvcb4}.

\begin{prop}\label{prop:maxvcb4a}\mbox{}
\begin{enumerate}[(a)]
\item\label{it:maxvcb4aa} Let $G$ be a maximal virtually cyclic subgroup of $B_{4}(\St)$. Then $G$ is isomorphic to one of the following groups: $\tstar$, $\quat \rtimes \Z$ for one of the three possible actions, or $\quat[16] \bigast_{\quat} \quat[16]$.
\item If $G$ is a subgroup of $B_{4}(\St)$ isomorphic to $\tstar$ then it is maximal as a virtually cyclic subgroup.
\end{enumerate}
\end{prop}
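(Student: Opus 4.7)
The approach is to treat the cases of $G$ finite and $G$ infinite separately, relying on the classifications in \reth{finitebn} and \reth{vcb4S2} as the main inputs.

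For (b), \reth{finitebn} already identifies $\tstar$ as a maximal finite subgroup of $B_{4}(\St)$, so no finite subgroup properly contains it; it remains to show that $\tstar$ embeds in no infinite virtually cyclic subgroup of $B_{4}(\St)$. A direct inspection of \reth{vcb4S2} reveals that every finite subgroup of any infinite virtually cyclic subgroup of $B_{4}(\St)$ is a subgroup of $\quat$, $\Z_{8}$ or $\quat[16]$, and in particular has order dividing $16$. Since $\tstar$ contains elements of order $3$, no such embedding is possible, proving~(b).

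For (a), suppose first that $G$ is finite. Then $G$ is a maximal finite subgroup, so by \reth{finitebn}, $G\cong\tstar$ or $G\cong\quat[16]$. The first is handled by~(b). For the second, \repr{q16q8} exhibits subgroups of $B_{4}(\St)$ isomorphic to $\quat[16]\bigast_{\quat}\quat[16]$, which are infinite virtually cyclic and contain copies of $\quat[16]$; since $B_{4}(\St)$ possesses a single conjugacy class of $\quat[16]$ (\rerems{maxfin}(\ref{it:maxfinb})), every realisation of $\quat[16]$ lies inside a conjugate of such an amalgam, so $\quat[16]$ is never maximal as a virtually cyclic subgroup. Next, suppose that $G$ is infinite, and let $M$ denote the unique maximal virtually cyclic subgroup of $B_{4}(\St)$ containing $G$ provided by \repr{juanleary}; the task is to pin down $M$ up to isomorphism. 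The pivotal step exploits the normality in $B_{4}(\St)$ of $Q=\ang{\alpha_{0}^{2},\garside[4]}\cong\quat$ (see \rerems{maxfin}(\ref{it:maxfinb})): the subgroup $M\cdot Q$ is a finite extension of $M$ and hence still virtually cyclic, so maximality forces $Q\subseteq M$. Since $Q$ is normal in $B_{4}(\St)$, it is normal in $M$, hence $Q\subseteq F$, where $F$ denotes the maximal finite normal subgroup of $M$. A case-by-case check against \reth{vcb4S2} shows that $F$ has order at most~$8$ in every infinite virtually cyclic subgroup of $B_{4}(\St)$; combined with $\ord{Q}=8$, this forces $F=Q$.

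The quotient $M/Q$ must then be $\Z$ or $\dih{\infty}$. In the first case $M\cong\quat\rtimes\Z$, and since $\out{\quat}\cong\sn[3]$ the conjugacy class of the action determines one of the three possibilities in the statement. In the second case, $M\cong G_{1}\bigast_{Q}G_{2}$ with $[G_{i}:Q]=2$; since \reth{finitebn} leaves (a conjugate of) $\quat[16]$ as the only finite subgroup of $B_{4}(\St)$ of order~$16$, we obtain $M\cong\quat[16]\bigast_{\quat}\quat[16]$. The main technical obstacle is the identification $F=Q$, which combines the global normality of $Q$ in $B_{4}(\St)$ with the uniform bound $\ord{F}\leq 8$ extracted from \reth{vcb4S2}; once this is in place, the bifurcation of $M/F$ as $\Z$ or $\dih{\infty}$, together with the finite-subgroup classification, concludes the argument.
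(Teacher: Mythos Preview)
Your proof is correct, and for part~(a) in the infinite case it takes a cleaner route than the paper's own argument. Both proofs agree on part~(b) and on the finite case of~(a). For the infinite case, the paper proceeds by brute-force elimination: for each isomorphism type in \reth{vcb4S2} other than $\quat\rtimes\Z$ and $\quat[16]\bigast_{\quat}\quat[16]$, it explicitly constructs a strictly larger virtually cyclic subgroup. For Type~I groups with small torsion it realises $F$ inside $\ang{\alpha_{0}^{2}}$ (after conjugation, using that the normaliser of $\ang{\alpha_{2}}$ is finite) and then adjoins $\garside[4]$; for the smaller Type~II groups it embeds the two factors into conjugate copies of $\ang{\alpha_{0},\garside[4]}\cong\quat[16]$ and takes the subgroup they generate.

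Your argument replaces all of this with a single structural observation: because $Q\vartriangleleft B_{4}(\St)$, the product $MQ$ is a subgroup containing $M$ with finite index, hence is virtually cyclic, so maximality forces $Q\subset M$. Combined with the bound $\ord{F}\le 8$ read off from \reth{vcb4S2}, this pins down the maximal finite normal subgroup of $M$ as exactly $Q$, after which the dichotomy $M/Q\cong\Z$ or $\dih{\infty}$ finishes the job. This is shorter and more conceptual; the paper's approach, by contrast, is more concrete and yields explicit embeddings that are reused later in the proofs of Propositions~\ref{prop:maxq8timesz} and~\ref{prop:maxq8timesza}. One small redundancy: since $G$ is assumed maximal, you may simply take $M=G$ rather than invoking \repr{juanleary} to produce $M$.
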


Before proving \repr{maxvcb4a}, note that if $G$ is infinite in part~(\ref{it:maxvcb4aa}), we will prove that $G$ cannot be isomorphic to one of the other infinite virtually cyclic groups of $B_{4}(\St)$ given in \reth{vcb4S2}. The question of whether there actually  exist maximal virtually cyclic subgroups of $B_{4}(\St)$ isomorphic to $\quat \rtimes \Z$ or to $\quat[16] \bigast_{\quat} \quat[16]$ will be dealt with in \resec{proofcd}.

\begin{proof}[Proof of \repr{maxvcb4a}]
Suppose that $G$ is a maximal virtually cyclic subgroup of $B_{4}(\St)$. 
\begin{enumerate}[(a)]
\item \begin{enumerate}[(i)]
\item\label{it:maxvcb4aai} First assume that $G$ is finite. Then $G$ is a maximal finite subgroup of $B_{4}(\St)$, so is isomorphic to either $\quat[16]$ or $\tstar$ by \reth{finitebn}. Suppose that $G\cong \quat[16]$. Since $B_{4}(\St)$ possesses a single conjugacy class of subgroups isomorphic to $\quat[16]$ by \rerems{maxfin}(\ref{it:maxfinb}), by \repr{q16q8}, there exists a subgroup of $B_{4}(\St)$ isomorphic to the amalgamated product $\quat[16] \bigast_{\quat} \quat[16]$, of which one of the factors is $G$, so $G$ is not maximal as a virtually cyclic subgroup. So $G$ must be isomorphic to $\tstar$.

\item Now assume that $G$ is infinite. We separate the cases where $G$ is of Type~I and Type~II respectively.
\begin{enumerate}[(A)]
\item We first suppose that $G$ is of Type~I, so $G= F\rtimes \Z$, for some action of $\Z$ on $F$, where $F$ is finite and is the torsion subgroup of $G$. Suppose that $F$ is either trivial or is isomorphic to $\Z_{2}$ or $\Z_{4}$, and let $u$ be a generator of the $\Z$-factor of $G$. Up to conjugation, we claim that $F\subset \ang{\alpha_{0}^{2}}$. If $F$ is  trivial or isomorphic to $\Z_{2}$ then $F\subset \ang{\ft[4]}\subset \ang{\alpha_{0}^{2}}$ since $\alpha_{0}^{4}=\ft[4]$. So suppose that $F\cong \Z_{4}$. By \rerems{maxfin}(\ref{it:maxfinc}), $B_{4}(\St)$ admits two conjugacy classes of subgroups isomorphic to $\Z_{4}$, generated respectively by $\alpha_{0}^{2}$ and $\alpha_{2}$. But since $u$ normalises $F$ and the normaliser of $\ang{\alpha_{2}}$ in $B_{4}(\St)$ is finite~\cite[Proposition~8(b)]{GG2}, it follows that $F$ is conjugate to $\ang{\alpha_{0}^{2}}$. This proves the claim, and so conjugating $G$ if necessary, we may suppose that $F\subset \ang{\alpha_{0}^{2}}$. Since $Q=\ang{\alpha_{0}^{2},\garside[4]}$ is normal in $B_{4}(\St)$ and $Q\cong \quat$ by~\cite[Theorem~1.3(3)]{GG3}, the subgroup $\ang{\alpha_{0}^{2},\garside[4],u}$ is isomorphic to one of the three Type~I groups $\quat\rtimes_{j} \Z$ of \reth{vcb4S2}(\ref{it:vcb4S2a}), where $j\in \brak{1,2,3}$, and admits $\ang{\alpha_{0}^{2},u}$ as a proper subgroup. Now $G$ is a subgroup of $\ang{\alpha_{0}^{2},u}$, so $G$ is non maximal as a virtually cyclic subgroup of $B_{4}(\St)$. The result in this case is then a consequence of \reth{vcb4S2}(\ref{it:vcb4S2a}).

\item Now suppose that $G$ is a Type~II subgroup of $B_{4}(\St)$ that is non isomorphic to $\quat[16] \bigast_{\quat} \quat[16]$. By \reth{vcb4S2}(\ref{it:vcb4S2b}), we may write $G=G_{1} \bigast_{H} G_{2}$, where either:
\begin{enumerate}[(1)]
\item\label{it:amalprodi} $G_{1}$ and $G_{2}$ are subgroups of $B_{4}(\St)$ isomorphic to $\quat$ or $\Z_{8}$, and $H=G_{1}\cap G_{2}$ is isomorphic to $\Z_{4}$, or
\item\label{it:amalprodii} $G_{1}$ and $G_{2}$ are subgroups of $B_{4}(\St)$ isomorphic to $\Z_{4}$, and $H=G_{1}\cap G_{2}=\ang{\ft[4]}$. 
\end{enumerate}
Note that $G_{1}$ and $G_{2}$ are not necessarily isomorphic. By \rerems{maxfin}, in $B_{4}(\St)$, there are two conjugacy classes of subgroups isomorphic to $\quat$ represented by $Q$ and $Q'$, one conjugacy class of subgroups isomorphic to $\Z_{8}$, represented by $\ang{\alpha_{0}}$, and two conjugacy classes of subgroups isomorphic to $\Z_{4}$, represented by $\ang{\alpha_{0}^{2}}$ and $\ang{\alpha_{0} \garside[4]}$ (this is because the elements $\alpha_{0} \garside[4]$ and $\alpha_{2}$ generate conjugate subgroups of order $4$). Conjugating $G$ if necessary, we may suppose that $G_{1}$ is equal to $Q$, $Q'$ or $\ang{\alpha_{0}}$ in case~(\ref{it:amalprodi}), and is equal to $\ang{\alpha_{0}^{2}}$ or $\ang{\alpha_{0} \garside[4]}$ in case~(\ref{it:amalprodii}). Furthermore, there exists $\lambda\in B_{4}(\St)$ such that $G_{2}=\lambda G_{2}' \lambda^{-1}$, where $G_{2}'$ is equal to $Q$, $Q'$ or $\ang{\alpha_{0}}$ in case~(\ref{it:amalprodi}), and is equal to $\ang{\alpha_{0}^{2}}$ or $\ang{\alpha_{0} \garside[4]}$ in case~(\ref{it:amalprodii}). Set $L=\ang{\alpha_{0}, \garside[4]}$. Then $G_{1}$ and $G_{2}'$ are subgroups of $L$, and $Q$ is a subgroup of $L$ that is normal in $B_{4}(\St)$, so $Q$ is a subgroup of $L \cap \lambda L \lambda^{-1}$. Since $L\cong \lambda L \lambda^{-1}\cong \quat[16]$ and $G=\ang{G_{1}\cup G_{2}} \subsetneqq \ang{L\cup \lambda L \lambda^{-1}}$, it follows that $\ang{L\cup \lambda L \lambda^{-1}}$ is infinite and $L \cap \lambda L \lambda^{-1}=Q$ because $Q$ is of index $2$ in both $L$ and $\lambda L \lambda^{-1}$. We conclude from~\cite[Lemma~15]{GG8} that $\ang{L\cup \lambda L \lambda^{-1}}\cong L \bigast_{Q} L\cong \quat[16] \bigast_{\quat} \quat[16]$. Thus $G$ is a non-maximal virtually cyclic subgroup of $B_{4}(\St)$, and it follows from \reth{vcb4S2}(\ref{it:vcb4S2b}) that any maximal virtually cyclic subgroup of $B_{4}(\St)$ of Type~II must be isomorphic to $\quat[16] \bigast_{\quat} \quat[16]$.
\end{enumerate}
\end{enumerate}

\item By \reth{vcb4S2}, none of the infinite virtually cyclic subgroups of $B_{4}(\St)$ admit subgroups isomorphic to $\tstar$, so any subgroup of $B_{4}(\St)$ isomorphic to $\tstar$ is maximal as a virtually cyclic subgroup. Combined with part~(\ref{it:maxvcb4aa})(\ref{it:maxvcb4aai}) of the proof, this shows in fact that $G$ is a finite maximal virtually cyclic subgroup if and only if $G\cong \tstar$.\qedhere
\end{enumerate}
\end{proof}

This completes the proof of parts~(\ref{it:maxvcb4a}) and~(\ref{it:maxvcb4b}) of \reth{maxvcb4}.

\subsection{Proof of parts~(\ref{it:maxvcb4c}) and~(\ref{it:maxvcb4d}) of \reth{maxvcb4}}\label{sec:proofcd}

We now turn to parts~(\ref{it:maxvcb4c}) and~(\ref{it:maxvcb4d}) of \reth{maxvcb4}, which may be regarded as a converse of part~(\ref{it:maxvcb4a}) in the case that $G$ is infinite. We first prove part~(\ref{it:maxvcb4c}) of \reth{maxvcb4} with the exception of the existence of $\quat\times \Z$ as a maximal virtually cyclic subgroup of $B_{4}(\St)$, which will be dealt with in \resec{proofcdexcep}. Before doing so, we state and prove the following lemma.

\begin{lem}\label{lem:over3}
Let $\map{\pi}{B_{4}(\St)}[\Z_{6}]$ denote the Abelianisation homomorphism described in \rerem{b4S2ab}.
\begin{enumerate}[(a)]
\item\label{it:over3a} If $H$ is a subgroup of $B_{4}(\St)$ that is isomorphic to either $\Z_{8}$, $\quat$ or $\quat[16]$ then $\pi(H)\subset \ang{\overline{3}}$.
\item\label{it:over3b} If $G$ is a subgroup of $B_{4}(\St)$ that is isomorphic to an amalgamated product of one of the groups $\quat[16] \bigast_{\quat} \quat[16]$, $\quat \bigast_{\Z_{4}} \quat$, $\quat \bigast_{\Z_{4}} \Z_{8}$ or $\Z_{8} \bigast_{\Z_{4}} \Z_{8}$ then $\pi(G)\subset \ang{\overline{3}}$. 
\end{enumerate}
\end{lem}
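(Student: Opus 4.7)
The plan is to exploit the fact that $\Z_6\cong \Z_2\oplus \Z_3$ has a unique subgroup of order $2$, namely $\ang{\overline{3}}$, so it suffices to show in part~(a) that $\pi(H)$ has order at most $2$, and in part~(b) that $G$ is generated by subgroups each satisfying the hypothesis of part~(a).

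First I would record the action of $\pi$ on the distinguished generators. From the braid relations in \reth{fvb}, the elements $\sigma_{1},\sigma_{2},\sigma_{3}$ are pairwise conjugate in $B_{4}(\St)$, hence have the same image under $\pi$. Setting $\overline{1}=\pi(\sigma_{i})$, the surface relation~\reqref{surfacerel} gives $6\ldotp\overline{1}=\overline{0}$, which confirms that $\pi$ is surjective onto $\Z_{6}=\ang{\overline{1}}$ and in particular that $\pi(\alpha_{0})=\overline{3}$ and $\pi(\garside[4])=\overline{0}$.

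For part~(a), suppose first that $H\cong \Z_{8}$. Then $\pi(H)$ is cyclic, and its order divides both $8$ and $6$, hence divides $\gcd(8,6)=2$, so $\pi(H)\subset \ang{\overline{3}}$. Next, suppose that $H\cong \quat$ or $\quat[16]$. The image of $\pi\left\lvert_{H}\right.$ factors through the Abelianisation of $H$, which in both cases is isomorphic to $\Z_{2}\oplus \Z_{2}$ (this is standard and may be read off from the presentation~\reqref{presdic}). Thus $\ord{\pi(H)}$ divides $4$, but since $\pi(H)\subset \Z_{6}$, it also divides $6$, so $\ord{\pi(H)}$ divides $\gcd(4,6)=2$, and again $\pi(H)\subset \ang{\overline{3}}$.

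For part~(b), each of the four amalgamated products listed is generated (as a group) by its two factors, and in every case those factors are isomorphic to subgroups from the list $\brak{\Z_{8},\quat,\quat[16]}$. If $G=G_{1}\bigast_{F} G_{2}$ is any such amalgam realised inside $B_{4}(\St)$, then $\pi(G)=\ang{\pi(G_{1})\cup\pi(G_{2})}$, and by part~(a) both $\pi(G_{1})$ and $\pi(G_{2})$ are contained in $\ang{\overline{3}}$, so $\pi(G)\subset \ang{\overline{3}}$ as required. There is no genuine obstacle here; the only point worth verifying carefully is the Abelianisation of $\quat[16]$, which follows immediately from \req{presdic} by passing to the Abelian quotient.
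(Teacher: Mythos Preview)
Your proof is correct, and it takes a genuinely different route from the paper's. The paper proves part~(\ref{it:over3a}) by invoking the classification of conjugacy classes of finite subgroups of $B_{4}(\St)$ from \rerems{maxfin}: every subgroup isomorphic to $\Z_{8}$, $\quat$ or $\quat[16]$ is conjugate into the single explicit subgroup $K=\ang{\alpha_{0},\garside[4]}$, and one then computes $\pi(\alpha_{0})=\overline{3}$ and $\pi(\garside[4])=\overline{0}$ directly. Your argument instead bypasses the conjugacy classification entirely, using only the order (for $\Z_{8}$) or the Abelianisation (for $\quat$ and $\quat[16]$) of the abstract group $H$ together with the structure of $\Z_{6}$; this is more elementary and would apply verbatim in any ambient group with Abelianisation $\Z_{6}$. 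The paper's approach, on the other hand, is more concrete and ties the lemma to the explicit generators already in play, which is convenient for the subsequent computations in \repr{maxq8timesz}. Part~(\ref{it:over3b}) is handled identically in both proofs.
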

 
\begin{proof}\mbox{}
\begin{enumerate}[(a)]
\item Consider the subgroup $K=\ang{\alpha_{0},\garside[4]}$. As we mentioned in the proof of \repr{maxvcb4a}, $K$ contains representatives of the conjugacy classes of all subgroups of $B_{4}(\St)$ that are isomorphic to $\Z_{8}$, $\quat$ or $\quat[16]$. So there exists $\lambda\in B_{4}(\St)$ such that $\lambda H \lambda^{-1}\subset K$. Now $\pi(\alpha_{0})=\overline{3}$ and $\pi(\garside[4])=\overline{0}$, thus $\pi(K)\subset \ang{\overline{3}}$, which yields the result.
\item If $G$ is a subgroup of $B_{4}(\St)$ that is isomorphic to one of the given amalgamated products then by \rerems{maxfin}(\ref{it:maxfinc}), the factors appearing in the amalgamation are subgroups of conjugates of $K$, and thus $\pi(G)\subset \pi(K)\subset \ang{\overline{3}}$ by part~(\ref{it:over3a}).\qedhere
\end{enumerate}
\end{proof}

To prove \reth{maxvcb4}(\ref{it:maxvcb4c}), for each $j\in\brak{1,2,3}$, we shall exhibit two subgroups of $B_{4}(\St)$ that are isomorphic $\quat \rtimes_{j}\Z$, one of which is maximal as a virtually cyclic subgroup of $B_{4}(\St)$, and the other of which is non maximal.  For the case $j=1$, the proof of the existence of a maximal virtually cyclic subgroup of $B_{4}(\St)$ that is isomorphic to $\quat \times \Z$ is long, and will be treated separately in \resec{proofcdexcep}. With the exception of this case, the statement of the following proposition is that of parts~(\ref{it:maxvcb4c}) and~(\ref{it:maxvcb4d}) of \reth{maxvcb4}.

\begin{prop}\label{prop:maxq8timesz}\mbox{}
\begin{enumerate}
\item For each $j\in \brak{1,2,3}$, there are subgroups of $B_{4}(\St)$ isomorphic to $\quat\rtimes_{j}\Z$ that are non maximal as virtually cyclic subgroups. 
\item For each $j\in \brak{2,3}$, there are subgroups of $B_{4}(\St)$ isomorphic to $\quat\rtimes_{j}\Z$ that are maximal as virtually cyclic subgroups.
\item\label{it:q16q8max} There exist subgroups of $B_{4}(\St)$ isomorphic to $\quat[16] \bigast_{\quat} \quat[16]$ that are maximal as virtually cyclic subgroups, and others that are non maximal. 
\end{enumerate}
\end{prop}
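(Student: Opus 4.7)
The plan is to construct explicit subgroups of $B_{4}(\St)$ of each required isomorphism type, and then to invoke the hyperbolicity of $B_{4}(\St)$ (as an amalgam by \repr{b4amalg}) together with \repr{juanleary} to distinguish the maximal representatives from the non-maximal ones. By \repr{juanleary}, every infinite virtually cyclic subgroup of $B_{4}(\St)$ is contained in a unique maximal virtually cyclic subgroup, which by \repr{maxvcb4a} must be isomorphic to $\tstar$, to some $\quat \rtimes_{j} \Z$, or to $\quat[16] \bigast_{\quat} \quat[16]$. Comparing torsion content with this container then pins down its isomorphism type in each case, and conversely forces non-maximality whenever the container is strictly larger than the given subgroup.

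For the non-maximal $\quat \rtimes_{j} \Z$ subgroups with $j \in \brak{1, 3}$, the natural candidate is the kernel of the composition $\Gamma_{i} \twoheadrightarrow \Gamma_{i}/\quat \cong \dih{\infty} \twoheadrightarrow \Z_{2}$, where $\Gamma_{1}, \Gamma_{2}$ are the subgroups produced by \repr{q16q8}. This kernel is an index-$2$ subgroup of $\Gamma_{i}$ with torsion subgroup $\quat$, so has the form $\quat \rtimes_{\theta_{i}} \Z$. A direct computation of conjugation by $ax \in \Gamma_{i}$ on $\quat$, using presentations \reqref{Gamma1} and \reqref{Gamma2} and the quaternion basis $i = a^{2}, j = b, k = a^{2}b$, shows that $\theta_{1}$ is inner (giving $\quat \times \Z$) while $\theta_{2}$ cyclically permutes $i, j, k$ (giving $\quat \rtimes_{3} \Z$). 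For $j = 2$, the kernel mechanism no longer applies, and we would instead take $g \in B_{4}(\St)$ of infinite order whose conjugation action on $Q$ represents an order-$2$ element of $\out{Q} \cong \sn[3]$, for instance by multiplying $\alpha_{0}$ by an element of $\tstar$ that centralises $Q$ modulo $\ang{\ft[4]}$; then $\ang{Q, g} \cong \quat \rtimes_{2} \Z$ can be embedded into a suitable $\quat[16] \bigast_{\quat} \quat[16]$ subgroup, yielding non-maximality.

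For the $\quat[16] \bigast_{\quat} \quat[16]$ statement, maximal representatives are obtained by applying \repr{juanleary} to $\Gamma_{1}$ and $\Gamma_{2}$: each has a unique maximal virtually cyclic container that contains $\quat[16]$, and so by \repr{maxvcb4a} must itself be of type $\quat[16] \bigast_{\quat} \quat[16]$. Non-maximal representatives are built by exhibiting two $\quat[16]$-subgroups whose generated subgroup has the required form but is strictly contained in another such amalgam in $B_{4}(\St)$, with Bass--Serre theory in $B_{4}(\St) = \quat[16] \bigast_{\quat} \tstar$ used to control the subgroups generated by the various $\quat[16]$-conjugates involved. For maximal $\quat \rtimes_{j} \Z$ with $j \in \brak{2, 3}$, we refine the construction of $g$ above by taking it cyclically reduced with maximal possible translation length on the Bass-Serre tree of $B_{4}(\St)$, so that $\ang{Q, g}$ coincides with its unique maximal virtually cyclic container.

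The main obstacle is the verification of the maximality statements: \repr{juanleary} only guarantees the existence and uniqueness of the maximal virtually cyclic container, so identifying that container with the explicit subgroup requires ruling out all potential enlargements by hand. This will be handled through a careful analysis of the normal form in $B_{4}(\St) \cong \quat[16] \bigast_{\quat} \tstar$: cyclic reducedness and translation length on the Bass-Serre tree constrain the torsion content of any candidate overgroup, and \reth{vcb4S2} then enumerates the possible isomorphism types, permitting a case-by-case elimination of strict enlargements.
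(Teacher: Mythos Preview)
Your plan for part~(a) with $j\in\brak{1,3}$ and for the maximal half of part~(\ref{it:q16q8max}) matches the paper's. For the non-maximal half of part~(\ref{it:q16q8max}), the paper uses a slicker device than your two-copies-of-$\quat[16]$ construction: the quotient $\quat[16] \bigast_{\quat} \quat[16]\twoheadrightarrow \dih{\infty}$ together with the non-co-Hopfian property of $\dih{\infty}$ (the proper subgroups $n\Z\rtimes\Z_{2}\subsetneq \Z\rtimes\Z_{2}$) immediately gives proper subgroups of any $\quat[16] \bigast_{\quat} \quat[16]$ that are again isomorphic to $\quat[16] \bigast_{\quat} \quat[16]$. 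For part~(a) with $j=2$, your construction is too vague; the paper simply observes that inside $G_{1}$ the element $xa^{-1}x$ has infinite order and conjugation by it on $\ang{a^{2},b}$ has order~$2$, so $\ang{a^{2},b,xa^{-1}x}\cong \quat\rtimes_{2}\Z$ sits properly in $G_{1}$.

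The genuine gap is in part~(b). Your proposed mechanism --- choosing $g$ ``cyclically reduced with maximal possible translation length'' on the Bass--Serre tree so that $\ang{Q,g}$ equals its own maximal container --- is not a well-defined criterion (translation lengths are unbounded), and you give no argument that such a choice precludes a proper inclusion $\ang{Q,g}\subsetneq \quat[16]\bigast_{\quat}\quat[16]$. The paper's proof of part~(b) rests on a tool you have not identified: \relem{over3}, which shows that every subgroup of $B_{4}(\St)$ isomorphic to one of the Type~II amalgams has image contained in $\ang{\overline{3}}$ under the abelianisation $\map{\pi}{B_{4}(\St)}[\Z_{6}]$. One then exhibits explicit elements $g$ with $\pi(g)\notin\ang{\overline{3}}$ whose conjugation action on $Q$ has order~$j$ (namely $g=\sigma_{1}$ for $j=2$ and $g=\sigma_{2}^{7}\sigma_{1}$ for $j=3$, the latter requiring a separate verification of infinite order). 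Then $\ang{Q,g}\cong\quat\rtimes_{j}\Z$ cannot embed in any Type~II subgroup by \relem{over3}, and cannot embed in $\quat\rtimes_{j'}\Z$ for $j'\neq j$ by incompatibility of the actions; so by \repr{maxvcb4a} its maximal container is itself isomorphic to $\quat\rtimes_{j}\Z$. Without this abelianisation obstruction, your Bass--Serre outline does not yield a proof.
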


\begin{proof}[Proof of \repr{maxq8timesz}]
\mbox{}
\begin{enumerate}
\item By \repr{q16q8}, for $i=1,2$, $B_{4}(\St)$ possesses a subgroup $G_{i}$ that is isomorphic to the amalgamated product $\Gamma_{i}$ given by equations~\reqref{Gamma1} and~\reqref{Gamma2}, and so admits a presentation given by the corresponding equation. The amalgamating subgroup $\Gamma=\ang{a^{2},b}=\ang{x^{2},y}$ is isomorphic to $\quat$, and the element $a^{-1}x$ is a product of elements chosen alternately from the two sets 
$\ang{a,b}\setminus \ang{a^2,b}$ and $\ang{x,y}\setminus \ang{x^2,y}$, so is of infinite order by standard properties of amalgamated products. Consider the subgroup $H_{i}=\ang{\Gamma_{i}\cup \brak{a^{-1}x}}$ of $G_{i}$. One may check that $\ang{a^{-1}x}$ acts by conjugation on $\ang{a^{2},b}$. If $i=1$ then:
\begin{equation}\label{eq:actxa}\left\{
\begin{aligned}
a^{-1}x \ldotp a^{2}\ldotp x^{-1}a &= a^{2}\\
a^{-1}x \ldotp b \ldotp x^{-1}a &=a^{-1}xyx^{-1}a= a^{-1}xyx^{-1}y^{-1} y a=a^{-1}x^{2}ya=a ba b^{-1}b=b\\
a^{-1}x \ldotp a^{2}b\ldotp x^{-1}a &=a^{2}b,
\end{aligned}\right.
\end{equation}
and hence $H_{1}\cong \quat\times \Z$. If $i=2$, a similar computation shows that conjugation by $a^{-1}x$ permutes cyclically $a^2,b^{-1}$ and $a^{-2}b^{-1}$, and thus $H_{2}\cong \quat\rtimes_{3} \Z$. In each case, $H_{i}\subsetneqq G_{i}$ because $[G_{i}: H_{i}]=2$. Now $G_{i}$ is isomorphic to $\quat[16] \bigast_{\quat} \quat[16]$, and so $H_{i}$ is non maximal as a virtually cyclic subgroup of $B_{4}(\St)$, which proves the statement for $j\in \brak{1,3}$. It thus remains to treat the case $j=2$. Using \req{actxa}, note that in $G_{1}$, the action by conjugation of $xa^{-1}x$ on $\Gamma$ is as follows:
\begin{equation}\label{eq:actxax}\left\{
\begin{aligned}
xa^{-1}x \ldotp a^{2}\ldotp x^{-1}ax^{-1} &= xa^{2}x^{-1}=a^{2}\\
xa^{-1}x \ldotp b \ldotp x^{-1}ax^{-1} &=xyx^{-1}=xyx^{-1}y^{-1}y= x^{2}y= a^{2} b\\
xa^{-1}x \ldotp a^{2}b\ldotp x^{-1}ax^{-1} &=a^{4}b=b^{-1}.
\end{aligned}\right.
\end{equation}
Now $xa^{-1}x$ is of infinite order, so we conclude from \req{actxax} that the subgroup $\ang{\Gamma \cup\brak{xa^{-1}x}}$ is isomorphic to $\quat \rtimes_{2} \Z$. Furthermore, this subgroup is contained (strictly) in $G_{1}$, so is non maximal.

\item First let $j=2$. Consider the subgroup $H=\ang{Q\cup\brak{\sigma_{1}}}$ of $B_{4}(\St)$. By \repr{juanleary}, $H$ is contained in a maximal virtually cyclic subgroup $M$ of $B_{4}(\St)$. Since $Q$ is normal in $B_{4}(\St)$ and $\sigma_{1}$ is of infinite order, $H$ must be isomorphic to a semi-direct product of the form $\quat\rtimes_{k}\Z$ for some $k\in\brak{1,2,3}$. To determine $k$, we study the action by conjugation of $\sigma_{1}$ on $Q$. Using \req{sig1sig3}, we have:
\begin{equation}\label{eq:actsig1}\left\{
\begin{aligned}
\sigma_{1}\ldotp \alpha_{0}^{2} \ldotp\sigma_{1}^{-1} &= \sigma_{1} \alpha_{0}^{2} \sigma_{1}^{-1}\alpha_{0}^{-2}\alpha_{0}^{2}= \sigma_{1} \sigma_{3}^{-1}\alpha_{0}^{2}\quad\text{by \req{actalpha0}}\\
&= \alpha_{0}^{-2}\garside[4] \alpha_{0}^{2}=\alpha_{0}^{-4}\garside[4]=\garside[4]^{-1} \quad\text{by \req{conjgar}}\\
\sigma_{1}\ldotp \garside[4]^{-1} \ldotp\sigma_{1}^{-1} &= \sigma_{1} \garside[4]^{-1}\sigma_{1}^{-1}\garside[4] \garside[4]^{-1}= \sigma_{1} \sigma_{3}^{-1} \garside[4]^{-1} \quad\text{by \req{actgarside}}\\
&=\alpha_{0}^{-2}\quad\text{by \req{sig1sig3}}\\
\sigma_{1}\ldotp \alpha_{0}^{2} \garside[4] \ldotp\sigma_{1}^{-1} &=\garside[4]^{-1} \alpha_{0}^{2}= \alpha_{0}^{2}\garside[4].
\end{aligned}\right.
\end{equation}
Since $\sigma_{1}$ is of infinite order, $H$ is thus isomorphic to $\quat \rtimes_{2} \Z$ because the action fixes the subgroup $\ang{\alpha_{0}^{2}\garside[4]}$ of order $4$ of $Q$, and exchanges $\ang{\alpha_{0}^2}$ and $\garside[4]$. But $\pi(\sigma_{1})=\overline{1} \notin \ang{\overline{3}}$, so $H$ is not contained in any subgroup of the form $\quat \bigast_{\Z_{4}} \quat$, $\quat \bigast_{\Z_{4}} \Z_{8}$ or $\quat[16] \bigast_{\quat} \quat[16]$ by \relem{over3}(\ref{it:over3b}). It cannot be contained either in a subgroup isomorphic to $\quat\times \Z$ or $\quat\rtimes_{3} \Z$ because the actions on $Q$ are not compatible. This implies that $M$, which is maximal in $B_{4}(\St)$ as a virtually cyclic subgroup, must also be isomorphic to $\quat \rtimes_{2} \Z$.

Now let $j=3$. As in the case $j=2$, if there exists a subgroup $L$ of $B_{4}(\St)$ that is isomorphic to $\quat\rtimes_{3} \Z$, it cannot be contained in a subgroup of $B_{4}(\St)$ isomorphic to $\quat \times \Z$ or to $\quat\rtimes_{2} \Z$. Moreover, by \relem{over3}(\ref{it:over3b}), if $\pi(L) \nsubset \ang{\overline{3}}$ then $L$ is not contained in any subgroup of $B_{4}(\St)$ isomorphic to $\quat \bigast_{\Z_{4}} \quat$, $\Z_{8} \bigast_{\Z_{4}} \Z_{8}$, $\quat \bigast_{\Z_{4}} \Z_{8}$ or $\quat[16] \bigast_{\quat} \quat[16]$. As in the previous paragraph, we conclude using \repr{juanleary} that $L$ is contained in a maximal virtually cyclic subgroup of $B_{4}(\St)$ that must also be isomorphic to $\quat\rtimes_{3} \Z$. To prove the result, we exhibit such a subgroup $L$. Consider the action by conjugation of $\sigma_{2}$ on $Q$:
\begin{equation}\label{eq:actsig2}\left\{
\begin{aligned}
\sigma_{2}\ldotp \alpha_{0}^{2} \ldotp\sigma_{2}^{-1} &= \alpha_{0}\ldotp \alpha_{0}^{-1}\sigma_{2} \alpha_{0}\ldotp \alpha_{0} \sigma_{2}^{-1}\alpha_{0}^{-1}\ldotp\alpha_{0}\\
&= \alpha_{0}\sigma_{1}\sigma_{3}^{-1}\alpha_{0} \quad\text{by \req{actalpha0}}\\
&=\alpha_{0}\alpha_{0}^{-2}\garside[4]\alpha_{0}\quad\text{by \req{sig1sig3}}\\
&= \alpha_{0}^{-2} \garside[4]= (\alpha_{0}^{2}\garside[4])^{-1}\quad\text{by \req{conjgar}}\\
\sigma_{2}\ldotp \garside[4] \ldotp\sigma_{2}^{-1} &= \sigma_{2} \garside[4]\sigma_{2}^{-1}\garside[4]^{-1}\ldotp \garside[4]= \garside[4]\\
\sigma_{2}\ldotp \alpha_{0}^{2} \garside[4] \ldotp\sigma_{2}^{-1} &=\alpha_{0}^{-2} \garside[4]\ldotp \garside[4]=\alpha_{0}^{2}.
\end{aligned}\right.
\end{equation}
In particular, $\sigma_{2}^{4} x \sigma_{2}^{-4}=x$ for all $x\in Q$. This implies that the action by conjugation of $z=\sigma_{2}^{7}\sigma_{1}$ on the elements of $Q$ is the same as that of $\sigma_{2}^{3}\sigma_{1}$. By equations~\reqref{actsig1} and~\reqref{actsig2}, this action is as follows:
\begin{align*}
& \alpha_{0}^{2} \stackrel{\sigma_{1}}{\longmapsto} \garside[4]^{-1} \stackrel{\sigma_{2}^{3}}{\longmapsto} \garside[4]^{-1}\\
& \garside[4]^{-1}\stackrel{\sigma_{1}}{\longmapsto} \alpha_{0}^{-2} \stackrel{\sigma_{2}^{3}}{\longmapsto} (\alpha_{0}^{2} \garside[4])^{-1}\\
& (\alpha_{0}^{2} \garside[4])^{-1}\stackrel{\sigma_{2}^{3}\sigma_{1}}{\longmapsto} \garside[4]^{-1} \alpha_{0}^{-2} \garside[4]=\alpha_{0}^{2}.
\end{align*}
Hence the action by conjugation of $z$ on $Q$ is of order $3$. Further, $\pi(z)=\overline{2}\notin \ang{\overline{3}}$, which shows that $L=\ang{Q\cup\brak{z}}$ is not contained in any subgroup isomorphic to an amalgamated product of the form $\quat \bigast_{\Z_{4}} \quat$, $\Z_{8} \bigast_{\Z_{4}} \Z_{8}$, $\quat \bigast_{\Z_{4}} \Z_{8}$ or $\quat[16] \bigast_{\quat} \quat[16]$ by \relem{over3}(\ref{it:over3b}). Observe that by~\reqref{sessn}, the permutation associated to $z$ is $(1,2,3)$, and so $z^{3}\in P_{4}(\St)$. To prove that $L\cong \quat\rtimes_{3}\Z$, it remains to show that $z$ is of infinite order. To achieve this, we shall write $z^{3}$ in terms of the direct product decomposition~\reqref{dirprodp4S2} of $P_{4}(\St)$, which comes down to expressing $z^{3}$ in terms of the basis $(A_{1,4}, A_{2,4})$ of the free group $\pi_{1}(\St\setminus \brak{z_{1},z_{2},z_{3}},z_{4})$ that is the kernel of the homomorphism $(p_{4,3})_{\ast}$ of~\reqref{fnsesS2}. Geometrically, this homomorphism is given by forgetting the last string (see~\reqref{fnses} and~\reqref{fnsesspec}). As mentioned in Appendix~\ref{chap:braids}, the group $P_{4}(\St)$ is generated by the set $\brak{A_{i,j}}_{1\leq i <j\leq 4}$, where $A_{i,j}$ is defined by~\reqref{defaij}, $A_{i,i+1}=\sigma_{i}^{2}$ for $i\in \brak{1,2,3}$, and the $A_{i,j}$ satisfy the `surface relations'~\reqref{surfrelpnS2} (the relations are not complete). For the convenience of the reader, we write out these relations in full:
\begin{align}
&A_{1,2}A_{1,3}A_{1,4}=1 \label{eq:relS21}\\
&A_{1,2}A_{2,3}A_{2,4}=1 \label{eq:relS25}\\
&A_{1,3}A_{2,3}A_{3,4}=1 \label{eq:relS24}\\
&A_{1,4}A_{2,4} A_{3,4}=1 \label{eq:relS22}.
\end{align}
Using~\reqref{fulltwist} and~\reqref{defaij}, one may also see that:
\begin{equation}\label{eq:relS23}
A_{1,2}A_{1,3}A_{1,4} A_{2,3}A_{2,4}A_{3,4}=\ft[4].
\end{equation}
The reader may also convince himself or herself of the validity of this relation by drawing a picture similar to that of Figure~\ref{fig:twists}(\subref{fig:fulltwist}). It follows from relations~\reqref{relS21},~\reqref{relS22} and~\reqref{relS23} that
\begin{equation}\label{eq:A23}
A_{2,3}=\ft[4] A_{3,4}^{-1}A_{2,4}^{-1}=\ft[4] A_{1,4},
\end{equation}
from relations~\reqref{relS24},~\reqref{relS22} and~\reqref{A23} that
\begin{equation}\label{eq:A13}
A_{1,3}=A_{3,4}^{-1} A_{2,3}^{-1}= A_{1,4} A_{2,4} A_{1,4}^{-1} \ft[4],
\end{equation}
and from relations~\reqref{relS25} and~\reqref{A23} that
\begin{equation}\label{eq:A12}
A_{1,2}= A_{2,4}^{-1}A_{2,3}^{-1}= A_{2,4}^{-1} A_{1,4}^{-1} \ft[4].
\end{equation}
If $i\in\brak{1,2}$, it follows from the braid relations in $B_{4}(\St)$ that $\sigma_{i}\sigma_{i+1}\sigma_{i}^{-1}=\sigma_{i+1}^{-1}\sigma_{i}\sigma_{i+1}$, and hence $\sigma_{i}\sigma_{i+1}^{k}\sigma_{i}^{-1}=\sigma_{i+1}^{-1}\sigma_{i}^{k}\sigma_{i+1}$ for all $k\in \Z$. We thus obtain:
\begin{align*}
z^{3}&= (\sigma_{2}^{7}\sigma_{1})^{3}=A_{2,3}^{3} \sigma_{2}\sigma_{1}\sigma_{2}^{7}\sigma_{1}^{-1}\ldotp \sigma_{1}^{2} \sigma_{2}^{7} \sigma_{1}= A_{2,3}^{3} \sigma_{1}^{7}\sigma_{2} \sigma_{1}^{2} \sigma_{2}^{7} \sigma_{1}\\
&= A_{2,3}^{3}A_{1,2}^{3} \sigma_{1}\sigma_{2} \sigma_{1}^{2} \sigma_{2}^{7} \sigma_{1}= A_{2,3}^{3}A_{1,2}^{3} \sigma_{1}\ldotp \sigma_{2} \sigma_{1}^{2} \sigma_{2}^{-1}\ldotp \sigma_{2}^{8} \sigma_{1}\\
&= A_{2,3}^{3}A_{1,2}^{3} A_{2,3} \sigma_{1} \sigma_{2}^{8} \sigma_{1}= A_{2,3}^{3}A_{1,2}^{3} A_{2,3} A_{1,2}\sigma_{1}^{-1} \sigma_{2}^{8} \sigma_{1}\\
&= A_{2,3}^{3}A_{1,2}^{3} A_{2,3} A_{1,2}\sigma_{2} \sigma_{1}^{8} \sigma_{2}^{-1}= A_{2,3}^{3}A_{1,2}^{3} A_{2,3} A_{1,2}A_{1,3}^{4}\\
&=(\ft[4] A_{1,4})^{3} (A_{2,4}^{-1} A_{1,4}^{-1} \ft[4])^{3} \ft[4] A_{1,4} A_{2,4}^{-1} A_{1,4}^{-1} \ft[4] (A_{1,4} A_{2,4} A_{1,4}^{-1} \ft[4])^{4}\\
&= A_{1,4}^{3}A_{2,4}^{-1} A_{1,4}^{-1}A_{2,4}^{-1} A_{1,4}^{-1} A_{2,4}^{2} A_{1,4}^{-1}
\end{align*}
by equations~\reqref{A23}~--~\reqref{A12}. But $(A_{1,4}, A_{2,4})$ is a basis of the free group $\pi_{1}(\St\setminus \brak{z_{1},z_{2},z_{3}},z_{4})$, so $z^{3}\neq 1$, and since $z^{3}\in \ang{A_{1,4}, A_{2,4}}$, it is of infinite order. We conclude that $L\cong \quat \rtimes_{3} \Z$, which completes the proof in this case.

\item The existence of subgroups of $B_{4}(\St)$ isomorphic to $\quat[16]\bigast_{\quat}\quat[16]$ that are non maximal as virtually cyclic subgroups is actually a consequence of the structure of the amalgamated product. Indeed, consider the following short exact sequence:
\begin{equation*}
1 \to \quat\to \quat[16]\bigast_{\quat}\quat[16] \stackrel{p}{\to} \Z_{2}\bigast \Z_{2}\to 1.
\end{equation*}
Now $\Z_{2}\bigast \Z_{2}$ is isomorphic to the infinite dihedral group $\dih{\infty}=\Z \rtimes \Z_{2}$. So for all $n\in \N$, $n\geq 2$, the subgroup $n\Z\rtimes \Z_{2}$ is abstractly isomorphic to $\Z\rtimes \Z_{2}$ while being a proper subgroup (in other words, it is non co-Hopfian). Thus $p^{-1}(n\Z\rtimes \Z)$ is isomorphic to $\quat[16]\bigast_{\quat}\quat[16]$ while being a proper subgroup (of index $n$). In particular, since $B_{4}(\St)$ contains a subgroup $\Gamma$ that is isomorphic to $\quat[16]\bigast_{\quat}\quat[16]$, $\Gamma$ admits proper subgroups that are also isomorphic to $\quat[16]\bigast_{\quat}\quat[16]$, and any one of these subgroups is a non-maximal virtually cyclic subgroup that is isomorphic to $\quat[16]\bigast_{\quat}\quat[16]$. Conversely, let $G$ be a subgroup of $B_{4}(\St)$ that is isomorphic to $\quat[16]\bigast_{\quat}\quat[16]$. By \repr{juanleary}, $G$ is a contained in a subgroup $M$ of $B_{4}(\St)$ that is maximal as a virtually cyclic subgroup. But \reth{vcb4S2} implies that the only isomorphism class of infinite virtually cyclic subgroups of $B_{4}(\St)$ that contains $\quat[16]$ is $\quat[16]\bigast_{\quat}\quat[16]$, and so we conclude that $M\cong \quat[16]\bigast_{\quat}\quat[16]$, which completes the proof.\qedhere
\end{enumerate}
\end{proof}
 
This proves parts~(\ref{it:maxvcb4c}) and~(\ref{it:maxvcb4d}) of \reth{maxvcb4}, with the exception of the statement of part~(\ref{it:maxvcb4c}) that pertains to the existence of maximal virtually cyclic subgroups in the case $j=1$.
 
\subsection{Proof of the existence of maximal subgroups $\quat\times \Z$ in part~(\ref{it:maxvcb4c}) of \reth{maxvcb4}}\label{sec:proofcdexcep}
 
We now complete the proof of \reth{maxvcb4}(\ref{it:maxvcb4c}) by proving the existence of maximal virtually cyclic subgroups of $B_{4}(\St)$ that are isomorphic to $\quat\times \Z$.

\begin{prop}\label{prop:maxq8timesza}
The group $B_{4}(\St)$ contains maximal virtually cyclic subgroups that are isomorphic to $\quat\times \Z$.
\end{prop}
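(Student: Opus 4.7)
The plan is to exhibit an element $w\in B_{4}(\St)$ of infinite order that centralises $Q$ and whose image in $\Lambda:=B_{4}(\St)/Q\cong \Z_{2}\bigast\Z_{3}$ (from \repr{b4amalg}) generates a maximal virtually cyclic subgroup of $\Lambda$. Since $Q$ is normal in $B_{4}(\St)$, it acts trivially on the Bass-Serre tree of the amalgam, which coincides with the Bass-Serre tree of $\Lambda$. First I will record that the conjugation action $B_{4}(\St)\to\aut{Q}\cong \sn[4]$ induces a surjection $\Lambda\twoheadrightarrow \out{Q}\cong \sn[3]$, using that $\alpha_{0}$ acts on $Q$ by an outer involution while $\alpha_{1}^{2}$ acts by an outer automorphism of order $3$ (computations implicit in \rerems{maxfin}). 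Writing $K$ for the kernel of this surjection, elements of $K$ are precisely the classes in $\Lambda$ whose lifts to $B_{4}(\St)$ act on $Q$ by inner automorphisms.

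Next I will pick an explicit candidate $\bar v\in K$. Let $a,b$ be the standard generators of $\Lambda$ of orders $2$ and $3$ respectively, and set $\bar v=(ab)^{4}(ab^{-1})^{2}$. I will verify three properties: (i) $\bar v\in K$, by sending $a\mapsto (12)$ and $b\mapsto (123)$ in $\sn[3]$, so that $ab\mapsto (23)$ and $ab^{-1}\mapsto (13)$ are both involutions, whence $(ab)^{4}(ab^{-1})^{2}\mapsto e$; (ii) $\bar v$ is primitive in $\Lambda$, since its cyclic sequence of $b$-exponents $(1,1,1,1,-1,-1)$ has no proper cyclic period; and (iii) $\bar v$ is not conjugate to $\bar v^{-1}$ in $\Lambda$. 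For~(iii) I will invoke the standard conjugacy criterion for cyclically reduced elements of a free product (two such are conjugate if and only if one is a cyclic permutation of the other) and compare the $b$-exponent sequence of $\bar v^{-1}$, which is a cyclic rotation of $(1,1,-1,-1,-1,-1)$ (containing two $+1$'s), with that of $\bar v$ (containing four $+1$'s).

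I will then lift $\bar v$ arbitrarily to $v_{0}\in B_{4}(\St)$, use~(i) to choose $q_{0}\in Q$ such that conjugation by $q_{0}$ agrees with conjugation by $v_{0}$ on $Q$, and set $w=q_{0}^{-1}v_{0}$; then $w$ centralises $Q$ and still projects to $\bar v$. Since $\bar v$ is cyclically reduced of length greater than one in a free product of finite groups, $\bar v$, and hence $w$, has infinite order; and $\ang{w}\cap Q=\brak{1}$ because any $w^{k}\in Q$ would project to the identity in $\Lambda$ and force $k=0$. Consequently $H:=\ang{Q\cup\brak{w}}\cong Q\times\ang{w}\cong \quat\times\Z$.

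Finally, for maximality, let $M$ be the unique maximal virtually cyclic subgroup of $B_{4}(\St)$ containing $H$ (\repr{juanleary}), and consider its image $M/Q\subset \Lambda$, which is an infinite virtually cyclic subgroup containing $\ang{\bar v}$. The only infinite virtually cyclic subgroups of $\Lambda$ are infinite cyclic or infinite dihedral, because the finite subgroups of $\Lambda$ have finite normalisers in $\Lambda$. Property~(ii) identifies the maximal cyclic subgroup of $\Lambda$ containing $\ang{\bar v}$ as $\ang{\bar v}$ itself, and property~(iii) rules out the infinite dihedral enlargement; hence $M/Q=\ang{\bar v}$ and $M=Q\cdot\ang{w}=H$. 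The main obstacle of the proof is step~(iii), the combinatorial confirmation that $\bar v$ admits no flipping conjugator in $\Lambda$, which is exactly what eliminates the other possibilities for $M$ allowed by \reth{maxvcb4}, namely $\quat\rtimes_{j}\Z$ for $j\in\brak{2,3}$ and $\quat[16]\bigast_{\quat}\quat[16]$.
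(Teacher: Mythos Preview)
Your argument is correct and follows a genuinely different route from the paper's. The paper works entirely inside $B_{4}(\St)$: it proves \relem{kerpsi} identifying $\ker\psi\cong Q\times\F[2](x,y)$ for explicit braids $x,y$, then computes (via \relem{conjtau}) how the coset representatives of $\ker\psi$ conjugate $x$ and $y$, and deduces that whenever $\quat\times\Z$ sits inside some $\quat\rtimes_{j}\Z$ with $j\in\brak{2,3}$, the abelianised image of the $\Z$-generator in $\Z^{2}$ must lie on the locus $ab(a-b)=0$. Separately, it uses the abelianisation $\pi\colon B_{4}(\St)\to\Z_{6}$ together with \relem{over3} to exclude embedding into any Type~II amalgam. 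The witness $w=xy^{3}$ is then chosen to violate both constraints. Your approach replaces these computations by a single passage to $\Lambda=B_{4}(\St)/Q\cong\Z_{2}\bigast\Z_{3}$: primitivity of $\bar v$ handles all Type~I enlargements simultaneously (since any such $M$ has $M/Q$ infinite cyclic), and non-conjugacy of $\bar v$ to $\bar v^{-1}$ handles all Type~II enlargements simultaneously (since any such $M$ has $M/Q\cong\dih{\infty}$, forcing an element of $\Lambda$ to invert $\bar v$). Your method is shorter and structurally cleaner, relying only on the standard conjugacy theorem for cyclically reduced words in a free product; the paper's method gives a more explicit witness in terms of the braid generators $\sigma_{i}$, and the auxiliary lemmas it develops are of independent use. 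One minor remark: in your closing sentence, note that step~(iii) alone rules out the Type~II (dihedral) case, while the cases $\quat\rtimes_{j}\Z$ for $j\in\brak{2,3}$ are already excluded by~(ii) together with the fact that $w$ centralises $Q$, since $M/Q=\ang{\bar v}$ forces $M=Q\ang{w}=H$ directly.
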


In order to prove \repr{maxq8timesza}, we will first require two lemmas. As before, let $Q$ denote the normal subgroup $\ang{\alpha_{0}^{2},\garside[4]}$ of $B_{4}(\St)$, and let $H_{1}=\ang{\garside[4]}$, $H_{2}=\ang{\alpha_{0}^{2}}$ and $H_{3}=\ang{\alpha_{0}^{2} \garside[4]}$ be the three subgroups of $Q$ isomorphic to $\Z_{4}$. Then $B_{4}(\St)$ acts transitively on the set $\mathcal{H}=\brak{H_{1},H_{2},H_{3}}$ by conjugation, and this action gives rise to the permutation representation $\map{\psi}{B_{4}(\St)}[{\sn[3]}]$ that satisfies the following relation:
\begin{equation*}
\text{for all $1\leq i,j\leq 3$, and for all $\beta\in B_{4}(\St)$, $\bigl( \beta H_{i}\beta^{-1}=H_{j}\bigr)$} \Longleftrightarrow \bigl( \psi(\beta)(i)=j \bigr).
\end{equation*}
Note that the homomorphism $\psi$ is surjective, that $\psi(\sigma_{1})=(1,2)$ by \req{actsig1}, and that $\psi(\sigma_{2})=(2,3)$ by \req{actsig2}. Since $\sigma_{1}\sigma_{3}^{-1}\in Q$ by \req{sig1sig3}, and the action of the elements of $Q$ on $\mathcal{H}$ is trivial, it follows that $\psi(\sigma_{3})=\psi(\sigma_{1})$. If $\beta$ is of infinite order then $\ang{Q\cup \brak{\beta}}\cong \quat\rtimes \Z$, and the order of the action of $\Z$ on $\quat$ is that of the element $\psi(\beta)$. The first step is to describe $\ker{\psi}$ whose elements of infinite order will give rise to subgroups of $B_{4}(\St)$ isomorphic to $\quat\times \Z$.

\begin{lem}\label{lem:kerpsi}
$\ker{\psi}$ is isomorphic to the direct product of $Q$ with a free group $\F[2](x,y)$ of rank $2$, for which a basis $(x,y)$ is given by:
\begin{equation}\label{eq:defxy}
\text{$x=\alpha_{0}^{2}\garside[4]\sigma_{1}^{2}$ and $y=\garside[4]\sigma_{2}^{2}$.}
\end{equation}
\end{lem}

\begin{proof}
By \rerems{maxfin}(\ref{it:maxfinb}) and \repr{b4amalg}, $B_{4}(\St)$ is isomorphic to the group $\tstar \bigast_{\quat} \quat[16]$, where the $\tstar$-factor $G_{1}$ of $B_{4}(\St)$ is generated by $Q$ and $\alpha_{1}^2$, and the $\quat[16]$-factor $G_{2}$ of $B_{4}(\St)$ is generated by $Q$ and $\alpha_{0}$, so $G_{1}\cap G_{2}=Q$. Consider the canonical projection: 
\begin{equation*}
\map{\rho}{B_{4}(\St)}[B_{4}(\St)/Q].
\end{equation*}
As in the proof of \repr{b4amalg}, we identify the quotient $B_{4}(\St)/Q$ with the free product $\Z_{3}\bigast \Z_{2}$, the $\Z_{3}$- (resp. $\Z_{2}$-) factor being generated by $a=\rho(\alpha_{1})$ (resp.\ $b=\rho(\alpha_{0})$). Consider the surjective homomorphism $\map{\widehat{\psi}}{\Z_{3}\bigast \Z_{2}}[{\sn[3]}]$ defined by $\widehat{\psi}(a)=(1,3,2)$ and $\widehat{\psi}(b)=(1,3)$. Since $\psi(\alpha_{0})= \psi(\sigma_{1}\sigma_{2}\sigma_{3})=(1,2)(2,3)(1,2)=(1,3)$, $\psi(\alpha_{1})= \psi(\sigma_{1}\sigma_{2}\sigma_{3}^{2})=(1,2)(2,3)=(1,3,2)$ and $B_{4}(\St)=\ang{\alpha_{0},\alpha_{1}}$ by~\cite[Theorem~3]{GG4}, it follows that $\widehat{\psi}\circ \rho=\psi$, so $\rho$ induces a homomorphism $\map{\widehat{\rho}}{\ker{\psi}}[\operatorname{Ker}\bigl(\widehat{\psi}\mspace{1mu}\bigr)]$ of the respective kernels. We thus obtain the following commutative diagram of short exact sequences:
\begin{equation}\label{eq:z3z2}
\begin{xy}*!C\xybox{%
\xymatrix{%
& 1\ar[d] & 1 \ar[d] & \\
& \ker{\widehat{\rho}\mspace{1mu}}\ar[d]\ar[r]& Q  \ar[d]  &  \\
1 \ar[r]  & \ker{\psi} \ar[r] \ar[d]^{\widehat{\rho}} & B_{4}(\St) \ar[d]^{\rho} \ar[r]^(.57){\psi} & \sn[3]
\ar@{=}[d] \ar[r] & 1\\
1 \ar[r] & \operatorname{Ker}\bigl(\widehat{\psi}\mspace{1mu}\bigr) \ar[r] \ar[d] & \Z_{3}\bigast \Z_{2} \ar[d]\ar[r]^(.57){\widehat{\psi}} & \sn[3] \ar[r] & 1,\\
& 1 & 1 & }}
\end{xy}
\end{equation}
as well as the equality $\ker{\widehat{\rho}\mspace{1mu}}=Q$. Taking $\brak{1,a,a^{2},b,ab,a^{2}b}$ to be the Schreier transversal for $\widehat{\psi}$ and applying the Reidemeister-Schreier rewriting process~\cite{J}, we see that $\operatorname{Ker}\bigl(\widehat{\psi}\bigr)$ is a free group of rank $2$ with basis $\left((ab)^{2}, (ba)^{2}\right)$, which implies that $\ker{\psi}\cong \quat \rtimes \F[2]$ by the commutative diagram~\reqref{z3z2}. To determine the action of $\operatorname{Ker}\bigl(\widehat{\psi}\mspace{1mu}\bigr)$ on $Q$, note by~\reqref{actsig1} and~\reqref{actsig2} that $\sigma_{1}^{2}$ and $\sigma_{2}^{2}$ belong to $\ker{\psi}$, and that:
\begin{align*}
\rho(\sigma_{1}^{2})&= \rho(\sigma_{3}^{2})=\bigl(\rho(\alpha_{0}^{-1}\alpha_{1})\bigr)^{2}=(ba)^{2}\\
\rho(\sigma_{2}^{2})&= \rho(\alpha_{0} \sigma_{1}^{2} \alpha_{0}^{-1})=(ab)^{2},
\end{align*}
so $\widehat{\rho}(\sigma_{1}^{2})=(ba)^{2}$ and $\widehat{\rho}(\sigma_{2}^{2})=(ab)^{2}$. The same equations imply that the actions by conjugation of $\sigma_{1}^{2}$ and $\sigma_{2}^{2}$ on $Q$ yield elements of $\inn{Q}$, namely conjugation by $\alpha_{0}^{2}\garside[4]$ and by $\garside[4]$ respectively. Let $\map{s}{\operatorname{Ker}\bigl(\widehat{\psi}\mspace{1mu}\bigr)}[\ker{\psi}]$ be the section for $\widehat{\rho}$ defined on the basis of $ \operatorname{Ker}\bigl(\widehat{\psi}\mspace{1mu}\bigr)$ by $s\left((ba)^{2}\right)=x$ and $s\left((ab)^{2}\right)=y$. The action of these two elements on $Q$ is thus trivial, which shows that $\ker{\psi}\cong \quat \times \F[2]$ as required.
\end{proof}

Using the definition of $\psi$, a transversal of $\ker{\psi}$ in $B_{4}(\St)$ is seen to be:
\begin{equation}\label{eq:transv}
\mathcal{T}=\brak{e,\sigma_{1}, \sigma_{2},\sigma_{1}\sigma_{2}\sigma_{1}, \sigma_{1}\sigma_{2}, \sigma_{2}\sigma_{1}}.
\end{equation}
We now determine the action by conjugation of these coset representatives on $x$ and $y$. 

\begin{lem}\label{lem:conjtau}
Let $\tau\in \mathcal{T}\setminus \brak{e}$. Then 
\begin{equation*}
\tau x \tau^{-1}=
\begin{cases}
x & \text{if $\tau=\sigma_{1}$}\\
\ft[4] x^{-1} y^{-1} & \text{if $\tau=\sigma_{2}$}\\
\ft[4] y & \text{if $\tau=\sigma_{1} \sigma_{2}\sigma_{1}$}\\
\ft[4] y & \text{if $\tau=\sigma_{1}\sigma_{2}$}\\
\ft[4] x^{-1} y^{-1} & \text{if $\tau=\sigma_{2}\sigma_{1}$}
\end{cases} \;\text{and}\; 
\tau y \tau^{-1}=
\begin{cases}
y^{-1}x^{-1} & \text{if $\tau=\sigma_{1}$}\\
y & \text{if $\tau=\sigma_{2}$}\\
\ft[4] x & \text{if $\tau=\sigma_{1} \sigma_{2}\sigma_{1}$}\\
y^{-1} x^{-1} & \text{if $\tau=\sigma_{1}\sigma_{2}$}\\
\ft[4] x & \text{if $\tau=\sigma_{2}\sigma_{1}$.}
\end{cases}
\end{equation*}
\end{lem}

\begin{proof}
The action by conjugation of $\sigma_{1}$ and $\sigma_{2}$ on $\sigma_{1}^{2}$ and $\sigma_{2}^{2}$ is given by:
\begin{align*}
\sigma_{1} \sigma_{1}^{2} \sigma_{1}^{-1}&= \text{$\sigma_{1}^{2}$ and $\sigma_{2} \sigma_{2}^{2} \sigma_{2}^{-1}=\sigma_{2}^{2}$}\\
\sigma_{1} \sigma_{2}^{2} \sigma_{1}^{-1}&= \sigma_{2}^{-1}\sigma_{1}^{2}\sigma_{2}= \sigma_{2}^{-2}\ldotp \sigma_{2}\sigma_{1}^{2}\sigma_{2}= \sigma_{2}^{-2} \sigma_{3}^{-2}\;\text{by \req{surfacerel}}\\
&=\sigma_{2}^{-2} \alpha_{0}^{2} \sigma_{1}^{-2}\alpha_{0}^{-2}\;\text{by \req{actalpha0}}\\
&= \sigma_{2}^{-2} \sigma_{1}^{-2} \ldotp \sigma_{1}^{2}\alpha_{0}^{2} \sigma_{1}^{-2}\alpha_{0}^{-2}=\ft[4]\sigma_{2}^{-2} \sigma_{1}^{-2} \;\text{by equations~\reqref{conjgar} and~\reqref{actsig1}}\\
\sigma_{2} \sigma_{1}^{2} \sigma_{2}^{-1}&= \sigma_{2} \sigma_{1}^{2} \sigma_{2}^{-1}=\sigma_{2} \sigma_{1}^{2} \sigma_{2} \ldotp\sigma_{2}^{-2}= \ft[4] \sigma_{1}^{-2}\sigma_{2}^{-2} \,\text{in a similar manner.}
\end{align*}
Using also equations~\reqref{actsig1} and~\reqref{actsig2} as well as the fact that $x$ and $y$ commute with the elements of $Q$, we see that:
\begin{align*}
\sigma_{1} x \sigma_{1}^{-1} &=\sigma_{1} \alpha_{0}^{2}\garside[4]\sigma_{1}^{2} \sigma_{1}^{-1}= \alpha_{0}^{2}\garside[4]\sigma_{1}^{2}=x\\
\sigma_{1} y \sigma_{1}^{-1} &=\sigma_{1} \garside[4]\sigma_{2}^{2} \sigma_{1}^{-1}=\alpha_{0}^2 \ft[4]\sigma_{2}^{-2} \sigma_{1}^{-2}=\alpha_{0}^2 \ft[4] y^{-1}\garside[4]x^{-1}\alpha_{0}^2\garside[4]=y^{-1}x^{-1}\\
\sigma_{2} x \sigma_{2}^{-1}&=\sigma_{2} \alpha_{0}^{2}\garside[4]\sigma_{1}^{2} \sigma_{2}^{-1}= \alpha_{0}^2 \ft[4] \sigma_{1}^{-2}\sigma_{2}^{-2}= \alpha_{0}^2 \ft[4] x^{-1}\alpha_{0}^2\garside[4]y^{-1}\garside[4]=\ft[4]x^{-1}y^{-1}\\
\sigma_{2} y \sigma_{2}^{-1}&=\sigma_{2} \garside[4]\sigma_{2}^{2} \sigma_{2}^{-1}=\alpha_{0}^2 \ft[4]\sigma_{2}^{-2} \sigma_{1}^{-2}=\ft[4]\sigma_{2}^{-2}=y,
\end{align*}
from which we deduce that:
\begin{align*}
&(\sigma_{1}\sigma_{2}\sigma_{1}) x (\sigma_{1}\sigma_{2}\sigma_{1})^{-1}=\ft[4]y, & &(\sigma_{1}\sigma_{2} )x\sigma_{2}^{-1}\sigma_{1}^{-1}= \ft[4]y,\\
&(\sigma_{1}\sigma_{2}\sigma_{1}) y (\sigma_{1}\sigma_{2}\sigma_{1})^{-1}=\ft[4]x, &
&(\sigma_{1}\sigma_{2}) y\sigma_{2}^{-1}\sigma_{1}^{-1}=y^{-1}x^{-1},\\
&(\sigma_{2}\sigma_{1}) x\sigma_{1}^{-1} \sigma_{2}^{-1}= \ft[4] x^{-1}y^{-1}, &
&(\sigma_{2}\sigma_{1}) y\sigma_{1}^{-1} \sigma_{2}^{-1}= \ft[4]x.
\end{align*}
We thus obtain the relations given in the statement.
\end{proof}

\begin{proof}[Proof of \repr{maxq8timesza}]
To prove the proposition, we must show that there exists a maximal virtually cyclic subgroup of $B_{4}(\St)$ that is isomorphic to $\quat\times \Z$. Let $z\in B_{4}(\St)$ be an element of infinite order, and suppose that $\Gamma=\ang{Q\cup\brak{z}}\cong \quat \rtimes_{j} \Z$, where $j\in \brak{2,3}$. Our aim is to obtain necessary conditions on the generators of the infinite cyclic factor of those subgroups of $\Gamma$ that are isomorphic to $\quat\times \Z$. Thus will enable us to construct subgroups of $B_{4}(\St)$ that are isomorphic to $\quat\times \Z$ but are not contained in any subgroup isomorphic to $\quat\rtimes_{j}\Z$, where $j\in \brak{2,3}$. With this in mind, let $\Delta$ be a subgroup of $\Gamma$ that is isomorphic to $\quat \times \Z$. Since the finite-order elements of $\Gamma$ are precisely the elements of $Q$, the subgroup of $\Delta$ that is isomorphic to $\quat$ is $Q$. The remaining elements of $\Gamma$, of the form $q\ldotp z^{k}$, where $q\in Q$ and $k\in \Z\setminus \brak{0}$, are of infinite order. In order that such an element belong to the centraliser of $Q$ (and thus form a subgroup isomorphic to $\quat\times \Z$), the fact that the action of $z$ on $Q$ is of order $j$ implies that $k$ must be a multiple of $j$, and thus $\Delta=\ang{Q\cup \brak{q\ldotp z^{\lambda j}}}= \ang{Q\cup \brak{z^{\lambda j}}}\subset \ang{Q\cup \brak{z^{j}}}$ for some $\lambda \in \Z\setminus \brak{0}$. In particular, $\ang{Q\cup \brak{z^{j}}}$ is the maximal subgroup of $\Gamma$ that is isomorphic to $\quat\times\Z$.

Since the action by conjugation of $z$ on $Q$ is of order $j$, it follows from the definition of $\psi$ that $z$ belongs to one of the cosets $\tau\ldotp\ker{\psi}$ of $B_{4}(\St)$ where $\tau\in \mathcal{T}\setminus \brak{e}$, $\mathcal{T}$ being the transversal of \req{transv}. More precisely, $z\in \tau \ldotp \ker{\psi}$, where $\tau\in \brak{\sigma_{1},\sigma_{2}, \sigma_{1} \sigma_{2}\sigma_{1}}$ if $j=2$, and $\tau\in \brak{\sigma_{1}\sigma_{2}, \sigma_{2}\sigma_{1}}$ if $j=3$. Further, by \relem{kerpsi} there exist $v\in \ker{\psi}$, $u\in \F[2](x,y)$ and $q_{1}\in Q$ such that $z=\tau v$ and $v=q_{1}u$. Let us write $u=u(x,y)$ as a freely reduced word in $\F[2](x,y)$:
\begin{equation*}
u= x^{\epsilon_{1}} y^{\delta_{1}}\cdots x^{\epsilon_{r}} y^{\delta_{r}},
\end{equation*}
where $\epsilon_{i},\delta_{i}\in \Z$ for all $i=1,\ldots,r$, and $\delta_{1},\epsilon_{2},\ldots, \delta_{r-1},\epsilon_{r}$ are non-zero. If $v\in \ker{\psi}$, let $\overline{v}$ denote the image of $v$ under projection onto the $\F[2](x,y)$-factor, followed by Abelianisation of $\F[2](x,y)$. We now compute $\overline{z^j}$. We have that:
\begin{align*}
z^j &=(\tau q_{1}u)^j\\
&=\begin{cases}
\underbrace{\tau q_{1}\tau^{-1}}_{\in Q} \underbrace{\tau u \tau q_{1} \tau^{-1}u^{-1}\tau^{-1}}_{\in Q} (\tau u)^2 & \text{if $j=2$}\\
\underbrace{\tau q_{1}\tau^{-1}}_{\in Q} \underbrace{\tau u \tau q_{1} \tau^{-1}u^{-1}\tau^{-1}}_{\in Q} \underbrace{\tau u \tau u \tau q_{1} \tau^{-1}u^{-1}\tau^{-1}u^{-1}\tau^{-1}}_{\in Q}(\tau u)^{3} & \text{if $j=3$}
\end{cases}\\
&= q' (\tau u)^j, \quad\text{where $q'\in Q$.}
\end{align*}
Now
\begin{equation*}
(\tau u)^j=\begin{cases}
\tau u \tau^{-1}\ldotp \tau^2 \ldotp u & \text{if $j=2$}\\
(\tau u \tau^{-1})(\tau^{2} u \tau^{-2}) \ldotp\tau^{3} \ldotp u & \text{if $j=3$.}
\end{cases}
\end{equation*}
Applying \relem{conjtau}, and using \req{defxy} as well as the fact that $x$ and $y$ commute with the elements of $Q$, it follows that there exists $q''\in Q$ such that $(\tau u)^j=q'^{-1}q'' w$, where $w\in \F[2](x,y)$ is given by: 
\begin{equation*}\label{eq:tauusquare}
\!\begin{cases}
x^{\epsilon_{1}} (y^{-1}x^{-1})^{\delta_{1}}\cdots x^{\epsilon_{r}} (y^{-1}x^{-1})^{\delta_{r}} x x^{\epsilon_{1}} y^{\delta_{1}}\cdots x^{\epsilon_{r}} y^{\delta_{r}} & \!\!\text{if $\tau=\sigma_{1}$}\\
(x^{-1}y^{-1})^{\epsilon_{1}} y^{\delta_{1}}\cdots (x^{-1}y^{-1})^{\epsilon_{r}} y^{\delta_{r}} y x^{\epsilon_{1}} y^{\delta_{1}}\cdots x^{\epsilon_{r}} y^{\delta_{r}} & \!\!\text{if $\tau=\sigma_{2}$}\\
y^{\epsilon_{1}} x^{\delta_{1}}\cdots y^{\epsilon_{r}} x^{\delta_{r}} x^{\epsilon_{1}} y^{\delta_{1}}\cdots x^{\epsilon_{r}} y^{\delta_{r}} & \!\!\text{if $\tau=\sigma_{1}\sigma_{2}\sigma_{1}$}\\
y^{\epsilon_{1}} (y^{-1}x^{-1})^{\delta_{1}}\cdots y^{\epsilon_{r}} (y^{-1}x^{-1})^{\delta_{r}} 
(y^{-1}x^{-1})^{\epsilon_{1}} x^{\delta_{1}}\cdots (y^{-1}x^{-1})^{\epsilon_{r}} x^{\delta_{r}} x^{\epsilon_{1}} y^{\delta_{1}}\cdots x^{\epsilon_{r}} y^{\delta_{r}} & \!\!\text{if $\tau=\sigma_{1}\sigma_{2}$}\\
(x^{-1}y^{-1})^{\epsilon_{1}} x^{\delta_{1}}\cdots (x^{-1}y^{-1})^{\epsilon_{r}} x^{\delta_{r}}
y^{\epsilon_{1}} (x^{-1}y^{-1})^{\delta_{1}}\cdots y^{\epsilon_{r}} (x^{-1}y^{-1})^{\delta_{r}} x^{\epsilon_{1}} y^{\delta_{1}}\cdots x^{\epsilon_{r}} y^{\delta_{r}} & \!\!\text{if $\tau=\sigma_{2}\sigma_{1}$.}
\end{cases}
\end{equation*}
We have also used the fact that:
\begin{equation*}
(\sigma_{1}\sigma_{2}\sigma_{1})^{2}=(\sigma_{1}\sigma_{2})^3=(\sigma_{2}\sigma_{1})^3 =(\sigma_{1}\sigma_{3}^{-1})^{2}=\ft[4]
\end{equation*}
by equations~\reqref{surfacerel},~\reqref{conjgar} and~\reqref{sig1sig3}.
Since $z^{j}=q''w$, and $q''$ commutes with $w$, relative to the basis $(\overline{x},\overline{y})$ of the Abelianisation $\Z^{2}$ of $\F[2](x,y)$, we obtain: 
\begin{equation*}
\overline{z^{\lambda j}}=\begin{cases}
\lambda\ldotp \bigl(2(\epsilon_{1}+\cdots+ \epsilon_{r})-(\delta_{1}+\cdots +\delta_{r})+1,0\bigr) & \text{if $\tau=\sigma_{1}$}\\
\lambda\ldotp \bigl(0,2(\delta_{1}+\cdots +\delta_{r})-(\epsilon_{1}+\cdots+ \epsilon_{r})+1\bigr) & \text{if $\tau=\sigma_{2}$}\\
\lambda(\epsilon_{1}+\cdots+ \epsilon_{r}+\delta_{1}+\cdots +\delta_{r})\ldotp (1, 1) & \text{if $\tau=\sigma_{1}\sigma_{2}\sigma_{1}$}\\
(0,0) & \text{if $\tau=\sigma_{1}\sigma_{2}$ or $\tau=\sigma_{2}\sigma_{1}$,}
\end{cases}
\end{equation*}
for all $\lambda \in \Z\setminus\brak{0}$. We conclude that if $\Delta$ is a subgroup of $\Gamma=\ang{Q\cup\brak{z}}\cong \quat\rtimes_{j}\Z$ that is isomorphic to $\quat\times \Z$ then $\Delta=\ang{Q\cup \brak{q\ldotp z^{\lambda j}}}$, where $\overline{z^{\lambda j}}\in \setl{(a,b)\in \Z^2}{ab(a-b)=0}$ relative to the basis $(\overline{x},\overline{y})$ of the Abelianisation $\Z^{2}$ of $\F[2](x,y)$. 

To complete the proof of the proposition, we shall exhibit an element $w\in B_{4}(\St)$ for which:
\begin{enumerate}[(a)]
\item\label{it:conda} $w$ is a non-trivial element of $\F[2](x,y)$ such that $\overline{w}=(c,d)$, where $cd(c-d)\neq 0$.
\item\label{it:condd} $\pi(w)\notin \ang{\overline{3}}$.
\end{enumerate}
Since $\F[2](x,y) \subset \ker{\psi}$, the first condition implies that such an element $w$ is a suitable generator of the $\Z$-factor of a subgroup of $B_{4}(\St)$ that is  isomorphic to $\quat\times \Z$, but which from the above discussion, is not contained in any subgroup that is isomorphic to $\quat\rtimes_{j} \Z$ for $j\in \brak{2,3}$. By \relem{over3}(\ref{it:over3b}), the second condition implies that $\ang{Q\cup\brak{w}}$ is not contained in any subgroup of $B_{4}(\St)$ that is isomorphic to $\quat[16]\bigast_{\quat} \quat[16]$, $\quat \bigast_{\Z_{4}} \quat$, $\Z_{8} \bigast_{\Z_{4}} \Z_{8}$ or $\quat\bigast_{\Z_{4}} \Z_{8}$.

Take $w=xy^3$, and let $\Delta=\ang{Q\cup\brak{w}}$. Then $\Delta\cong \quat\times \Z$ since $w\in \ker{\psi}$ is an element of infinite order, and by \repr{juanleary}, there exists a maximal infinite virtually cyclic subgroup $M$ of $B_{4}(\St)$ that contains $\Delta$. Clearly condition~(\ref{it:conda}) above holds, and \req{defxy} implies that condition~(\ref{it:condd}) is also satisfied. It follows from the previous paragraph and \reth{vcb4S2}(\ref{it:vcb4S2b}) that $M\cong \quat\times \Z$, which completes the proof of \repr{maxq8timesza}. In conjunction with \repr{maxq8timesz}, this also proves parts~(\ref{it:maxvcb4c}) and~(\ref{it:maxvcb4d}) of \reth{maxvcb4}.
\end{proof}

This proves part~(\ref{it:maxvcb4c}) of \reth{maxvcb4} in the the exceptional case, and bringing together Propositions~\ref{prop:maxvcb4a},~\ref{prop:maxq8timesz} and~\ref{prop:maxq8timesza}, completes the proof of \reth{maxvcb4}.
  
\section{Conjugacy classes of maximal infinite virtually cyclic subgroups in $B_{4}(\St)$}\label{sec:conjclmaxvc}

In order to determine the number of conjugacy classes of maximal infinite virtually cyclic subgroups, we follow the procedure given in \cite[Section~2.5]{JLMP} based on the action of $B_{4}(\St)$ on a suitable tree. As in the proof of \repr{b4amalg}, we identify $B_{4}(\St)$ with $\quat[16] \bigast_{\quat} \tstar$, and the quotient  $B_{4}(\St)/Q$ with the modular group $\operatorname{PSL}(2,\Z)\cong \mathbb{Z}_{2} \bigast\mathbb{Z}_{3}=\setangr{a,b}{a^2=b^3=1}$. Thus we have the following short exact sequence:
\begin{equation}\label{eq:b4mod}
1\to Q \to B_{4}(\St) \stackrel{\rho}{\to} \mathbb{Z}_{2} \bigast\mathbb{Z}_{3}   \to 1,
\end{equation}
$\rho$ being the quotient map as in the proof of \repr{b4amalg}. There is a well-known action of $\operatorname{PSL}(2,\Z)$ on the tree $T$ of Figure~\ref{fig:treeT}, where the edge stabilisers are trivial and the vertex stabilisers are $\Z_2$ and $\Z_3$.
 \begin{figure*}[tbp]
\centering
\centering{\includegraphics[scale=0.82]{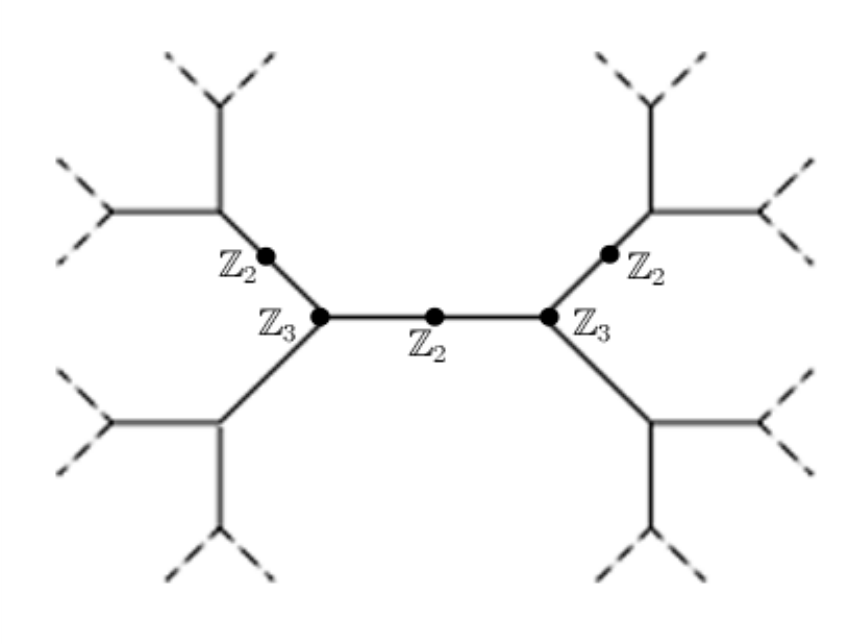}}
\caption{The tree $T$, showing the edge and vertex stabilisers under the action of $\operatorname{PSL}(2,\Z)$.}\label{fig:treeT}
\end{figure*}
The quotient of $T$ by this action is the graph:
\begin{equation*}
 \Z_2\bullet\hspace{-2mm}\underset{}{\rule[.6mm]{3cm}{.5mm}} \hspace{-2mm}\bullet\Z_3.
\end{equation*}
It follows from the short exact sequence~\reqref{b4mod} that $B_4(\St)$ acts on $T$ via $\rho$, and since $\ker{\rho}\cong \quat$, the quotient graph of  this action is:
\begin{equation*}
\quat[16]\bullet\hspace{-2mm}\underset{\quat}{\rule[.8mm]{3cm}{.5mm}} \hspace{-2mm}\bullet \tstar.
\end{equation*}
We now apply the Reidemeister-Schreier rewriting process to the Abelianisation homomorphism $\map{\widetilde{\pi}}{\mathbb{Z}_{2} \bigast\mathbb{Z}_{3}}[\Z_{6}]$. A computation similar to that given in the proof of \relem{kerpsi} shows that the commutator subgroup $\Gamma_{2}(\mathbb{Z}_{2} \bigast\mathbb{Z}_{3})$ of $\mathbb{Z}_{2} \bigast\mathbb{Z}_{3}$ is a free group, which we denote by $\mathbf{F}_2$, of rank two with basis $\bigl([a,b],[a,b^2]\bigr)$.

\begin{prop}\label{prop:semigamma}
Let $\widetilde{\mathbf{F}}=\rho^{-1}(\mathbf{F}_2)$. Then there exists a (free) subgroup $\widetilde{\mathbf{F}}_k$ of $\widetilde{\mathbf{F}}$ of rank $k\geq 2$ that is normal and of finite index in $B_4(\St)$. 
\end{prop}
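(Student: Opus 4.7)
The plan is to exploit the amalgamated product decomposition $B_{4}(\St) \cong \quat[16] \bigast_{\quat} \tstar$ from \repr{b4amalg}: the action of $B_{4}(\St)$ on the associated Bass-Serre tree $T$ has vertex stabilisers conjugate to $\quat[16]$ or $\tstar$ and edge stabilisers conjugate to $\quat$, all finite. Consequently, any torsion-free subgroup of $B_{4}(\St)$ acts freely on $T$ and is free by Bass-Serre theory. The task therefore reduces to producing a torsion-free normal subgroup of $B_{4}(\St)$ of finite index that is contained in $\widetilde{\mathbf{F}}$.

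Since $\widetilde{\mathbf{F}}/Q\cong \mathbf{F}_{2}$ is torsion-free, the torsion of $\widetilde{\mathbf{F}}$ lies entirely within $Q$. Because $B_{4}(\St)$ is virtually free and hence residually finite, for each of the seven non-trivial elements $q\in Q$ I would select a finite-index normal subgroup $N_{q} \vartriangleleft B_{4}(\St)$ with $q\notin N_{q}$. Setting $N=\bigcap_{q\in Q\setminus\brak{1}} N_{q}$ and defining
\[
\widetilde{\mathbf{F}}_{k} = N \cap \widetilde{\mathbf{F}},
\]
one obtains a subgroup that is (i)~normal in $B_{4}(\St)$, being the intersection of two normal subgroups, (ii)~of finite index, (iii)~contained in $\widetilde{\mathbf{F}}$ by construction, and (iv)~torsion-free since $\widetilde{\mathbf{F}}_{k}\cap Q=\brak{1}$. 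By the first paragraph, $\widetilde{\mathbf{F}}_{k}$ is free.

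To verify $k\geq 2$, I would appeal to the rational Euler characteristic. The amalgamation formula yields
\[
\chi(B_{4}(\St)) = \frac{1}{\ord{\quat[16]}} + \frac{1}{\ord{\tstar}} - \frac{1}{\ord{\quat}} = \frac{1}{16} + \frac{1}{24} - \frac{1}{8} = -\frac{1}{48},
\]
so if $m=[B_{4}(\St):\widetilde{\mathbf{F}}_{k}]$ then multiplicativity of $\chi$ for finite-index subgroups gives $\chi(\widetilde{\mathbf{F}}_{k})=-m/48$, and hence the free rank is $k=1+m/48$. Because $\widetilde{\mathbf{F}}_{k}$ is torsion-free and normal, both $\quat[16]$ and $\tstar$ inject into the finite quotient $B_{4}(\St)/\widetilde{\mathbf{F}}_{k}$, forcing $\operatorname{lcm}(16,24)=48$ to divide $m$, so that $k\geq 2$.

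The main subtlety is the simultaneous demand that $\widetilde{\mathbf{F}}_{k}$ be normal in all of $B_{4}(\St)$ (not merely in $\widetilde{\mathbf{F}}$), of finite index, contained in $\widetilde{\mathbf{F}}$, and torsion-free. Normality and containment inside $\widetilde{\mathbf{F}}$ are built in by intersecting two normal subgroups; the residual-finiteness step is what kills the torsion of the finite kernel $Q$; and Bass-Serre theory supplies both the freeness and, via the Euler-characteristic calculation, the lower bound on the rank.
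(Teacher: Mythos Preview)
Your argument is correct, but the paper takes a more direct route that avoids residual finiteness altogether. Since $\mathbf{F}_{2}$ is free, the short exact sequence $1\to Q\to \widetilde{\mathbf{F}}\to\mathbf{F}_{2}\to 1$ splits; the paper simply picks a section $s\colon\mathbf{F}_{2}\to\widetilde{\mathbf{F}}$, observes that $s(\mathbf{F}_{2})$ is a free rank-$2$ subgroup of index $48$ in $B_{4}(\St)$, and takes $\widetilde{\mathbf{F}}_{k}$ to be the normal core (the intersection of all $B_{4}(\St)$-conjugates of $s(\mathbf{F}_{2})$). Freeness and the rank bound $k\geq 2$ then follow immediately from Nielsen--Schreier, since $\widetilde{\mathbf{F}}_{k}$ is a finite-index subgroup of the free group $s(\mathbf{F}_{2})$. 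Your approach trades this splitting observation for the heavier machinery of residual finiteness and Bass--Serre theory, and your Euler-characteristic computation for the rank is a nice alternative to Nielsen--Schreier; the advantage of your method is that it would work verbatim for any amalgam of finite groups over a finite subgroup, without needing to identify a specific free complement inside the kernel.
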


\begin{rem}\label{rem:semigamma}
The above construction gives rise to the following commutative diagram of short exact sequences:
\begin{equation*}
\xymatrix{%
& & 1\ar[d] &  1\ar[d] &\\
1 \ar[r]  & Q \ar[r] \ar@{=}[d] & \widetilde{\mathbf{F}} \ar[d] \ar[r]^{\rho\left\lvert_{\widetilde{\mathbf{F}}}\right.} & \mathbf{F}_2
\ar[d] \ar[r] & 1\\
1 \ar[r] & Q \ar[r] & B_{4}(\St) \ar[d]^{\pi}\ar[r]^{\rho} &  \mathbb{Z}_{2} \bigast\mathbb{Z}_{3} \ar[d]^{\widetilde{\pi}} \ar[r] & 1.\\
& & \Z_{6}\ar[d] \ar@{=}[r] &  \Z_{6}\ar[d] &\\
& & 1 &  1 &}
\end{equation*}
We see that $\Gamma_{2}(B_{4}(\St))=\widetilde{\mathbf{F}}\cong \quat \rtimes \mathbf{F}_2$, which yields an alternative proof of the decomposition given in~\cite[Theorem~1.3(3)]{GG3}.
\end{rem}

\begin{proof}[Proof of \repr{semigamma}]
By \rerem{semigamma}, $\widetilde{\mathbf{F}}$ is isomorphic to a semi-direct product of the form $\quat\rtimes \mathbf{F}_2$. Let $\map{s}{\mathbf{F}_2}[\widetilde{\mathbf{F}}]$ be a section for $\rho\left\lvert_{\widetilde{\mathbf{F}}}\right.$. Since $s(\mathbf{F}_2)$ is of finite index in $B_4(\St)$, it suffices to take $\widetilde{\mathbf{F}}_k$ to be the intersection of the conjugates of $s(\mathbf{F}_2)$ in $B_{4}(\St)$.
\end{proof}

The group $\mathbf{F}_2$ acts freely on $T$, the resulting quotient space being a graph $\Gamma_1$ that is homotopy equivalent to a wedge of two circles. The group $\widetilde{\mathbf{F}}_k$ also acts freely on $T$ in the same way as its image $\rho(\widetilde{\mathbf{F}}_k)$ in $\mathbb{Z}_{2} \bigast\mathbb{Z}_{3}$, the quotient graph $\Gamma=T/\widetilde{\mathbf{F}}_k$ being a finite-sheeted covering space of  $\Gamma_1$.
 
By~\cite[Section~2.3]{JLMP}, there is a bijective correspondence between:
\begin{enumerate}[(a)]
\item the maximal infinite virtually subgroups of $B_4(\St)$, and
\item the stabilisers of geodesics in $T$ with infinite stabiliser.
\end{enumerate}
In order to determine the number of conjugacy classes of the maximal infinite virtually cyclic subgroups of $B_4(\St)$, we observe that since the action of $\quat$ on the quotient $T/\widetilde{\mathbf{F}}_k$ is trivial, it follows that $\pi_1((T/\widetilde{\mathbf{F}}_k)^{\quat})$ is free of rank $k\geq 2$. Therefore,  there are infinitely many conjugacy classes of maximal infinite virtually cyclic subgroups of the form $\quat\rtimes_{j}\Z$ for $j\in \brak{1,2,3}$ and of the form $\quat[16]\bigast_{\quat}\quat[16]$, see~\cite[Section~2.5]{JLMP} for more details.

\chapter{Lower algebraic $K$-theory groups of the group ring $\Z[B_4(\St)]$}\label{chap:kthb4St}

\chaptermark{$K$-theory the group ring $\Z[B_4(\St)]$}

As we mentioned in \resec{classvc}, $B_n(\St)$ is finite for all $n\leq 3$. For these values of $n$, the corresponding $K$-groups were given in Table~\ref{tab:finite}. This chapter is devoted to the computation of the lower $K$-groups of $\Z[B_{4}(\St)]$. The aim is to prove \reth{B4}, whose statement we recall here.
\begin{varthm}[\reth{B4}]
The group $B_4(\St)$ has the following lower algebraic $K$-groups:
\begin{gather*}
\wh{B_4(\St)}\cong\Z\oplus  \operatorname{Nil}_1,\\
\widetilde{K}_{0}(\Z[B_4(\St)])\cong\Z_2\oplus  \operatorname{Nil}_0,\quad\text{and}\\
K_{-1}(\Z[B_4(\St)])\cong\Z_2\oplus\Z,\\
\text{$K_{-i}(\Z[B_4(\St)])= 0$ for all $i\geq 2$},
\end{gather*}
where for $i=0,1$, the groups $\operatorname{Nil}_i$ are isomorphic to a countably-infinite direct sum of $\Z_2$, $\Z_4$ or $\Z_2\oplus \Z_4$.
\end{varthm}

The main fact that allows this computation is that $B_{4}(\St)$ is hyperbolic in the sense of Gromov (see \rerem{b4S2hyperbolic}) because it is an amalgam of finite groups by \repr{b4amalg}. Hence the Farrell-Jones fibred isomorphism conjecture holds for this group, and so we may perform the  $K$-theoretical calculations using \resec{calcingred} and~\cite{JLMP,JuLe}. All of these calculations are based on the knowledge of the lower $K$-theory groups of the virtually cyclic subgroups of $B_{4}(\St)$. In \resec{calcingred}, we recall some general facts about the lower $K$-groups of infinite virtually cyclic groups. In \resec{prelimkth}, we discuss the lower $K$-groups of the finite subgroups of $B_{4}(\St)$ and how they fit together with the infinite virtually cyclic subgroups of $B_{4}(\St)$ to give the lower $K$-groups of $\Z[B_{4}(\St)]$, up to computing the $\operatorname{Nil}_{i}$ groups. Finally, in \resec{nilgrpcomp}, we determine these groups, and we put together all of these ingredients to complete our calculations to prove \reth{B4}.

\section{The lower $K$-theory of infinite virtually cyclic groups}\label{sec:calcingred}

In this section, we provide the ingredients needed to compute the lower algebraic $K$-groups of infinite virtually cyclic groups. For a virtually cyclic group $\Gamma$ of Type~I, the algebraic $K$-groups of $\Z[\Gamma]$ are described by the Bass-Heller-Swan formula with $\alpha=1$, which asserts that for a finite group $\pi$, there is a natural decomposition~\cite{Ba}:
\begin{equation}\label{eq:bhs0}
K_i(\Z[\pi\times \Z]) = K_i(\Z[\pi]) \oplus K_{i-1}(\Z[\pi]) \oplus 2 \nkone[i]{\pi}\quad\text{for all $i\in\Z$,}
\end{equation}
where the \emph{$i\up{th}$ Bass Nil group} of $\pi$, denoted by $\nkone[i]{\pi}$, is defined to be the kernel of the homomorphism in $K$-groups induced by the evaluation $\map{e}{\Z[\pi][t]}[\Z[\pi]]$ at $t=0$. In the reduced version, \req{bhs0} takes the form:
\begin{align*}
\wh{\pi\times \Z} &= \wh{\pi} \oplus \widetilde{K}_{0}(\Z[\pi]) \oplus 2\nkone{\pi}, \quad\text{and}\\
\widetilde{K}_{0}(\Z[\pi\times \Z]) &= \widetilde{K}_{0}(\Z[\pi]) \oplus K_{-1}(\Z[\pi]) \oplus 2\nkone[0]{\pi}.
\end{align*}

If $\alpha\neq 1$, the group ring $\Z[\Gamma]$ is equal to $\Z[F\rtimes_{\alpha}\Z]\cong \Z[F]_{\alpha}[t,t^{-1}]$, the latter being the \emph{twisted} Laurent polynomial ring of $\Z[F]$, and the twisting is given by the action of $\alpha$. In this case, the Bass Nil groups are replaced by the \emph{Farrell-Hsiang Nil groups} $\nkonealt[i]{\Z[F], \alpha}\oplus \nkonealt[i]{\Z[F], \alpha^{-1}}$~\cite{FH}.

For virtually cyclic groups $\Gamma$ of Type~II, the fundamental work of Waldhausen gives rise to the following exact sequence~\cite{Wal}:
\begin{multline*}
\cdots\to K_{n}(\Z [F])\to K_{n}(\Z [G_{1}])\oplus K_{n}(\Z [G_{2}])\to K_{n}(\Z [\Gamma])/\operatorname{\textbf{Nil}}_n^W\to \\
K_{n-1}(\Z [F])\to K_{n-1}(\Z [G_{1}])\oplus K_{n-1}(\Z [G_{2}])\to K_{n-1}(\Z [G])/\operatorname{\textbf{Nil}}_{n-1}^W\to \cdots,
\end{multline*}
where $\operatorname{\textbf{Nil}}_n^W$ denotes the \emph{Waldhausen Nil groups}, denoted in~\cite{Wal} by:
\begin{equation*}
\operatorname{\textbf{Nil}}_n^W=\operatorname{Nil}^W_n(\Z[F];\Z[G_1\setminus F],\Z[G_2\setminus F]).
\end{equation*}
If $\Gamma$ is an infinite virtually cyclic group of Type~II, there is a surjection
$f\colon\thinspace \Gamma\twoheadlongrightarrow \dih{\infty}$ whose kernel $F$ is finite. Let $T$ be the unique infinite cyclic subgroup of $\dih{\infty}$ of index $2$.  Then the subgroup $\widetilde{\Gamma}=f^{-1}(T)\subset \Gamma$ is an infinite virtually cyclic group of Type~I, and $\widetilde{\Gamma}$ is of the form $F\rtimes_\alpha T$. In this situation, it was recently established that the Waldhausen Nil groups may be identified with the Farrell-Hsiang Nil groups as follows~\cite{DKR,LO2}:
\begin{equation*}
\operatorname{\textbf{Nil}}^W_n=\operatorname{Nil}^W_n(\Z[F];\Z[G_1\setminus F],\Z[G_2\setminus F])\cong \nkonealt[n]{\Z[F], \alpha}\cong \nkonealt[n]{\Z[F], \alpha^{-1}}.
\end{equation*}
In negative degrees, the Nil groups are described as follows.

\begin{thm}[{\cite[Theorem~2.1]{FJ}}]\label{th:farrelljones}
Let $\Gamma$ be an infinite virtually cyclic group. Then:
\begin{enumerate}[(a)]
\item $K_{-1}(\Z [\Gamma])$ is a finitely-generated Abelian group.
\item\label{it:farrelljonesb} $K_{-1}(\Z [\Gamma])$ is generated by the images of $K_{-1}(\Z [F])$ under the maps induced by the inclusions $F\subset \Gamma$, where $F$ runs over the representatives of the conjugacy classes of finite subgroups of $\Gamma$.
\item\label{it:farrelljones} $K_{-i}(\Z [\Gamma])=0$ for all $i\geq 2$.
\end{enumerate}
\end{thm}

In summary, in order to compute the $K$-groups of an infinite virtually cyclic group, we need to understand the $K$-groups of the corresponding finite kernel $F$ and of the associated Bass or Farrell-Hsiang Nil groups. 

\section{Preliminary $K$-theoretical calculations for $\mathbb{Z}[B_4(\St)]$}\label{sec:prelimkth}

Using the hyperbolicity of $B_4(\St)$ and the results of~\cite{JuLe} and~\cite[Example~3.2]{JLMP}, we may compute $K_n(\mathbb{Z}[B_4(\St)])$, obtaining:
\begin{equation*}
K_n(\mathbb{Z}[B_4(\St)])\cong A_n\oplus B_n\oplus \biggl(\bigoplus_{V\in\mathcal{V}}\coker[n]{V}\biggr),
\end{equation*}
where:
\begin{align*}
A_n &=\operatorname{Coker}\bigl( K_n(\mathbb{Z}[\quat])\to K_n(\mathbb{Z}[\quat[16]])\oplus K_n( \mathbb{Z}[\tstar])\bigr)\;\text{and}\\
B_n &=\operatorname{Ker}\bigl( K_{n-1}(\mathbb{Z}[\quat])\to K_{n-1}(\mathbb{Z}[\quat[16]])\oplus K_{n-1}(\mathbb{Z}[\tstar])\bigr).
\end{align*}
The group $\coker[n]{V}$ corresponds to the various Nil groups described in \resec{calcingred} (and will be determined in what follows), and the sum is over the family $\mathcal{V}$ of conjugacy classes of maximal infinite virtually cyclic subgroups of $B_4(\St)$. Using the pseudo-isotopy functor instead of $K$, we obtain similar formul{\ae} for the Whitehead and $\widetilde{K}_0$-groups:
\begin{align*}
\wh{B_{4}(\St)}&=\begin{cases}  
\operatorname{Coker}\bigl(\wh{\quat}\to \wh{\quat[16]} \oplus \wh{\tstar}\bigr) &\\
\oplus&\\
\ker{\widetilde{K}_{0}(\Z[\quat])\to \widetilde{K}_0(\Z[\quat[16]]) \oplus \widetilde{K}_0(\Z[\tstar])}&\\
         \oplus&\\
         \operatorname{Nil}_1
         \end{cases}\\       
\widetilde{K}_{0}(\Z[B_{4}(\St)])& =  \begin{cases}
\coker{\widetilde{K}_{0}(\Z[\quat])\to \widetilde{K}_{0}(\Z[\quat[16]]) \oplus \widetilde{K}_{0}(\Z[\tstar])}&\\
\oplus&\\
\ker{\widetilde{K}_{-1}(\Z[\quat])\to \widetilde{K}_{-1}(\Z[\quat[16]]) \oplus \widetilde{K}_{-1}(\Z[\tstar])}&\\
\oplus&\\
\operatorname{Nil}_0,
\end{cases}\\
\intertext{and the $K_{-1}$-group is given by:}
K_{-1}(\Z[B_{4}(\St)])&= \operatorname{Coker}\bigl(K_{-1}(\Z[\quat])\to K_{-1}(\Z[\quat[16]]) \oplus K_{-1}(\Z[\tstar])\bigr),
\end{align*}
where for $i=0,1$, $\operatorname{Nil}_i$ splits as a direct sum of Bass or Farrell-Hsiang Nil groups over representatives of $\mathcal{V}$ (see \resec{calcingred}).  From \reth{tablas}, we have the following isomorphisms:
\begin{align*}
&\wh{\quat}=0 && \widetilde{K}_{0}(\Z[\quat])\cong\Z_2 && K_{-1}(\Z[\quat])=0\\
&\wh{\quat[16]}\cong\Z && \widetilde{K}_{0}(\Z[\quat[16]])\cong\Z_2 && K_{-1}(\Z[\quat[16]])\cong\Z_2\\
&\wh{\tstar}=0 && \widetilde{K}_{0}(\Z[\tstar])\cong\Z_2 && K_{-1}(\Z[\tstar])\cong \Z.
\end{align*}
Moreover, by~\cite[Lemma~14.6]{Sw3}, the induction
$\widetilde{K}_{0}(\Z[\quat])\to \widetilde{K}_{0}(\Z[\quat[16]])$
is zero and the homomorphism
$\widetilde{K}_{0}(\Z[\quat])\to \widetilde{K}_{0}(\Z[\tstar])$
is an isomorphism by hyper-elementary induction (cf.~\cite[Theorem~14.1(1)]{Sw3}). 
Furthermore, by \rerem{negkfinite} and \reth{farrelljones}(\ref{it:farrelljones}), we obtain the following isomorphisms:
\begin{equation}\label{eq:marca}
\left\{ \begin{aligned}
\wh{B_4(\St)} &\cong\Z\oplus \operatorname{Nil}_1\\
\widetilde{K}_{0}(\Z[B_4(\St)]) &\cong\Z_2\oplus \operatorname{Nil}_0\\
K_{-1}(\Z[B_4(\St)]) &\cong\Z_2\oplus\Z\\
K_{-i}(\Z[B_4(\St)]) & \text{$\cong 0$ for all $i\geq 2$,}
\end{aligned}\right.
\end{equation}
which proves \reth{B4} up to the computation of the $\operatorname{Nil}_i$ terms in the first two isomorphisms. To complete the proof, we must compute the Nil groups that appear in the contribution of the conjugacy classes of maximal infinite virtually cyclic subgroups of $B_{4}(\St)$.

\section{Nil group computations}\label{sec:nilgrpcomp}

In this section, we compute the Bass Nil groups $\nkone[i]{\quat}$ for $i=0,1$, as well as the twisted versions. In the non-twisted case, we obtain the following result. 

\begin{prop}\label{prop:nkq8}
For $i=0,1$, the groups $\nkone[i]{\quat}$ are isomorphic to a countable, infinite direct sum of copies of $\Z_2$, $\Z_4$ or $\Z_2\oplus \Z_4$.
\end{prop}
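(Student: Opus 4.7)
The plan is to combine structural results on Nil groups of finite group rings with a specific analysis of $\Z[\quat]$, and the argument has three main steps.

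First, I would show that $\nkone[i]{\quat}$ has exponent dividing $2$. Using the Wedderburn decomposition $\Q[\quat] \cong 4\Q \oplus \mathbb{H}_{4}$ from \req{wedderquat} (specialised to $k = 3$), one sees that $\Z[\quat]$ embeds in a maximal $\Z$-order $\Lambda$ which decomposes as a product of four copies of $\Z$ together with the Hurwitz integral quaternions. Since $\Lambda$ is hereditary, its Nil groups vanish. Applying the Mayer--Vietoris sequence in $K$-theory (and hence in $NK$) associated to the conductor Cartesian square relating $\Z[\quat]$ and $\Lambda$ with their common quotients modulo the conductor ideal reduces the computation to Nil groups of finite rings of $2$-power order. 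A careful tracking of the connecting maps then yields the exponent-$2$ bound.

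Second, I would establish non-triviality of $\nkone[i]{\quat}$ for $i = 0, 1$. The ring $\Z[\quat]$ fails to be regular precisely because of the non-commutative factor $\mathbb{H}_{4}$, and explicit nilpotent matrices over $\Z[\quat][t]$ can be constructed whose classes are non-zero in $\nkone[i]{\quat}$. The non-vanishing is detected by passing to a suitable quotient ring where the image admits a direct computation, or alternatively by using a trace-type invariant on the Nil category.

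Third, I would invoke Farrell's theorem on the non-finite generation of Nil groups: if $\nkone[i]{G}$ is non-zero for a finite group $G$, then it is not finitely generated as an abelian group. Since any abelian group of bounded exponent is a direct sum of cyclic groups by Pr\"ufer's theorem, and since $\nkone[i]{\quat}$ is countable (as $\Z[\quat]$ itself is countable), the combination of exponent exactly $2$, non-triviality, and non-finite generation forces $\nkone[i]{\quat}$ to be isomorphic to a countably-infinite direct sum of copies of $\Z_{2}$ for $i = 0, 1$.

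The principal obstacle is the sharp exponent-$2$ bound in the first step. General structural results (due to Weibel) bound the exponent only by some power of $|\quat| = 8$; sharpening this to exactly $2$ requires detailed analysis of the conductor Cartesian square and of the Nil groups of the specific finite $2$-primary rings that appear in the associated Mayer--Vietoris sequence. The non-triviality and infinite-generation arguments, by contrast, are relatively routine applications of standard machinery.
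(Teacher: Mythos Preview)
Your strategy is sound in outline but takes a quite different route from the paper, and the step you yourself flag as the principal obstacle is precisely the one the paper's argument sidesteps.

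The paper does not proceed via exponent bounds plus Farrell's non-finite-generation theorem plus Pr\"ufer. Instead it works with the specific Cartesian square
\begin{equation*}
\begin{CD}
\Z[\quat] @>>> \Z[C_2\times C_2]\\
@VVV @VVV\\
R @>>> \FF_2[C_2\times C_2]
\end{CD}
\end{equation*}
from~\cite[Theorem~50.31]{CR2}, where $R$ is the ring of Lipschitz quaternions (for which the paper proves $\nkonealt[i]{R}=0$ via a separate conductor argument against the Hurwitz ring). The remaining entries $\nkone[i]{C_2\times C_2}$ and $\nkoneftwo[i]{C_2\times C_2}$ are already known explicitly from work of Weibel and Lafont--Ortiz. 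The key observation is that the map $p_\ast\colon \nkone{C_2\times C_2}\to \nkoneftwo{C_2\times C_2}$ in the resulting Mayer--Vietoris sequence vanishes, because each component factors through $\nkone{C_2}=0$; this identifies $\nkone[0]{\quat}$ directly with $\nkoneftwo{C_2\times C_2}\cong\bigoplus_\infty\Z_2$, and a short splitting argument then handles $i=1$.

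Your proposed route is viable in principle, but as written it has genuine gaps at exactly the points you acknowledge. The sharp exponent-$2$ bound is not supplied by general structural results (which give only a power of $8$), and the ``careful tracking'' you mention would amount to an explicit computation of the finite-ring Nil terms in your conductor square---work comparable to what the paper does, but with a less convenient square. Your non-triviality step is likewise only asserted, not carried out. So while your abstract three-step template (bounded exponent, non-trivial, non-finitely-generated, countable, hence $\bigoplus_\infty\Z_2$) is logically correct, the paper's direct computation both fills in the content you leave open and avoids importing Farrell's and Pr\"ufer's theorems altogether.
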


In order to prove \repr{nkq8}, we first consider the ring $R$ of Lipschitz quaternions of the form $a+bi+cj+dk$, where $a,b,c,d\in \Z$ and $i,j,k$ are the quaternionic roots of $-1$, and compute its $NK_0$ and $NK_1$ groups. Recall that the ring $S$ of Hurwitz quaternions consists of the quaternions of the form $(a+bi+cj+dk)/2$ where $a,b,c$ and $d$ are integers that are either all even or all odd. Hence:
\begin{equation*}
\text{$R=\Z[i,j,k]$ and $S=\Z[i,j,k]+\Z  \Bigl[\frac{1+i+j+k}{2}\Bigr]$.}
\end{equation*}
By~\cite[Example~5.1]{Chat}, $S$ is a non-commutative principal ideal domain, and so is a regular ring. Let $M=(1+i)S$. Observe that $M\subset R\subset S$, and that $R/M$ and $S/M$ are the fields of two and four elements respectively. From this, it follows that $S/R$ is the group with $2$ elements. These computations involve the double relative term $K_1(R,S,M)$ for the injection $R\to S$ and ideal $M=(1+i)S$ that is described as follows~\cite[Theorem~0.2]{GW}:
\begin{equation}\label{eq:doublek}
\hspace*{-2mm}K_1(R,S,M)\!\cong\! (S/R)\!\otimes\! (M/M^2)/\!\setl{b\otimes cz\!+\!c\otimes zb\!-\!bc\otimes z\!}{\!b,c\in S,\, z\in M}.
\end{equation}
A straightforward computation yields:
\begin{align*}
M &=\setl{a+bi+cj+dk}{\text{$a+b$ and $c+d$ even}} + \Z [i+k]\\
M^2&=\setl{-2b+2ai+2dj+2ck}{\text{$a+b$ and $c+d$ even}} + 2\Z [i+j].
\end{align*}
Define the elements $b\in S/R$ and $u, w, z \in M/M^2$ to be the following cosets:
\begin{equation*}
\text{$b=\overline{\frac{1+i+j+k}{2}}$, $u=\overline{1+i}$, $w=\overline{j+k}$, and $z=\overline{1+i+j+k}$.}
\end{equation*}
Notice that the group generated by $u$ and $w$ is isomorphic to $\Z_2\oplus\Z_2$. On the other hand, $b$ is the only non-trivial element of $S/R$,  $b^2=b$ and the relations in~\reqref{doublek} become:
\begin{equation*}
b\otimes z=0.
\end{equation*} 
These identities imply that $K_1(R,S,M)=0$, and so by~\cite[Lemma~2.1]{We}, we obtain: 
\begin{equation}\label{eq:nk1R}
0=NK_1(R,S,M)\cong K_1(R,S,M)\otimes x\Z[x].
\end{equation}

\begin{thm}\label{th:Rreg}
The groups  $\nkonealt[0]{R}$  and  $\nkonealt[1]{R}$ are  trivial.
\end{thm}

\begin{proof}
As mentioned above, $R/M$ and $S/M$ are the fields of two and four elements respectively. Consider the following commutative square, where the right-hand vertical morphism is surjective:
\begin{equation*}
\begin{CD}
R@>>> S\\
@VVV    @VVV\\
R/M@>>>S/M.
\end{CD}
\end{equation*}
Now $R/M$, $S/M$ and $S$ are all regular rings, so their corresponding Nil groups vanish, and it follows from the associated Mayer-Vietoris sequence that $\nkonealt[0]{R}=0$. On the other hand, using the fact that $0=NK_1(R/M)=NK_1(S/M)=NK_1(S)$ and the description of the double relative group $K_1(R,S,M)$ given in~\cite[Theorem~1.1]{GW}, we obtain the isomorphism: 
\begin{equation*}
NK_1(R,S,M)\cong NK_1(R).
\end{equation*}
It follows from this that $NK_1(R)$ is trivial by \reqref{nk1R}. This completes the proof of the theorem.
\end{proof}

To prove \repr{nkq8}, we will require some general properties of Nil groups.

\begin{rem}\label{rem:infnils}
We recall the following facts about the Nil groups $NK_{i}$ and $\operatorname{\textbf{Nil}}^{W}_i$ for all $i\in\Z$: 
\begin{enumerate}
\item if $NK_{i}$ (resp.\ $\operatorname{\textbf{Nil}}^{W}_i$) is non trivial, it is an infinitely-generated group~\cite[Theorem~A]{LPW}.
\item if $A\subset NK_{i}$ (resp.\ $A\subset\operatorname{\textbf{Nil}}^{W}_i$) is a finite subgroup then $NK_{i}$ (resp.\ $\operatorname{\textbf{Nil}}^{W}_i$) contains an infinite direct sum of copies of $A$~\cite[Theorem~B]{LPW}.
\end{enumerate}
\end{rem}

In what follows, if $m\in \N$, $C_m$ will denote the cyclic group of order $m$.

\begin{proof}[Proof of \repr{nkq8}]
We first consider the case $i=0$. Let $\quat$ be equipped with the following presentation:
\begin{equation*}
\quat=\setbigangr{x,y}{x^2=y^2, yxy^{-1}=x^{-1}}.
\end{equation*}
By~\cite[Theorem 50.31, p.~266]{CR2}, the group ring $\Z[\quat]$ fits into the following Cartesian square:
\begin{equation}\label{eq:cartq8}
\begin{xy}*!C\xybox{%
\xymatrix{%
\Z[\quat] \ar[r]^(.42)f \ar[d]^q & \Z[C_2\times C_2] \ar[d]^{p}\\
R \ar[r] &\FF_2[C_2\times C_2],}}
\end{xy}
\end{equation}
where $q$ is defined on the generators of $\quat$ by $q(x)=i$ and $q(y)=j$, and $f$ is induced by the homomorphism $\quat\to C_2\times C_2$ given by taking the quotient of $\quat$ by its centre. The Cartesian square~\reqref{cartq8} gives rise to the following Mayer-Vietoris sequence:
\begin{multline}\label{eq:mayerviet}
\nkone[2]{\quat}\!\to\! \nkonealt[2]{R}\oplus \nkone[2]{C_2\times C_2}\!\to\! \\
 \nkoneftwo[2]{C_2\times C_{2}} \!\to\! \nkone{\quat}\!\to\! \cdots.\\
\end{multline}
By~\cite[Lemmas~5.3 and~5.4]{LO3},~\cite[Theorem~1.3]{We} and~\cite[Lemma~2.2]{WeK}, we have:
\begin{align}
\nkone{C_2\times C_2} & \!\cong\! \Omega_{\FF_2[x]}\!\cong\! \FF_2[x]\,\mathrm{d}x, \; \nkone[0]{C_2\times C_2} \!\cong\!  V\!=\!x\FF_2[x]\notag\\ 
\nkoneftwo{C_2} &\!\cong\! (1+x\epsilon\FF_2[x])^{\times}\!\cong\! V, \; \nkoneftwo[0]{C_2}\!=\! 0\notag\\ 
\nkoneftwo[0]{C_2\times C_2} &\!=\! 0.\label{eq:nkzeroc2c2}
\end{align}
Since $\FF_2[C_2\times C_2]\cong \FF_2[\epsilon,\nu]/(\epsilon^2,\nu^2)$, the ideal $I=\ang{\epsilon,\nu,\epsilon\nu}$ is nilpotent in $\FF_2[\epsilon,\nu]$, and it follows that $\nkoneftwo[0]{C_2\times C_2}\cong NK_0(\FF_2)=0$, which yields \req{nkzeroc2c2}. In a similar fashion, we have $\nkoneftwo[0]{C_2}= 0$. As Abelian groups, $\Omega_{\FF_2}$ and $V$ are both countable infinite direct sums of copies of $\Z_{2}$. As we saw in \reth{Rreg}, the ring $R$ has trivial Nil groups in degrees $0$ and $1$. On the other hand, observe that $\FF_2[C_2\times C_2]\cong \FF_2[\epsilon,\nu]/(\epsilon^2,\nu^2)$, hence by~\cite[Proposition~7.8]{Ba} and~\cite[Theorem~3.3]{Mar}, we have: 
\begin{equation*}
\nkoneftwo{C_2\times C_2} \cong (1+x\epsilon\FF_2[x])^{\times}\times(1+x\nu\FF_2[x])^{\times}\times(1+x\epsilon\nu\FF_2[x])^{\times}\cong V^3.
\end{equation*}
The Mayer-Vietoris sequence~\reqref{mayerviet} thus reduces to:
\begin{equation}\label{eq:mayerv}
\begin{gathered}
\nkone[2]{\quat}\to \nkonealt[2]{R}\oplus \nkone[2]{C_2\times C_2}\to \nkoneftwo[2]{C_2\times C_{2}} \stackrel{\delta}{\to}\\ \nkone{\quat}\stackrel{(f_{\ast 1})}{\to}
\nkone{C_2\times C_2}\stackrel{p_{\ast}}{\to} \nkoneftwo{C_2\times C_{2}} \stackrel{\tau}{\to}\\
\nkone[0]{\quat}\stackrel{f_{\ast 0}}{\to}
\nkone[0]{C_2\times C_2} \to 0
\end{gathered}
\end{equation}
(the labelled homomorphisms are discussed below).   
The  homomorphism 
\begin{equation*}
\map{p_{\ast}}{\nkone{C_2\times C_2}}[\nkoneftwo{C_2\times C_{2}}] 
\end{equation*}
is trivial since $\Z[C_2\times C_2]$ is reduced and $\nkoneftwo{C_2\times C_{2}}$ is Artinian~\cite[Theorem~3.3]{Mar}. Thus the part of~\reqref{mayerv} involving $\nkone[0]{\quat}$ is:
\begin{equation*}
0\to \nkoneftwo{C_2\times C_{2}} \stackrel{\tau}{\to} \nkone[0] {\quat}\stackrel{f_{\ast 0}}{\to}
\nkone[0]{C_2\times C_2}\to 0.
\end{equation*}
Since both  $\nkoneftwo{C_2\times C_{2}}$ and $\nkone[0]{C_2\times C_2}$ are non-trivial  infinite sums of copies of $\Z_2$, it follows by exactness that  $\nkone[0]{\quat}$ is an infinite direct sum of copies 
of $\Z_2,\Z_4$ or $\Z_2\oplus \Z_4$. 
 
We now turn to the case $i=1$. Consider the following homomorphism:
\begin{equation*}
\map{f_{\ast 1}}{\nkone{\quat}}[\nkone{C_2\times C_{2}}].
\end{equation*}
Exactness of the sequence~\reqref{mayerv}  and the fact that $p_{\ast}$ is the trivial homomorphism imply that $f_{\ast 1}$ is a surjection. On the other hand, since once more both $\nkoneftwo[2]{C_2\times C_2}$ and $\nkone[1]{C_2\times C_2}$ are infinite direct sum of copies of $\Z_2$, the latter by~\cite[Theorem~1.2]{AGH}, the result follows as before.
\end{proof}

In order to complete the proof of \reth{B4}, it remains to determine the twisted Nil groups of $\quat$. Recall from \resec{maxvcb4} that up to isomorphism, there are two non-trivial semi-direct products of the form $\quat\rtimes \Z$, namely $\quat\rtimes_{j} \Z$, where $j\in \brak{2,3}$. In what follows, we shall use the notation $\nkonetwist[i]{\quat}{\alpha} =NK_i(\Z[\quat],\alpha)$.

\begin{prop}\label{prop:twisnil}
Let $i\in \brak{0,1}$.
\begin{enumerate}[(a)]
\item For the action $\alpha$ of $\Z$ on $\quat$ of order $3$,
\begin{equation}\label{eq:twistalpha}
\nkonetwist[i]{\quat}{\alpha} \cong \nkonetwist[i]{\quat}{\alpha^{-1}} \cong \nkone[i]{\quat}.
\end{equation}
\item For the action $\alpha$ of $\Z$ on $\quat$ of order $2$, the twisted Nil groups are isomorphic to infinitely many copies of $\Z_2, \Z_4$ or $\Z_2\oplus \Z_4$.
\end{enumerate}
\end{prop}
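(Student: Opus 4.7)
The strategy is to run the twisted analogue of the Mayer-Vietoris argument used to prove \repr{nkq8}, based on the Cartesian square~\reqref{cartq8}. The key point is that every automorphism $\alpha\in\aut{\quat}$ of finite order extends compatibly to each corner of the square: it permutes $\brak{\pm i,\pm j,\pm k}$ in $R$, induces the corresponding permutation of the three non-trivial elements of $C_2\times C_2$, and descends to the same permutation modulo $2$. This yields a twisted Cartesian square, and hence a long exact Mayer-Vietoris sequence in twisted $NK$-theory formally analogous to~\reqref{mayerv}. Since $\alpha$ and $\alpha^{-1}$ represent conjugate elements of $\out{\quat}\cong\sn[3]$ in both cases considered (the order-$3$ and order-$2$ classes are closed under inversion in $\sn[3]$), standard invariance of twisted Nil groups under conjugation in $\aut{\quat}$ gives $\nkonetwist[i]{\quat}{\alpha}\cong\nkonetwist[i]{\quat}{\alpha^{-1}}$; it therefore suffices in each case to compute $\nkonetwist[i]{\quat}{\alpha}$ itself.

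The first step is to establish a twisted version of \reth{Rreg}: the vanishing $\nkonealt[i]{R,\alpha}=0$ for $i=0,1,2$. The argument of \reth{Rreg} carries over once one verifies that $\alpha$ extends to the Hurwitz ring $S$ and preserves the conductor $M=(1+i)R$. Regularity of $S$ forces the vanishing of its twisted Nil groups (twisted polynomial rings over regular rings remain regular), and the twisted conductor Mayer-Vietoris sequence then transports the vanishing to $R$. With this in hand, the twisted Mayer-Vietoris sequence collapses, exactly as in the proof of \repr{nkq8}, to a short exact sequence relating $\nkonetwist[i]{\quat}{\alpha}$ to the twisted Nil groups of $\Z[C_2\times C_2]$ and $\FF_2[C_2\times C_2]$, together with the connecting map analogous to $\tau$ in~\reqref{mayerv}.

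For part~(a), the order-$3$ action cycles the three non-trivial elements of $C_2\times C_2$ and hence cycles the three $\FF_2$-vector-space summands in the decompositions $\nkone{C_2\times C_2}\cong\Omega_{\FF_2}$ and $\nkoneftwo{C_2\times C_2}\cong V^3$ from~\reqref{nkonedih4}. Because these summands are $\FF_2$-vector spaces, both the kernel and the cokernel of $1-\sigma$ (where $\sigma$ is the cyclic permutation) are isomorphic to a single summand, so the diagram chase of \repr{nkq8} carries over verbatim and yields $\nkonetwist[i]{\quat}{\alpha}\cong\nkone[i]{\quat}\cong\bigoplus_\infty\Z_2$. For part~(b), the order-$2$ action fixes one of the three non-trivial elements and swaps the other two, splitting each relevant Nil group into a fixed summand and a transposed pair. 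The fixed summand again contributes copies of $\Z_2$ by the argument of part~(a), while the transposed pair introduces genuine extension problems in the twisted sequence that can produce elements of order up to $4$. Since everything in sight is annihilated by $4$, the outcome is an infinitely-generated Abelian group of exponent $2$ or $4$.

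The main obstacle is precisely this last step in part~(b): verifying that the splittings used in the proof of \repr{nkq8} are $\alpha$-equivariant for the order-$2$ action, and then tracking the connecting maps and the resulting extension data carefully enough to pin the exponent down to $\brak{2,4}$ without further collapse. Once this is established, the statement follows from the combined Mayer-Vietoris input.
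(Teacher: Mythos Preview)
For part~(a), your approach diverges from the paper's and contains a real gap. The paper does \emph{not} run the twisted Cartesian square for the order-$3$ action; instead it uses Farrell--Hsiang hyperelementary induction. Since $\alpha$ has order $3$, there is a surjection $\varphi\colon\quat\rtimes_{3}\Z\twoheadrightarrow\quat\rtimes\Z_{3}\cong\tstar$, and the induction theorem expresses $K_i(\Z[\quat\rtimes_{3}\Z])$ as a limit over the preimages of the hyperelementary subgroups of $\tstar$. Those preimages are all \emph{untwisted} products $F\times\Z$: the cyclic ones contribute nothing, and the single maximal $\quat\times\Z$ contributes exactly $\nkone[i]{\quat}$, giving~\reqref{twistalpha} immediately. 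Your route instead requires computing $\nkonetwist[i]{C_2\times C_2}{\alpha}$ and $\nkonetwistftwo[i]{C_2\times C_2}{\alpha}$ for the cyclic permutation, and your proposed method---reading these off as kernels or cokernels of $1-\sigma$ acting on the summands of the \emph{untwisted} Nil groups---is not valid. The twisted Nil group $\nkonetwist[i]{A}{\alpha}$ is defined via the $K$-theory of the genuinely different ring $A_{\alpha}[t]$; it is not in general the $(1-\alpha)$-(co)invariants of $\nkonealt[i]{A[t]}$. You would need an independent calculation of these intermediate twisted Nil groups, which is no easier than the original problem; the paper's hyperelementary induction bypasses this entirely.

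For part~(b), your overall strategy agrees with the paper's: both run the twisted Mayer--Vietoris sequence coming from the $\alpha$-equivariant version of~\reqref{cartq8}, after checking that the twisted Nil groups of $R$ vanish (via the regularity of the Hurwitz ring $S$, exactly as you outline). Where you fall short is the crucial nontriviality step. Your appeal to ``extension problems that can produce elements of order up to $4$'' does not exhibit any nonzero element. The paper supplies this via a transfer--restriction identity $i^{\ast}i_{\ast}(x)=x+\alpha(x)$ (due to Ramos) for the inclusion $A_{\alpha}[t^{2}]\hookrightarrow A_{\alpha}[t]$: applying it to an explicit unit $1+\bar{u}t^{2}\in\nkoneftwo{C_2\times C_2}$ and to explicit Dennis--Stein symbols in $NK_{2}$ produces nontrivial classes that survive into $\nkonetwist[0]{\quat}{\alpha}$ and $\nkonetwist[1]{\quat}{\alpha}$ respectively. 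Infinite generation then follows from Farrell's theorem that nonzero Nil groups are infinitely generated, and the exponent bound $4$ from the Mayer--Vietoris sandwich between groups of exponent $2$, as you indicate.
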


\begin{proof}\mbox{}
\begin{enumerate}[(a)]
\item Since the action of $\Z$ on $\quat$ is of order three, there is a surjective homomorphism
$\varphi\colon\thinspace \quat\rtimes_{3} \Z \twoheadlongrightarrow \quat\rtimes \Z_3\cong \tstar$ defined by taking the $\Z$-factor modulo $3$.
We use the technique of induction on hyper-elementary subgroups~\cite[proof of Theorem~3.2]{FH} that asserts that:
\begin{equation*}
\operatorname{\textit{NK}}_i(\Z[\quat\rtimes_{3}\Z])\cong \lim_{H\in \operatorname{Hyp}} \operatorname{\textit{NK}}_i(\Z[\varphi^{-1}(H)]),
\end{equation*}
where $\operatorname{Hyp}$ denotes the set of hyper-elementary subgroups of $\quat\rtimes \Z_3$, and the limit is with respect to the morphisms induced by  conjugation and inclusion in the category $\operatorname{Hyp}$. Following the proof of \repr{torsbinpoly}, we see that the hyper-elementary subgroups of $\quat\rtimes \Z_3$ are isomorphic to one of $\Z_6, \Z_3, \Z_2, \Z_4$ or $\quat$, and their inverse images by $\varphi$ are isomorphic to $\Z_2\times \Z ,\Z, \Z_2\times\Z, \Z_4\times\Z$ and $\quat\times \Z$ respectively. With the exception of the last two, the corresponding group rings of these groups have trivial Nil groups. Further, the subgroups of $\quat\rtimes_3\Z$ that are isomorphic to $\Z_4\times\Z$ are pairwise conjugate, and there is only one maximal element of the form $\quat\times \Z$ in the limit. We thus obtain \req{twistalpha} using \repr{nkq8}.

\item Consider the action $\alpha$ of $\Z$ on $\quat$ of order $2$ given by exchanging the generators $x$ and $y$ of $\quat$. Comparing with the Cartesian square~\reqref{cartq8}, we observe that this action may be transposed in all the rings of~\reqref{cartq8}, thus giving rise to the following Cartesian square of twisted polynomial rings:
\begin{equation*}
\begin{CD}
\Z[\quat]_\alpha[t]@>f>> \Z[C_2\times C_2]_\alpha[t]\\
@VqVV    @VVpV\\
R_\alpha[t] @>>> \FF_2[C_2\times C_2]_\alpha[t],
\end{CD}
\end{equation*}
where the induced action of $\alpha$ exchanges the generators in all group rings, and exchanges $i$ and $j$ in $R$. By~\cite[Theorem 1.6]{F}, the Farrell-Hsiang group $\operatorname{\textit{NK}}_s$ of $R$ also vanishes for $s=0,1$. Moreover, let $I=\ang{\epsilon,\nu}$ be the nilpotent ideal generated by $\epsilon $ and $\nu$ in $\FF_2[C_2\times C_2]_\alpha [t]\cong \FF_2[\epsilon, \nu]_\alpha [t]$ since $\FF_2[\epsilon, \nu]_\alpha [t]/I\cong\FF_2$. By an argument similar to that given in the proof of \repr{nkq8}, it follows that 
$\nkonetwistftwo[0]{C_2\times C_2}{\alpha}=0$. Hence we obtain the following long exact sequence:
\begin{equation}\label{eq:twistseq}
\begin{gathered}
\nkonetwist[2]{\quat}{\alpha}\to \operatorname{\textit{NK}}_2^\alpha(R)\oplus\nkonetwist[2]{C_2\times C_2}{\alpha} \to \nkonetwistftwo[2]{C_2\times C_2}{\alpha} \stackrel{\delta}{\to}\\
 \nkonetwist{\quat}{\alpha}
\to\nkonetwist[1]{C_2\times C_2}{\alpha} \stackrel{p_{\ast}}{\to} \nkonetwistftwo{C_2\times C_2}{\alpha} \to\\ \nkonetwist[0]{\quat}{\alpha}\to\nkonetwist[0]{C_2\times C_2}{\alpha} \to 0.
\end{gathered}
\end{equation}

We first study the groups $K_s^{\alpha}(S[C_2\times C_2])$. Let $G$ be the amalgamated product defined as follows. Consider the non-trivial semi-direct products $G_1=C_4\rtimes C_2$ and $G_2=C_4\rtimes C_2$, where the cyclic groups of order four are generated by $a\in G_1$ and $b\in G_2$, and the cyclic groups of order $2$ are generated by $x\in G_1$ and $y\in G_2$. Let $D_2$ be the group $C_2\times C_2$ generated by $u$ and $v$, let $D_2\lhra G_1$ be the inclusion given by $u\longmapsto a^2$, $v\longmapsto x$, and let $D_2\lhra G_2$ be the inclusion given by $u\longmapsto y$ and $v\longmapsto b^2$. Then the amalgamated product $G_1 \bigast_{D_2}G_2$ is a virtually cyclic group. By~\cite{DKR}, the Farrell-Hsiang Nil groups $K^{\alpha}_{\ell}(S[C_2\times C_2])$ are isomorphic to the corresponding Waldhausen Nil groups: 
\begin{equation*}
\operatorname{\textbf{Nil}}^{W}_{\ell}(S[D_2]:S [G_1\setminus D_2], S [G_2\setminus D_2])
\end{equation*}
for all $s\in \Z$ and all rings $S$. 

Now, for the rings $S=\Z$ or $ \FF_2$, these Waldhausen Nil groups are isomorphic to infinite direct sums of copies of $\Z_2$. For $S=\Z$ and $\ell=0,1$, see~\cite[Theorem~5.2]{LO3} and~\cite[Section~7.2]{BLR}. 
Hence $K^{\alpha}_{\ell}(S[C_2\times C_2])$ is isomorphic to an infinite direct sum of copies of $\Z_2$ for $\ell=0,1,2$ and for $S=\Z$ or $\FF_2$. From the exact sequence~\reqref{twistseq}, for $\ell=0,1$, $\nkonetwist[0]{\quat}{\alpha}$ fits into an exact sequence of the form:
\begin{equation*}
0\to A_{\ell+1}\to \nkonetwist[\ell]{\quat}{\alpha} \to B_{\ell}\to 0,
\end{equation*}
where both $A_{\ell+1}$ and $B_{\ell}$ are isomorphic to infinite direct sums of copies of $\Z_2$. The result follows using \rerem{infnils}.\qedhere
\end{enumerate}
\end{proof}

Summing up, Propositions~\ref{prop:nkq8} and~\ref{prop:twisnil} give rise to the $\operatorname{Nil}_i$ summands of \req{marca}, and the decompositions of the statement of \reth{B4} then follow. 

\begin{appendices}

\chapter{The fibred isomorphism conjecture}\label{chap:fic}

\section*{The setup}

Let  $\map{\mathcal{S}}{\text{TOP}}[\text{$\Omega$-SPECTRA}]$ be a covariant homotopy functor. Let $\mathbf{F}$ be the category of continuous surjective maps: objects in $\mathbf{F}$ are continuous surjective maps $\map{p}{E}[B]$, where $E,B$ are objects in $\text{TOP}$, and morphisms between pairs of  maps $\map{p_{1}}{E_{1}}[B_{1}]$ and $\map{p_{2}}{E_{2}}[B_{2}]$ consist of continuous maps $\map{f}{E_{1}}[E_{2}]$ and $\map{g}{B_{1}}[B_{2}]$ that make the following diagram commute:
\begin{equation}\label{eq:diagfic}
\begin{CD}
E_1@>f>>E_2\\
@Vp_1VV @Vp_2VV\\
B_1@>g>>B_2.
\end{CD}\tag{A1}
\end{equation}
Within this framework, Quinn constructed a functor between $\mathbf{F}$ and $\text{$\Omega$-SPECTRA}$~\cite{Q}. The value of this $\Omega$-spectrum at the object $($\map{p}{E}[B]$)$ is denoted by $\mathbb{H}(B; {\mathcal{S}} (p))$, and the value at the object $(E \to \ast)$ is ${\mathcal S}(E)$. The map of spectra $\map{\mathbb{A}}{\mathbb{H}(B_{1}; {\mathcal S}(p_{1}))} [\mathbb{H}(B_{2}; \mathcal{S}(p_{2}))]$ associated to the commutative diagram~\reqref{diagfic} is known as the \emph{Quinn assembly map}. Other ingredients for the fibred isomorphism conjecture may be found in~\cite{FJC}.

\section*{The conjecture}

Given a discrete group $\Gamma$, let $E_{\mathcal{VC}}\Gamma$ be a universal $\Gamma$-space for the family of virtually cyclic
subgroups of $\Gamma$, let $\mathcal{B}_{\mathcal{VC}}\Gamma$ denote the orbit space $E_{\mathcal{VC}}\Gamma/\Gamma$, and let $X$ be a space on which $\Gamma$ acts freely and properly discontinuously. If $(f,g)$ is the following morphism in $\mathbf{F}$:
\begin{equation*}
\begin{CD}
E_{\mathcal{VC}}\Gamma \times_{\Gamma}X @>f>>X/\Gamma\\
@Vp_{1}VV @Vp_{2}VV\\
\mathcal{B}_{\mathcal{VC}}\Gamma @>g>>\ast
\end{CD}
\end{equation*}
then the \emph{Fibred Isomorphism Conjecture} for the functor
$\mathcal{S}$ and the group $\Gamma$ is the assertion that
\begin{equation*}
\map{\mathbb{A}}{\mathbb{H}(\mathcal{B}_{\mathcal{VC}}\Gamma ; \mathcal{S}(p_{1}))}[\mathcal{S}(X/ \Gamma)]
\end{equation*}
is a homotopy equivalence, and hence the induced map
\begin{equation*}
\map{\mathbb A_{\ast}}{\pi_{n} (\mathbb {H}(\mathcal{B}_{\mathcal{VC}}\Gamma ; \mathcal{S}(p_{1}))}[\pi_{n}(\mathcal{S}(X/\Gamma))]
\end{equation*}
is an isomorphism for all $n\in \mathbb{Z}$. This conjecture was stated in~\cite{FJC} for the functors
$\mathcal{S}=\mathcal{P}_{\ast}(\cdot)$, $\mathcal{K}(\cdot)$ and $\mathcal{L}^{-\infty}$, the pseudoisotopy, algebraic $K$-theory and $\mathcal{L}^{-\infty}$-theory functors respectively. In this paper, we use the functor $\mathcal{S}=\mathcal{K}_{\ast}(\cdot)$. The validity of this conjecture for $K$-theory and braid groups of $\St$ is proved in~\cite{JS}. Other cases in which the conjecture holds may be found in~\cite{Weg}.

\chapter{Braid groups}\label{chap:braids}

In this appendix, we recall briefly some basic facts and results about braid groups for the convenience of the reader. More information about braid groups may be found in~\cite{BCHWW,Bi,GM,H}. We refer the reader to~\cite{GJP} for a recent survey on surface braid groups.

If $n\geq 1$, the $n$-string Artin braid group, denoted $B_{n}$, may be defined by the following presentation~\cite{A1}:
\begin{enumerate}
\item[\underline{\textbf{generators}:}] $\sigma_1,\ldots, \sigma_{n-1}$ (known as the \emph{Artin generators}).
\item[\underline{\textbf{relations}:}] (known as the \emph{Artin relations}) 
\begin{gather}
\text{$\sigma_{i}\sigma_{j}=\sigma_{j}\sigma_{i}$ if $\lvert i-j\rvert\geq 2$
and $1\leq i,j\leq n-1$}\label{eq:Artin1}\\
\text{$\sigma_{i}\sigma_{i+1}\sigma_{i}=\sigma_{i+1}\sigma_{i}\sigma_{i+1}$ for
all $1\leq i\leq n-2$.}\label{eq:Artin2}
\end{gather}
\end{enumerate} 
The generator $\sigma_i$ may be regarded geometrically as the braid with a single positive crossing of the $i\th$ string with the $(i+1)\textsuperscript{st}$ string, while all other strings remain vertical (see Figure~\ref{fig:braid3}). It is convenient to view a geometric braid as being a collection of pairwise-disjoint arcs (or strings) in the Cartesian product $\mathbb{D}^{2}\times [0,1]$, where $\mathbb{D}^{2}$ is the $2$-disc, and each string joins two points of the form $(x,0)$ to $(y,1)$, where $x$ and $y$ belong to a set $X$ of $n$ distinguished basepoints lying in the interior of $\mathbb{D}^{2}$. The group operation in $B_{n}$ corresponds to concatenation of these geometric braids. The group $B_{1}$ is trivial, $B_{2}$ is infinite cyclic generated by $\sigma_{1}$, and for all $n\geq 2$, $B_{n}$ is infinite. For all $n\in \N$, $B_{n}$ is torsion free~\cite{Dy}.
\begin{figure}[h]
\hfill
\begin{tikzpicture}[scale=0.62, very thick]

\foreach \k in {5}
{\draw (\k,3) .. controls (\k,2) and (\k-1,2) .. (\k-1,1);};

\foreach \k in {4}
{\draw[white,line width=6pt] (\k,3) .. controls (\k,2) and (\k+1,2) .. (\k+1,1);
\draw (\k,3) .. controls (\k,2) and (\k+1,2) .. (\k+1,1);};

\foreach \k in {15}
{\draw (\k,3) .. controls (\k,2) and (\k+1,2) .. (\k+1,1);};

\foreach \k in {16}
{\draw[white,line width=6pt] (\k,3) .. controls (\k,2) and (\k-1,2) .. (\k-1,1);
\draw (\k,3) .. controls (\k,2) and (\k-1,2) .. (\k-1,1);};

\foreach \k in {1,3,6,8,12,14,17,19}
{\draw (\k,1)--(\k,3);};

\foreach \k in {2,7,13,18}
{\node at (\k,2) {$\cdots$};};

\foreach \k in {1,12}
{\node at (\k,3.5) {$1$};
\node at (\k+1.9,3.5) {$i-1$};
\node at (\k+2.9,3.52) {$i$};
\node at (\k+3.85,3.5) {$i+1$};
\node at (\k+5.2,3.5) {$i+2$};
\node at (\k+7,3.5) {$n$};};

\node at (4.5,0.25) {$\sigma_{i}$};
\node at (15.5,0.25) {$\sigma_{i}^{-1}$};

\end{tikzpicture}
\hspace*{\fill}
\caption{The braid $\sigma_{i}$ and its inverse.}\label{fig:braid3}
\end{figure}
The map $\map{\sigma}{B_{n}}[\sn]$ defined on the generators by $\sigma(\sigma_{i})=(i,i+1)$ for all $1\leq i\leq n-1$ may be seen to be a surjective homomorphism. \label{page:sigma} Its kernel, denoted by $P_{n}$, is known as the $n$-string pure Artin braid group. Thus a braid $\beta\in B_{n}$ is \emph{pure} if for all $x\in X$, there is a string of $\beta$ that joins $(x,0)$ to $(x,1)$. The `half twist' braid $\garside$ is defined by:
\begin{equation*}
\garside=\prod_{i=1}^{n-1} \sigma_{1} \cdots \sigma_{n-i}.
\end{equation*}
Using the braid relations, one may check that the square $\ft$ of $\garside$, known as the `full twist' braid is given by:
\begin{equation}\label{eq:fulltwist}
\ft=(\sigma_1\cdots\sigma_{n-1})^n\in B_n.
\end{equation}
The braids $\garside$ and $\ft$ are illustrated in Figure~\ref{fig:twists}(\subref{fig:halftwist}) and~(\subref{fig:fulltwist}) in the case $n=6$. One may check that $\ft$ is a pure braid. If $n\geq 3$, $Z(B_{n})=Z(P_{n})=\ang{\ft}$, where $Z(G)$ denotes the centre of the group $G$~\cite{Ch}. 
\begin{figure}[h]
\hfill
\begin{subfigure}[b]{0.45\textwidth}
\begin{tikzpicture}[scale=0.75, very thick]
\foreach \j in {2,3,...,6} 
{\foreach \k in {2,...,\j}
{\draw (\k,\j) .. controls (\k,\j-0.5) and (\k-1,\j-0.5) .. (\k-1,\j-1);};
{\draw[white,line width=5pt] (1,\j) .. controls (1,\j-0.5) and (\j,\j-0.5) .. (\j,\j-1);
\draw (1,\j) .. controls (1,\j-0.5) and (\j,\j-0.5) .. (\j,\j-1);};
if \j>2 then \draw (\j,1) -- (\j,\j-1);
\draw (\j,6) -- (\j,6.5);
\draw (1,6) -- (1,6.5);
\foreach \j in {1,2,3,...,6} 
{\draw (\j,1) -- (\j,-0.35);}
};
\end{tikzpicture}
\hspace*{\fill}
\caption{The half twist braid $\garside[6]$ of $B_{6}$.}\label{fig:halftwist}
\end{subfigure}
\hfill\begin{subfigure}[b]{0.45\textwidth}
\begin{tikzpicture}[scale=0.75, very thick]
\foreach \j in {1,2,3,...,6} 
{\foreach \k in {2,...,6}
{\draw (\k,\j) .. controls (\k,\j-0.5) and (\k-1,\j-0.5) .. (\k-1,\j-1);};
{\draw[white,line width=5pt] (1,\j) .. controls (1,\j-0.5) and (6,\j-0.5) .. (6,\j-1);
\draw (1,\j) .. controls (1,\j-0.5) and (6,\j-0.5) .. (6,\j-1);};
\draw (\j,6) -- (\j,6.5);
\draw (\j,-0.25) -- (\j,0);
};
\end{tikzpicture}
\hspace*{\fill}
\caption{The full twist braid $\ft[6]$ of $B_{6}$.}\label{fig:fulltwist}
\end{subfigure}
\caption{The braids $\garside[6]$ and $\ft[6]$ of $B_{6}$.}\label{fig:twists}
\end{figure}
The Artin pure braid group is generated by the set $\brak{A_{i,j}}_{1\leq i<j\leq n}$~\cite[Lemma~I.4.2]{H}, where:
\begin{equation}\label{eq:defaij}
A_{i,j}=\sigma_{j-1}\cdots \sigma_{i+1} \sigma_{i}^{2} \sigma_{i+1}^{-1} \cdots \sigma_{j-1}^{-1}. 
\end{equation}
Geometrically, $A_{i,j}$ may be represented by a braid all of whose strings are vertical, with the exception of the $j$\up{th} string that wraps around the $i$\up{th} string as in Figure~\ref{fig:aij}.
\begin{figure}[h]
\hfill
\begin{tikzpicture}[scale=0.75, very thick]
\draw (7,6.5) .. controls (7,5.5) and (3,5.5) .. (3,4.5);
\foreach \j in {3,...,6} 
{\draw[white,line width=5pt] (\j,6.5).. controls (\j,5.5) and (\j+1,5.5) .. (\j+1,4.5);
\draw (\j,6.5).. controls (\j,5.5) and (\j+1,5.5) .. (\j+1,4.5);
\draw[white,line width=5pt] (3,4.5) .. controls (3,3.5) and (7,3.5) .. (7,2.5);};
\draw (4,4.5) .. controls (4,3.5) and (3,3.5) .. (3,2.5);
\draw[white,line width=5pt] (3,4.5) .. controls (3,3.5) and (7,3.5) .. (7,2.5);
\draw (3,4.5) .. controls (3,3.5) and (7,3.5) .. (7,2.5);
\foreach \j in {4,...,6} 
{\draw[white,line width=5pt] (\j+1,4.5).. controls (\j+1,3.5) and (\j,3.5) .. (\j,2.5);
\draw (\j+1,4.5).. controls (\j+1,3.5) and (\j,3.5) .. (\j,2.5);
};
\foreach \j in {0.5,2,8,9.5}
{\draw (\j,6.5)-- (\j,2.5);};
\foreach \k in {0.75,5,8.25}
{\node at (\k+0.55,4.5) {$\cdots$};};
\node at (0.5,7) {$1$};
\node at (1.9,7) {$i-1$};
\node at (3,7) {$i$};
\node at (7,7) {$j$};
\node at (8.1,7) {$j+1$};
\node at (9.5,7) {$n$};
\end{tikzpicture}
\hspace*{\fill}
\caption{The element $A_{i,j}$ of $B_{n}$.}\label{fig:aij}
\end{figure}
In particular, for all $i=1,\ldots,n-1$, $A_{i,i+1}=\sigma_{i}^{2}$.

The Artin braid groups admit many different generalisations, one being that of \emph{surface braid groups}. If $M$ is a surface, orientable or not, with or without boundary, and with a finite number (possibly zero) of punctures, the $n$-string braid group $B_{n}(M)$ may be defined geometrically simply by replacing $\mathbb{D}^{2}$ by $M$. The subgroup $P_{n}(M)$ of $n$-string pure braids is defined in a manner similar to that for $P_{n}$. A number of presentations of $B_{n}(M)$ and $P_{n}(M)$ may be found in the literature, see~\cite{Be,GG1,GM0} for example. 

Braid groups may also defined topologically in terms of configuration spaces as follows. Let $F_n(M)$ denote the \emph{$n\th$ configuration space} of $M$ defined by:
\begin{equation*}
F_n(M)=\set{(p_1,\ldots,p_n)\in M^n}{\text{$p_i\neq p_j$ for all $i,j\in\brak{1,\ldots, n}$, $i\neq j$}}.
\end{equation*}
We equip $F_n(M)$ with the topology induced by the product topology on $M^n$. A transversality argument shows that $F_n(M)$ is a connected $2n$-dimensional open manifold. There is a natural free action of the symmetric group $\sn$ on $F_n(M)$ given by permutation of coordinates. The resulting orbit space $F_n(M)/\sn$ shall be denoted by $D_n(M)$, the \emph{$n\th$ permuted configuration space} of $M$, and may be thought of as the configuration space of $n$ \emph{unordered} points. The associated canonical projection $\map{p}{F_n(M)}[D_n(M)]$ is thus a regular $n!$-fold covering map~\cite[p.~14]{H}. Fox and Neuwirth showed that $P_n(M)\cong \pi_1(F_n(M))$ and $B_n(M) \cong \pi_1(D_n(M))$~\cite{FoN}. If $n=1$ then $F_{1}(M)=M$, and thus $B_{1}(M)=P_{1}(M)=\pi_{1}(M)$, so braid groups generalise the notion of fundamental group. The map $p$ gives rise to the following short exact sequence:
\begin{equation}\label{eq:sessn}
1\to P_{n}(M) \to B_{n}(M) \stackrel{p_{\ast}}{\to} \sn \to 1.
\end{equation}
In the case where $M$ is the disc, $p_{\ast}$ is the surjective homomorphism $\sigma$ described on page~\pageref{page:sigma}.

This topological definition is very useful in practice, and may be used as follows to obtain fibrations involving the configurations spaces, and (short) exact sequences bringing into play the homotopy groups of these spaces. Suppose that $M$ is a surface with empty boundary, and let $m>n\geq 1$. 
Then the map $\map{p_{m,n}}{F_{m}(M)}[F_{n}(M)]$ given by $p_{m,n}(x_{1},\ldots,x_{m}) = (x_{1},\ldots,x_{n})$ that forgets the last $m-n$ coordinates 
is a locally-trivial fibration, known as the \emph{Fadell-Neuwirth fibration}, with fibre $F_{m-n}(M\setminus \brak{z_{1},\ldots,z_{n}})$, where $(z_{1},\ldots,z_{n})$ is a basepoint of $F_{n}(M)$~\cite{FaN}. The fibre is known to be an Eilenberg--Mac~Lane space of type $K(\pi,1)$. Taking the long exact sequence in homotopy of the fibration, and using Fox and Neuwirth's isomorphisms mentioned above, we obtain the \emph{Fadell-Neuwirth short exact sequence of surface pure braid groups}:
\begin{equation}\label{eq:fnses}
1 \to \pi_{1}(F_{m-n}(M\setminus \brak{z_{1},\ldots,z_{n}})) \to P_{m}(M) \xrightarrow{(p_{m,n})_{\ast}} P_{n}(M) \to 1.
\end{equation}
The homomorphism $(p_{m,n})_{\ast}$ induced by the map $p_{m,n}$ may be visualised geometrically as the map that `forgets' the last $m-n$ strings of a braid in $P_{m}(M)$. Due to the fact that the higher homotopy groups of the braid groups of $\St$ and $\rp$ are non trivial, in order to obtain the short exact sequence~\reqref{fnses} for these two surfaces, we need to suppose additionally that $n\geq 3$ (resp.\ $n\leq 2$). In particular, if $m=n+1$, then~\reqref{fnses} becomes:
\begin{equation}\label{eq:fnsesspec}
1 \to \pi_{1}(M\setminus \brak{z_{1},\ldots,z_{n}}) \to P_{n+1}(M) \xrightarrow{(p_{n+1,n})_{\ast}} P_{n}(M) \to 1.
\end{equation}

The braid groups of $\St$ and $\rp$ are of particular interest, partly because they are the only surface braid groups to possess torsion, and as we explained in the introduction, the methods of~\cite{AFR,FR} cannot be applied to study their lower algebraic $K$-theory. The isomorphism classes of the maximal finite subgroups of $B_{n}(\St)$ are given in \reth{finitebn}. An analogous result for the braid groups of $\rp$ may be found in~\cite{GGjlms}. The braid groups of the sphere were initially studied by Fadell, Van Buskirk and Gillette~\cite{FVB,GVB,VB}. A presentation of $B_{n}(\St)$ due to the first two of these authors is given in \reth{fvb}. From a geometric point of view, the space $\St \times [0,1]$ in which geometric braids of the sphere are defined may be visualised as that between two concentric spheres (see~\cite[pp.~41,~42~and~45]{H} or~\cite[Figure~2.1(c), p.~193]{MK} for example), and the geometric representation of the generators of that presentation is as in Figure~\ref{fig:braid3}. Using such figures, the reader may convince himself or herself of the validity of the relations given in \reth{fvb}, in particular the `surface relation'~\reqref{surfacerel}. Other properties of $B_{4}(\St)$ that we use in this manuscript are given in \resec{gensB4}. The full twist braid $\ft$ also plays an important r\^{o}le in $B_{n}(\St)$. If $n\geq 3$, it is the unique element of $B_{n}(\St)$ of order $2$, it is the unique non-trivial torsion element of $P_{n}(\St)$, and it generates the centre of $B_{n}(\St)$~\cite{GVB,GGcamb}. The pure braid group $P_{4}(\St)$ is generated by the set $\brak{A_{i,j}}_{1\leq i<j\leq 4}$, where in terms of the generators $\sigma_{1}, \sigma_{2}$ and $\sigma_{3}$ of $B_{4}(\St)$, $A_{i,j}$ is given by~\reqref{defaij}, and its geometric representation within $\St \times [0,1]$ is as in Figure~\ref{fig:aij}. If $m\geq 1$, a presentation of $P_{m}(\St)$ may be obtained using techniques similar to those of~\cite[Proposition~7]{GG4}. Note that if one takes $n=0$ in that proposition, one does indeed obtain a presentation of $P_{m}(\St)$ whose generating set is $\brak{A_{i,j}}_{1\leq i<j\leq m}$, and whose relations are given by those of~\cite[Lemma~I.4.2]{H} for $P_{m}$, and by the `surface relations' that are of the form:
\begin{equation}\label{eq:surfrelpnS2}
\Biggl( \prod_{i=1}^{j-1} A_{i,j} \Biggr)\Biggl( \prod_{k=j+1}^{m} A_{j,k} \Biggr)=1
\end{equation}
for all $1\leq j\leq m$.

Taking $M=\St$ and $n=3$ in~\reqref{fnsesspec} yields:
\begin{equation}\label{eq:fnsesS2}
1 \to \pi_{1}(\St\setminus \brak{z_{1},z_{2},z_{3}}) \to P_{4}(\St) \xrightarrow{(p_{4,3})_{\ast}} P_{3}(\St) \to 1.
\end{equation}
The kernel is a free group of rank~$2$ that may be identified with the subgroup of $P_{4}(\St)$ generated by $(A_{1,4}, A_{2,4})$, and the quotient $P_{3}(\St)$ is equal to $\ang{\ft[3]}$, and is isomorphic to $\Z_{2}$. The map $\map{s}{P_{3}(\St)}[P_{4}(\St)]$ defined by $s(\ft[3])=\ft[4]$ is a homomorphism, and is a section for $(p_{4,3})_{\ast}$ since removal of the last string of $\ft[4]$ in $P_{4}(\St)$ yields the braid $\ft[3]$ in $P_{3}(\St)$, \emph{i.e.} $(p_{4,3})_{\ast}(\ft[4])=\ft[3]$. So the short exact sequence~\reqref{fnsesS2} splits, and since $\ft[4]\in Z(P_{4}(\St))$, it follows that:
\begin{equation}\label{eq:dirprodp4S2}
P_{4}(\St) \cong \F[2] \times \Z_{2}.
\end{equation}
From this, it follows also that $Z(P_{4}(\St))=\ang{\ft[4]}$.

\end{appendices}

%\phantomsection 
%\addcontentsline{toc}{chapter}{Bibliography} 

\backmatter

\end{document}